\documentclass{amsart}
\usepackage{enumitem}
\usepackage{amssymb}
\usepackage{amsthm}
\usepackage{amsmath}
\usepackage{mathrsfs}
\usepackage{comment}
\usepackage{graphicx}
\usepackage[all]{xy}

\usepackage[pdfencoding=auto, psdextra]{hyperref}

\usepackage[nonewpage]{imakeidx}
\makeindex

\usepackage[columns=1]{idxlayout} 

\usepackage[cal=boondoxo]{mathalfa}

\usepackage[usenames]{color}

\theoremstyle{plain}
\newtheorem{proposition}{Proposition}[section]
\newtheorem{theorem}[proposition]{Theorem}
\newtheorem{Theorem}[proposition]{Theorem}
\newtheorem{lemma}[proposition]{Lemma}
\newtheorem{corollary}[proposition]{Corollary}
\theoremstyle{definition}

\newtheorem{definition}[proposition]{Definition}

\theoremstyle{remark}
\newtheorem{remark}[proposition]{Remark}

\newtheorem{question}[proposition]{Question}

\numberwithin{equation}{section}

\let\oldtocsection=\tocsection

\let\oldtocsubsection=\tocsubsection

\let\oldtocsubsubsection=\tocsubsubsection

\renewcommand{\tocsection}[2]{\hspace{0em}\oldtocsection{#1}{#2}}
\renewcommand{\tocsubsection}[2]{\hspace{1em}\oldtocsubsection{#1}{#2}}
\renewcommand{\tocsubsubsection}[2]{\hspace{2em}\oldtocsubsubsection{#1}{#2}}

\DeclareMathOperator{\Aut}{Aut}

\DeclareMathOperator{\Tr}{tr}

\DeclareMathOperator{\SL}{\mathsf{SL}}
\DeclareMathOperator{\GL}{\mathsf{GL}}

\DeclareMathOperator{\SU}{\mathsf{SU}}
\DeclareMathOperator{\ML}{\mathsf{ML}}
\DeclareMathOperator{\PSL}{\mathsf{PSL}}
\DeclareMathOperator{\PGL}{\mathsf{PGL}}
\DeclareMathOperator{\Hom}{Hom}

\DeclareMathOperator{\id}{id}

\DeclareMathOperator{\Isom}{Isom}

\DeclareMathOperator{\Ad}{Ad}

\DeclareMathOperator{\gL}{\mathfrak{g}}

\def\co{\colon\thinspace}

\DeclareMathOperator{\Cc}{\mathcal{C}}
\DeclareMathOperator{\Ec}{\mathcal{E}}
\DeclareMathOperator{\Fc}{\mathcal{F}}
\DeclareMathOperator{\Gc}{\mathcal{G}}

\DeclareMathOperator{\Lc}{\mathcal{L}}

\DeclareMathOperator{\Tc}{\mathcal{T}}
\DeclareMathOperator{\Uc}{\mathcal{U}}

\DeclareMathOperator{\Pc}{\mathcal{P}}
\DeclareMathOperator{\Sc}{\mathcal{S}}
\DeclareMathOperator{\Xc}{\mathcal{X}}
\DeclareMathOperator{\Yc}{\mathcal{Y}}

\DeclareMathOperator{\Gs}{\mathsf{G}}

\DeclareMathOperator{\Cb}{\mathbb{C}}

\DeclareMathOperator{\Fb}{\mathbb{F}}

\DeclareMathOperator{\Hb}{\mathbb{H}}

\DeclareMathOperator{\Kb}{\mathbb{K}}
\DeclareMathOperator{\Nb}{\mathbb{N}}

\DeclareMathOperator{\Rb}{\mathbb{R}}

\DeclareMathOperator{\Zb}{\mathbb{Z}}
\DeclareMathOperator{\Qb}{\mathbb{Q}}

\DeclareMathOperator{\X}{\mathfrak{X}}
\DeclareMathOperator{\Ks}{\mathsf{K}}

\DeclareMathOperator{\kL}{\mathfrak{k}}

\DeclareMathOperator{\aL}{\mathfrak{a}}

\DeclareMathOperator{\PSLC}{\mathsf{PSL}_2(\Cb)}
\DeclareMathOperator{\MCG}{\mathrm{MCG}}
\DeclareMathOperator{\PSLR}{\mathsf{PSL}_2(\Rb)}
\DeclareMathOperator{\PGLR}{\mathsf{PGL}_2(\Rb)}
\DeclareMathOperator{\Out}{\mathrm{Out}}
\DeclareMathOperator{\Teich}{\mathrm{Teich}}
\DeclareMathOperator{\eu}{\mathrm{eu}}
\DeclareMathOperator{\Xf}{\mathfrak{X}}
\DeclareMathOperator{\Homeo}{\mathrm{Homeo}}
\DeclareMathOperator{\cPSLR}{\widetilde{\mathsf{PSL}_2(\Rb)}}
\DeclareMathOperator{\Ztwo}{\mathbb{Z}/2\mathbb{Z}}

\newcommand{\abs}[1]{\left|#1\right|}
\newcommand{\conj}[1]{\overline{#1}}

\newcommand{\ip}[1]{\left\langle #1\right\rangle}

\newcommand{\vertiii}[1]{{\left\vert\kern-0.25ex\left\vert\kern-0.25ex\left\vert #1 
    \right\vert\kern-0.25ex\right\vert\kern-0.25ex\right\vert}}
	
	\newcounter{notes}

\subjclass[2020]{Primary 14M35, 57M60; Secondary 57M50, 57K20, 37A25}

\keywords{Dynamics, Character Varieties, Representation Varieties, Ergodicity, Proper discontinuity, Connected components}

\begin{document}

\title[On character varieties for surface groups]{Dynamics and geometry of character varieties for surface groups}

\author[Chasteen-Boyd D.]{David Chasteen-Boyd}
\address{Department of Mathematics, University of Virginia}
\email{kxk2dr@virginia.edu}

\author[Maloni S.]{Sara Maloni}
\address{Department of Mathematics, University of Virginia}
\email{sm4cw@virginia.edu}
\urladdr{sites.google.com/view/sara-maloni/}

\author[Schlich S.]{Suzanne Schlich}
\address{Dipartimento di Matematica ``Giuseppe Peano'', Università di Torino}
\email{suzanne.schlich@unito.it}
\urladdr{https://suzanne-schlich.github.io/}

\thanks{D.C.B. and S.M. were partially supported by U.S. National Science Foundation grant DMS-1848346 (NSF CAREER). This material is based upon work supported by the National Science Foundation under Grant No. DMS-1928930, while S.M. was in residence at the Simons Laufer Mathematical Sciences Institute in Berkeley, California, during Spring 2026.
S.S. is funded by the European Union (ERC, GENERATE, 101124349). Views and opinions expressed are however those of the author only and do not necessarily reflect those of the European Union or the European Research Council Executive Agency. Neither the European Union nor the granting authority can be held responsible for them.} 

\date{\today}

\begin{abstract}
The problem of classifying geometric structures on manifolds is very much related to the discussion of the automorphism groups actions on character varieties, which are spaces of equivalence classes of representations. In this chapter we survey some results on this topic, mostly focusing on representations of surface groups (both in the orientable and non-orientable cases) and free groups. An important principle in the study of the dynamics on character varieties $\mathfrak{X} = \mathfrak{X}(\pi_1(S), \Gs)$ for surface groups $\pi_1(S)$ is the following dichotomy: when the target group $\Gs$ is compact, $\mathfrak{X}$ has nontrivial homotopy type, and the action of the mapping class group is chaotic; whereas when the target group $\Gs$ is non-compact, $\mathfrak{X}$ contains contractible sets on which the mapping class group acts properly. We will expand on this dichotomy in various cases. After introducing the necessary background, we will discuss representations into $\PSLR$ and $\PGLR$, discussing the number of connected components, the geometric properties (Bowditch question), the dynamics (Goldman conjecture) and some components with an `exotic' behaviour (Deroin-Tholozan representations). We will also underline how the theory for representations of fundamental groups of orientable closed hyperbolizable surfaces needs to be adapted when one considers surfaces with punctures or non-orientable surfaces. This will serve as a guide and motivation for other sections. We will then discuss representations into compact groups, where we will discuss mostly ergodicity results in various settings, and some non-ergodicity results at the end. Thirdly, we will consider representations in $\PSLC$. We will discuss convex-cocompact representations, primitive-stable and Bowditch representations and their relationship, also in the more general setting of representations into rank-one Lie groups. Finally, we will describe how some of the results mentioned can be generalized for representations into higher-rank Lie groups. 
\end{abstract}

\maketitle

\tableofcontents

\section{Introduction}\label{intro}

In this chapter we survey results on dynamics on character varieties.  In particular, we decided to focus on representations of surface groups, discussing in most cases how the theory needs to be adapted when one considers representations from fundamental groups of punctured surfaces or non-orientable surfaces. In addition, we will discuss extensively the dichotomy between representations into compact groups and into non-compact groups. We will try to summarize some ideas behind certain important results in the field, and hope this document will help the readers navigate the literature. Note that there are other surveys on dynamics on character varieties, see for example Goldman \cite{goldman-survey} and Canary \cite{can_dyn}, among others, and we strive to make our work complementary to theirs as much as possible. Given the wide spread of the topic discussed, we will not aim to be exhaustive, but to present a slice of the theory, according to our expertise. 

Given a closed, orientable surface $S$ with genus $g\geq 2$, it is a classical result that the data of the following three objects are equivalent:
\begin{itemize} 
    \item a discrete and faithful representation $\rho:\pi_1(S)\to\PSLR$;
    \item a (marked) hyperbolic structure on $S$;
    \item a (marked) complex structure on $S$.
\end{itemize}
 In addition, the transformation of such a representation by conjugation is equivalent to changing the marking of the corresponding structure by an isomorphism isotopic to the identity. (See, for example, Farb--Margalit \cite{FM} for a discussion of this classical correspondence.) This fact naturally leads to the study of the \textit{character variety}\index{Character variety} $\mathfrak{X}(\pi_1(S),\PSLR) := \mathrm{Hom}(\pi_1(S),\PSLR) / \PSLR$ of conjugation classes of representations. More generally, given a finitely presentable group $\Gamma$ and a Lie group $\Gs$, we consider the character variety $\mathfrak{X}(\Gamma,\Gs) := \mathrm{Hom}(\Gamma,\Gs) / \Gs$.  When the target group $\Gs$, like in this case, is a non-compact group, this conjugation quotient can behave poorly, so we will introduce in Section \ref{character} the polystable quotient, which, restricting to closed orbits, ensures a better algebraic structure that is, for example, Hausdorff. In this chapter we are interested in the topological and geometrical properties of the different connected components of $\mathfrak{X}(\Gamma,\Gs)$ and on the action of the outer automorphism group $\mathrm{Out}(\Gamma)$ on it.

When $\Gamma = \pi_1(S)$ for a connected, orientable, hyperbolizable surface, the group $\Out(\Gamma)$ corresponds, up to finite index, with the mapping class group $\mathrm{MCG}(S)$. An important principle in the study of the dynamics of the mapping class group $\mathrm{MCG}(S)$ on the character variety $\mathfrak{X} = \mathfrak{X}(\pi_1(S), \Gs)$ for surface groups $\pi_1(S)$ is the following dichotomy: when the target group $\Gs$ is compact, $\mathfrak{X}$ has nontrivial homotopy type, and the action of the mapping class group is chaotic; while when the target group $G$ is non-compact, $\mathfrak{X}$ contains contractible sets on which the mapping class group acts properly. For example, when $G= \mathsf{U}(1)$, the character variety can be identified with the Jacobian for a Riemann surface $X$ homeomorphic to $S$, which parametrizes topologically trivial holomorphic complex line bundles over $X$. Using the identification of $\mathfrak{X}$ with the ordinary cohomology group with values in $\Rb/\Zb$, one can see that the action of the mapping class group $\mathrm{Mod}(S)$ on $\mathfrak{X}$ factors through the symplectic representation and can be seen to be chaotic (ergodic). Recall that an action of a group $\Gamma$ on a measurable space $X$ equipped with a measure $\mu$ is called \textit{ergodic}\index{Ergodic action} if the only $\Gamma$--invariant measurable sets $A$ satisfy $\mu(A)=0$ or $\mu(X\setminus A)=0$. On the other hand, when $\Gs= \PSLR$, one can see that in $\mathfrak{X}$ there are two contractible connected components, corresponding to Teichm\"uller spaces, and a result often attributed to Fricke proves that the action of $\mathrm{Mod}(S)$ on these components is properly discontinuous. Recall that an action of a group $\Gamma$ on a space $X$ is called \textit{proper discontinuous}\index{Proper discontinuous action} if for any compact subset $K \subset X$, the cardinality of the set of elements $\gamma \in \Gamma$ such that $K \cap \gamma\cdot K$ is not empty is finite. See Kapovich \cite{Kapovich-proper} for a discussion on different equivalent characterizations. 

In this survey we will explore open cases around this dichotomy for closed and punctured surfaces, both in the orientable and non-orientable case. We will start with a review, in the Background (Section \ref{back}), of a few concepts that will  be important in many of the following sections: an overview of the topology of non-orientable surfaces, the definition of character variety and its symplectic structure, and the notion of Euler class for a representation. In Section \ref{PSL-R} we will discuss representations into $\PSLR$ and $\PGLR$, by discussing first the case of representations from the fundamental group of closed orientable surfaces, and then discuss the case of non-orientable surfaces and punctured surfaces.  We will end the section with a discussion of the ``super-maximal'' representations studied by Deroin and Tholozan. In Section \ref{compact} we will instead focus on representations into compact groups, where, again, we will consider orientable surfaces first, followed by non-orientable surfaces and free groups after. We will end this section with a discussion of dynamics of certain subgroups of the mapping class group. In Section \ref{PSL-C} we will consider representations into $\PSL_2(\Cb)$, where we will start recalling the theory of convex-cocompact representations, before discussing Minsky's primitive-stable representations and Bowditch representations. We will end with a discussion of  the relationship between these definitions. In the last section (Section \ref{higher-rank}), we will give an overview of how to generalize these discussions for representations into higher-rank Lie groups. 

\section*{Acknowledgments}

The authors would like to thank Sean Lawton and Fr\'ed\'eric Palesi for answering many questions on quotients of representations, ergodicity results and results about non-orientable surfaces and Inyoung Ryu for clarification on her work. 

\section{Background}\label{back}

\subsection{Topology of surfaces}\label{sec:top}

We begin with an overview of the topology of surfaces and establish some notational conventions that will be used throughout. We will assume that all surfaces are smooth and connected unless otherwise stated. We will generally refer to an arbitrary surface as $S$ and an arbitrary non-orientable surface as $N$.

\subsubsection{Orientable Surfaces}\label{orient}

Every closed, orientable surface $S$ is diffeomorphic to a connected sum of some number of tori, i.e. $S\cong\mathop{\#}\limits_{i=1}^{g}T_i$ where the number $g$ is its genus. We will use $S_g$ to denote the closed, orientable surface of genus $g$. Its fundamental group has a presentation consisting of $2g$ generators and one relation, $$\pi_1(S_g)=\ip{a_1,b_1,\dots,a_g,b_g\ \big|\ \prod_{i=1}^g[a_i,b_i]}.$$ Similarly, every compact, orientable surface is diffeomorphic to some $S_g$ with a finite number of open disks with disjoint closures removed. We will use $S_{g,n}$ to denote the closed, orientable surface with genus $g$ and $n$ boundary components. The fundamental group of $S_{g,n}$ is a free group on $2g+n-1$ generators and has a presentation similar to that of $\pi_1(S_g)$ as $$\pi_1(S_{g,n}) = \ip{a_1,b_1,\dots,a_g,b_g,c_1, \dots,c_n\ \big|\ \left(\prod_{i=1}^g[a_i,b_i]\right)\left(\prod_{i=1}^nc_i\right)}.$$ We will also consider surfaces with finitely many punctures, that is, points removed from the interior of the surface; we will generally also denote the closed, orientable genus $g$ surface with $n$ punctures by $S_{g,n}$. This is somewhat an abuse of notation, though the two surfaces denoted by $S_{g,n}$ are almost identical in the sense that one is a deformation retract of the other, they have isomorphic fundamental groups, and even the standard presentations for their fundamental groups are identical. Whether $S_{g,n}$ is denoting a surface with $n$ punctures or $n$ boundary components will generally be dictated by the geometric context rather than by anything topological. Later in this section we will highlight some rules of thumb about when to consider $S_{g,n}$ as having punctures or boundary components.

The \textit{mapping class group}\index{Mapping class group} of a closed, orientable surface $S_g$ is the group of orientation-preserving homeomorphisms of the surface taken up to isotopy, i.e. $\MCG(S_g)=\Homeo^+(S_g)/\Homeo_0(S_g)$. Dehn \cite{dehn_papers_1987} showed that this group is finitely generated by Dehn twists about simple closed curves on $S_g$. Let $\MCG^{\pm}(S_g):=\Homeo(S_g)/\Homeo_0(S_g)$ be the \emph{extended mapping class group}, which contains $\MCG(S_g)$ as an index 2 subgroup. The Dehn--Nielsen--Baer Theorem states that for $g\geq 1$ we have an isomorphism $\MCG^{\pm}(S_g)\cong\Out(\pi_1(S_g))$. We will see in Section \ref{character} that $\Out(\Gamma)$ naturally acts on the character variety $\Xf(\Gamma,\Gs)$; the Dehn--Nielsen--Baer Theorem gives a geometric interpretation of this action when $\Gamma$ is the fundamental group of a surface.

For a surface $S_{g,n}$ with boundary, we instead consider the group of orientation-preserving homeomorphisms of the surface that restrict to the identity on the boundary, taken up to isotopy, i.e. $\MCG(S_{g,n})=\Homeo^+(S_g,\partial S_{g,n})/\Homeo_0(S_g,\partial S_{g,n})$. Note that if a homeomorphism fixes the boundary of $S_{g,n}$ then it is automatically orientation-preserving, so there is not a version of the extended mapping class group for $S_{g,n}$. Because elements of $\MCG(S_{g,n})$ must fix the boundary components, $\MCG(S_{g,n})$ is not isomorphic to $\Out(\pi_1(S_{g,n}))$ but is rather isomorphic to the proper subgroup of it that fixes (the conjugacy classes of) the boundary curves $c_1,\dots,c_n$.

If $S_{g,n}$ is instead a punctured surface with punctures $\{p_1,\dots,p_n\}$, we define $\MCG(S_{g,n})$ to be the group of orientation-preserving homeomorphisms of the surface that fix the points $p_i$, taken up to isotopy, i.e. $\MCG(S_{g,n})=\Homeo^+(S_g,\{p_1,\dots,p_n\})/\Homeo_0(S_g,\{p_1,\dots,p_n\})$ (note that this is also often called the \emph{pure} mapping class group of $S_{g,n}$). Similar to the case of surfaces with boundary, $\MCG^{\pm}(S_{g,n})$ is isomorphic to the subgroup of $\Out(\pi_1(S_{g,n}))$ that fixes (the conjugacy class of) each of the peripheral curves $c_i$ around the puncture $p_i$ for $i=1,\dots,n$. See \cite{FM} for more details on mapping class groups and a proof of the Dehn--Nielsen--Baer Theorem.

We finish with a brief discussion of the geometric differences between punctures and boundary components. A similar discussion also applies to non-orientable surfaces. We will denote by $c_i$ both the simple closed curve homotopic to the boundary component and the peripheral simple closed curve around the puncture. For representations $\rho:\pi_1(S_{g,n})\to\PSLR$, the difference will generally come from what type of isometry $\rho(c_i)$ is. First consider when $\rho(c_i)$ is parabolic; specifically, let $\rho(c_i)=\pm\begin{pmatrix}
    1 & 1\\
    0 & 1
\end{pmatrix}$ so that $\rho(c_i)$ fixes the point at $\infty$. Then $\Hb^2/\ip{\rho(c_i)}$ is homeomorphic to an open annulus. For any $y>0$, the set of points $(t,y)_{t\in[0,1]}$ in the upper-half space model of $\Hb^2$ projects to a simple closed curve and as $y$ increases the lengths of these curves decreases and the points on the curves converge towards the point at $\infty$; for this reason we think of the point at $\infty$ as a puncture in the surface $\Hb^2/\ip{\rho(c_i)}$. On the other hand, if $\rho(c_i)=\pm\begin{pmatrix}
    \sqrt{2} & 0\\
    0 & \frac{1}{\sqrt{2}}
\end{pmatrix}$ is hyperbolic, $\Hb^2/\ip{\rho(c_i)}$ is again homeomorphic to an open annulus, but now there is a unique shortest curve in $\Hb^2/\ip{\rho(c_i)}$ homotopic to the image of the points $(0,t)_{t\in[1,2]}$; we want to think of this unique shortest curve as a boundary component of the surface. Finally, if $\rho(c_i)=\pm\begin{pmatrix}
    \cos\theta & -\sin\theta\\
    \sin\theta & \cos\theta
\end{pmatrix}$ is elliptic then $\Hb^2/\ip{\rho(c_i)}$ is not a manifold as $\rho(c_i)$ has a fixed point, and will not even be Hausdorff if $\theta$ is an irrational angle. So, we will most commonly think of $S_{g,n}$ as having boundary if the $c_i$s are restricted to being hyperbolic. On the other hand, if the $c_i$s are restricted to being parabolic we will generally think of $S_{g,n}$ as having punctures.

In addition to these geometric reasons, many of the arguments presented will involve either cutting a surface along a simple closed curve or gluing two surfaces together along boundary components. If no specific boundary conditions are stated, or if we are considering representations into groups without the same classification of elements as $\PSLR$, we will most commonly think of $S_{g,n}$ as having boundary components instead of punctures.

\subsubsection{Non-orientable Surfaces}

Just as every closed, orientable surface is diffeomorphic to the connected sum of some number of tori, every closed, non-orientable surface is diffeomorphic to the connected sum of some number of projective planes; we call this number the \emph{non-orientable genus} of the surfaces and denote the closed non-orientable surface of genus $g$ by $N_g$. Schematically, each copy of $\mathbb{RP}^2$ is represented by the symbol $\otimes$, as below, and is called a cross-cap. A cross-cap can also be thought of as the result of removing an open disk from the surface and then identifying the antipodal points on the resulting circle boundary component, or as the result of removing an open disk and gluing the boundary of a M\"obius band to the resulting circle boundary component.

\begin{figure}
[hbt] \centering
\includegraphics[height=4 cm]{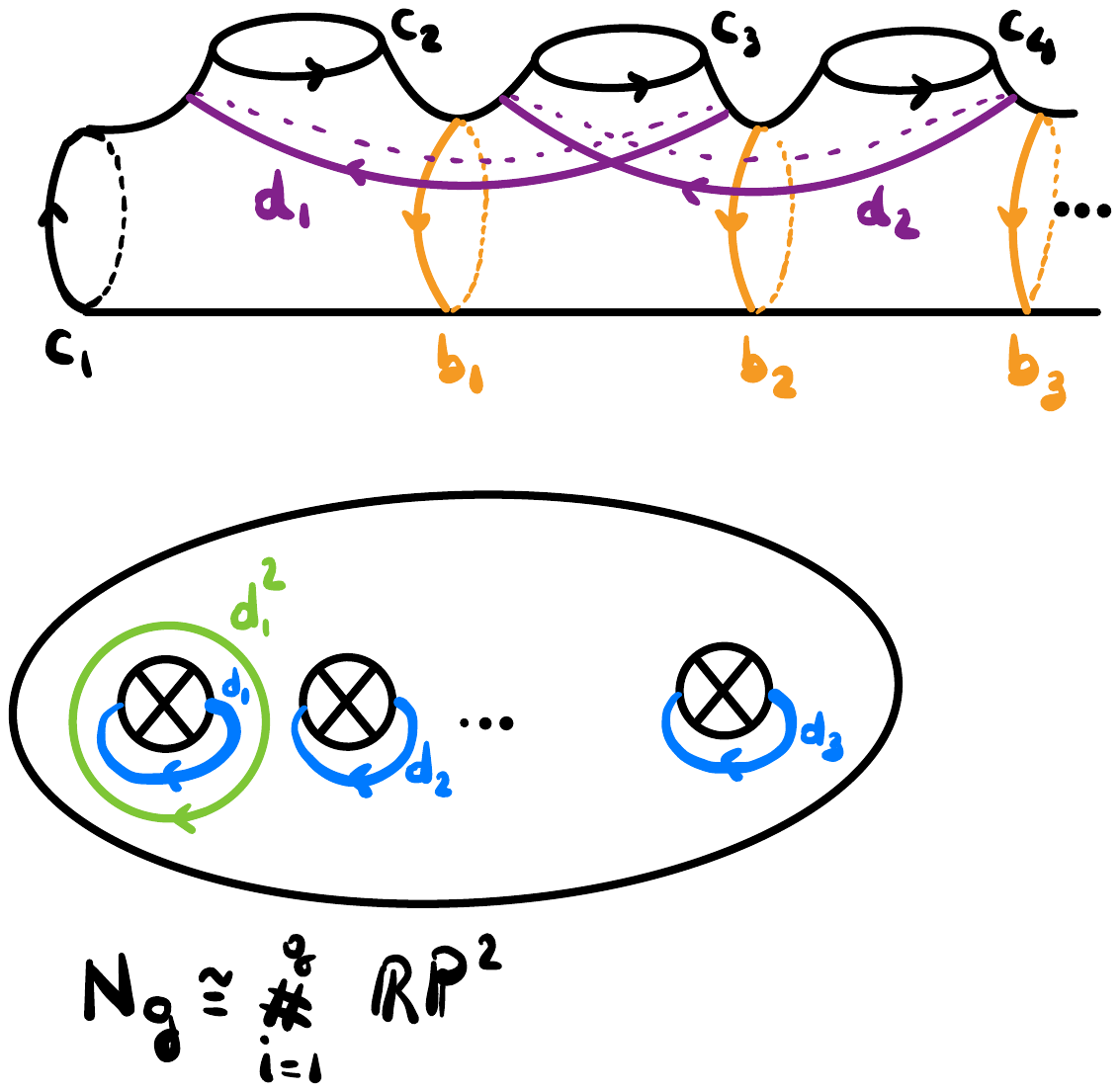}
\caption{The closed surface $N_g$ with $g$ cross-caps. The blue curves $d_1,\dots,d_g$ are one-sided curves that meet the cross-caps at one point. The green curve $d_1^2$ is a two-sided curve that encircles the first cross-cap, and is the boundary of a M\"obius band neighborhood of $d_1$.}
\label{fig:ng}
\end{figure}

With this convention $N_1$ is diffeomorphic to $\mathbb{RP}^2$ and $N_2$ is diffeomorphic to the Klein bottle. It is not always clear, given any non-orientable surface, where it falls in this classification. Take, for example, $N=\mathbb{RP}^2\#S_1$, the connected sum of a projective plane and a torus. This surface is diffeomorphic to $N_3$. More generally, we have 
\begin{align*}
N_{k} \cong N_{k-2m}\#S_m \quad \text{ for } 0\leq m < \left\lfloor  \frac{k}{2} \right\rfloor, 
\end{align*}
that is, two copies of $\mathbb{RP}^2$ can be turned into one copy of a torus, as long as there is at least one additional copy of $\mathbb{RP}^2$. Every non-orientable surface also has an orientable covering surface of index 2, namely, $N_g$ is covered by $S_{g-1}$.

Simple closed curves on a non-orientable surface are classified as either one-sided or two-sided; a curve $\gamma$ is \textit{one-sided} if it has a regular neighborhood that is a M\"obius band and \textit{two-sided} if it has regular neighborhood that is an annulus. Note that there are simple closed curves in a non-orientable surface that can be written as powers of other curves, namely any two-sided curve that is the square of a one-sided curve. Because a one-sided curve is the core curve of a M\"obius band, this means that a two-sided curve is non-primitive if it bounds a M\"obius band in the surface. In Figure \ref{fig:ng}, the curves $d_i$ are one-sided simple closed curves and the curve $d_1^2$ is a non-primitive two-sided simple closed curve that can be thought of as the boundary of a M\"obius band neighborhood of $d_1$.

The fundamental group of $N_g$ has a presentation as $$\pi_1(N_g)=\left\langle d_1,\dots,d_g\ \big|\ \prod_{i=1}^g d_i^2\right\rangle$$ where the generators $d_i$ are the one-sided curves shown above. For a non-orientable surface of odd genus, we can use the fact that $N_{2k+1}\cong N_1\# S_{k}$ to write the fundamental group as $$\pi_1(N_{2k+1})=\left\langle d,a_1,b_1,\dots,a_k,b_k\ \big|\ d^2\prod_{i=1}^k[a_i,b_i]\right\rangle.$$ This contains the subgroup $\langle d^2,a_1,b_1,\dots,a_k,b_k\rangle$, which is isomorphic to the fundamental group of the compact orientable genus $k$ surface with one boundary component; this is the fundamental group of the surface obtained by cutting $N_1\# S_{k}$ along a one-sided curve. As we did above for orientable surfaces, we will denote with $N_{g,n}$ the closed, orientable surface with
genus $g$ and $n$ boundary components. The fundamental group of $N_{g, n}$ is a free group of rank $g + n - 1$, and has the following presentation:
$$\pi_1(N_{g, n}) = \left\langle d_1\ldots,d_g, c_1, \ldots, c_n\ \big|\ \left(\prod_{i=1}^g d_i^2\right) \left( \prod_{i=1}^n c_i\right)\right\rangle.$$

The mapping class group of a non-orientable surface $N$ is the group of homeomorphisms of the surface taken up to isotopy, i.e. $\MCG(N)=\Homeo(N)/\Homeo_0(N)$. Chillingworth \cite{chillingworth} showed that for surfaces with non-orientable genus $g\geq 2$ this group is finitely generated by Dehn twists and one additional mapping class, called a cross-cap slide or $Y$-homeomorphism, defined earlier by Lickorish \cite{lickorish}. This $Y$-homeomorphism is supported on an embedded copy of $N_{2,1}$ and, roughly speaking, is the result of pushing one of the cross-caps along the core of the other and inverting the orientation of the two-sided curve encircling the latter cross-cap. Lickorish \cite{lickorish-note} showed that the subgroup of $\MCG(N)$ generated by the Dehn twists about the two-sided curves is of index 2 in $\MCG(N)$, so for the purposes of the dynamical arguments later in this paper we generally need only consider this subgroup. As with orientable surfaces, $\MCG(N_g)$ is isomorphic to $\Out(\pi_1(N_g))$ and the mapping class groups of surfaces with punctures and boundaries are isomorphic to the corresponding subgroup of $\Out(\pi_1(N_{g,n}))$ \cite{nonor_mcg}. 

\subsection{Character varieties}\label{character}

Given a finitely presentable group $\Gamma$ endowed with the discrete topology and a Lie group $\Gs$, we consider the space $\mathrm{Hom}(\Gamma,\Gs)$ of homomorphisms from $\Gamma$ to $\Gs$ endowed with the compact-open topology. The group $\Gs$ acts on the space $\mathrm{Hom}(\Gamma,\Gs)$ by conjugation, that is, we have $g\cdot\rho:= \iota_g\circ \rho$, where $\iota_g\in \mathrm{Inn}(\Gs)$ is the inner automorphism defined by $\iota_g(x) = g\cdot x \cdot g^{-1}$. When $\Gs$ is a non-compact group, the conjugation quotient can behave poorly, so we will introduce the polystable quotient, which restricts the focus to closed orbits to ensure better algebraic structures. For example, in the cases of representations discussed in this survey, this quotient is Hausdorff, see Florentino--Lawton \cite{flo-law}. 

Let us denote $\mathrm{Orb}_{\Gs}(\rho)$ the $\Gs$-orbit of the representation $\rho$.  We define the subspace $\mathrm{Hom}(\Gamma,\Gs)^*$ of {\it polystable} homomorphisms as follows: 
$$\mathrm{Hom}(\Gamma,\Gs)^*:=\{\rho\in\mathrm{Hom}(\Gamma,\Gs)\ |\ \mathrm{Orb}_{\Gs}(\rho)=\overline{\mathrm{Orb}_{\Gs}(\rho)}\}.$$ 
We then define the {\it polystable} quotient space as follows: $$\mathfrak{X}(\Gamma,\Gs):=\mathrm{Hom}(\Gamma,\Gs)^*/\Gs.$$  We will call this quotient the $\Gs$--{\it character variety of} $\Gamma$, even though in this generality it may be neither a variety, see Casimiro--Florentino--Lawton--Oliverira \cite{CFLO}, nor correspond to characters, see Lawton--Sikora\cite{LaSi}.

Note that when $\Gs$ is compact every homomorphism is polystable, so we have $\Xf(\Gamma,\Gs)=\mathrm{Hom}(\Gamma,\Gs)/\Gs$. On the other hand, consider the case when $\Gamma=\Zb$ and $\Gs=\PSLR$, and define the representation $\rho_1:\Gamma\to\Gs$ given by $\rho_1(1)=\pm\begin{pmatrix}
    1 & 1 \\
    0 & 1
\end{pmatrix}$. For any real number $a>0$, we define $g_a:=\pm\begin{pmatrix}
    \sqrt{a} & 0\\
    0 & \frac{1}{\sqrt{a}}
\end{pmatrix}$ and $\rho_a:=g_a\rho_1 g_a^{-1}$, which is in $\mathrm{Orb}_{\Gs}(\rho_1)$ and has $\rho_a(1)=\pm\begin{pmatrix}
    1 & a \\
    0 & 1
\end{pmatrix}$. Because $a$ can be arbitrarily small, the conjugacy class $[\rho_1]$ contains elements that are arbitrarily close to the trivial representation, but the trivial representation is not in $\mathrm{Orb}_{\Gs}(\rho_1)$, meaning that $\mathrm{Orb}_{\Gs}(\rho_1)\neq\overline{\mathrm{Orb}_{\Gs}(\rho_1)}$.

The $\Gs$-character variety of $\Gamma$ is also often defined in the literature from the point of view of algebraic geometry. In this paper we will focus on the definition of $\Xf(\Gamma,\Gs)$ as the polystable quotient space, but we give here a brief introduction to the GIT quotient definition. Let $\Gs$ be a (complex) affine algebraic group, that is, a subgroup of $\GL_n(\Cb)$ defined by polynomial equations. If $\Gamma$ has a generating set of $n$ elements, then $\Hom(\Gamma,\Gs)$ can be identified with the collection of points in $\Gs^n$ satisfying the relations in $\Gamma$, so it has the structure of an affine algebraic variety over $\Cb$. Let $\Cb[\Hom(\Gamma,\Gs)]$ be the ring of $\Cb$-valued functions on $\Hom(\Gamma,\Gs)$ which are locally given by rational functions. The action of $\Gs$ on $\Hom(\Gamma,\Gs)$ by conjugation gives an action of $\Gs$ on $\Cb[\Hom(\Gamma,\Gs)]$, and we denote by $\Cb[\Hom(\Gamma,\Gs)]^{\Gs}$ the $\Gs$-invariant functions; for example, given an element $\gamma\in\Gamma$, the function $\rho\mapsto\Tr(\rho(\gamma))$ is in $\Cb[\Hom(\Gamma,\Gs)]^{\Gs}$. The \emph{geometric invariant theory quotient}, or GIT quotient, is defined as
$$\Hom(\Gamma,\Gs)/\!\!/\Gs:=\mathrm{Spec}(\Cb[\Hom(\Gamma,\Gs)]^{\Gs}),$$
the spectrum of prime ideals of this ring. When $\Gs$ is a reductive complex algebraic group (for example, when $\Gs=\GL_n\Cb,\SL_n\Cb$) the GIT quotient $\Hom(\Gamma,\Gs)/\!\!/\Gs$ and the polystable quotient $\Xf(\Gamma,\Gs)$ are homeomorphic \cite{flo-law}.

Let $\mathcal{P}$ be a collection of properties invariant by conjugation in the sense that if $\rho\in \mathrm{Hom}(\Gamma, \Gs)$ satisfies the properties $\mathcal{P}$, then $\iota_g\circ \rho$ also satisfies the properties $\mathcal{P}$ for all elements $g\in \Gs$.  In this case, the group $\Gs$ acts by conjugation on the subspace $$\mathrm{Hom}_\mathcal{P}(\Gamma, \Gs):=\{\rho\in \mathrm{Hom}(\Gamma, \Gs)\ |\ \rho \ \mathrm{ satisfies }\ \mathcal{P}\}.$$ 
We can define the polystable quotient as follows:
$$\mathfrak{X}_\mathcal{P} = \{\rho \in \mathrm{Hom}_\mathcal{P}(\Gamma, \Gs)|\ \mathrm{Orb}_{\Gs}(\rho)=\overline{\mathrm{Orb}_{\Gs}(\rho)}\} / \Gs$$
We call $\mathfrak{X}_\mathcal{P}(\Gamma, \Gs)$ the {\it $\mathcal{P}$-relative $\Gs$-character variety of $\Gamma$}.  If the condition $\mathcal{P}$ is vacuous, one gets $\mathfrak{X}(\Gamma, \Gs)$ back.

Let $\mathrm{Aut}_\mathcal{P}(\Gamma)$ denote the subgroup of $\mathrm{Aut}(\Gamma)$ that preserves $\mathrm{Hom}_\mathcal{P}(\Gamma, \Gs)$.  Then the inner automorphism group satisfies $\mathrm{Inn}(\Gamma)\subset \mathrm{Aut}_\mathcal{P}(\Gamma)$ and so the outer automorphism group $\mathrm{Out}_\mathcal{P}(\Gamma):=\mathrm{Aut}_\mathcal{P}(\Gamma)/\mathrm{Inn}(\Gamma)$ acts on $\mathfrak{X}_\mathcal{P}(\Gamma, \Gs)$ by $[\alpha]\cdot[\rho]=[\rho\circ \alpha^{-1}]$. We can see, again, that if $\mathcal{P}=\emptyset$, one obtains an action of $\mathrm{Out}(\Gamma)$ on $\mathfrak{X}(\Gamma, \Gs)$.

When the group $\Gs$ does not have the structure of a Lie group but only that of a topological group (for example, when $\Gs$ is the isometry group of a metric space, endowed with the compact-open topology), one may instead wish to consider the Hausdorff character variety, defined as the Hausdorffization of the quotient $\mathrm{Hom}(\Gamma,\Gs)/G$ of the space of homomorphisms from $\Gamma$ to $\Gs$, endowed with the compact-open topology, by the conjugation action of $\Gs$. The Hausdorffization of a topological space is the largest Hausdorff quotient of that space, it is obtained by identifying points that are identified in every Hausdorff quotient of the space (see Maret \cite{maret-character-variety}). Note that when the closure of every orbit contains a unique closed orbit, the polystable quotient space and the Hausdorff character variety coincide.

\subsection{Symplectic structure and invariant measure}\label{measure}

Many of the results we will discuss concern showing that the mapping class group of a surface $S$ acts ergodically on some character variety $\Xf(\pi_1(S),\Gs)$. The measure with respect to which this action is ergodic is constructed differently depending on the group $\Gs$ and the surface $S$; we briefly describe these constructions in this section.

We first consider the case where $S$ is an orientable surface and $\Gs$ is a Lie group that preserves a nondegenerate symmetric bilinear form $B$ on its Lie algebra $\gL$; for more details on this construction see \cite{goldman-symplectic}. A representation $\rho\in\Hom(\pi_1(S),\Gs)$ defines an action of $\pi_1(S)$ on $\gL$ via the adjoint representation $\Ad : \Gs \to \gL$ by $\gamma\cdot v=\Ad(\rho(\gamma))v$, making $\gL$ into a $\pi_1(S)$-module denoted $\gL_{\Ad\rho}$. The tangent space $T_\rho\Xf(\pi_1(S),\Gs)$ can then be identified with the cohomology group $H^1(\pi_1(S);\gL_{\Ad\rho})$ by checking that tangent directions to paths in $\Hom(\pi_1(S),G)$ correspond to cocycles in $Z^1(\pi_1(S);\gL_{\Ad\rho})$ and tangent directions to paths of the form $\rho_t=g_t^{-1}\rho g_t$ correspond to coboundaries in $B^1(\pi_1(S);\gL_{\Ad\rho})$. Using the pairing of cocycles via the cup product and pairing the coefficients using the bilinear form $B$, this gives a map
$$\omega_\rho^B:H^1(\pi_1(S);\gL_{\Ad\rho})\times H^1(\pi_1(S);\gL_{\Ad\rho})\to H^1(\pi_1(S);\Rb)\cong\Rb$$
at each $\rho\in\Xf(\pi_1(S),\Gs)$. The proofs that this map is nondegenerate and is a 2-form follow from the properties of $B$ and the cup product. To show that $\omega^B$ is nondegenerate, Goldman follows work of Atiyah--Bott \cite{atiyah-bott}, but instead considers $H^1(\pi_1(S);\gL_{\Ad\rho})$ as de Rham cohomology with coefficients in the flat vector bundle $S\times_\rho\gL:=(\widetilde{S}\times\gL)/\pi_1(S)$ where $\pi_1(S)$ acts by $\gamma\cdot(\tilde{x},v)=(\gamma\cdot\tilde{x},\Ad(\rho(\gamma))v)$. This construction gives a symplectic form\index{Symplectic form} $\omega^B$, and in turn a measure, on $\Xf(\pi_1(S),\Gs)$. 

The symplectic structure on $\Xf(\pi_1(S),\Gs)$ provides a way to study the dynamics of the mapping class group $\MCG(S)$ action on the character variety. This is done by relating the Dehn twist $\tau_\alpha$ along a simple closed curve $\alpha$ to some Hamiltonian flow, often called the Goldman twist flow \cite{goldman-flows}. Given a conjugation-invariant function $f:\Gs\to\Rb$, the associated 1-form $df$ is dual via $B$ to some vector field $F:\Gs\to\gL$ in the following sense:
$$B(v, F(x)) =\left.\frac{d}{dt}\right|_{t=0} f(x \exp{t v}),$$
for all $v \in \gL$ and $x \in \Gs$, and we will use the notation $df_x(v) := \left.\frac{d}{dt}\right|_{t=0} f(x \exp{t v})$ in the following.
For each $x\in\Gs$ we then define the one-parameter subgroup $\zeta^t(x)=\exp(tF(x))$ of $\Gs$. Note that $f$ is invariant under conjugation in $\Gs$, so for any $g\in\Gs$, $x \in G$ and $v\in\gL$ we have the following 
\begin{align*}
B\left(\Ad_gF(x),v\right) =B\left(F(x),\Ad_{g^{-1}}v\right)=df_x(\Ad_{g^{-1}}v)=df_{gxg^{-1}}(v)=B\left(F(gxg^{-1}),v\right).
\end{align*}
Since $B$ is nondegenerate, we then have $\Ad_g F(x)=F(gxg^{-1})$ and $F$ is $\Ad$--equivariant. In particular, $\Ad_xF(x)=F(xxx^{-1})=F(x)$ so $F(x)$ is $\Ad_x$ invariant, and therefore $\zeta^t(x)$ centralizes $x$.

We now wish to use this subgroup to define a flow on $\Xf(\pi_1(S),\Gs)$ that extends the action of the Dehn twist about a simple closed curve $\alpha\in\pi_1(S)$. We will consider two cases: first we consider the case that the curve $\alpha$ is nonseparating and then discuss the case of $\alpha$ being separating.

\textbf{Case 1:} Let $\alpha$ be a nonseparating curve. Recall that if $S\setminus\alpha$ denotes the surface obtained by cutting $S$ along $\alpha$ and $\alpha_{\pm}$ denote the corresponding boundary components of $S\setminus \alpha$, then $\pi_1(S)$ can be constructed as an HNN-extension as follows:
$$\pi_1(S)=\Big(\pi_1(S\setminus\alpha)\sqcup\ip{\beta}\Big)\Big/\Big(\beta\alpha_-\beta^{-1}=\alpha_+\Big).$$ The Dehn twist $\tau_\alpha$ acts on $\Hom(\pi_1(S),\Gs)$ by
$$(\tau_\alpha(\rho))(\gamma)= \begin{cases}
\rho(\gamma) & \text{if } \gamma\in\pi_1(S\setminus\alpha), \\
\rho(\gamma)\rho(\alpha_-)^{-1} & \text{if } \gamma=\beta.
\end{cases}$$

For any $\rho\in\Hom(\pi_1(S),\Gs)$ and any $t \in \Rb$ we have:
\begin{align*}
\Big(\rho&(\beta)\zeta^t(\rho(\alpha_-))^{-1}\Big)\rho(\alpha_-)\Big(\rho(\beta)\zeta^t(\rho(\alpha_-))^{-1}\Big)^{-1}\\ &=\rho(\beta)\Big(\zeta^t(\rho(\alpha_-))^{-1}\rho(\alpha_-)\zeta^t(\rho(\alpha_-))\Big)\rho(\beta)^{-1} \\
    &=\rho(\beta)\rho(\alpha_-)\rho(\beta)^{-1} \\
    &=\rho(\alpha_+),
\end{align*}
where the second equality follows because $\zeta^t(\rho(\alpha_-))$ centralizes $\rho(\alpha_-)$, while the third follows because $\rho$ is a representation of $\pi_1(S)$. This means that for any $t$ we can define a new representation $\xi^t_\alpha(\rho)$ by
$$(\xi^t_\alpha(\rho))(\gamma)= \begin{cases}
\rho(\gamma) & \text{if } \gamma\in\pi_1(S \setminus \alpha), \\
\rho(\gamma)\zeta^t(\rho(\alpha_-))^{-1} & \text{if } \gamma=\beta.
\end{cases}$$
If there is some $t$ such that $\zeta^t(\rho(\alpha_-))=\rho(\alpha_-)$, denote the smallest such positive $t$ by $s(\rho(\alpha_-))$. We then have that $\xi_\alpha^{s(\rho(\alpha_-))}(\rho)=\tau_\alpha(\rho)$, so the orbit of $\rho$ under the Dehn twist $\tau_\alpha$ is contained in the orbit of $\rho$ under the \emph{twist flow}  $\xi^t_\alpha$ on $\Hom(\pi_1(S),\Gs)$.

\textbf{Case 2:} Now consider a curve $\alpha$ that separates $S$ into two components $S_1$ and $S_2$. Then $\pi_1(S)=\pi_1(S_1)\sqcup_{\ip{\alpha}}\pi_1(S_2)$, and the Dehn twist $\tau_\alpha$ acts on $\Hom(\pi_1(S),\Gs)$ by
$$(\tau_\alpha(\rho))(\gamma)= \begin{cases}
\rho(\gamma) & \text{if } \gamma\in\pi_1(S_1), \\
\rho(\alpha)^{-1}\rho(\gamma)\rho(\alpha) & \text{if } \gamma\in\pi_1(S_2).
\end{cases}$$

In this case the twist flow is defined as follows:
$$(\xi^t_\alpha(\rho))(\gamma)= \begin{cases}
\rho(\gamma) & \text{if } \gamma\in\pi_1(S_1), \\
\zeta^t(\rho(\alpha))^{-1}\rho(\gamma)\zeta^t(\rho(\alpha)) & \text{if } \gamma\in\pi_1(S_2).
\end{cases}$$
We then have, again, that $\xi_\alpha^{s(\rho(\alpha_-))}(\rho)=\tau_\alpha(\rho)$.

Recall that we initially defined the one-parameter subgroup $\zeta^t(x)$ of $\Gs$ using a conjugation-invariant function $f:\Gs\to\Rb$. In both the separating and nonseparating cases the twist flow descends to the Hamiltonian flow on $\Xf(\pi_1(S),\Gs)$ corresponding to the function $f_\alpha:\Xf(\pi_1(S),\Gs)\to\Rb$ given by $f_\alpha(\rho)=f(\rho(\alpha))$.

While character varieties of orientable surfaces are known to be symplectic manifolds, the same is not known for non-orientable surfaces. In fact, these spaces are often known to \textbf{not} be symplectic manifolds. Take, for example, the Teichm{\"u}ller component of $\Xf(\pi_1(N_g),\PGLR)$; this is homeomorphic to $\Rb^{-3\chi(N_g)}=\Rb^{-3(2-g)}$, so if $g$ is odd then this space has odd dimension, and therefore cannot be symplectic. In the case of a non-orientable surface $N$ and a compact Lie group $\Gs$, we can think of $\Hom(\pi_1(N),\Gs)$ as a closed subspace of $\Gs^k$ where $k$ is the number of generators of $\pi_1(N)$, meaning that $\Hom(\pi_1(N),\Gs)$ is compact. Together with the fact that Dehn twists act by multiplication by elements of $\Gs$, one might expect that there is a finite, mapping class group-invariant measure on $\Hom(\pi_1(N),\Gs)$ related to the Haar measure on $\Gs^k$. Following work  of Witten \cite{witten-quant}, Ho--Jeffrey \cite{HJ}, Mulase--Penkava \cite{Mu-Pen}, Palesi \cite{pal-erg} constructs such a measure in the case of a closed $N_g$ with $g\geq4$ and $\Gs=\SU(2)$. One can show that the generalization of this construction for other compact $\Gs$ reduces to showing the convergence of a sum involving the dimensions of the complex irreducible representations of $\Gs$. 

\subsection{Euler class}\label{sec:euler}

In this section we give an overview of the \emph{Euler class}\index{Euler class} of a representation $\rho:\pi_1(S)\to\PSLR$, which is a crucial tool used in characterizing the connected components of the character variety $\Xf(\pi_1(S),\PSLR)$. For a more detailed discussion, see \cite{goldman-topological}. For a closed, orientable surface $S$, a representation $\rho:\pi_1(S)\to\PSLR$ gives an action of $\pi_1(S)$ on $\mathbb{RP}^1$, and $\pi_1(S)$ naturally acts on $\widetilde{S}$. We define a flat circle bundle over $S$ with total space $S\times_\rho\mathbb{RP}^1:=(\widetilde{S}\times\mathbb{RP}^1)/\pi_1(S)$. The Euler class $\eu(\rho)$ is a characteristic class of this bundle that measures the obstruction to finding a global section of the circle bundle. The Euler class is generally thought of as an integer using the isomorphism between the second cohomology of this bundle and $\Zb$.

If $S$ is instead a punctured surface or a surface with non-empty boundary, then $H^2(S,\Zb)=0$, so the bundle $S\times_\rho\mathbb{RP}^1$ is trivial and admits a global section, independent of $\rho$. Similarly, $\pi_1(S)$ is a free group, so $\Hom(\pi_1(S),\PSLR)=\PSLR^k$, where $k$ is the number of generators of $S$, and is already connected. For these reasons, we must add some constraints in order to define a useful notion of the Euler class for non-closed surfaces. To do this, let $\{c_1,\cdots,c_n\}$ be a set of peripheral simple closed curves on the surface, one for each boundary component or puncture, and consider a representation $\rho$ such that $\rho(c_i)$ is either parabolic or hyperbolic for each $i$. The representation $\rho|_{\partial S}$ has a preferred trivialization, described in more detail later in this section, and the \emph{relative Euler class} of $\rho$ is a class that measures the obstruction to extending this trivialization to the entire surface. 

This definition in terms of cohomology classes is powerful in proving many of the abstract properties of the Euler class, but is not easily computable and so is not the definition used in much of the literature. We now give two alternate definitions that lend themselves better to computation.

The first definition considers lifts of elements of $\PSLR$ to its universal cover $\cPSLR$, see again \cite{goldman-topological}. Let $\rho:\pi_1(S)\to\PSLR$ be a representation of $\pi_1(S)=\ip{a_1,b_1,\dots,a_g,b_g\ |\ \prod_{i=1}^g[a_i,b_i]}$. Given lifts $\widetilde{\rho(a_i)},\widetilde{\rho(b_i)}\in\cPSLR$ of the generators, the product $\prod_{i=1}^g\left[\widetilde{\rho(a_i)},\widetilde{\rho(b_i)}\right]$ projects to the identity in $\PSLR$, meaning that the product is a lift of the identity. Noting that the fundamental group of $\PSLR$ is $\Zb$, the lifts of the identity are $\{z^n\ |\ n\in\Zb\}$ for some lift $z\in\cPSLR$. This then means that $\prod_{i=1}^g[\widetilde{a_i},\widetilde{b_i}]=z^n$ for some $n\in\Zb$. Moreover, for any $h\in\PSLR$, two lifts $\widetilde{h}_1$ and $\widetilde{h}_2$ of $h$ only differ by a lift of the identity, i.e. there is some $n\in\Zb$ such that $\widetilde{h}_2=z^n\widetilde{h}_1$. Because $z$ is central in $\cPSLR$, this means that the product $\prod_{i=1}^g\left[\widetilde{\rho(a_i)},\widetilde{\rho(b_i)}\right]$, being composed of commutators, is independent of the choices of $\widetilde{\rho(a_i)},\widetilde{\rho(b_i)}$, and therefore so is the integer $n$; this integer $n$ is the Euler class of the representation $\rho$.

If $S$ is not closed, we again restrict ourselves to the representations that send the peripheral curves $\{c_1,\cdots,c_n\}$ to parabolic or hyperbolic elements of $\PSLR$. In $\pi_1(S)$ we have the relation $\left(\prod_{i=1}^g[a_i,b_i]\right)\prod_{j=1}^nc_j$ but two different lifts of any of the $\rho(c_j)$ differ by some power of $z$, so a product $\left(\prod_{i=1}^g[\widetilde{\rho(a_i)},\widetilde{\rho(b_i)}]\right)\prod_{j=1}^n\widetilde{\rho(c_j)}$ is not independent of the choices of lifts of the $\rho(c_j)$. We instead need to find a preferred choice of lift for these peripheral curves. If $g\in\PSLR$ is a parabolic or hyperbolic then it has a fixed point in $\mathbb{RP}^1$, meaning that its translation length when acting on $\mathbb{RP}^1$ is 0 and that the translation length of any lift $\widetilde{g}\in\cPSLR$, seen as acting on $\Rb=\widetilde{\mathbb{RP}^1}$, is an integer. We then define the preferred lift of $g$ to be the lift with zero translation length. This choice makes the product $\left(\prod_{i=1}^g[\widetilde{\rho(a_i)},\widetilde{\rho(b_i)}]\right)\prod_{j=1}^n\widetilde{\rho(c_j)}=z^n$ well-defined and again the relative Euler class of the representation is taken to be $\eu(\rho)=n$.

The second definition is more geometric in nature and relates to what is known as the \emph{volume} of a representation. Recall that the data of a (marked) hyperbolic structure on $S$ is equivalent to the data of a (discrete, faithful) representation $\rho:\pi_1(S)\to\PSLR$ and a $\rho$-equivariant homeomorphism $D_\rho:\widetilde{S}\to\Hb^2$ called a \emph{developing map}. Generalizing this idea, given any representation $\rho:\pi_1(S)\to\PSLR$, a \emph{pseudo-developing map} associated to $\rho$ is a piecewise-smooth, $\rho$-equivariant $D_\rho:\widetilde{S}\to\Hb^2$, which is not necessarily a homeomorphism. The Euler class of the representation can then be defined as 
$$\eu(\rho)=\dfrac{1}{2\pi}\int_{S}(D_\rho)^*\omega,$$
where $\omega$ is the area form on $\Hb^2$. Using hyperbolic geometry, one can show that $\eu(\rho) \in \Zb$. Note that if $\rho$ is discrete and faithful, then $\int_{S}(D_\rho)^*\omega$ is the area of the hyperbolic surface $S$, and the Gauss--Bonnet theorem implies that $\eu(\rho)=\pm\chi(S)$. If $S$ is not closed and the peripheral curves $\{c_1,\cdots,c_n\}$ are sent to parabolic elements of $\PSLR$ then this definition can be used without modification because the hyperbolic area of $S$ is still finite. However, if any of the peripheral curves around punctures are sent to hyperbolic elements then a fundamental domain for $S$ in $\Hb^2$ has infinite hyperbolic area. We can instead integrate over the same fundamental domain but with the `funnel' removed; this domain has a finite hyperbolic area and the integral above gives the Euler class of the representation. In practice this integral is sometimes computed by taking an ideal triangulation of the surface $S$ and splitting the integral over the fundamental domain of $S$ into an integral over each of these ideal triangles, see, for example, \cite{yan_ont} and \cite{MPY}.

Similar obstruction classes exist for studying representations $\rho:\pi_1(N)\to\PGLR$ of a nonorientable surface group, though the situation is more delicate in this case; see \cite{palesi-no} for a detailed discussion. For example, consider the definition using lifts of the generators of $\pi_1(N)$ to $\cPSLR$. The fundamental group $\pi_1(N_{g,k})$ has a presentation of the form $\ip{a_1,\dots,a_g,c_1,\dots,c_k\ |\ \prod_{i=1}^ga_i^2\prod_{j=1}^kc_j}$. Given a representation $\rho:\pi_1(N_{g,k})\to\PSLR$ that sends each element to an orientation-preserving isometry, choosing two different lifts of any of the $\rho(a_i)$ results in changing the exponent on $z$ by a multiple of 2, so this construction is only well-defined if we alter the definition so that it takes values in $\Ztwo$. Now consider the definition via integrating the pullback of the area form on $\Hb^2$. Let $\widehat{N}$ be the orientable double-cover of $N$. A representation $\rho:\pi_1(N)\to\PGLR$ determines a representation $\widehat{\rho}:\pi_1\widehat{N}\to\PSLR$; using this we can define $\eu(\rho)=\dfrac{1}{2}\eu(\widehat{\rho})$, yielding an integer rather than an element of $\Ztwo$. Another obstruction class useful in studying the $\PGLR$-character variety of a non-orientable surface is the \emph{first obstruction class} $o_1$, defined as follows. The group $\PGLR$ has two connected components, i.e. $\pi_0(\PGLR)\cong\Ztwo$, one of which is a copy of $\PSLR$ and the other of which consists of orientation-reversing isometries of $\Hb^2$. Given a representation $\rho:\pi_1(N_g)\to\PGLR$, for each one-sided generator $a_i,\ i=1,\dots,g$ of $\pi_1(N_g)$, we can assign to $\rho(a_i)$ either a $0$ if $\rho(a_i)$ is orientation-preserving or a $1$ if $\rho(a_i)$ is orientation reversing. This assignment is invariant under conjugation by $\PGLR$, so it defines a map $o_1:\Xf(\pi_1(N_g),\PGLR)\to(\Ztwo)^g$ which is constant on connected components of the character variety.

\section{Representations in \texorpdfstring{$\PSLR$}{PSL(2,R)} and \texorpdfstring{$\PGLR$}{PGL(2,R)}}\label{PSL-R}

In this section, we will focus on representations from a discrete group $\Gamma$ into the groups $\mathsf{PSL}_2(\Rb)$ and $\mathsf{PGL}_2(\Rb)$. We will start by describing, in Section \ref{o-surface-PSL-R}, the case of representations from surface groups (that is, $\Gamma = \pi_1(S)$ for $S$ a closed orientable surface with genus $g \geq 2$), and then explain how the discussion in that section motivates the discussion in Section \ref{no-surface-PSL-R} for fundamental groups of non-orientable surfaces and in Section \ref{type-preserving} for fundamental groups of surfaces with punctures and type-preserving representations. We will discuss results concerning the topology of the space of such representations, the geometric properties of these representations, the dynamics of the action of $\Out(\pi_1(S))$ on this space, and generalizations in the cases when the surface is non-orientable or has punctures.

\subsection{Closed orientable surfaces}\label{o-surface-PSL-R}

\subsubsection{Connected components} 

The space $\mathfrak{X}(\pi_1(S_g),\PSLR)$ contains two connected components consisting of discrete, faithful representations. The representations in these components correspond to (marked) hyperbolic structures on $S_g$ with opposite orientations; equivalently, any orientation-reversing $h\in\PGLR$ induces 
a homeomorphism of $\mathfrak{X}(\pi_1(S),\PSLR)$ given by $\rho\mapsto\iota_h\circ\rho$ that swaps these two components. These components correspond to the \emph{Teichm\"uller space of $S_g$}, denoted $\Teich(S_g)$ (or sometimes $\Teich(S_g)$ and $\Teich(\conj{S_g})$ to distinguish the two components) and are each homeomorphic to $\Rb^{6g-6}=\Rb^{-3\chi(S_g)}$. 

The character variety $\mathfrak{X}(\pi_1(S_g),\PSLR)$ contains finitely many connected components which Milnor \cite{mil_ont}, Wood \cite{wood_bundles} and Goldman \cite{goldman-topological} showed are classified by the \emph{Euler class} of the representation, denoted $\eu(\rho)$, see Section \ref{sec:euler}.

\begin{Theorem}[Goldman \cite{goldman-topological}] \label{mwg_cc}
    Let $S_g$ be a closed, orientable surface with $\chi(S_g)<0$. The space $\mathfrak{X}(\pi_1(S_g),\PSLR)$ has $2\abs{\chi(S_g)}+1$ connected components, indexed by the Euler class $\chi(S_g)\leq\eu(\rho)\leq-\chi(S_g)$. Moreover, $\rho$ is discrete and faithful if and only if it is in the maximal components, that is, if $\abs{\eu(\rho)}=\abs{\chi(S_g)}$.
\end{Theorem}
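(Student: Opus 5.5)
The proof has three parts: the Milnor--Wood inequality, which bounds the Euler class; a construction showing every allowed value is realized; and a connectedness argument for each level set of the Euler class, followed by the characterization of the extremal components.

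\emph{Step 1: the Milnor--Wood inequality.} I would use the lift definition of Section \ref{sec:euler}. Let $\mathrm{rot}\colon\cPSLR\to\Rb$ be the translation number, a homogeneous quasimorphism of defect at most $1$. For any lifts $A,B\in\cPSLR$ one checks that $\widetilde{[A,B]}=[\widetilde A,\widetilde B]$ has translation number in $(-1,1)$. More precisely, I would use the sharper estimate that a product of $g$ commutators has translation number strictly less than $2g-1$ in absolute value. This is Wood's estimate, and it follows by induction from the quasimorphism defect, taking care with strict versus non-strict inequalities. Since $\mathrm{rot}(z^n)=n$, we get $|\eu(\rho)|\le 2g-2=|\chi(S_g)|$. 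The Euler class is locally constant, because the lifts can be chosen continuously on a neighborhood and $z^n$ ranges over a discrete set. So $\eu$ is constant on components, and there are at most $2|\chi|+1$ possible values.

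\emph{Step 2: every value is realized.} The maximal value $\pm\chi$ is realized by Fuchsian representations, via Gauss--Bonnet in the area definition. For intermediate values I would cut $S_g$ along separating curves into one-holed tori, and on each piece choose a representation with hyperbolic boundary holonomy and relative Euler class in $\{-1,0,1\}$. Gluing uses additivity of the relative Euler class. Alternatively, one can take a branched cover or pinch handles: sending some $a_i,b_i$ to the identity lowers $|\eu|$ by $2$ per handle, and one-holed torus pieces adjust it by $1$.

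\emph{Step 3: each fiber $\eu^{-1}(k)$ is connected.} This is the main obstacle. Following Goldman, I would consider the map $R\colon\cPSLR^{2g}\to\cPSLR$ given by $(A_i,B_i)\mapsto\prod[A_i,B_i]$. Then $\eu^{-1}(k)$ is the image, modulo the deck group $\Zb^{2g}$, of $R^{-1}(z^k)$. It suffices to show that $R^{-1}(z^k)$ is connected, or that its components are permuted transitively by the deck group.

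The plan is to induct on $g$ using the fibration structure of the one-holed torus commutator map $[\cdot,\cdot]\colon\cPSLR^2\to\cPSLR$. Goldman analyzes its image and fibers: the fibers over hyperbolic and elliptic elements in the image are connected, and the map is a submersion away from abelian pairs. Connectedness of the total level set then follows from connectedness of the base region together with connectedness of the fibers, after handling the non-submersive abelian locus by a codimension or transversality argument.

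For non-maximal $|k|$, an alternative route is to show that the smooth locus is dense and connected via Higgs bundles, using Hitchin's Morse function. Its critical submanifolds in the component $\eu=k$ are symmetric products of $S_g$, which are connected, so the components are in bijection with $k$.

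\emph{Step 4: discrete and faithful exactly in the maximal components.} In one direction, a discrete faithful representation gives a hyperbolic surface, and Gauss--Bonnet gives $|\eu|=|\chi|$. For the converse, which is Goldman's thesis, I would show that the set of Fuchsian representations is open (Ehresmann--Thurston holonomy stability) and closed (Chuckrow, together with the fact that limits of faithful discrete representations of a non-elementary group are faithful and discrete). It is therefore a union of components, and by Step 3 it fills $\eu=\pm\chi$.
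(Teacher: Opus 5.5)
There is a genuine gap in Step 3. That step is the heart of the theorem, and you describe it rather than prove it. If you fiber $R^{-1}(z^k)$ over $c=[\widetilde{A}_g,\widetilde{B}_g]$, the fiber is $R_{g-1}^{-1}(z^kc^{-1})\times R_1^{-1}(c)$. Your induction therefore needs as its hypothesis that the relative level sets $R_h^{-1}(c')$ are connected modulo the deck group, for \emph{every} $c'$ in the image of $R_h$, including elliptic and parabolic $c'$. The qualifier ``modulo the deck group'' is essential: over $c\in z^{\pm1}\cdot\{\text{hyperbolic lifts with a fixed point}\}$, the fiber of $R_1$ in $\cPSLR^2$ is disconnected, with components permuted by $\Zb^2$. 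This hypothesis is stronger than the closed-surface statement you are proving, and stronger than Theorem \ref{mwg_ccb}; you neither formulate nor prove it. A second problem is the base: for $|k|<|\chi(S_g)|$, the hyperbolic part of the set of admissible $c$ splits according to $\mathrm{rot}(c)\in\{-1,0,1\}$. These pieces are joined only through elliptic and parabolic values of $c$, which is exactly where Euler class is transferred between the last handle and the rest of the surface. ``Connected base plus connected fibers'' does not explain why the total space stays connected across those strata.

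The paper, following Goldman, handles this differently. It first deforms so that every curve of a decomposition into $g$ one-holed tori and $g-2$ pairs of pants is hyperbolic. It then proves connectedness of $W(S_{1,1})$ and $W(S_{0,3})$ at fixed relative Euler class by explicit computation in $\cPSLR$. Finally, it joins different distributions of relative Euler classes with the same total through a separate analysis of $S_2$, $S_{1,2}$ and $S_{0,4}$, inducting along the dual tree. Your Higgs-bundle alternative does work for $k\neq 0$: Hitchin shows the component is a vector bundle over $\mathrm{Sym}^{2g-2-|k|}S_g$, at the cost of importing nonabelian Hodge theory. For $k=0$, however, the minimum of the Hitchin function is the torus of $\mathsf{SO}(2)$-representations and the space is singular, so that case is not covered as you state it.

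Step 1 also rests on a false estimate. A commutator in $\cPSLR$ can have translation number exactly $\pm1$. More generally, for a Fuchsian $S_{g,1}$ with geodesic boundary, $\prod_{i=1}^g[\widetilde{A}_i,\widetilde{B}_i]=z^{\pm(2g-1)}\widetilde{C}^{-1}$, where $\widetilde{C}$ is the fixed-point lift of the boundary holonomy. So a product of $g$ commutators can have translation number exactly $\pm(2g-1)$, and ``strictly less than $2g-1$'' fails. The strict inequality you want holds instead for the non-homogeneous displacement $\underline{m}(f)=\min_t(f(t)-t)$. One has $\underline{m}([a,b])<1$ and $\underline{m}(fg)<\underline{m}(f)+\underline{m}(g)+1$, hence $\underline{m}(\prod_{i=1}^g[a_i,b_i])<2g-1$. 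Since $\underline{m}(z^n)=n$, this gives $n\le 2g-2$, and the symmetric argument with the maximum gives $n\ge -(2g-2)$. With that fix, Steps 1 and 4 are reasonable alternatives to the paper's route, where Milnor--Wood comes from additivity over the $2g-2$ pieces, each with $|\eu|\le 1$. (In Step 2, note that for $g\ge 3$ the decomposition must also include $g-2$ pairs of pants.) However, your open--closed argument in Step 4 identifies the Fuchsian locus with $\eu^{-1}(\pm\chi(S_g))$ only once those level sets are known to be connected. So the component count and the characterization of the maximal components both still hinge on Step 3.
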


The proof of this theorem goes by induction on the Euler characteristic $\chi(S_g)$. The surface $S_g$ can be decomposed into $g$ one-holed tori and $g-2$ pairs of pants, which are the two orientable surfaces with Euler characteristic $-1$. We wish to prove the equivalent theorem for these two surfaces and then show that we can decompose $S_g$ in such a way that the Euler class of the representation of $\pi_1(S_g)$ is the sum of the Euler classes of the representations obtained by restricting to these subsurfaces. To state the theorem for surfaces with boundary, recall the notation from Section \ref{sec:top} where in the surface $S_{g,n}$ we denoted $c_1, \ldots, c_n$ the peripheral curves. We introduce the set
$$W(S_{g,n})=\{\rho\in\Hom(\pi_1(S_{g, n}),\PSLR)\ |\ \rho(c_i)\ \text{ is not elliptic for }\ i=1,\dots,n\}.$$
We then have the following result.
\begin{Theorem}[Goldman \cite{goldman-topological}] \label{mwg_ccb}
    Let $S_{g,n}$ be an orientable surface with boundary such that $\chi(S_{g,n})<0$. The space $W(S_{g,n})$ has $2\abs{\chi(S_{g,n})}+1$ connected components, indexed by the relative Euler class $\chi(S_{g,n})\leq\eu(\rho)\leq-\chi(S_{g,n})$. Moreover, $\rho \in W(S_{g,n})$ is discrete and faithful if and only if it is in the maximal components, that is, if $\abs{\eu(\rho)}=\abs{\chi(S_{g,n})}$.
\end{Theorem}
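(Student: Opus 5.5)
I would prove Theorem \ref{mwg_ccb} by induction on $\abs{\chi(S_{g,n})}$, starting from the two surfaces with $\chi=-1$: the pair of pants $S_{0,3}$ and the one-holed torus $S_{1,1}$. The three ingredients are the relative Euler class as a locally constant, additive invariant; the Milnor--Wood inequality; and an explicit connectivity analysis in the base cases followed by gluing.

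\textbf{Step 1 (well-definedness and local constancy).} Using the lift-to-$\cPSLR$ definition of Section \ref{sec:euler}, the preferred zero-translation lift of a non-elliptic element varies continuously on the set of non-elliptic elements. Hence the product $\bigl(\prod[\widetilde{\rho(a_i)},\widetilde{\rho(b_i)}]\bigr)\prod\widetilde{\rho(c_j)}=z^{\eu(\rho)}$ varies continuously in $\rho\in W(S_{g,n})$. Since it is a lift of the identity, $\eu$ is locally constant on $W(S_{g,n})$.

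\textbf{Step 2 (additivity).} If $S=S_1\cup_\gamma S_2$ is glued along a curve $\gamma$ with $\rho(\gamma)$ non-elliptic, then $\eu(\rho)=\eu(\rho|_{S_1})+\eu(\rho|_{S_2})$. The preferred lifts of $\rho(\gamma)$ and $\rho(\gamma)^{-1}$ cancel, so the two relations multiply to the relation for $S$. The same argument handles gluing two boundary curves of a single surface (the HNN case).

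\textbf{Step 3 (base cases and Milnor--Wood).} For $S_{0,3}$, write $\rho(c_1)=A$, $\rho(c_2)=B$, $\rho(c_3)=(AB)^{-1}$. I would compute directly, using translation-number estimates on $\cPSLR$ (quasi-morphism bounds for products of zero-translation lifts), that $\eu\in\{-1,0,1\}$. I would then show each level set is connected, and that $\abs{\eu}=1$ exactly when the three boundary elements are hyperbolic or parabolic with the ``pants'' configuration of axes, which gives a discrete faithful representation. For $S_{1,1}$ the analogous statement concerns the commutator $[A,B]$, where $\abs{\eu}=1$ forces $\Tr[A,B]\le -2$. Gluing then gives the Milnor--Wood bound $\abs{\eu}\le\abs{\chi}$, provided we can cut along non-elliptic curves. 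When every candidate cutting curve is elliptic, I would instead argue via the universal bound on translation numbers of products of commutators.

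\textbf{Step 4 (induction and connectivity).} Decompose $S_{g,n}=S'\cup P$ with $\chi(P)=-1$. A representation in the level set $\eu=k$ is first deformed inside that level set so that the cutting curve becomes hyperbolic. Then, using connectivity of the level sets for $S'$ and $P$ with prescribed boundary conjugacy class, together with the twist flow of Section \ref{measure} along the gluing curve, any two representations with equal $\eu$ can be joined. Every value in $[\chi,-\chi]$ is realized by summing base-case values. Discreteness and faithfulness at maximal $\abs{\eu}$ follow because maximality forces every piece to be maximal, so the pieces glue to a hyperbolic structure. Conversely, a discrete faithful representation has $\abs{\eu}=\abs{\chi}$ by Gauss--Bonnet applied to the volume definition.

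The main obstacle is Step 4's connectivity, specifically the need to move to a representation where the cutting curve is non-elliptic without changing components. Here I would first try Goldman's trick of choosing the decomposition curve among infinitely many simple curves and invoking density of hyperbolic values.
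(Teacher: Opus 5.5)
Your architecture matches the paper's sketch of Goldman's proof: base cases $S_{0,3}$ and $S_{1,1}$ via products and commutators in $\cPSLR$, local constancy and additivity of $\eu$, deformation so that decomposition curves become hyperbolic, then gluing. However, Step 4 omits the step the paper singles out as the final essential one, namely \emph{redistributing Euler class between pieces}.

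After you deform $\rho_0,\rho_1\in\eu^{-1}(k)$ so that $\gamma$ is hyperbolic, their splittings $\bigl(\eu(\rho_i|_{S'}),\eu(\rho_i|_{P})\bigr)$ can differ. Examples are $(1,0)$ versus $(0,1)$, or $(1,-1)$ versus $(0,0)$. On the subset of $W(S_{g,n})$ where $\rho(\gamma)$ is non-elliptic, the restrictions lie in $W(S')$ and $W(P)$, so by your own Step 1 each summand is separately locally constant there. Paths inside level sets of $W(S')$ and $W(P)$ never leave this subset, and neither do twist flows along $\gamma$, which fix $\rho(\gamma)$. So these moves can only join representations with the \emph{same} splitting, and your deformation step gives no control over which splitting you land in.

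To prove that $\eu^{-1}(k)$ is connected, you must show that Euler class can be transferred across a decomposition curve. This necessarily happens along paths on which $\rho(\gamma)$ becomes elliptic. The paper does this by treating the $\chi=-2$ surfaces formed by two adjacent pieces, namely $S_{0,4}$ and $S_{1,2}$ (and $S_2$ in the closed case). On these surfaces, all splittings with the same total lie in one component of $W$. Then, using a decomposition whose dual graph is a tree, one moves Euler class from piece to adjacent piece until the splittings of $\rho_0$ and $\rho_1$ agree. Only at that point does your gluing argument apply. This is the genuinely missing idea. The deformation to hyperbolic cutting curves, which you flag as the main obstacle, is used in the paper as the easier preliminary step.

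Two smaller points remain. First, even for equal splittings, your gluing uses ``connectivity of the level sets for $S'$ and $P$ with prescribed boundary conjugacy class''. Your induction hypothesis does not give this, because the statement for $S'$ concerns $W(S')$ with all boundary classes free. You need either to carry a relative statement through the induction, or something like a path-lifting property for the boundary restriction map on each Euler-class component of $W(P)$. The latter would let a path in $W(S')$ be followed by a compatible path in $W(P)$. Second, your maximal case only shows discreteness and faithfulness for maximal representations whose decomposition curves are hyperbolic. To cover every point of the maximal level set you need an extra argument, e.g.\ that the discrete faithful locus is both open and closed in that connected level set.
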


To show this, Goldman first proves the case where $S_{g,n}$ is either a one-holed torus or a pair of pants by explicitly computing what the commutator of two elements in $\cPSLR$ can be (for the one-holed torus) or what the product of two elements in $\cPSLR$ can be (for the pair of pants) to determine what the Euler class of such a representation can be. This shows that $\eu(\rho)\in\{-1,0,1\}$ for all such $\rho$. Moreover, Goldman shows that if two such representations have the same Euler class then there is a continuous path between them in $W(S_{1,1})$ or $W(S_{0,3})$.

Next we decompose $S_{g,n}$ into surfaces of characteristic $-1$ such that the dual graph to the decomposition is a tree; this condition is necessary for the inductive step that Goldman shows. Such a decomposition of $S_{g,n}$ contains $g$ one-holed tori and $g+n-2$ pairs of pants. It is possible to find a continuous path in $W(S_{g,n})$ from a given representation $\rho$ to a representation $\rho'$ such that the curves defining this decomposition are sent to hyperbolic elements. This means that for each subsurface $S$ in the decomposition, the restriction $\rho'|_{S}$ is in $W(S)$, and we can compute the relative Euler class of $\rho'|_{S}$. The results for $S_{1,1}$ and $S_{0,3}$ then show that if two representations have the same relative Euler class on each subsurface $S$, then they are in the same connected component of $W(S_{g,n})$.

Finally, we must show that if two representations in $W(S_{g,n})$ have the same Euler class, then there are paths of representations that have the same relative Euler class on each of the subsurfaces. This can be done first by considering each of the surfaces of Euler characteristic $-2$, i.e. $S_{2}$, $S_{1,2}$, and $S_{0,4}$. Inducting along the tree dual to the decomposition gives the final result.

\subsubsection{Dynamics of the mapping class group and Bowditch's question}

The outer automorphism group $\Out(\pi_1(S))$ is (virtually) isomorphic to $\MCG(S)$, the mapping class group of the surface, and acts on the space $\mathfrak{X}(\pi_1(S),\PSLR)$ by precomposition. A result often attributed to Fricke showed that the action on the Teichm\"uller components $\Teich(S)$ and $\Teich(\conj{S})$ is properly discontinuous. (The interested reader can see a proof of this result in Canary \cite{can_dyn} or Farb-Margalit \cite[Theorem 12.2]{FM}.) A consequence is that the action has finite point stabilizers, so the quotient $\Teich(S)/\MCG(S)$ is an orbifold called the moduli space of $S$.

Goldman conjectured in \cite{goldman-survey} that the action of $\MCG(S)$ on the non-maximal components of $\mathfrak{X}(\pi_1(S),\PSLR)$ should be ergodic with respect to the symplectic measure on $\mathfrak{X}(\pi_1(S),\PSLR)$ defined in Section \ref{back} (except for the $\eu(\rho)=0$ component for technical reasons which we discuss below), in stark contrast to the proper discontinuity of the action on the maximal components. This question remains open in general, but March\'e and Wolff \cite{mar-wol, mar-wol2} showed that it is true in the genus--$2$ case. They also solidified the equivalence between Goldman's conjecture and the following question of Bowditch \cite{bow_mar}, namely: \textit{Does almost every non-maximal representation $\rho\in\mathfrak{X}(\pi_1(S),\PSLR)$ send a non-trivial simple closed curve to an elliptic element of $\PSLR$?} They proved that a positive answer to Bowditch's question implies a positive answer to Goldman's conjecture \cite{mar-wol} and in the case of genus $g=2$ they answered Bowditch's question in the positive. 

In order to state their result more formally, we introduce the notation $\Xf^{\text{ne}}_k(S_2)$ for the connected component of $\mathfrak{X}(\pi_1(S_2),\PSLR)$ containing \textit{non-elementary} representations with $\eu(\rho)=k$, that is  representations with Zariski dense image in $\PSLR$. The result below was proven in two papers.
\begin{Theorem}[March\'e--Wolff \cite{mar-wol, mar-wol2}] \label{mw_ergodic}
    Let $S_2$ be the closed, oriented surface of genus 2. Then:
    \begin{enumerate}
        \item The action of $\MCG(S_2)$ on $\Xf^{\text{ne}}_k(S_2)$ is ergodic if $|k|=1$.
        \item The space $\Xf^{\text{ne}}_0(S_2)$ is the union of two disjoint, $\MCG(S_2)$-invariant open subsets, denoted $\Xf^{\text{ne}}_{0,+}(S_2)$ and $\Xf^{\text{ne}}_{0,-}(S_2)$.
        \item The action of $\MCG(S_2)$ on each of $\Xf^{\text{ne}}_{0,+}(S_2)$ and $\Xf^{\text{ne}}_{0,-}(S_2)$ is ergodic.
    \end{enumerate}
\end{Theorem}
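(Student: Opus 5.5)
The plan follows March\'e--Wolff: first settle Bowditch's question for genus $2$, then deduce ergodicity from it using the twist flows of Section \ref{measure}.

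\textbf{Step 1 (a dense set of representations with elliptic simple curves).} We show that almost every $[\rho]\in\Xf^{\text{ne}}_k(S_2)$ with $|k|\le 1$ sends some non-separating simple closed curve $\alpha$ to an elliptic element. We use the hyperelliptic involution of $S_2$: it is central in $\MCG(S_2)$ and acts trivially on $\Xf(\pi_1(S_2),\PSLR)$. Hence the characters factor through the orbifold sphere with six cone points, and can be analyzed through its pants decompositions.

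Take $\rho$ with $|\eu(\rho)|\le 1$, and suppose it sends every simple closed curve to a hyperbolic element. Cut $S_2$ along a separating curve into two one-holed tori. By Theorem \ref{mwg_ccb}, the relative Euler classes on the two pieces lie in $\{-1,0,1\}$ and sum to $\eu(\rho)$. If both were $\pm1$ with the same sign, $\rho$ would be maximal. Therefore some one-holed torus $T$ carries a restriction with relative Euler class $0$, or else the two pieces have opposite signs.

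On a one-holed torus, the relative Euler class $0$ locus is governed by Markov-type trace dynamics. Goldman's analysis of $\Xf(F_2,\SL_2(\Rb))$ shows that, for almost every boundary trace, the $\MCG(T)$-orbit meets an elliptic simple curve, except on a null set. The opposite-sign case in $\eu(\rho)=0$ is handled separately; it is exactly the locus where a sign invariant can persist, and it produces the splitting $\Xf^{\text{ne}}_{0,\pm}$ in part (2). We define this sign invariant explicitly, say via the relative Euler classes of the two tori for a pants decomposition all of whose curves are hyperbolic. We then check that it is locally constant and $\MCG$-invariant, and that both values occur on open sets.

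\textbf{Step 2 (from elliptic curves to ergodicity).} If $\rho(\alpha)$ is elliptic, the twist flow $\xi^t_\alpha$ is periodic, with its rotation angle determined by $\Tr\rho(\alpha)$. For an irrational angle, the Dehn twist orbit is dense in the circle orbit of the flow. So any $\MCG$-invariant measurable function $f$ is, almost everywhere on the open set $U_\alpha=\{\rho(\alpha)\text{ elliptic}\}$, invariant under the flow $\xi^t_\alpha$. Choose a second curve $\beta$ with $\rho(\beta)$ also elliptic on an open subset, for example by applying Step 1 inside $U_\alpha$. The vector fields of the twist flows of $\alpha$, $\beta$, and their translates under $\MCG$ span the tangent space, as in Goldman's ergodicity argument for compact groups. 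Hence $f$ is locally constant on the union of the $\MCG$-translates of $U_\alpha$, which by Step 1 has full measure.

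\textbf{Step 3 (connectedness).} Local constancy only gives constancy on connected components of the full-measure open set $\bigcup_{\phi\in\MCG}\phi(U_\alpha)$. So we show that this set meets each of $\Xf^{\text{ne}}_{\pm1}$ and $\Xf^{\text{ne}}_{0,\pm}$ in a connected set, modulo the $\MCG$ action. We do this by joining any two of the open pieces through overlaps $U_\alpha\cap U_\beta$, using paths in trace coordinates.

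\textbf{Main obstacle.} The hard part is Step 1: proving that the exceptional set, where every simple closed curve is hyperbolic, has measure zero in non-maximal components. This requires fine control of trace dynamics on the one-holed torus pieces and of how the pieces glue. A key sub-issue is identifying precisely the invariant separating $\Xf^{\text{ne}}_{0,+}$ from $\Xf^{\text{ne}}_{0,-}$ and showing it is the only obstruction.
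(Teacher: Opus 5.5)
Your architecture (answer Bowditch's question in genus $2$, then run a twist-flow argument on the locus where some simple closed curve is non-hyperbolic) is the one the paper attributes to March\'e--Wolff: Theorem \ref{mw_bowditch} together with their genus-$2$ answer to Bowditch's question. But Step~1, which carries all the weight, rests on a false input.

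For a one-holed torus $T$ with hyperbolic boundary, relative Euler class $0$ means $\Tr[\tilde A,\tilde B]>2$. Goldman's analysis of real one-holed-torus characters shows that once this trace exceeds $18$, there is a nonempty open wandering domain: discrete faithful representations onto three-holed-sphere groups. On it, every simple closed curve of $T$ is sent to a hyperbolic element. This happens in your setting. If $\eu(\rho)=\pm1$, then the $\SL_2(\Rb)$-lifts satisfy $[\tilde A_1,\tilde B_1][\tilde A_2,\tilde B_2]=-I$. So when $\gamma$ is hyperbolic and $T_1$ is the side of relative Euler class $\pm1$, the other side has boundary trace $-\Tr[\tilde A_1,\tilde B_1]$. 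This is arbitrarily large when the hyperbolic one-holed torus $T_1$ has long boundary, and the same issue arises in the $(0,0)$ case. On the resulting open set every simple closed curve disjoint from $\gamma$ is hyperbolic, so any elliptic curve must cross $\gamma$. Your argument has no mechanism for producing such curves, and that is where the real work in \cite{mar-wol} has to happen.

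For $k=0$ the target of Step~1 is actually false on an open set. Here $\rho$ lifts to $\SL_2(\Rb)$, and the hyperelliptic involution $\iota$ acts trivially on $\SL_2$-characters. Hence $\rho\circ\iota_*=g\rho g^{-1}$ with $g\in\PGLR$ and $g^2=1$. Whether $g$ is a half-turn or a reflection is locally constant and $\MCG$-invariant, since $\iota$ is central; this gives the splitting in part~(2).

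Where some separating curve is hyperbolic, this invariant reduces to your dichotomy: half-turn $\iff$ commutator trace $<-2$ $\iff$ opposite signs. However, your opposite-sign locus is only an open subset of the half-turn piece, and you never show your invariant is independent of the chosen curve.

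In the half-turn piece, $\rho$ extends to $\langle s_1,\dots,s_6\mid s_i^2,\ s_1\cdots s_6\rangle$ with the $s_i$ sent to half-turns. Every non-separating simple closed curve is represented by a product $xy$ of two conjugates of the $s_i$. It is therefore sent to a product of two half-turns, i.e.\ to a hyperbolic element (off a null set). So:
\begin{itemize}
\item there is no elliptic non-separating curve in this piece, and your sets $U_\alpha$ in Steps~2--3 miss it entirely;
\item what must be shown is that almost every such $\rho$ sends some \emph{separating} curve to an elliptic element;
\item that is the content of the six-point-configuration paper \cite{mar-wol2}, which ``handled separately'' leaves open.
\end{itemize}

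Relatedly, $\iota$ does not act trivially on the $\eu=\pm1$ components. A reflection $g$ forces $\eu=0$. A half-turn $g$ yields a lift to $\SL_2(\Rb)$ via $s_i\mapsto\sqrt{-1}\,J_i$, with lifts $J_i$ chosen so that $J_1\cdots J_6=-I$, which forces $\eu$ even.
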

Note that the action is still `nearly' ergodic even in $\Xf^{\text{ne}}_{0}(S_2)$. In fact, the space $\Xf^{\text{ne}}_{0}(S_2)$ consists of two disjoint open sets and the action is ergodic on each one. March\'e and Wolff also showed that this decomposition of the component with euler class zero is unique to the genus $2$ case, see Theorem 1.6 in \cite{mar-wol}.

Let $S_g$ denote the genus $g$ closed surface and $\Xf^{\text{ne,NH}}_{k}(S_g)$  denote the subset of $\Xf^{\text{ne}}_{k}(S_g)$ consisting of representations that send some non-trivial simple closed curve to a non-hyperbolic element of $\PSLR$. March\'e and Wolff showed the following: 
\begin{Theorem}[March\'e-Wolff \cite{mar-wol}] \label{mw_bowditch}
    Let $g\geq 2$ and let $\chi(S_g)+1\leq k\leq -\chi(S_g)-1$. If $(g,k)\neq(2,0)$ then the action of $\MCG(S_g)$ on $\Xf^{\text{ne,NH}}_{k}(S_g)$ is ergodic.
\end{Theorem}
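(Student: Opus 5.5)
The argument has two steps. First we reduce to a neighborhood of representations sending a fixed simple closed curve to an elliptic element, where the Goldman twist flows give ergodicity. Then we spread this to all of $\Xf^{\text{ne,NH}}_{k}(S_g)$ using the transitivity of $\MCG(S_g)$ on topological types of simple closed curves.

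\textbf{Reduction to elliptic curves.} Since the set of non-elliptic, non-hyperbolic (parabolic or trivial) values is a codimension-one condition, up to measure zero we may assume that $\rho\in\Xf^{\text{ne,NH}}_{k}(S_g)$ sends some simple closed curve $\alpha$ to an elliptic element. There are finitely many $\MCG(S_g)$-orbits of simple closed curves: the nonseparating one, and the separating ones indexed by the genus of one side. Hence
$$\Xf^{\text{ne,NH}}_{k}(S_g)=\MCG(S_g)\cdot\bigcup_{\alpha\in\mathcal{A}}U_\alpha \pmod 0,$$
where $\mathcal{A}$ is a finite set of representatives and $U_\alpha=\{\rho : \rho(\alpha)\text{ elliptic}\}$.

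\textbf{Ergodicity on each $U_\alpha$.} Take $f=$ rotation angle, which is conjugation-invariant on elliptics. By Section~\ref{measure}, the twist flow $\xi^t_\alpha$ is periodic on $U_\alpha$, and it rotates the gluing by an angle equal to the rotation angle $\theta$ of $\rho(\alpha)$. When $\theta/2\pi$ is irrational, the orbit of the Dehn twist $\tau_\alpha$ is dense in the circle orbit of the flow. So any $\MCG$-invariant function is, almost everywhere, invariant under $\xi^t_\alpha$, and likewise under the twist flows of every elliptic curve disjoint from or nearby $\alpha$.

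We then cut along $\alpha$ and fix $\theta$. The relative character variety of $S_g\setminus\alpha$ with prescribed elliptic boundary data is lower-dimensional. By induction on complexity, using pants and one-holed-torus pieces as in Goldman's proof of Theorem~\ref{mwg_ccb}, the mapping class group of the complement acts ergodically on the relevant non-elementary relative components. Here the relative Euler class is determined by $k$ and by the lift choice of the elliptic $\rho(\alpha)$. Combining this with the twist-flow invariance, an invariant function on $U_\alpha$ depends only on $\theta$ together with discrete data. To kill the dependence on $\theta$, we move to another curve $\beta$ meeting $\alpha$ once (or twice, for separating $\alpha$) and show that on the open set where both $\rho(\alpha)$ and $\rho(\beta)$ are elliptic, the pair of functions $(\theta_\alpha,\theta_\beta)$ has independent flows. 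Invariance under both flows then forces constancy locally.

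\textbf{Connecting the pieces.} We show that the sets $U_\alpha$ for different $\alpha$ overlap in positive measure. We then need, inside each $U_\alpha$, that the discrete data (the relative Euler classes of the two sides, and the sign data) is connected by chains of overlaps within $\Xf^{\text{ne,NH}}_{k}(S_g)$. This makes an invariant function globally constant.

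\textbf{Main obstacle.} The main obstacle is this last connectivity: showing that every admissible distribution of relative Euler classes is reachable through elliptic-curve moves. The exception $(g,k)=(2,0)$ of Theorem~\ref{mw_ergodic}, where an invariant splitting into $\Xf^{\text{ne}}_{0,\pm}$ really occurs, shows the combinatorics is delicate. For this step I would first try an explicit analysis of $S_{0,4}$ and $S_{1,2}$ pieces, extending Goldman's Euler-characteristic $-2$ step.
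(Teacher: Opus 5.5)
The central step fails. You need the mapping class group of $S_g\setminus\alpha$ to act ergodically on the relative components with elliptic boundary data. Goldman's proof of Theorem~\ref{mwg_ccb} does not give this: that induction classifies connected components with non-elliptic boundary and says nothing about dynamics. In the basic case the claim is false.

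Write $\kappa(x,y,z)=x^2+y^2+z^2-xyz-2$ for the commutator trace of a one-holed torus in $\SL_2(\Rb)$ trace coordinates. Suppose $\alpha$ bounds a one-holed torus $T$ and $\rho(\alpha)$ is a nontrivial elliptic. Then $\rho|_T$ is non-elementary, since elementary groups have no elliptic commutators. Any $\SL_2(\Rb)$-lift of $\rho|_T$ has $t=\kappa(x,y,z)\in(-2,2)$. By Goldman's analysis of the one-holed torus, such characters fill the non-compact components of $\kappa^{-1}(t)\cap\Rb^3$. These components are Teichm\"uller spaces of hyperbolic cone-tori, $\MCG(T)$ acts on them \emph{properly discontinuously}, and every simple closed curve of $T$ is hyperbolic there.

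Now take $g=2$ and $\alpha$ separating. This gives a nonempty open set already for $k=\pm1$: glue cone-tori of angles $\theta$ and $2\pi-\theta$. On it, the stabilizer of $\alpha$ only mixes the gluing circle, and there are no other elliptic curves disjoint from $\alpha$ to flow along. A $\MCG$-invariant function may therefore depend freely on the cone-torus moduli of both halves. Your conclusion ``$\theta$ plus discrete data'' is false there, and any mixing must come from mapping classes that move $\alpha$.

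For nonseparating $\alpha$, the statement you need for $S_{g-1,2}$ with elliptic boundary is a relative form of Goldman's conjecture, no easier than the theorem itself. Also, the relative Euler class of Section~\ref{sec:euler} is not defined for elliptic boundary; its real-valued extensions vary continuously with $\theta$. So the ``discrete data'' you invoke does not exist. Finally, the connectivity step, which you rightly identify as the crux and where the $(2,0)$ exception must show up, is left open. As it stands the proposal is not a proof.

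The survey quotes this theorem from March\'e--Wolff without proof, so there is no in-text argument to compare against. A mechanism that does work is to use the elliptic curve to control a one-holed torus \emph{containing} it, not its complement.

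Suppose $\rho(\alpha)$ is elliptic and $\beta$ meets $\alpha$ once. Then $x=\Tr\tilde\rho(\alpha)\in(-2,2)$ forces $\Tr[\tilde\rho(\alpha),\tilde\rho(\beta)]\geq 2$, and this trace does not depend on the lifts. The reason is that a real character with $\kappa<2$ and one coordinate in $(-2,2)$ has all coordinates in $(-2,2)$, so it is an $\SU(2)$-character rather than an $\SL_2(\Rb)$ one. Hence $T=N(\alpha\cup\beta)$ generically has hyperbolic boundary with trace $t>2$ and relative Euler class $0$.

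This is exactly the regime where Goldman proved that $\MCG(T)$ acts ergodically on $\kappa^{-1}(t)\cap\Rb^3$. For $t>18$ this holds only off a wandering set of Fuchsian three-holed-sphere characters, which an elliptic $\rho(\alpha)$ rules out. A sewing or ergodic-decomposition argument as in Section~\ref{compact} then shows that an invariant function is a.e.\ a function of $[\rho|_{S_g\setminus T}]$ alone. This is a representation of $S_{g-1,1}$ with hyperbolic boundary and relative Euler class $k$.

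The substantial remaining work is to propagate this across different such tori and across the possible Euler-class splittings of the complement. It is this propagation step that must break down for $(g,k)=(2,0)$.
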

Note that Bouilly \cite[Theorem C]{bouilly} proved an analogue of this result for punctured surfaces under certain conditions. 

This shows that Goldman's conjecture would be solved if one could find a positive answer to Bowditch's question and show that $\Xf^{\text{ne,NH}}_{k}(S_g)$ is of full measure in $\Xf^{\text{ne}}_{k}(S_g)$. Note that we originally phrased Bowditch's question as looking for a non-trivial simple closed curve whose image is elliptic. In fact, roughly speaking, this is equivalent to looking for when the image is non-hyperbolic because the parabolic elements have measure zero in $\PSLR$. In addition, this is further equivalent to looking for when the image is an elliptic element with irrational rotation number because the elliptic elements with rational rotation number have measure zero in the full set of elliptic elements.

\subsubsection{Strict domination of representations}

We can consider the function $l \co \PSLR \to \Rb_{\geq 0}$ defined by
$$ l(g) := \mathrm{inf}_{ x \in \mathbb{H}^2} d(x, g\cdot x),$$
that is the translation length of $g \in \PSLR$ in the hyperbolic plane $\mathbb{H}^2$. Note that this function $l$ is invariant under conjugation. 

Let $S = S_g$ be any closed, connected, oriented surface of genus $g \geq 2$, and let $\rho_1, \rho_2 \in \mathfrak{X}(\pi_1(S), \PSLR)$. We say that $\rho_1$ \textit{strictly dominates} $\rho_2$ if $$ \mathrm{sup}_{\gamma \in \pi_1(S) \setminus \{1\}} \frac{l\left(\rho_2(\gamma)\right)}{l\left(\rho_1(\gamma)\right)} < 1.$$

Note that Thurston \cite{T-min} proves that there are no Fuchsian representations~$\rho_1, \rho_2$ where $\rho_1$ strictly dominates $\rho_2$. 

Gueritaud--Kassel--Wolff \cite{GKW} and Deroin--Tholozan \cite{DT-domi} independently proved the following result using different methods: Kassel--Gueritaud--Wolff used a more constructive proof using folded (or pleated) surfaces, while Deroin--Tholozan used a more analytic proof. In fact, Deroin--Tholozan have a more general statement about surface group representations into the isometry group of any complete, simply connected Riemannian manifold with sectional curvature bounded above by $-1$. See also Martin--Baillon for a more general related result into CAT(-1) metric spaces \cite{MB-domination}. Deroin--Tholozan also used this result to obtain various corollaries in other contexts, such as for closed anti-de Sitter $3$--manifolds. Let $\mathfrak{X}_{DF}(\pi_1(S), \PSLR)$ be the set of (conjugacy classes of) discrete and faithful representations and let $\mathfrak{X}_{NDF}(\pi_1(S), \PSLR)$ be its complement, that is $\mathfrak{X}_{NDF}(\pi_1(S), \PSLR) = \mathfrak{X}(\pi_1(S), \PSLR)\setminus \mathfrak{X}_{DF}(\pi_1(S), \PSLR).$ 

\begin{theorem}[Gueritaud--Kassel--Wolff \cite{GKW}, Deroin--Tholozan \cite{DT-domi}]
    Let $S = S_g$ be any closed, connected, oriented surface of genus $g \geq 2$. Any representation in $\mathfrak{X}_{NDF}(\pi_1(S), \PSLR)$ is strictly dominated by some representation in $\mathfrak{X}_{DF}(\pi_1(S), \PSLR)$. In addition, any representation $\mathfrak{X}_{DF}(\pi_1(S), \PSLR)$ strictly dominates some representation in $\mathfrak{X}_{NDF}(\pi_1(S), \PSLR)$ whose Euler class can be prescribed.
\end{theorem}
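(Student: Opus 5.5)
The plan is to follow the Deroin--Tholozan strategy, which is harmonic-map based, and to reduce strict domination of length spectra to strict contraction of an equivariant map. The elementary observation is this. Suppose $f\co \Hb^2\to\Hb^2$ is $(j,\rho)$-equivariant and $\lambda$-Lipschitz with $\lambda<1$, where $j$ is Fuchsian. Then for every $\gamma\neq 1$ and $x$ on the axis of $j(\gamma)$ we have
$$l(\rho(\gamma))\le d(f(x),\rho(\gamma)f(x))\le \lambda\, d(x,j(\gamma)x)=\lambda\, l(j(\gamma)).$$
Hence $j$ strictly dominates $\rho$. The first part of the statement is therefore reduced to: for $\rho\in\Xf_{NDF}$, find a Fuchsian $j$ and a $(j,\rho)$-equivariant map with global Lipschitz constant $<1$.

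To produce $j$, fix a complex structure $X$ on $S$. If $\rho$ is reductive, take the $\rho$-equivariant harmonic map $h\co\widetilde X\to\Hb^2$ (Corlette/Donaldson). If $\rho$ is non-reductive, first treat the elementary cases directly: a representation fixing a point of $\partial\Hb^2$ has length spectrum that of a homomorphism to $\Rb$, and this is dominated by suitable Fuchsian ones. The pullback metric $h^*g_{\Hb^2}$ is then dominated by a conformal metric $e^{2u}|dz|^2$, with $e^{2u}$ equal to the sum of the squared $\partial$- and $\bar\partial$-energy densities. The key analytic step is that, because $\Hb^2$ has curvature $-1$, the Bochner formula gives a subsolution property: $e^{2u}|dz|^2$ has curvature $\ge -1$ in the sense of distributions, i.e.\ $\Delta u \le e^{2u}$. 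Now vary the complex structure $X$ over $\Teich(S)$ and choose $X$ minimizing the energy of $h$. At a critical point the Hopf differential vanishes, so $h$ is weakly conformal (branched minimal). Then $h^*g_{\Hb^2}$ is a conformal metric with curvature $\le -1$ off branch points. Comparison with the uniformizing metric $g_X$ of curvature $-1$, via the maximum principle on $\Delta u\le e^{2u}$ applied to $u-u_X$, gives $h^*g\le g_X$. Taking $j$ to be the holonomy of $g_X$, the map $h\circ D_j^{-1}$ is $1$-Lipschitz.

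The main obstacle is upgrading $1$-Lipschitz to strict $\lambda<1$, and also the existence of the energy minimizer over Teichm\"uller space. For strictness I would use the strong maximum principle: equality $u=u_X$ at one point forces $h^*g=g_X$ everywhere. Then $h$ is a local isometry, hence a covering, so $\rho$ is Fuchsian, contradicting $\rho\in\Xf_{NDF}$. Compactness of $S$ then yields a uniform $\lambda<1$. For the minimizer, I would use properness of the energy functional on $\Teich(S)$ for non-elementary reductive $\rho$. Where properness fails, I would instead use a direct fixed-$X$ argument: pick $X$ so that $h$ is energy-minimizing among holomorphic quadratic differential variations. As a backup, GKW's folding construction gives a $1$-Lipschitz map strictly contracting on a neighbourhood of every geodesic lamination, which suffices for strict domination.

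For the second part, start from Fuchsian $j$ and a prescribed $k$ with $|k|<|\chi(S)|$. Take a branched cover: choose a holomorphic quadratic/branched structure, i.e.\ a divisor $D$ of degree $|\chi(S)|-|k|$ on $X$. Solve the prescribed curvature equation $\Delta u = e^{2u} - 2\pi\delta_D$ to obtain a metric of curvature $-1$ with cone angles $4\pi$ at $D$. Its holonomy $\rho$ has Euler class $k$ by Gauss--Bonnet (each branch point drops the Euler number by one), and $\rho$ is non-Fuchsian. The developing map is $(j,\rho)$-equivariant and, by the same maximum principle comparison, strictly contracting away from $D$, hence globally $\lambda<1$ by the argument above. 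This shows $j$ strictly dominates $\rho$.
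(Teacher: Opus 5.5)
Your reduction of strict domination to a $(j,\rho)$-equivariant $\lambda$-Lipschitz map with $\lambda<1$ is correct. So is the Schwarz--Pick argument, once a \emph{weakly conformal} $\rho$-equivariant harmonic map is available. The gap is that such a map almost never exists, so energy minimization over $\Teich(S)$ cannot carry the first part.

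A critical point of $E_\rho$ means the Hopf differential of $h$ vanishes, so $h$ is holomorphic or antiholomorphic. If $h$ is nonconstant, $h^*g_{\Hb^2}$ is a hyperbolic cone metric on $S$ with cone angles in $2\pi\Zb_{\geq 2}$. Its area equals $2\pi|\eu(\rho)|$ (take $h$ as pseudo-developing map), and also $2\pi(|\chi(S)|-B)$ by Gauss--Bonnet, where $B$ is the total branching order. Hence $E_\rho$ has a critical point only if $\rho$ fixes a point of $\Hb^2$ or is the holonomy of a branched hyperbolic structure, and the latter forces $|\eu(\rho)|\geq 1$.

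In particular, no minimizer exists for any non-elementary $\rho$ with $\eu(\rho)=0$ (an open dense part of a whole component of $\Xf(\pi_1(S),\PSLR)$), nor for any diagonal $\rho$ with nonzero translation homomorphism $\phi$. So the properness you invoke is false. It also fails directly whenever some simple closed curve $\gamma$ has $\rho(\gamma)$ elliptic, since $E_\rho$ stays bounded along $\tau_\gamma^n\cdot X$.

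Your fallbacks do not help:
(i) ``energy-minimizing among quadratic differential variations'' is the same vanishing condition;
(ii) the folding ``backup'' is a citation of the other proof;
(iii) the abelian case, i.e.\ finding a hyperbolic $X$ with $\sup_\gamma|\phi(\gamma)|/l_X(\gamma)<1$, is asserted without argument.
Also, as written, $\Delta u\le e^{2u}$ (curvature $\geq -1$) gives the reverse comparison $e^{2u}|dz|^2\geq g_X$; Ahlfors--Schwarz needs curvature $\le -1$.

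The missing idea is to keep $X$ arbitrary and move the Fuchsian representation, rather than kill the Hopf differential. Let $\Phi$ be the Hopf differential of $h$ on $X$. By Wolf's theorem ($\Teich(S)\cong H^0(X,K_X^2)$ via Hopf differentials of harmonic maps from $X$), there is a Fuchsian $j$ whose equivariant harmonic diffeomorphism $u\colon\widetilde X\to\Hb^2$ has Hopf differential $\Phi$. Then $h^*g_{\Hb^2}-u^*g_{\Hb^2}=(e_h-e_u)\sigma$ is pure trace. Because $\mathcal H\mathcal L=|\Phi|^2/\sigma^2$ for both maps, $e=\mathcal H+\mathcal L$ is an increasing function of $\max(\mathcal H,\mathcal L)$.

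Apply the Bochner formulas for $\log\mathcal H$ and $\log\mathcal L$, and the maximum principle to $\max(\log\mathcal H_h,\log\mathcal L_h)-\log\mathcal H_u$. This gives $\max(\mathcal H_h,\mathcal L_h)\le\mathcal H_u$, so $h\circ u^{-1}$ is $1$-Lipschitz. The strong maximum principle plus compactness then give $\lambda<1$, unless $h\circ u^{-1}$ is an isometry, i.e.\ $\rho$ is Fuchsian. This treats all reductive $\rho$, abelian ones included, and your conformal case is exactly $\Phi=0$.

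Your second part is fine for $k\neq 0$, with these corrections:
(i) with your convention the equation should read $\Delta u=e^{2u}+2\pi\delta_D$;
(ii) Troyanov's theorem needs $\deg D<|\chi(S)|$;
(iii) Schwarz--Pick gives the strict contraction;
(iv) the sign of $k$ is adjusted by conjugating with an orientation-reversing element of $\PGLR$.
It cannot produce $k=0$, since then $\deg D=|\chi(S)|$ and no such cone metric exists. There the trivial representation, whose length spectrum vanishes, does the job.
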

Note that the last statement of the theorem above is only due to Gueritaud--Kassel--Wolff.

\subsection{Non-orientable surfaces}\label{no-surface-PSL-R}

Let $N = N_g$ be the non-orientable closed surface of genus $g$, see Section \ref{sec:top} for a discussion of the topology of non-orientable surfaces.

While in the case of orientable surfaces we considered representations into $\Isom^+(\Hb^2)\cong\PSLR$, in this section we consider representations from the group $\pi_1(N_g)$ into $\Isom(\Hb^2)\cong\PGLR$. The group $\PGLR$ contains $\PSLR$ as a subgroup and is homeomorphic to two copies of $\PSLR$. We will denote the identity component of $\PGLR$ as $\Gs_+$ and the other component as $\Gs_-$. The component $\Gs_+$ is exactly $\PSLR$, while $\Gs_-$ consists of the orientation-reversing isometries of $\Hb^2$. The elements of $\Gs_-$ are all glide-reflections in $\Hb^2$, that is, they consist of a hyperbolic translation (possibly with zero translation length) along a geodesic composed with a reflection about that geodesic. This means that the square of an element in $\Gs_-$ is either a hyperbolic element of $\Gs_+$ or is the identity.

A hyperbolic structure on $N_g$ is a $(\PGLR,\Hb^2)$--structure, that is, a maximal atlas $\{(U_\alpha,\phi_\alpha)\}_{\alpha\in\mathcal{A}}$ of charts $\phi_\alpha:U_\alpha\to\Hb^2$ on $N_g$ such that for each connected component $C\subset U_\alpha\cap U_\beta$ the transition map $(\phi_\alpha\circ\phi_\beta^{-1})|_C:\phi_\beta(C)\to\phi_\alpha(C)$ is the restriction of an element of $\PGLR$. Similarly, a complex structure on $N_g$ is a maximal atlas of charts to $\Cb$ such that the transition maps are either holomorphic or anti-holomorphic; a surface with such a structure is sometimes called a Klein surface.

With these definitions, given a closed, non-orientable surface $N$ of genus $g\geq 3$, the following data are equivalent:
\begin{itemize} 
    \item a discrete, faithful representation $\rho:\pi_1(N)\to\PGLR$;
    \item a (marked) hyperbolic structure on $N$;
    \item a (marked) complex structure on $N$.
\end{itemize}
See for example Palesi \cite{palesi-no} for a more detailed discussion of this case. 

Note that in order to define a hyperbolic structure on $N$, such a representation must send one-sided curves to elements of $\Gs_-$. The space $\Xf(\pi_1(N),\PGLR)$ contains one connected component consisting of discrete, faithful representations which is homeomorphic to $\Rb^{-3\chi(N)}$. As in the orientable case, the action of the mapping class group $\MCG(N)$ on this component is properly discontinuous. Palesi \cite{palesi-no} showed that $\Xf(\pi_1(N),\PGLR)$ contains finitely many connected components:
\begin{Theorem} [Palesi  \cite{palesi-no}] \label{palesi-cc-pgl}
    The representation space $\Hom(\pi_1(N_k),\PGLR)$ has $2^{k+1}+2k-5$ connected components. As a consequence, the character variety $\X(\pi_1(N_k),\PGLR)$ has $2^{k+1}+k-3$ connected components.
\end{Theorem}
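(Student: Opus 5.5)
The plan is to classify components of $\Hom(\pi_1(N_k),\PGLR)$ by topological invariants and then match the count by the same kind of cut-and-glue argument used for Theorem~\ref{mwg_ccb}.

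The first invariant is the first obstruction class $o_1(\rho)\in(\Ztwo)^k$ from Section~\ref{sec:euler}, which records for each one-sided generator $d_i$ whether $\rho(d_i)$ lies in $\Gs_+$ or $\Gs_-$. For the type $o_1\equiv 0$ (all $\rho(d_i)\in\PSLR$), the relation $\prod d_i^2=1$ lifts to $\cPSLR$ only modulo $z^2$, so the second invariant is a $\Ztwo$-valued class $w_2$. For a type in which some generator is orientation reversing, the orientation homomorphism $\pi_1(N_k)\to\Ztwo$ determined by $o_1$ defines a double cover. When this cover is the orientation cover $S_{k-1}$, i.e. $o_1\equiv(1,\dots,1)$, the invariant is $\eu(\rho)=\frac12\eu(\widehat\rho)$. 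It should range over an interval of integers bounded by $|\chi(N_k)|=k-2$, with the extreme values attained exactly by the discrete faithful (Teichm\"uller) representations. For the remaining types, where the double cover is a non-orientable $N_{2k-2}$, I expect only a $\Ztwo$-valued twisted obstruction to survive.

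The proof then has three steps. First, the invariants are locally constant, since each is computed from lifts to a covering group or from a discrete characteristic number. Second, I would show they take exactly the claimed values, and this is where the counting happens. For the $2^k-2$ types that are neither trivial nor orientation-cover-type, one needs two components each. For the trivial type, one needs two components, or one if a component is forced to be empty. For the orientation type, one needs about $2(k-2)+1$ Euler values, with the twisted $\Ztwo$ data for parity mixing in. I would calibrate these so that the total is $2^{k+1}+2k-5$, checking $k=3$ by hand.

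Third, and this is the main obstacle, one must show each nonempty fiber of the invariants is connected. I would follow Goldman's strategy. Decompose $N_k$ into M\"obius bands (copies of $N_{1,1}$), pants and one-holed tori along curves whose dual graph is a tree. Then deform $\rho$ so that all the cutting curves become hyperbolic, using the openness of the hyperbolic locus and the fact that $\PGLR^m$ is a manifold. Next, prove connectivity for the small pieces $N_{1,1}$ and $N_{1,2}$ with fixed boundary type by explicit computation in $\PGLR$. Finally, induct along the tree as in Theorem~\ref{mwg_ccb}, using Euler-characteristic $-2$ pieces to redistribute local relative Euler classes. The delicate part is that one-sided curves in $\Gs_-$ square to hyperbolic elements or to the identity, and glide reflections with zero translation length create non-generic strata. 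I would handle these by a codimension argument showing they do not disconnect.

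Passing to the character variety, the action of $\PGLR$ (which has two components) can swap components of $\Hom$. Conjugation by an orientation-reversing element sends $\eu\mapsto-\eu$ and preserves $o_1$ and the $\Ztwo$ classes. So the $k-2$ pairs of components with $\eu=\pm m\neq0$ are identified, and every other component is fixed. The character variety therefore has $2^{k+1}+2k-5-(k-2)=2^{k+1}+k-3$ components, which is the claimed count.
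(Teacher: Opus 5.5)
Your invariants are the right ones ($o_1$, then a $\Ztwo$ class when $o_1\not\equiv(1,\dots,1)$, and an integer Euler class on the orientation type). Your passage from $\Hom$ to $\X$, where only the $k-2$ pairs with $\eu=\pm m\neq 0$ are identified, matches the paper exactly.

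The gap is your second step. A counting theorem cannot be proved by choosing the per-type counts ``so that the total is $2^{k+1}+2k-5$''. As written, you leave open whether the type $o_1\equiv 0$ has one or two components. You also allow ``twisted $\Ztwo$ data for parity'' on the orientation type, which would change the count. Nothing in your plan shows that each value of each invariant is actually realized; your third step addresses only connectedness of the fibers. So the numbers $2$ and $2k-3$, which are the whole content of the theorem, are never derived.

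The missing idea, and the way the paper gets these numbers, is to cut $N_k$ along exactly those one-sided generators $d_i$ that $o_1$ sends into $\Gs_-$.

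\emph{Case $l\geq 1$.} Suppose $l\geq 1$ generators go to $\PSLR$; this includes $o_1\equiv 0$, where $l=k$ and nothing is cut. The squares $\rho(d_i)^2$ of the cut generators lie in $\PSLR$, so you obtain a $\PSLR$-representation of $N_{l,k-l}$. Palesi's earlier theorem \cite{palesi-psl} gives exactly two components there, both nonempty. Summed over the choices of which generators are cut, this accounts for $2\sum_{l=1}^k\binom{k}{l}=2^{k+1}-2$.

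\emph{Case $l=0$.} Cutting along all $d_i$ gives a representation of $S_{0,k}$ whose boundary images $\rho(d_i)^2$ are squares of glide reflections, hence hyperbolic or trivial. Goldman's relative theorem (Theorem~\ref{mwg_ccb}) then gives exactly $2\abs{\chi(S_{0,k})}+1=2k-3$ components, indexed by the relative Euler class alone. So there is no extra parity invariant, and integrality and the bound $\abs{\eu}\leq k-2$ come for free.

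Your Goldman-style induction over M\"obius bands and pants would amount to reproving these two inputs from scratch. That is legitimate, and your remark that the fiber over trivial boundary is a whole space of reflections is a real subtlety the paper's sketch passes over. But you must add a realization argument for every invariant value and an actual derivation of the per-type counts before the total $2(2^k-1)+2k-3$ follows.
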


Although the count is a bit more complicated than in the orientable case, it still comes down to considering characteristic classes associated to the representations, along with considering whether the one-sided generators of $\pi_1(N_k)$ are sent into $\Gs_+$ or $\Gs_-$. We can rewrite the quantity in the above theorem as follows:
$$2^{k+1}+2k-5 \;\;=\;\; 2(2^k-1)+2k-3\;\;=\;\;2\sum_{l=1}^k\begin{pmatrix}
    k \\ l
\end{pmatrix}+2\abs{\chi(S_{0,k})}+1$$ 
in order to better see how we arrive at this count. Fixing a choice of $l>0$ one-sided generators of $\pi_1(N_k)$ that are sent into $\PSLR$, given such a $\rho$ we can cut along the remaining one-sided generators to get a representation $\rho':\pi_1(N_{l,k-l})\to\PSLR$. Palesi showed in \cite{palesi-psl} that there are two components of the character variety $\Xf(\pi_1(N_{l,k-l}),\PSLR)$, which results in the $2\sum\limits_{l=1}^k\begin{pmatrix}
    k \\ l
\end{pmatrix}$ term above. When all of the one-sided generators are sent into $\Gs_-$, i.e. when $l=0$, the representation induces a representation of the subgroup $\pi_1(S_{0,k})<\pi_1(N_k)$, generated by the squares of the generators, so we have a representation of an orientable surface with $k$ boundary components into $\PSLR$. Moreover, each one-sided generator was mapped into $\Gs_-$, so the square of its image is either hyperbolic or the identity, and therefore the induced representation of $\pi_1(S_{0,k})$ has hyperbolic or identity boundary conditions. Milnor \cite{mil_ont}, Wood \cite{wood_bundles} and Goldman's \cite{goldman-topological} work then shows that there are $2\abs{\chi(S_{0,k})}+1$ such components and are indexed by the Euler class. Finally, conjugation by an element of $\PGLR$ preserves the components $\Gs_+$ and $\Gs_-$ of $\PSLR$ and sends the Euler class to its opposite. Therefore, the $2k-4$ components corresponding to the case $l=0$ and having non-zero Euler class are identified two by two, yielding the count of the number of connected components of $\X(\pi_1(N_k),\PGLR)$.

Note that in the discussion above it seems that orientable and non-orientable surfaces behave `similarly'. On the other hand, this is not always the case, as one can see, for example, by considering the question of the growth of simple closed curves. A well-known result of Mirzakhani \cite{mir_gro} shows that for an orientable surface $S$ the number of closed geodesics of a given topological type of length less than $L$ is asymptotically equivalent to a positive constant times $L^{\mathrm{dim}(\ML(S))}$, where we denote $\ML(S)$ the space of measured laminations of $S.$ In \cite{hua_sim, mag_cou, EGPS} it was proved that this is not true any longer for non-orientable surfaces. In fact, as Gendulphe \cite{gen_wha} explains, the difference comes from the different dynamics of the action of $\mathrm{Mod}(S)$ on $\ML(S)$. 

\subsection{Punctured surfaces}\label{type-preserving}

If we consider a punctured orientable  surface $S$, the character variety $\Xf(\pi_1(S),\PSLR)$ is connected and homeomorphic to $\PSLR^{-\chi(S)}$, where $\chi(S)$ is the Euler characteristic of $S$, because $\pi_1(S)$ is a (non-Abelian) free group of rank $1-\chi(S)$. On the other hand, there are many distinct punctured surfaces with isomorphic fundamental group. Hence, in order to `distinguish' between different surfaces, it is natural to consider the subspaces of $\Xf(\pi_1(S),\PGLR)$ with suitable constraints on the peripheral curves. Goldman \cite{goldman-topological} studied representations such that all the peripheral curves are sent to hyperbolic elements, and used this study to discuss the case of closed hyperbolizable surfaces. Another natural constraint is to ask all peripheral curves to be sent to parabolic elements. This is the notion we will focus on in this section.

\begin{definition}
A representation $\rho\co\pi_1(S) \to \PGL_2(\Rb)$ is called \textit{type-preserving} if:
\begin{itemize}
\item all the peripheral elements are mapped to parabolic isometries;
\item all $1$-sided [resp. $2$–sided] elements are mapped to orientation reversing [resp. preserving] isometries. 
\end{itemize}
\end{definition}

Recall that an element of $\pi_1(S)$ is called $2$-sided if it is represented by a curve which admits an orientable regular neighborhood, and $1$-sided otherwise. For an orientable surface, all curves are $2$-sided, so type-preserving representations are representations in $\PSLR$.

In this section we will consider the space $$\mathfrak{X}_{tp}(S) = \{ \mbox{type-preserving } \rho \co \pi_1 (S) \to \PGLR \} / \PSLR$$ of type-preserving representations up to $\PSLR$--conjugation. One can consider this as a subset of the $\PGLR$--character variety of $S$ by considering a further quotient which identifies certain connected components.  The reason to consider representations up to $\PSLR$ is because this allows a unified treatment with the well known case of representations of orientable punctured surfaces in $\PSLR$. In particular, the notion of euler classes and the lengths coordinates  extend naturally from $\PSLR$--character varieties to $\mathfrak{X}_{tp}(S).$ We are interested in three topics:
\begin{itemize}
\item Kashaev conjecture on the number of connected components of $\mathfrak{X}_{tp}(S)$;
\item Bowditch question on the existence of representations such that the image of all $2$-sided simple closed curves is hyperbolic but which are not discrete and faithful; and
\item Goldman conjecture on the ergodicity of the action of the mapping class group $\mathrm{Mod}(S)$ on $\mathfrak{X}.$
\end{itemize}
We will discuss some (possibly partial) answers to these questions. 

\subsubsection{Number of connected components}\label{conn-comp}

For each type-preserving representation $\rho$, one can define its Euler class $\eu(\rho)$ as its representation area divided by $2\pi$, see Section \ref{sec:euler} for a more detailed discussion. It is known that the Euler class satisfies the Milnor--Wood inequality $\chi(S)\leqslant \eu(\rho)\leqslant -\chi(S)$, see \cite{mil_ont, wood_bundles}, and Goldman\,\cite{goldman-topological} (in the orientable case) and Palesi\,\cite{palesi-no} (in the non-orientable case) proved that the equality holds if and only if $\rho$ is Fuchsian, that is, the representation is discrete and faithful. The case of closed orientable and non-orientable surfaces is discussed in the previous sections. For a punctured orientable surface $S_{g,n},$ the number of connected components of $\mathfrak{X}(S_{g,n})$ requires a more refined description, since for an integer $e$ with $|e|\leqslant -\chi(S),$ the spaces $\mathfrak X_e(S)$ of (conjugacy classes of) type-preserving representations of Euler class $e$ can either be empty or non-connected, see \cite{kas_coor}. 

In the case of orientable surfaces,  Kashaev\,\cite{kas_coor} conjectured that the number of connected components of the character variety $\mathfrak{X}_{tp}(S_{g,n})$ is determined by the Euler class and an extra invariant, the `sign of the punctures', which corresponds to the $\PSLR$--conjugacy classes of the holonomy representations of the peripheral elements. More precisely, for a type-preserving representation $\rho\co\pi_1 (S) \rightarrow \PGLR,$ we say that the \emph{sign} of a puncture $v$ is  \emph{positive} (resp. \emph{negative}), denoted by $s(v)=+1$ (resp. $s(v)=-1$), if $\rho$ sends a peripheral element around this puncture into a parabolic element conjugated to an upper-triangular matrix with the nondiagonal element positive (resp. negative). For $s\in\{\pm 1\}^n,$ we denote by $\mathfrak{X}_{e, s}(S)$ the space of conjugacy classes of type-preserving representations with Euler class $e$ and signs of the punctures given by $s$. Kashaev \cite{kas_coor} conjectured that each $\mathfrak{X}_{e, s}(S)$ is either empty or connected. 

Yang \cite{yan_ont} described the case of the four holed sphere $S_{0,4}$ and Maloni--Palesi--Yang \cite{MPY} described the case of the thrice-punctured projective plane $N_{1,3}$. The main tool used for these proofs are the length coordinates for the decorated nonelementary character variety, which were originally defined by Penner\,\cite{Penner} for the decorated Teichm\"uller space of orientable punctured surfaces and later generalized by Kashaev\,\cite{kas_coor} for the full character variety of type-preserving representations of orientable surfaces. Maloni--Palesi--Yang \cite{MPY} generalized these coordinates to the case of non-orientable surfaces. One can then prove a formula that  calculates the trace of curves in terms of these coordinates, following ideas of Roger--Yang \cite{roger-yang}, see \cite{yan_ont, MPY}. Another main tool in the proofs is the choice of a good ideal triangulation of the surface, which we call `balanced' triangulations. Recall that $\mathfrak{X}_{\text{ne}}(S)$ is the set of non-elementary representations of $\pi_1(S)$. Recall also that all elementary representations have Euler class zero. 

\begin{Theorem}[Yang \cite{yan_ont}] \label{conncomp-04}
  Let $s \in \{\pm 1\}^4.$
  \begin{enumerate}
    \item $\mathfrak{X}_{0, s}(S_{0,4}) \cap \mathfrak{X}_{\text{ne}}$ is nonempty if and only if $s$ contains exactly two $-1$'s and two $+1$'s.
    \item $\mathfrak{X}_{+ 1, s}(S_{0,4})$ is nonempty if and only if $s$ contains at most one $-1$'s.
    \item $\mathfrak{X}_{- 1, s}(S_{0,4})$ is nonempty if and only if  $s$ contains at most one $+1$'s.
    \item All the nonempty spaces above are connected.
  \end{enumerate}
\end{Theorem}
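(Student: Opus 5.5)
The plan is to work in Kashaev's decorated length coordinates. Fix an ideal triangulation $\Delta$ of $S_{0,4}$; it has $6$ edges and $4$ ideal triangles. A decorated non-elementary type-preserving representation is recorded by a tuple of nonzero real numbers $(\lambda_e)_{e\in\Delta}$, one per edge. For $S_{0,4}$ the decorated space is then an open subset of $(\Rb^*)^6$, and it fibres over $\mathfrak{X}_{tp}(S_{0,4})\cap\mathfrak{X}_{\text{ne}}$ with contractible fibres given by the horocycle decorations. In these coordinates both invariants become combinatorial.
\begin{itemize}
\item Each ideal triangle carries a sign $\pm1$, namely the orientation of its image under the pseudo-developing map, read off from the signs of the $\lambda$'s on its three edges. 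Computing $\eu(\rho)$ as (area)$/2\pi$ over the ideal triangulation, as in Section \ref{sec:euler}, gives $\eu(\rho)=\tfrac12\sum_{T}\epsilon_T$. This lies in $\{-2,\dots,2\}$.
\item The sign $s(v)$ of a puncture is the sign of the trace-type expression obtained from the upper-triangular form of the peripheral parabolic. Equivalently, it is determined by the signs of the triangles around $v$, via a Roger--Yang type product formula for the off-diagonal entry.
\end{itemize}

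First, I would establish the constraints. Non-elementary representations have $|\eu|\le 1$: the value $\pm2$ is Fuchsian and is excluded or treated separately. For each sign pattern of triangles I would compute the resulting $(e,s)$. Since each puncture's sign is determined by the local orientations of the triangles around it, the admissible pairs follow from a local count.
\begin{itemize}
\item When $e=\pm1$, three triangles share one orientation. The sign pattern then forces at most one puncture of the "wrong" sign.
\item When $e=0$, the triangles split two against two. The tetrahedral combinatorics of the triangulation of $S_{0,4}$ then force exactly two $+1$'s and two $-1$'s.
\end{itemize}
Nonemptiness in each allowed case is shown by exhibiting explicit coordinates. For elementary representations one also checks that they sit only in $e=0$, and these are excluded by the intersection with $\mathfrak{X}_{\text{ne}}$ in (1).

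Next comes connectedness, which I expect to be the main obstacle. A fixed $(e,s)$ stratum is a union of several orthants of $(\Rb^*)^6$, cut further by the open conditions that make the representation non-elementary and type-preserving. Inside one orthant the region is convex, or at least path-connected, by explicit inequalities. The difficulty is joining different orthants with the same $(e,s)$. For this I would use the "balanced" triangulations and diagonal flips. A flip changes the coordinates by the Ptolemy relation $\lambda\lambda'=\lambda_a\lambda_c+\lambda_b\lambda_d$, so by passing through configurations that are balanced for two triangulations at once, one can move between sign patterns. Concretely, I would show that any point of $\mathfrak{X}_{e,s}$ admits a balanced triangulation in which its sign pattern is a fixed normal form depending only on $(e,s)$. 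Then I would show that the set of points having that normal form in that triangulation is connected. Finally I would show that the mapping class group, which acts transitively on ideal triangulations up to finitely many types, together with paths crossing flip walls, connects these normal-form regions.

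The hard step is proving that a balanced triangulation always exists. For this I would try a minimization argument: choose the triangulation minimizing the number of negatively oriented triangles, and show that a local flip reduces that number whenever the triangulation is unbalanced.
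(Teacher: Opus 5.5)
Your route (Kashaev's decorated $\lambda$-length coordinates, $\eu(\rho)=\tfrac12\sum_T\epsilon_T$, good triangulations and flips) is the one the paper points to, but both substantive steps fail as written.

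\textbf{The constraints do not come from a local count.} The puncture signs are not determined by the orientations of the triangles at the puncture, so the constraints in (1)--(3) cannot follow from counting orientations. In Kashaev's chart for $\Delta$ (which only covers those $\rho$ for which $\Delta$ is admissible), the edge coordinates are positive and the orientations $\epsilon_T\in\{\pm1\}$ are independent data. If $\epsilon_T$ were the product of the signs of its edge coordinates, then, since each edge lies in two triangles, $\prod_T\epsilon_T=1$, and $\eu$ would always be even.

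The holonomy around $v$ translates the horocycle at $v$ by $H_v=\sum_{T\ni v}\epsilon_T\,h_T(v)$, where $h_T(v)=\lambda_{\mathrm{opp}}/(\lambda\lambda')$ is the horocyclic length of the corner. So $s(v)=\operatorname{sgn}H_v$ (with suitable conventions), and type-preservation means $H_v\neq0$. In the tetrahedral triangulation with punctures $1,\dots,4$, put $P=\lambda_{12}\lambda_{34}$, $Q=\lambda_{13}\lambda_{24}$, $R=\lambda_{14}\lambda_{23}$.
\begin{enumerate}
\item If only $\{1,2,3\}$ is negative ($e=1$), then $H_1,H_2,H_3,H_4$ are positive multiples of $P+Q-R$, $P+R-Q$, $Q+R-P$, $P+Q+R$. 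Punctures $1,2,3$ have identical local data, yet $R>P+Q$ gives $s=(-,+,+,+)$. The bound $p_-(s)\le 1$ holds because any two of the first three expressions sum to $2P$, $2Q$ or $2R$.
\item If $\{1,2,3\}$ and $\{1,2,4\}$ are negative ($e=0$), then $H_1,H_2$ are positive multiples of $P-Q-R$ and $H_3,H_4$ of $Q+R-P$. This is where ``exactly two of each sign'' comes from.
\end{enumerate}
This computation, together with a proof that every non-elementary $\rho$ admits an admissible triangulation of this type, is the real content of the ``only if'' directions. A local count only yields $p_+(s)\ge1$ for $e=1$ and nothing at all for $e=0$.

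\textbf{The mechanism for the hard step is vacuous.} In every admissible triangulation the number of negative triangles is $2-\eu(\rho)$, an invariant of $\rho$, so no flip can decrease it. Since ``balanced'' is never defined, the normal-form claim also has no content. The formulas above show what connectedness actually requires. Inside one chart, a given $(e,s)$ arises from several orientation patterns. For example, $e=0$, $s=(-,-,+,+)$ comes from $\{1,2,3\},\{1,2,4\}$ negative with $P<Q+R$, and also from $\{1,3,4\},\{2,3,4\}$ negative with $P>Q+R$. Each such piece is connected, since its inequalities involve only $P,Q,R$ and $\log\lambda\mapsto(\log P,\log Q,\log R)$ is a linear surjection. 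However, pieces with different orientation patterns lie on different sheets $\mathbb{R}_{>0}^6\times\{\epsilon\}$ of the chart. They can only be joined through overlaps with charts of other admissible triangulations. You need an actual argument that all these pieces, over all admissible triangulations, are linked by nonempty overlaps (for instance via flips and the Ptolemy relation), and the proposal provides none.
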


\begin{Theorem}[Maloni--Palesi--Yang \cite{MPY}]\label{conncomp-13}
  Let $s \in \{\pm 1\}^3.$
  \begin{enumerate}
    \item $\mathfrak{X}_{0,s}(N_{1, 3}) \cap \mathfrak{X}_{\text{ne}}(N_{1, 3})$ is nonempty if and only if $s$ contains exactly one or two $+1$'s.
    \item $\mathfrak{X}_{+ 1, s}(N_{1, 3})$ is nonempty if and only if $s$ contains exactly two or three $+1$'s.
    \item $\mathfrak{X}_{- 1, s}(N_{1, 3})$ is nonempty if and only if  $s$ contains exactly two or three $-1$'s.
    \item All the nonempty spaces above are connected.
  \end{enumerate}
\end{Theorem}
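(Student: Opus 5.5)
The plan is to work in the length coordinates for the decorated nonelementary type-preserving character variety of $N_{1,3}$, as generalized by Maloni--Palesi--Yang from Penner and Kashaev. We fix an ideal triangulation of $N_{1,3}$ with vertices at the three punctures. Since $\chi(N_{1,3})=-2$, such a triangulation has $-3\chi=6$ edges and $4$ triangles, and the one-sided curve forces one edge to run through the cross-cap. A decorated type-preserving representation is then encoded by nonzero real numbers $\lambda_e$, one per edge, and a sign pattern determines everything else.

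First, the sign $s(v)$ of each puncture is read off from the signs of the $\lambda$'s on the edge-ends around $v$. Second, the Euler class $\eu(\rho)$ is the sum over triangles of $\pm\frac12$ or $0$, according to the orientation of the ideal triangle spanned by the decorated horocycles. This matches the area description of $\eu$ in Section \ref{sec:euler}, with the funnels absent since the peripherals are parabolic. The Milnor--Wood bound gives $|\eu|\le 2$, and $|\eu|=2$ is the Fuchsian case, so only $\eu\in\{-1,0,1\}$ needs work.

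For items (1)--(3), nonemptiness is shown by writing explicit sign assignments on the $6$ edges for each allowed $(e,s)$. The symmetry $\rho\mapsto\iota_h\circ\rho$ with $h$ orientation-reversing sends $(e,s)$ to $(-e,-s)$, so item (3) follows from item (2). For emptiness, we combine two constraints. The Euler class is a sum of local contributions, and a negative puncture forces a certain parity or count of negatively oriented triangles around it. Together these give $\eu\ge 0$ or $\eu\le0$ inequalities in terms of the number of $+1$'s. For $\eu=0$ with non-elementary image, the all-$+$ and all-$-$ patterns must be ruled out. I would use the trace formula of Roger--Yang type: with all signs equal and $\eu=0$, some two-sided simple closed curve (the boundary $d^2$ of the Möbius band, say) has trace forcing elementarity.

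Item (4), connectedness, is the main obstacle. The sign patterns of the coordinates are not invariant on a component, because edges can change sign only by passing through degenerate configurations, and the triangulation may need flipping. The approach is a balanced triangulation, as in Yang's $S_{0,4}$ case. One first shows that every representation in $\mathfrak{X}_{e,s}$ admits, after a flip sequence, a decoration in which the $\lambda$-signs lie in a canonical pattern determined by $(e,s)$. The set of decorated representations with that fixed sign pattern is then an orthant, hence connected. The decoration fiber is $\Rb_{>0}^3$ and is connected, so the image in $\mathfrak{X}_{e,s}$ is connected. Finally, one checks that distinct balanced patterns with the same $(e,s)$ are related by flips, and that Ptolemy relations preserve sign-consistency, so their orthants glue along common faces. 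This is where I expect the real case analysis, especially around the cross-cap edge.
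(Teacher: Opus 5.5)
Your route (Kashaev-type $\lambda$-lengths extended to $N_{1,3}$, $\eu=\frac12\sum_\Delta\epsilon_\Delta$ over the four triangles, and balanced triangulations) is the one the paper attributes to Maloni--Palesi--Yang. There is, however, a genuine gap at its core: the sign of a puncture is \emph{not} determined by the sign pattern of the $\lambda$'s. Conjugate so that a lift of $v$ is $\infty$ with its horocycle at height one. Then $\rho(c_v)$ is $z\mapsto z+h_v$, where $h_v=\sum_{\text{corners at }v}\epsilon_\Delta\,|\lambda_{\mathrm{opp}}|/(|\lambda_a||\lambda_b|)$ is the sum of Penner's horocyclic lengths, each signed by the orientation of its triangle. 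So $s(v)=\mathrm{sgn}(h_v)$ depends on the magnitudes as soon as the corners at $v$ have mixed orientations. For $|\eu|<2$ this always happens, at the endpoints of any edge separating a positive triangle from a negative one. Moreover $h_v$ can vanish inside an orthant, and there $\rho(c_v)$ is trivial and $\rho$ is not type-preserving.

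This breaks both halves of your plan.

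\emph{Emptiness.} Counting negative corners cannot work. Take the ideal triangulation of $N_{1,3}$ obtained as the antipodal quotient of the octahedron: each triangle has one corner at each puncture $x,y,z$. For $\eu=1$ the single negative triangle then puts a negative corner at every puncture, so no count excludes $p_-(s)\geq 2$. What excludes it is an identity in the magnitudes. Let $p,q,r$ be the absolute $\lambda$-lengths of the $xy$, $yz$, $xz$ edges of the negative triangle, and $p',q',r'$ those of the other edge joining the same pair of punctures. Then $pp'rr'\,h_x+pp'qq'\,h_y=2p(qr+q'r')>0$, and symmetrically for the other pairs, so at most one puncture is negative. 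Likewise, for $\eu=0$ the sign vectors $(+1,+1,+1)$ and $(-1,-1,-1)$ are excluded by a relation of the same kind among the normalized $h_v$, not by a trace forcing elementarity. There are no elementary points with these signs either: an elementary type-preserving $\rho$ fixes a point at infinity, and then the translation parts of $\rho(\alpha\beta)$, $\rho(\beta\gamma)$, $\rho(\gamma\alpha)$ sum to zero.

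\emph{Connectedness.} A fixed-sign orthant is not contained in a single $\mathfrak{X}_{e,s}$. In the example above, the region where exactly one triangle is negative meets $\mathfrak{X}_{1,(+1,+1,+1)}$ and all three $\mathfrak{X}_{1,s}$ with $p_-(s)=1$. So ``orthant, hence connected'' proves nothing. What item (4) actually requires in this approach is:
\begin{itemize}
\item a chart covering, i.e.\ every nonelementary $\rho$ is admissible for some balanced triangulation;
\item connectedness of the semialgebraic regions $\{\mathrm{sgn}(h_v)=s(v)\ \text{for all } v\}$ in these charts;
\item gluing of these regions across charts.
\end{itemize}
Orthants also cannot ``glue along common faces''. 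The coordinate hyperplanes are not in the chart, and under a flip the signed Ptolemy relation changes $\lambda$-signs according to magnitudes.
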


As a consequence, $\mathfrak{X}_{0}(S_{0,4})$ and $\mathfrak{X}_{0}(N_{1, 3})$ have six connected components of non-elementary representations, while $\mathfrak{X}_{+ 1}(S_{0, 4})$ and $\mathfrak{X}_{- 1}(S_{0, 4})$ each have five connected components and $\mathfrak{X}_{+ 1}(N_{1, 3})$ and $\mathfrak{X}_{- 1}(N_{1, 3})$ each have four connected components. A surprising fact that we will discuss below is that different connected components will have different geometric properties.

Recently, for orientable punctured surfaces, Ryu--Yang \cite{RY} classified the number of connected components for type-preserving character varieties using a different method, which instead generalizes ideas used by Goldman \cite{goldman-topological} in the classification of connected components for closed orientable surfaces. In order to state the results, we need to define for a sign $s \in \{\pm 1\}^{n}$, the quantity $\mathrm{p}_+(s)$ as the number of $+1$'s and 
$\mathrm{p}_{-}(s)$ as the number of $-1$'s in the components of $s$. Note that $\mathrm{p}_{+}(s) + \mathrm{p}_{-}(s) = n$.

\begin{Theorem}[Ryu--Yang \cite{RY}] \label{cc-RY-1}
Let $S= S_{g, n}$ be a connected, oriented punctured surface with genus $g \geq 1$ and $n \geq 1$ punctures. Let $e \in \mathbb{N}$ and $s \in \{\pm 1\}^{n}$. Then the space $\mathfrak{X}_{e, s}$ of type-preserving representations with relative Euler class $e$ and sign $s$ is non-empty if and only if the pair $(e, s)$ satisfies the following inequality:
$$\chi(S) + \mathrm{p}_{+}(s) \leq e \leq -\chi(S) - \mathrm{p}_{-}(s).$$
In addition, every non-empty space $\mathfrak{X}_{e, s}$ above is connected.
\end{Theorem}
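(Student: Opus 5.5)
The plan is to adapt Goldman's inductive strategy from Theorem~\ref{mwg_ccb} to type-preserving representations. First I settle the base cases, then decompose a general $S_{g,n}$ into pieces of Euler characteristic $-1$, and finally use additivity of the relative Euler class together with a connectivity argument along the dual tree.

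\textbf{Base cases.} These are the pieces of Euler characteristic $-1$ that carry punctures: the once-punctured torus $S_{1,1}$ and pairs of pants with some boundary components replaced by punctures. For each, I will compute the possible relative Euler classes by explicit computation in $\cPSLR$.
\begin{itemize}
\item The preferred lift of a parabolic has zero translation number. A positive parabolic and a negative parabolic sit on opposite sides of the identity in $\cPSLR$, in the sense of the partial order given by translation at a fixed point.
\item Using Goldman's descriptions of commutators (for $S_{1,1}$) and products (for pants), I will read off which lifts can occur.
\item The aim is the local inequality $\chi + \mathrm{p}_+ \leq e \leq -\chi - \mathrm{p}_-$, with each allowed value realized by a connected set.
\end{itemize}
For example, a pair of pants with two hyperbolic boundaries and one positive puncture should only allow $e\in\{0,1\}$. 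For $S_{1,1}$, the commutator of two lifts lies in a region of $\cPSLR$ between $z^{-1}$ and $z$ that excludes certain parabolic lifts depending on sign, giving $e=1$ only for sign $+$ and $e=-1$ only for sign $-$. This matches the formula with $\chi=-1$.

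\textbf{Inductive step.} Choose a pants-type decomposition with dual graph a tree, in which the punctures are distributed among the pieces. As in Goldman's argument, deform a given representation within $\mathfrak{X}_{e,s}$ so that all internal decomposition curves become hyperbolic. Since $g\geq 1$, I expect to do this while keeping the peripheral parabolics fixed in type and sign. The relative Euler class is then additive over the pieces, and the signs of the punctures are recorded piece by piece.

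\textbf{Non-emptiness.} The necessary direction follows by summing the local inequalities, since $\chi$ and $\mathrm{p}_\pm$ are additive. For sufficiency, I will distribute the target $e$ among the pieces within their local ranges and glue, matching hyperbolic elements along internal curves by conjugation. Here $g\geq1$ matters: it guarantees at least one torus piece, which has enough room to absorb any residual Euler class. This is presumably why the genus-zero case, with Yang's extra constraints for $S_{0,4}$, differs.

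\textbf{Connectedness.} Two representations with the same $(e,s)$ may have different local Euler class distributions. I must show these can be moved into each other by paths through configurations where some internal curve becomes parabolic or elliptic. Following Goldman, I will first treat the surfaces of Euler characteristic $-2$ with punctures, namely $S_{1,2}$, $S_{2,1}$, $S_{0,4}$ with mixed boundary, and $S_{1,1}\cup$ pants. On each, I will show that adjacent distributions $(e_1,e_2)$ and $(e_1\pm1,e_2\mp1)$ are connected whenever both are locally allowed. Then I induct along the tree.

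I expect the main obstacle to be this transfer step when punctures are involved. In particular, when a piece contains punctures of a fixed sign, its local range is truncated, and one must check that the tree can still be rearranged so that Euler class is moved through a torus piece. This is where the hypothesis $g\geq 1$ should be used essentially. I would first try to arrange all punctures in pants attached as leaves to a chain of one-holed tori, and move Euler class only along that chain.
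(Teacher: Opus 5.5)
The paper gives no proof beyond saying that Ryu--Yang generalize Goldman's method, and your outline is of that type. Your local ranges are also right: for example $e\in\{0,1\}$ both for $S_{1,1}$ with sign $+$ and for a pants with one positive puncture and two hyperbolic boundaries.

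\textbf{The gap: the deformation step.} You pass over it with ``Since $g\geq1$, I expect to do this'': moving an arbitrary $\rho\in\mathfrak{X}_{e,s}$, \emph{inside} $\mathfrak{X}_{e,s}$, to one whose decomposition curves are all hyperbolic. This is not ``as in Goldman''. Goldman deforms in $W(S_{g,n})$, where peripheral holonomy only has to be non-elliptic and can leave the parabolic locus. Here each puncture is pinned to a fixed parabolic conjugacy class.

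Write $\mathrm{Hyp}_0$ and $\mathrm{Par}_0^{\pm}$ for zero-translation lifts in $\cPSLR$, with $\mathrm{Par}_0^{+}$ the lifts of positive parabolics (they move every point weakly in the direction of $z$). Write $\mathrm{Ell}_{(0,1)}$ for elliptic lifts with translation number in $(0,1)$. For two positive parabolics, the product of preferred lifts lies only in $\mathrm{Par}_0^+\cup\mathrm{Ell}_{(0,1)}\cup z\,\mathrm{Par}_0^-\cup z\,\mathrm{Hyp}_0$.

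Take $S_{0,4}$ with all signs $+$ and $e=1$. The relation $\tilde c_1\tilde c_2=z\,(\tilde c_3\tilde c_4)^{-1}$ forces $\tilde c_1\tilde c_2\in\mathrm{Par}_0^+\cup\mathrm{Ell}_{(0,1)}\cup z\,\mathrm{Par}_0^-$. So the separating curve can never be made hyperbolic. These are the supra-maximal representations of Theorem~\ref{cc-RY-2}(3). They violate $\chi+\mathrm{p}_+\leq e$, yet they lie in the $e=1$ component of Goldman's $W(S_{0,4})$.

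Consequently your necessity argument (summing local inequalities) says nothing until the deformation lemma is proved, and that lemma does not follow from local information. Consider a leaf pants with two positive punctures, bounded by $C$. An elliptic $\rho(C)$ can only become hyperbolic by passing through $z\,\mathrm{Par}_0^-$ into $z\,\mathrm{Hyp}_0$. That requires the complementary subsurface to carry relative Euler class $e-1$. By the inequality for that smaller subsurface, this means $e\geq\chi(S)+\mathrm{p}_+(s)$, which is the very inequality you are trying to prove.

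\textbf{What is missing.} You need an input that handles \emph{non-hyperbolic} cut curves directly. For a subsurface with prescribed puncture signs and one free boundary, you need the image in $\cPSLR$ of the lifted boundary map, elliptic part included, together with path lifting over the non-abelian locus. Intersecting such images across a cut gives the inequality for every $\rho$. It also tells you when an elliptic cut curve can be pushed to a hyperbolic one.

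This is where $g\geq1$ genuinely enters. By Goldman, the commutator image of a one-holed torus contains every elliptic lift with translation number in $(-1,1)$. It also contains $\mathrm{Par}_0^{\pm}$, $\mathrm{Hyp}_0$, $z^{\pm1}\mathrm{Hyp}_0$, $z\,\mathrm{Par}_0^-$ and $z^{-1}\mathrm{Par}_0^+$. So it fills the whole translation-number window, unlike the $(+,+)$ pants above, whose image skips $\mathrm{Hyp}_0$ and $\mathrm{Par}_0^-$. For instance, on $S_{1,2}$ with signs $(+,+)$ this intersection gives exactly $e\in\{0,1,2\}$.

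By contrast, the uses of $g\geq1$ you name are not where the difficulty lies. Every local range is a nonempty integer interval, and the endpoints add up to the global bounds. So distributing $e$ and gluing realizes the whole range for punctured spheres as well (case~(1) of Theorem~\ref{cc-RY-2}), and no torus is needed to absorb Euler class. A minor slip: $\chi(S_{2,1})=-3$, so the $\chi=-2$ surfaces relevant to your transfer step are $S_{1,2}$ and $S_{0,4}$.
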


\begin{Theorem}[Ryu--Yang \cite{RY}] \label{cc-RY-2}
Let $S= S_{0, n}$ be a punctured sphere with $n \geq 3$ punctures. Let $e \in \mathbb{N}$ and $s \in \{\pm 1\}^{n}$. Then the space $\mathfrak{X}_{e, s}$ of type-preserving representations with relative Euler class $e$ and sign $s$ is non-empty if and only if the pair $(e, s)$ satisfies one of the following conditions:
\begin{enumerate}
    \item $$\chi(S) + \mathrm{p}_{+}(s) \leq e \leq -\chi(S) - \mathrm{p}_{-}(s);$$
    \item $e = 0$, and either $\mathrm{p}_{-}(s) = 1$ or $\mathrm{p}_{+}(s) = 1$;
    \item $e = 1$ and $\mathrm{p}_{-}(s) = 0$ or $e = -1$ and $\mathrm{p}_{+}(s) = 0$.
\end{enumerate}
In addition, every non-empty space $\mathfrak{X}_{e, s}$ above is connected.
\end{Theorem}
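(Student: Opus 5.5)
Theorem \ref{cc-RY-2} (Ryu--Yang) has two halves: the non-emptiness criterion and the connectedness of each non-empty $\mathfrak{X}_{e,s}$. I would prove both by induction on $n$, following Goldman's strategy for Theorem \ref{mwg_ccb}. The base case is the thrice-punctured sphere $S_{0,3}$. There $\pi_1(S_{0,3})=\langle c_1,c_2,c_3\mid c_1c_2c_3\rangle$, so a type-preserving representation is a pair of parabolics $A=\rho(c_1)$, $B=\rho(c_2)$ whose product $AB$ is also parabolic. I would lift $A$, $B$ and $(AB)^{-1}$ to their preferred (zero translation length) lifts in $\cPSLR$ and compute the relative Euler class from the resulting power of $z$, as in Section~\ref{sec:euler}. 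Explicitly, conjugate $A=\pm\begin{pmatrix}1&s_1\\0&1\end{pmatrix}$ and $B=\pm\begin{pmatrix}1&0\\ t&1\end{pmatrix}$; then $\Tr(AB)=2+s_1t$ must equal $\pm2$. The sign of $s_1t$ and the signs $s(v)$ determine the possible $e$.

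This case analysis should show that the only possibilities are $e=\pm1$ with all signs equal (condition (3), which is also the Fuchsian case since $\chi=-1$), and $e=0$ with exactly one sign differing (condition (2)). Condition (1) is empty when $n=3$, because $\chi+\mathrm{p}_+\le e\le-\chi-\mathrm{p}_-$ forces $\mathrm{p}_++\mathrm{p}_-\le 2$. Connectedness of each piece for $n=3$ is direct, since each $\mathfrak{X}_{e,s}(S_{0,3})$ turns out to be a point or empty after conjugation.

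For the inductive step, I would write $S_{0,n}=S_{0,n-1}\cup_\gamma S_{0,3}$, cutting along a separating curve $\gamma$ that cuts off two punctures. First I would deform $\rho$ inside $\mathfrak{X}_{e,s}$ so that $\rho(\gamma)$ is hyperbolic, which should be possible off a codimension-one set. Then $e$ splits additively into relative Euler classes of the two pieces, now with one hyperbolic boundary component. This requires a mixed version of the base case: the pair of pants with two parabolic ends and one hyperbolic end. A Goldman-type computation there gives $e'\in\{-1,0,1\}$, with $e'=\pm1$ forcing the two parabolic signs to be $\pm$ respectively and $e'=0$ allowed for any signs. Summing these constraints over the pieces recovers inequality (1) for non-emptiness. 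The exceptional cases (2) and (3) survive only when no hyperbolic cut exists, i.e.\ when $n=3$ or when every admissible cut is forced to be non-hyperbolic.

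For connectedness, I would follow Goldman's induction along the dual tree. Representations with the same Euler data on every piece are connected by the inductive hypothesis, together with connectedness of the space of gluing parameters (the twist, and the conjugacy class of $\rho(\gamma)$). Then I would check on $\chi=-2$ configurations, i.e.\ $S_{0,4}$ via Theorem \ref{conncomp-04}, that any two distributions of $(e,s)$ among the pieces with the same totals are joined by paths that pass through $\rho(\gamma)$ parabolic or elliptic.

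I expect the main obstacle to be this last point. Moving Euler class between pieces requires crossing loci where $\rho(\gamma)$ is not hyperbolic, and the crossing must preserve the global $(e,s)$. The exceptional $e=0$ and $e=\pm1$ components, and the fact from Theorem \ref{conncomp-04} that some $\mathfrak{X}_{e,s}(S_{0,4})$ are empty, show that genus zero lacks the flexibility that handles generic cases. I would handle it by a case check on $S_{0,4}$ and $S_{0,5}$, using Kashaev-type length coordinates as in Yang's proof to realize the connecting paths explicitly.
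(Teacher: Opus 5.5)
The survey gives no proof of this theorem beyond saying that Ryu--Yang extend Goldman's method, so your cut-and-glue strategy is the expected one. As written, however, it has two genuine gaps.

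\textbf{The pants computation is wrong.} Your rule for the pair of pants with two cusps and one hyperbolic end is incorrect. Write $A=\pm\begin{pmatrix}1&a\\0&1\end{pmatrix}$ and $B=\pm\begin{pmatrix}1&0\\b&1\end{pmatrix}$; the cusps must have distinct fixed points if $AB$ is hyperbolic. The signs of $A$ and $B$ are those of $a$ and $-b$, and $AB$ is hyperbolic exactly when $ab>0$ or $ab<-4$.
\begin{itemize}
\item If $ab>0$, the signs are opposite and $e'=0$. To see this, shrink $b$ to $0$: the product of the preferred lifts of $A$ and $B$ keeps an integer translation number and tends to the preferred lift of $A$, so it is the preferred lift of $AB$.
\item If $ab<-4$, the signs agree and $\langle A,B\rangle$ is the Fuchsian pants with two cusps and a funnel, so $e'=\pm1$.
\end{itemize}
Hence $e'$ is determined by the signs, and $e'=0$ is impossible when they agree. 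With your rule you could glue a $(+,+)$ pants with $e'=1$ to a $(-,-)$ pants with $e'=0$ and obtain $e=1$, $s=(+,+,-,-)$ on $S_{0,4}$. That class is empty by Theorem~\ref{conncomp-04} and by the statement itself, so summing your constraints does not yield (1).

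Moreover, after cutting, $S_{0,n-1}$ has a hyperbolic boundary component. The induction therefore has to run on a classification for spheres with signed cusps \emph{and} hyperbolic ends, which you never formulate; the theorem as stated does not apply to that piece. A smaller slip in your base case: the $e=0$ class comes from cusps with a common fixed point, which your normalization excludes. It is a one-parameter family of abelian classes, not a point.

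\textbf{The deformation to a hyperbolic cut fails on whole components.} The step ``deform $\rho$ inside $\mathfrak{X}_{e,s}$ so that $\rho(\gamma)$ is hyperbolic, off a codimension-one set'' is false on entire components. For $n\ge4$, the classes in (2) are abelian, so $\rho(\gamma)$ is parabolic or trivial, and those in (3) are of Deroin--Tholozan type. Both send every simple closed curve to a non-hyperbolic element. Saying these classes ``survive only when no hyperbolic cut exists'' is not an argument. You would still need to:
\begin{itemize}
\item show that a representation admitting no hyperbolic separating curve has $(e,s)$ of type (2) or (3), which is part of the ``only if'' direction;
\item construct type-(3) representations for every $n\ge4$, which your gluing cannot produce;
\item prove connectedness of $\mathfrak{X}_{e,s}$ in cases (2) and (3), for example by proving that case (2) consists only of abelian representations and analysing the supra-maximal component separately.
\end{itemize}
Even in case (1), the claim that every representation can be joined inside $\mathfrak{X}_{e,s}$ to one with $\rho(\gamma)$ hyperbolic is precisely the key lemma. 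It cannot be a general-position fact, since it is false on the other components.

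In your decomposition the Euler class of the pants piece is forced by its signs. So the real difficulty is not moving Euler class across $\gamma$, but this deformation lemma together with the exceptional classes. A case check on $S_{0,4}$ and $S_{0,5}$ does not reach general $n$.
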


The inequalities in Theorem \ref{cc-RY-1} and \ref{cc-RY-2} can be considered as ``generalized Milnor--Wood inequalities''. Ryu--Yang also show that the representations satisfying condition $(2)$ in the above theorem are abelian (and hence elementary), see Section 7.1 in \cite{RY}.

The representations which satisfy conditions $(2)$ and $(3)$ in Theorem \ref{cc-RY-2} are the super-maximal representations described by Deroin and Tholozan and that we will introduce in Section \ref{super-maximal}. In particular these representations send every element of the fundamental group represented by a simple closed curve in $S_{0,n}$ to a non-hyperbolic element of $\PSLR$ and one can see that the representations in $(2)$ are abelian, while the representations in $(3)$ are irreducible.

\subsubsection{Hyperbolicity of simple closed curves}\label{geom}

One can generalize Bowditch's \cite{bow_mar} question discussed in Section \ref{o-surface-PSL-R} to the case of type-preserving representations: \textit{Given a non-elementary type-preserving representation $\rho\co\pi_1(S)\to \PGLR$, is it true that if $\rho$ sends every non-peripheral $2$--sided simple closed curve to a hyperbolic element of $\PSLR$, then $\rho$ is Fuchsian?}  As already discussed, March\'e and Wolff \cite{mar-wol} answered affirmatively for the closed genus-$2$ surface $S_{2,0}$, but Yang \cite{yan_ont} and Maloni--Palesi--Yang \cite{MPY} showed that the answer is no for $S_{0,4}$ and $N_{1,3}$, respectively. Moreover, they characterized exactly for which connected components in $\mathfrak{X}_{tp}(S)$ the answer is yes and for which ones the answer is no. Ryu \cite{ryu} partially generalized these results and showed that the answer to the question above is generally no for all punctured orientable surfaces. 

\begin{Theorem}[Yang \cite{yan_ont}] 
\label{hyp-04}\noindent
Let $s \in \{\pm 1\}^4$ and $e \in \mathbb{N}$.
\begin{enumerate}
\item If either $e=1$ and $p_{-}(s) = 1$, or $e=-1$ and $p_{+}(s) = 1$, then there exists a full measure subset $A$ of $\mathfrak X_{e, s}(S_{0,4})$ in which any $\rho \in A$ sends every non-peripheral simple closed curve to a hyperbolic element.
\item Let $s_+=(+1,+1,+1, +1)$ and $ s_-=(-1,-1,-1, -1)$, and let $s \in \{\pm 1\}^4$ be such that $p_{+}(s) =  p_{-}(s) = 2$. Then every representation in $\mathfrak{X}_{0, s}(S_{0,4})$, $\mathfrak{X}_{1, s_+}(S_{0,4})$ and $\mathfrak{X}_{-1, s_-}(S_{0,4})$ sends some non-peripheral simple closed curve to a non-hyperbolic element. 
\end{enumerate}
\end{Theorem}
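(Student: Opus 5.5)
The plan is to work in Kashaev--Penner length coordinates on the decorated type-preserving character variety of $S_{0,4}$, attached to a well-chosen (``balanced'') ideal triangulation. An ideal triangulation of $S_{0,4}$ has $6$ edges, and a type-preserving representation with decoration is encoded by nonzero real $\lambda$-lengths $(\lambda_1,\dots,\lambda_6)$. Their signs, modulo the action of changing decorations and signs at the punctures, determine both the Euler class $e$ (via the signed areas of the ideal triangles) and the sign vector $s$ (via the signs of the $h$-lengths around each puncture). By Theorem \ref{conncomp-04}, each nonempty $\mathfrak{X}_{e,s}(S_{0,4})$ is connected, so for each pair $(e,s)$ in the statement I would fix one sign pattern on a balanced triangulation realizing it.

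Next I would express traces of simple closed curves in these coordinates. Non-peripheral simple closed curves on $S_{0,4}$ correspond to slopes $p/q\in\Qb\cup\{\infty\}$. For a curve $\gamma$ crossing the edges of the triangulation, the Roger--Yang type formula writes $\Tr\rho(\gamma)$ as a Laurent polynomial in the $\lambda_i$, namely a sum over ``states'' of monomials. I would first treat the three curves of slopes $0,1,\infty$ disjoint from, or crossing minimally, the triangulation. Then I would propagate to all slopes using the Markov-type trace relations for the four-holed sphere: with traces $x,y,z$ of three curves pairwise intersecting twice, the neighbouring triples are obtained by the moves $x\mapsto x' = -x - yz + \cdots$, where the omitted terms involve the boundary traces, all equal to $\pm2$ with signs given by $s$.

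For part (2), with $s$ having two $+1$'s and two $-1$'s and $e=0$, or with $(e,s)=(1,s_+)$ or $(-1,s_-)$, I would show that on the explicit sign pattern some curve among the slope $0,1,\infty$ curves has $|\Tr|<2$, or is forced to be $\le 2$ in absolute value. I would get this by computing the trace as a sum of terms of mixed signs whose magnitudes are controlled by AM--GM; for instance $|\Tr| = |a/b - b/a + \cdots|$ can be made small. If this holds everywhere on the component, the conclusion follows directly. If instead it is not uniform for a single curve, I would argue via the Markov-type moves on the ``Farey tree'': by a Bowditch-style descent, absence of any non-hyperbolic curve would force traces to grow along every path in the tree. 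That growth in turn forces a sign coherence of the triples, and the sign coherence contradicts the prescribed $e$ and $s$.

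For part (1), take $e=1$ and $p_-(s)=1$; the case $e=-1$ is symmetric by $\PGLR$-conjugation. I would exhibit the full-measure set $A$ as the complement of a countable union of proper real-algebraic hypersurfaces $\{\Tr\rho(\gamma)^2=4\}$, intersected with the locus where all curve traces have absolute value $>2$. The key step is to show that, in the chosen sign pattern, every monomial in the state-sum for $\Tr\rho(\gamma)$ has the same sign for every simple closed $\gamma$, except for a controlled contribution from the unique negative puncture. This should yield $|\Tr\rho(\gamma)|\ge 2$, with equality only on a measure-zero set.

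I expect this sign-coherence claim for all slopes simultaneously to be the main obstacle. I would attack it by induction along the Farey tree, using the edge-relation $x x' = \ldots$ so as to preserve the sign and magnitude inequalities at each step.
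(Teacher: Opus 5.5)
There is a genuine gap, and it is the one you anticipated. Part (1) rests on a global sign-coherence claim for the state-sums of all slopes in one fixed triangulation. You do not prove this claim, and it is not what makes the statement true. Also, putting ``the locus where all curve traces have absolute value $>2$'' into the definition of $A$ assumes the conclusion.

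The missing idea is local. Every essential simple closed curve $\gamma$ on $S_{0,4}$ cuts off a twice-punctured disc. Hence $\rho(\gamma)$ is conjugate to $\rho(c_i')\rho(c_j')$, with $c_i',c_j'$ peripheral around two punctures $v_i,v_j$ and oriented as in the definition of the signs. If the two fixed points differ, normalize them to $\infty$ and $0$ and take the trace-$2$ lifts $\begin{pmatrix}1&a\\0&1\end{pmatrix}$ and $\begin{pmatrix}1&0\\c&1\end{pmatrix}$. Their signs are $\mathrm{sgn}(a)$ and $\mathrm{sgn}(-c)$, and the product has trace $2+ac$. So opposite signs force trace $>2$, and equal signs force trace $<2$.

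When $p_-(s)=1$, the side of $\gamma$ containing the negative puncture pairs it with a positive one. So $\rho(\gamma)$ is hyperbolic unless these two parabolics share a fixed point. The set $A$ is the complement of these countably many proper analytic subsets, and this is exactly where ``full measure'' comes from. (For non-elementary $\rho$, $p_-(s)=1$ already forces $e=1$ by Theorem~\ref{cc-RY-2}.) No global sign pattern, Farey induction or AM--GM is needed.

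Part (2) has further problems. Connectedness of $\mathfrak X_{e,s}(S_{0,4})$ does not let you fix one triangulation with one sign pattern. The $\lambda$-coordinates of a fixed triangulation are defined only on an open dense set, whose pieces can carry different sign patterns (e.g.\ which triangles are negative). So a good triangulation has to be chosen depending on $\rho$.

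Nor do the boundary traces in the Fricke relation see $s$. Take trace-$2$ lifts of a standard system $c_1',\dots,c_4'$ with $c_1'c_2'$ representing $\gamma$. Then $\tilde\rho(c_1')\tilde\rho(c_2')\tilde\rho(c_3')\tilde\rho(c_4')=(-I)^{e}$, by the $\cPSLR$ definition of the relative Euler class. So only the parity of $e$ enters, and $s$ must come in through the inequalities above. These already settle $\mathfrak X_{1,s_+}$ and $\mathfrak X_{-1,s_-}$: both sides of $\gamma$ are equal-sign pairs, and $\tilde\rho(c_1')\tilde\rho(c_2')=-(\tilde\rho(c_3')\tilde\rho(c_4'))^{-1}$. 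Hence $-2\le \Tr(\tilde\rho(c_1')\tilde\rho(c_2'))\le 2$ for every $\gamma$.

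The case $\mathfrak X_{0,s}$ with $p_+(s)=p_-(s)=2$ is where real work is needed, and your plan does not provide it. Generically, only curves separating the two positive from the two negative punctures can be non-hyperbolic. No fixed such curve works: glue a positive and a negative Fuchsian twice-cusped pair of pants along geodesic boundaries of equal length. This gives (generically, after changing the marking) points of $\mathfrak X_{0,s}$ where the curves of slopes $0,1,\infty$ are all hyperbolic. So the curve must depend on $\rho$, and your fallback is only asserted.

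In fact the principle ``no non-hyperbolic curve forces growth along every path of the tree'' is false in general. The components of part (1) consist almost entirely of representations with all curves hyperbolic, yet the mapping class group acts ergodically on them (Theorem~\ref{ergodicity}). That is incompatible with Bowditch-type growth. The contradiction for $e=0$ has to use the specific sign data, and the step ``sign coherence contradicts $e$ and $s$'' is exactly what is missing.
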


\begin{Theorem}[Maloni--Palesi--Yang \cite{MPY}] 
\label{hyp-13}\noindent
Let $s \in \{\pm 1\}^3$ and $e \in \mathbb{N}$.
\begin{enumerate}
\item If either $e=1$ and $p_{-}(s) = 1$, or $e=-1$ and $p_{+}(s) = 1$, then there exists a full measure subset $A$ of $\mathfrak X_{e, s}(N_{1,3})$ in which every $\rho \in A$ sends every non-peripheral simple closed curve to a hyperbolic element.
\item Let $s_+=(+1,+1,+1)$ and $ s_-=(-1,-1,-1)$, and let $s \in \{\pm 1\}^3$ such that $p_{+}(s) \in \{1,2\}$. Then every representation in $\mathfrak{X}_{1, s+}(N_{1, 3})$, $\mathfrak{X}_{-1, s_-}(N_{1, 3})$ and $\mathfrak{X}_{0, s}(N_{1, 3})$ sends some non-peripheral $2$-sided simple closed curve to a non-hyperbolic element. 
\end{enumerate}
\end{Theorem}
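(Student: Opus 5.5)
The plan uses the length coordinates of Maloni--Palesi--Yang for the decorated type-preserving character variety of $N_{1,3}$, together with a balanced ideal triangulation of $N_{1,3}$. In these coordinates each component $\mathfrak X_{e,s}(N_{1,3})$ is identified, after the decoration, with an explicit semi-algebraic region. The trace of the image of any $2$-sided simple closed curve $\gamma$ is then a Laurent polynomial in the coordinates, computed by the Roger--Yang type formula. This reduces hyperbolicity, $|\Tr\rho(\gamma)|>2$, to a sign/size question about that polynomial, and the analysis splits according to the two parts of the statement.

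For part (2), we exhibit in each listed component a specific non-peripheral $2$-sided simple closed curve, or a finite list of candidates among curves adapted to the triangulation, whose trace is forced to satisfy $|\Tr|\le 2$.

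1. For $\mathfrak X_{0,s}$ with $p_+(s)\in\{1,2\}$, we cut $N_{1,3}$ along the $2$-sided curve separating one puncture together with the crosscap from the other two punctures, or along the curve bounding the Möbius band around the one-sided curve.
2. We compute that the constraints defining the component (the signs of the coordinates dictated by $e$ and $s$) force the trace of that curve into $[-2,2]$.
3. Alternatively, we can argue by additivity of the relative Euler class. If the curve were hyperbolic, the restrictions to the two pieces (a pair of pants and a one-holed Möbius band / $N_{1,2}$) would have relative Euler classes compatible with the generalized Milnor--Wood inequalities of Ryu--Yang type. For the sign patterns listed, no such compatible decomposition exists, so the curve must be non-hyperbolic.
4. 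The cases $\mathfrak X_{\pm1,s_\pm}$ run the same way. A hyperbolic separating curve would force a pair of pants with three positive parabolic-or-hyperbolic boundaries and Euler class $1$, which is impossible.

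Part (1) is the hard part. For $e=1$, $p_-(s)=1$ (the case $e=-1$ follows by the orientation-reversing conjugation), we want to show that almost every representation is non-Fuchsian yet all $2$-sided simple closed curves are hyperbolic.

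1. Using the trace formula, we show that for every $2$-sided simple closed curve the trace polynomial has coefficients of a fixed sign on this component. This yields $|\Tr|\ge 2$, with equality only on a proper algebraic subset.
2. We take $A$ to be the complement of the countable union of these measure-zero subsets. Since simple closed curves are countable, $A$ has full measure.
3. We note that the component is not maximal, since $|e|=1<|\chi(N_{1,3})|=2$, so its points are not Fuchsian.

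The main obstacle is establishing the uniform sign of the trace coefficients over all infinitely many curves. We will first try an inductive argument along the flip graph, or equivalently along the Farey-type tree of $2$-sided curves in $N_{1,3}$. The aim is to show that positivity of a suitable edge-weight is preserved by the trace (Ptolemy-type) relations under flips of the balanced triangulation, in the spirit of Bowditch's Markoff-tree arguments.
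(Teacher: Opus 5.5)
\textbf{Part (2) cannot work as you set it up.} The statement only asserts that \emph{some} curve is non-hyperbolic, and that curve must depend on $\rho$.

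Any non-peripheral 2-sided curve $\xi$ cuts $N_{1,3}$ into a pair of pants $P$ carrying two punctures and a copy $M\cong N_{1,2}$ carrying the cross-cap and the third puncture. In each listed component, $\rho(\xi)$ can be hyperbolic:
\begin{itemize}
\item In $\mathfrak{X}_{0,s}(N_{1,3})$ with $s=(+,+,-)$, take $\xi$ cutting off the two positive cusps. Glue a Fuchsian pair of pants with two positive cusps (Euler class $1$) to a Fuchsian representation of $M$ with negative cusp (Euler class $-1$), along boundary geodesics of equal length. The result is a non-elementary point of $\mathfrak{X}_{0,s}(N_{1,3})$ with $\rho(\xi)$ hyperbolic.
\item In $\mathfrak{X}_{1,s_+}(N_{1,3})$, glue the Fuchsian pair of pants to a representation of $M=\langle\alpha,\beta\rangle$ with positive cusp, hyperbolic boundary and relative Euler class $0$. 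Such a representation exists: for determinant $-1$ lifts normalised by $\Tr\rho(\alpha\beta)=2$ one has $\Tr\rho(\alpha\beta^{-1})=2-ab$. The hyperbolic locus therefore has the two regions $ab<0$ and $ab>4$, and only one of them is Fuchsian.
\end{itemize}
Twisting along $\xi$ preserves the component. Every other 2-sided curve crosses $\xi$, so their traces generically grow without bound under the twist, and any prescribed finite family becomes hyperbolic simultaneously. Hence no fixed curve, and no fixed finite list, contains a non-hyperbolic member for every $\rho$.

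The same examples defeat your Euler-class alternative: the bookkeeping $0=1+(-1)$, respectively $1=1+0$, is perfectly consistent. Your step 4 is false as stated: a pair of pants with two positive cusps, a hyperbolic third boundary and Euler class $1$ is just a Fuchsian pair of pants (with three positive cusps it is the Fuchsian thrice-punctured sphere). The curve bounding a M\"obius band is no candidate either, since its image is the square of an orientation-reversing isometry, hence hyperbolic or trivial. What is missing is a genuinely $\rho$-dependent existence argument: for each $\rho$, a good triangulation, or a Bowditch-type descent on the tree ending at a non-hyperbolic curve.

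\textbf{Part (1): the central step is a non sequitur.} Coefficients of one sign do not give $|\Tr|\ge 2$. Moreover, in a non-maximal component the ideal triangles are not coherently oriented ($e$ is half their signed count), so the positivity behind Penner-type formulas in the Fuchsian case is unavailable. You also omit one-sided curves, which the statement includes. These are easy: $\rho(\alpha)$ is non-hyperbolic iff it is a reflection, i.e.\ a determinant $-1$ lift has trace $0$. That is a codimension-one condition, once you check it does not hold identically on the component.

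The actual structure is more specific. For 2-sided $\xi$, $\rho(\xi)$ is conjugate to the product of the two peripheral parabolics of $P$, and these have opposite signs, as seen from $P$, exactly when the two punctures have $s_j\neq s_k$. In that case they are conjugate to $\begin{pmatrix}1&a\\0&1\end{pmatrix}$ and $\begin{pmatrix}1&0\\b&1\end{pmatrix}$ with $ab>0$. The trace is then $2+ab>2$ unless the fixed points coincide, which is a null set.

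For $p_-(s)=1$ this settles every 2-sided curve except those whose $N_{1,2}$ side contains the negative puncture. For those, $P$ carries two positive cusps and the product can a priori be elliptic. By part (2), in $\mathfrak{X}_{0,s}$ with the same $s$ one of exactly these curves is non-hyperbolic for every $\rho$, since the other types are automatically hyperbolic there. So excluding ellipticity must use $e=1$ in an essential way. Your flip-induction never identifies how $e$ enters, so the hard case remains open.
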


Note that the representations described in condition $(2)$ of Theorems \ref{hyp-04} and \ref{hyp-13}, that is, type-preserving representations in $\mathfrak{X}_{1, s_+}(S_{0,4})$, $\mathfrak{X}_{-1, s_-}(S_{0,4})$, $\mathfrak{X}_{0, s}(S_{0,4})$, $\mathfrak{X}_{1, s_+}(N_{1, 3})$, $\mathfrak{X}_{-1, s_-}(N_{1, 3})$ and $\mathfrak{X}_{0, s}(N_{1, 3})$, are not Fuchsian, so Theorems \ref{hyp-04} and \ref{hyp-13} gives a negative answer to the generalization of Bowditch's question for punctured surfaces. On all the other components, the answer to Bowditch's question is affirmative.

Ryu generalized part of this result and proved this result for general punctured orientable surfaces.

\begin{Theorem}[Ryu \cite{ryu}] 
\label{gen-bow}\noindent
Let $S = S_{g, n}$ with $\chi(S) \leq -2$ and $n \geq 1$, and assume $e \in \mathbb{N}$ and $s \in \{\pm 1\}^n$ satisfy either $e = -\chi(S)-1$ and $p_{-}(s) = 1$ or $e = \chi(S)+1$ and $p_{+}(s) = 1$. Then there exists a full measure subset $A$ of $\mathfrak X_{e, s}(S_{g,n})$ in which every $\rho \in A$ sends every non-peripheral simple closed curve to a hyperbolic element.

In particular, there exist uncountably many non-Fuchsian type-preserving representations in $\mathfrak{X}_{e, s}(S_{g, n})$ of relative Euler class $e$ and sign $s$ that are totally hyperbolic, that is, they send every non-peripheral element represented by a simple closed curve to a hyperbolic element. 
\end{Theorem}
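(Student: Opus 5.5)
The idea is to reduce to the case of small surfaces already handled in Theorem \ref{hyp-04}, via a pants decomposition adapted to the single puncture of opposite sign. By symmetry (conjugating by an orientation-reversing element of $\PGLR$ negates $e$ and $s$), I treat only the case $e=-\chi(S)-1$, $p_-(s)=1$. Let $v$ be the unique negative puncture. Choose a pants decomposition $\mathcal{P}$ of $S_{g,n}$ in which $v$, together with one other (positive) puncture $w$ and two pants curves, bounds a four-holed sphere $\Sigma\subset S$. Along each curve $\gamma\in\mathcal{P}$ with $\rho(\gamma)$ hyperbolic, the relative Euler class is additive under cutting, in the sense of Goldman's argument recalled after Theorem \ref{mwg_ccb}. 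Each piece has Euler class at most $-\chi$ of the piece, with equality only for Fuchsian restrictions. The total defect is exactly $1$. The sign constraint from Theorem \ref{cc-RY-2} also forces the pair of pants containing $v$ to be non-maximal: a maximal, Fuchsian pair of pants has all parabolic boundaries positive. Hence, on the open dense set where all curves of $\mathcal{P}$ are hyperbolic, $\rho$ restricted to every piece away from $v$ is Fuchsian, and the defect is concentrated near $v$.

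Next I would pass from a single pants decomposition to all simple closed curves. For a non-peripheral simple closed curve $\delta$, there are two cases.
\begin{enumerate}
\item If $\delta$ is disjoint from a subsurface on which $\rho$ is Fuchsian, or lies in one, then $\rho(\delta)$ is hyperbolic: restrictions of Fuchsian representations to essential subsurfaces are discrete and faithful with hyperbolic non-peripheral images.
\item Otherwise, I would argue by induction on complexity using the mapping class group. The set $A$ will be the set of $\rho$ such that for every pants decomposition, every pants curve is sent to a hyperbolic element. It is a countable intersection of the sets $\{\rho : \rho(\delta)\text{ not parabolic}\}$, each of which is open with null complement, intersected with the condition of non-ellipticity. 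Hence the task is to show that the ellipticity locus $E_\delta=\{\rho\in\mathfrak{X}_{e,s} : \rho(\delta)\text{ elliptic}\}$ is empty for every $\delta$.
\end{enumerate}

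Emptiness of $E_\delta$ is the key step. Suppose $\rho(\delta)$ is elliptic. Cut $S$ along $\delta$ into subsurfaces. For elliptic boundary the Euler class is computed with the rotation-number correction, and the pieces' Euler classes plus the fractional rotation terms sum to $e$. Each piece satisfies a generalized Milnor--Wood inequality in the style of Theorem \ref{cc-RY-1}/\ref{cc-RY-2}, adapted to one elliptic boundary. That inequality loses at least one unit compared with a hyperbolic boundary, and the negative puncture costs a further unit. Together the two costs would force $e\le -\chi(S)-2$, a contradiction. Verifying this inequality for elliptic boundary is where I expect the main obstacle to be. I would first try to deduce it from the $S_{0,4}$ case of Theorem \ref{hyp-04}, by embedding $\delta$ (or, if $\delta$ separates $v$ from the rest, a curve near it) in a four-holed sphere containing $v$, where the restricted representation lands in the component $e=1$, $p_-=1$. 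This reduces the problem to the base case, up to the full-measure issue, which is why the conclusion holds only on the full measure set $A$ rather than everywhere.

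Finally, the \emph{in particular} statement follows. $\mathfrak{X}_{e,s}$ is nonempty by Theorem \ref{cc-RY-1}/\ref{cc-RY-2}, and it has positive dimension $-3\chi(S)-n>0$, so $A$ is uncountable. No element of $A$ is Fuchsian, since $|e|<-\chi(S)$.
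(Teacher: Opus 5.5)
The survey states Ryu's theorem without proof, so this is an assessment of your argument on its own terms.

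\textbf{What is right.} Your reformulation is correct and is the right way to think about the problem. Since $|e|\ge 1$, every class in $\mathfrak X_{e,s}$ is non-elementary. So $\mathfrak X_{e,s}$ is a smooth connected manifold on which the symplectic measure has full support. Since $\{|\Tr\rho(\delta)|<2\}$ is open, the theorem holds if and only if two things are true:
\begin{enumerate}
\item $E_\delta=\emptyset$ for every non-peripheral simple $\delta$;
\item each set $\{\Tr^2\rho(\delta)=4\}$ is null. For this you still need the analytic function $\Tr^2\rho(\delta)$ to be non-constant on $\mathfrak X_{e,s}$.
\end{enumerate}
So the restriction to a full-measure set comes only from parabolic images. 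It does not come from any ``full-measure issue'' in the elliptic step: an almost-everywhere statement about elliptic images is automatically an everywhere one.

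\textbf{Your first paragraph carries no weight.} It only describes $\rho$ on the set where every curve of $\mathcal P$ is hyperbolic, and you cannot call that set dense before knowing $E_\delta=\emptyset$. It needs a positive puncture $w$, which does not exist when $n=1$ (e.g.\ $S_{2,1}$). It also applies Theorem \ref{cc-RY-2} to the pair of pants containing $v$, whose other boundary curves are not punctures.

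\textbf{The genuine gap is the proof that $E_\delta=\emptyset$}, which is the entire content of the theorem. The inequality you invoke is not among the results available here, and it is not true in the form you state.

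A single piece does not lose a unit. A piece with cusps and a hyperbolic cone point of angle $\phi\in(0,2\pi)$ at $\delta$ has Toledo number $-\chi(\Sigma_i)-\phi/2\pi$. The unit only appears after summing the two sides: $\theta/2\pi+(2\pi-\theta)/2\pi=1$.

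What you really need is the following. For each piece $\Sigma_i$ (a single piece with two elliptic boundaries if $\delta$ is non-separating), let $\theta_i\in(0,2\pi)$ be the rotation angle of $\rho(\delta)$ seen from $\Sigma_i$. Then you need
\[
T(\rho|_{\Sigma_i})\le-\chi(\Sigma_i)-p_-(\Sigma_i)-\textstyle\sum\theta_i/2\pi .
\]
In other words, the integer Euler class computed with rotation-number lifts of the elliptic boundary must obey the same bound as for hyperbolic boundary. That is exactly the kind of bound violated by the supra-maximal Deroin--Tholozan representations of Section \ref{super-maximal}. For type-preserving representations of punctured spheres it is also violated by cases (2)--(3) of Theorem \ref{cc-RY-2}, which lie outside the generalized Milnor--Wood range.

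Cutting $S_{0,n}$, or a genus-one surface along a non-separating curve, produces genus-zero pieces. So you would have to prove the bound in exactly the configurations that arise, and show that $\chi(S)\le-2$ and $p_-(s)=1$ exclude supra-maximal behaviour there.

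\textbf{The shortcut through Theorem \ref{hyp-04} cannot supply this.}
\begin{enumerate}
\item A four-holed sphere in $S$ containing $\delta$ and $v$ has boundary curves that are essential curves of $S$, sent to hyperbolic or elliptic elements. So $\rho|_\Sigma$ is not type-preserving and lies in no $\mathfrak X_{1,s}(S_{0,4})$.
\item Theorem \ref{hyp-04} is in any case only an almost-everywhere statement on that space.
\item For $S=S_{0,4}$ the reduction is circular.
\end{enumerate}
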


\subsubsection{Ergodicity of the mapping class group action}

The pure (extended) mapping class group $\mathrm{Mod}(S)$ is the group of isotopy-classes of homeomorphisms of $S$ fixing the boundary components point-wise. It naturally acts on $\mathfrak X(S)$ preserving the Euler class $e$ and the sign of the boundary holonomy $s.$ In the case of closed oriented surfaces, Goldman\,\cite{goldman-survey} conjectured that this action is ergodic on each non-extremal and non-zero component. March\'e and Wolff\,\cite{mar-wol} proved that a positive answer to Bowditch's question implies Goldman's conjecture and used this to prove Goldman's conjecture for $S_2.$ In the case of punctured surfaces, since Bowditch's conjecture is no longer true for all the connected components, the proof of Goldman's result will require a new strategy.  Yang \cite{yan_ont} proved it for the four-punctured sphere and Maloni--Palesi--Yang proved it in most cases of the thrice-punctured projective plane.

\begin{Theorem}[Yang \cite{yan_ont}, Maloni--Palesi--Yang \cite{MPY}]
\label{ergodicity}\noindent
  \begin{enumerate}
  \item Let $s \in \{\pm 1\}^4$ and $e \in \mathbb{N}.$ The mapping class group $\mathrm{Mod}(S_{0,4})$ acts ergodically on all the connected components $\mathfrak{X}_{e, s}(S_{0,4})$ described in Theorem \ref{conncomp-04}.
    \item Let $s_+=(+1,+1,+1)$ and $ s_-=(-1,-1,-1)$, and let $s \in \{\pm 1\}^3$ such that $p_{+}(s) \in \{1,2\}$. Then the mapping class group $\mathrm{Mod}(N_{1,3})$ acts ergodically on the connected component $\mathfrak{X}_{1, s_+}(N_{1, 3}),$ $\mathfrak{X}_{-1, s_-}(N_{1, 3})$ and $\mathfrak{X}_{0, s}(N_{1, 3}).$  
  \end{enumerate}
\end{Theorem}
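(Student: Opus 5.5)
We split by the dichotomy of Theorems \ref{hyp-04} and \ref{hyp-13}. On components where a full-measure set sends every non-peripheral simple closed curve to a hyperbolic element (the $e=\pm1$, $p_\mp(s)=1$ cases), the March\'e--Wolff mechanism is unavailable, so we use a direct argument in coordinates. On the components in part (2) of those theorems, every representation sends some non-peripheral (2-sided) simple closed curve $\alpha$ to a non-hyperbolic element. This is the setting where the March\'e--Wolff strategy behind Theorem \ref{mw_bowditch} applies.

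For the components with a non-hyperbolic curve, the plan is as follows. For $S_{0,4}$ (resp. $N_{1,3}$) the relative character variety with fixed parabolic peripheral data is two-dimensional. The Goldman twist flow $\xi^t_\alpha$ along a 2-sided curve $\alpha$ preserves the level sets of $\Tr\rho(\alpha)$, and these level sets are (generically) circles. When $\rho(\alpha)$ is elliptic with irrational rotation angle, the Dehn twist $\tau_\alpha$ acts on the level circle as an irrational rotation, since $\tau_\alpha=\xi^{s}_\alpha$ with $s$ determined by the angle. Hence any $\mathrm{Mod}$-invariant measurable function $F$ is a.e. a function of $\Tr\rho(\alpha)$ alone on the open set $U_\alpha$ where $\rho(\alpha)$ is elliptic. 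Taking two curves $\alpha,\beta$ with intersection number $2$ (resp. the appropriate minimal intersection in $N_{1,3}$), the functions $\Tr\rho(\alpha)$ and $\Tr\rho(\beta)$ give local coordinates on $U_\alpha\cap U_\beta$. There $F$ depends only on each of them separately, so $F$ is a.e. constant on each connected component of $U_\alpha\cap U_\beta$. Propagating along the graph of curves (the Farey graph for $S_{0,4}$, its analogue for $N_{1,3}$) using the mapping class group action, $F$ is constant on $\bigcup_{\gamma}U_\gamma$. By part (2) of Theorems \ref{hyp-04} and \ref{hyp-13}, together with the measure-zero nature of parabolic and rational-elliptic images, this union has full measure.

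The main obstacle is connectivity. One must show that the ``elliptic overlap'' graph, whose vertices are components of the sets $U_\gamma$ and whose edges are non-empty overlaps, is connected modulo the group action within each $\mathfrak{X}_{e,s}$. I would attack it using the Kashaev/Penner-type length coordinates of \cite{kas_coor, MPY} and trace formulas in a balanced triangulation. These give explicit descriptions of the sets $\{|\Tr\rho(\gamma)|<2\}$, which can then be checked to chain together.

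For the remaining components, where the generalized Bowditch property holds almost everywhere, I would follow Goldman's approach for the one-holed torus and four-holed sphere. Fix $e$ and $s$, and use the Markoff-type cubic relation satisfied by the traces $(x,y,z)$ of three curves in a pants decomposition pattern. On this cubic surface, $\mathrm{Mod}$ acts by Vieta involutions. Almost every orbit visits the region where one of the coordinates has absolute value less than $2$: hyperbolicity of all simple curves forces the Bowditch-type ``tree'' structure with an attracting sink, and here one would need a measure estimate showing the exceptional set is null, matching part (1). Since I expect a plain Bowditch set to be invariant and of positive measure, which would contradict ergodicity, I would first double-check whether part (1) of the statement indeed covers these components. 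If it does, the orbit must still meet elliptic regions, and the hardest step is a measure argument via the trace-coordinate dynamics.
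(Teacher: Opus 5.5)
The real gap is on the components $\mathfrak{X}_{\pm 1,s}(S_{0,4})$ with $p_{\mp}(s)=1$. Part (1) does include them, since they are among the components listed in Theorem \ref{conncomp-04}.

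\textbf{Why your plan fails there.} Your final plan is to show that almost every orbit enters a region where some simple curve has trace in $(-2,2)$. This cannot work. For $\phi\in\mathrm{Mod}(S_{0,4})$ one has $\{\Tr\rho(\phi^{-1}(\gamma))\}_{\gamma\text{ simple}}=\{\Tr\rho(\gamma)\}_{\gamma\text{ simple}}$. So by Theorem \ref{hyp-04}(1), almost every orbit never meets an elliptic region, and no irrational rotation is ever available. Your worry about a Bowditch set resolves the other way. Ergodicity forces the set where $\mathrm{Mod}$ acts properly to be null on these components, so almost every such $\rho$ satisfies (BQ1) but fails (BQ2).

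\textbf{A mechanism that does work.} A genuinely different argument is needed, and one is visible in trace coordinates.
\begin{enumerate}
\item The preferred (zero translation number) lift of a parabolic projects to the trace-$2$ element of $\mathsf{SL}_2(\mathbb{R})$, and the central generator of $\cPSLR$ projects to $-I$. So normalizing $a=b=c=2$ gives $d=2(-1)^{e}$.
\item For odd $e$, equation \eqref{vertex} therefore becomes the Cayley cubic $x^2+y^2+z^2+xyz=4$. Each of these components is identified, via characters, with one of its four non-compact sheets.
\item These sheets are parametrized by $(u,v)\in(\mathbb{R}^*)^2$ via $(x,y,z)=-\bigl(u+u^{-1},\,v+v^{-1},\,uv+(uv)^{-1}\bigr)$. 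Simple closed curves then have traces $\pm(u^pv^q+u^{-p}v^{-q})$.
\item $\mathrm{Mod}(S_{0,4})$ acts by monomial maps. For instance, the Vieta involution $x\mapsto -x-yz$, products of which give the action, becomes $(u,v)\mapsto(uv^2,v^{-1})$. Thus the action is linear on $(\log|u|,\log|v|)$, through a finite-index subgroup of $\mathsf{GL}_2(\mathbb{Z})$.
\item Ergodicity follows from the ergodicity of lattices of $\mathsf{SL}_2(\mathbb{R})$ acting linearly on $\mathbb{R}^2$ (Moore's theorem, or duality with the horocycle flow). No elliptic element is involved.
\end{enumerate}
The same picture recovers the full-measure hyperbolicity of Theorem \ref{hyp-04}(1). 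By Dirichlet's theorem it also shows the failure of (BQ2).

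\textbf{The remaining components.} Here your outline is in the spirit of March\'e--Wolff, but two steps do not hold as written.
\begin{enumerate}
\item For $N_{1,3}$ the type-preserving relative character variety is three-dimensional, not two. It has four coordinates $(a,b,c,d)$ subject to \eqref{vertex}; equivalently, $-3\chi(N_{1,3})-3=3$. Being odd-dimensional, it carries no symplectic form. Cutting along a $2$-sided curve $\xi_{\alpha,\beta}$ leaves a pair of pants and a two-holed projective plane. So the level sets of $\Tr\rho(\xi_{\alpha,\beta})$ are surfaces: the twist parameter times the one-dimensional relative character variety of the two-holed projective plane. The twist orbits are only circles inside these surfaces, two traces cannot serve as local coordinates, and the extra direction must be handled separately.
\item For both surfaces, the step ``propagating along the graph of curves, $F$ is constant on $\bigcup_\gamma U_\gamma$'' is the whole difficulty, not a consequence. $\mathrm{Mod}$-invariance only transports the value of $F$ from a piece of $U_\alpha\cap U_\beta$ to its translate. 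Linking different pieces needs an actual argument, for instance:
\begin{itemize}
\item every twist circle through a point of $U_\alpha$ meets a region where a transverse elliptic pair exists; and
\item the resulting adjacency relation is connected.
\end{itemize}
You leave this to be checked later in Kashaev-type coordinates, so the argument on these components is also incomplete.
\end{enumerate}
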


The ergodicity of the action of $\mathrm{Mod}(N_{1,3})$ on the components $\mathfrak{X}_{1}^{s}(N_{1, 3})$ with $s \in \{\pm 1\}^3$ containing exactly two $+1$'s,  and on the components $\mathfrak{X}_{-1}^{s_-}(N_{1, 3})$ with $s \in \{\pm 1\}^3$ containing exactly two $-1$'s, is still unknown and deserves further study. It is expected that the action would still be ergodic.

\subsubsection{Domination of representations} 

As a corollary of the results discussed above, Yang \cite{yan_ont} and Maloni--Palesi--Yang \cite{MPY} extend to punctured surfaces (orientable and non-orientable, respectively) the result about domination of representations obtained by Gueritaud--Kassel-Wolff \cite{GKW} and Deroin--Tholozan \cite{DT-domi} for orientable closed surfaces and mentioned in Section \ref{o-surface-PSL-R}.

Recall that a representation $\rho$ is said to be \textit{dominated} by another representation $\rho'$ if the traces of the simple closed curves of $\rho$ are less than or equal to those of $\rho'$, in absolute value.

\begin{Theorem}[Yang \cite{yan_ont}, Maloni--Palesi--Yang \cite{MPY}]
\label{domination} \,\\[-0.1in]

\begin{itemize}
    \item Given an orientable punctured surface $S_{g, n}$, every non-Fuchsian type-preserving representation is dominated by a Fuchsian one and for almost every Fuchsian representation $\rho$ and every integer $e$ with $|e|<-\chi(S_{g,n})$, there exists at least one representation with Euler class $e$ which is dominated by $\rho$.
    \item Given a non-orientable punctured surface $N_{k, n}$, every non-Fuchsian type-preserving representation is dominated by a Fuchsian one and for almost every Fuchsian representation $\rho$ and for every integer $e$ with $|e|<-\chi(N_{k,n})$, there exists at least one representation with Euler class $e$ which is dominated by $\rho$. 
\end{itemize} 
\end{Theorem}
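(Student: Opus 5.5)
The statement to prove is Theorem~\ref{domination}. The plan is to obtain it as a consequence of the geometric and dynamical results above, following the strategy of Gueritaud--Kassel--Wolff in the closed case but replacing their pleated-surface input by the trace formulas in length coordinates. There are two halves.

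\emph{First half: every non-Fuchsian type-preserving $\rho$ is dominated by a Fuchsian $\rho'$.} Fix an ideal triangulation of $S$ (balanced, in the sense of \cite{yan_ont, MPY}). Use the decorated length coordinates of Kashaev, generalized by Maloni--Palesi--Yang \cite{MPY} to the non-orientable case. In these coordinates a representation is encoded by positive edge weights together with the sign data $s$ and the orientation data of the triangles. The Roger--Yang type formula then writes the trace of any simple closed curve $\gamma$ as a Laurent polynomial in the edge weights, whose monomials carry signs. For a Fuchsian representation all signs are positive. For a general $\rho$, the same monomials appear, possibly with sign changes coming from the signs of punctures and triangle orientations. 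Hence taking the Fuchsian representation $\rho'$ with the same edge weights gives
\[
|\Tr\rho(\gamma)|\le \Tr\rho'(\gamma)
\]
for every simple closed curve $\gamma$, by the triangle inequality. The delicate point is to check that this ``absolute value'' point is realized by an honest Fuchsian type-preserving representation. This requires that the formula's monomials correspond bijectively to the Fuchsian ones; it holds because the Fuchsian decorated Teichm\"uller space is parametrized by all positive edge weights (Penner).

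\emph{Second half: almost every Fuchsian $\rho$ dominates some $\rho_e$ of each Euler class $e$ with $|e|<-\chi(S)$.} Reverse the procedure. Given Fuchsian edge weights, flip the orientation data on a suitable set of triangles to produce a type-preserving representation with the same weights and prescribed Euler class $e$. The Euler class is computed as a signed sum of triangle areas over $2\pi$, as in Section~\ref{sec:euler}, so we choose the number of flipped triangles to hit $e$. The same triangle-inequality argument shows $\rho_e$ is dominated by $\rho$.

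The ``almost every'' qualifier, together with the need to land in components where a full-measure statement applies, is handled as follows. We use that the realization works for all weights but need genericity so that the resulting $\rho_e$ is non-elementary and lies in the expected component. We remove a measure-zero set where some relevant trace degenerates (e.g.\ equalities or vanishing), which is the source of the qualifier. Where needed, ergodicity (Theorem~\ref{ergodicity}) and the full-measure hyperbolicity statements (Theorems~\ref{hyp-04}, \ref{hyp-13}, \ref{gen-bow}) can be used to transport the conclusion from one good triangulation to almost all points via the mapping class group action, since domination is $\mathrm{Mod}(S)$-equivariant.

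I expect the main obstacle to be the first half: the precise sign analysis in the trace formula. In particular, one must show that changing signs of punctures and orientations of triangles affects each monomial only by a sign, and never by cancellation of weights, uniformly over all simple closed curves including one-sided ones in the non-orientable case. I would first verify this on $S_{0,4}$ and $N_{1,3}$ via explicit computation, then argue inductively on the number of triangles crossed by $\gamma$.
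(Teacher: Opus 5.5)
Your first half is essentially the argument of Yang and Maloni--Palesi--Yang. In Kashaev's coordinates $(\lambda,\varepsilon)$, with positive $\lambda$-lengths on edges and signs $\varepsilon(t)=\pm1$ on triangles, the trace of a curve is a signed sum of exactly the positive monomials giving the trace at the Fuchsian point $(\lambda,+1)$. That point exists by Penner's theorem, and the triangle inequality then gives domination. The sign analysis you flag as the main obstacle is the routine part.

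There are two corrections in the first half:
\begin{enumerate}
\item The puncture signs $s(v)$ are not additional data. They are determined by $(\lambda,\varepsilon)$, as explained below.
\item Coordinates with respect to a fixed triangulation do not cover every representation. An edge whose lifted endpoints are sent to the same parabolic fixed point has no $\lambda$-length, so the triangulation has to be adapted to $\rho$. Elementary representations, for which all these fixed points coincide, are not reached by this argument at all and need separate treatment.
\end{enumerate}

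The genuine gap is in the second half, where you assert that ``the realization works for all weights.'' It does not. Go once around a puncture $v$. In a suitable normalization, the holonomy determined by $(\lambda,\varepsilon)$ is unipotent, and its off-diagonal entry is the signed horocyclic length $\sum_c \varepsilon(t_c)\,h_c$, summed over the corners $c$ at $v$. So $(\lambda,\varepsilon)$ comes from a type-preserving representation exactly when this sum is nonzero at every puncture; its sign is what defines $s(v)$. If the sum vanishes, the peripheral holonomy is trivial rather than parabolic. For $\varepsilon\equiv+1$ this is automatic. When $|e|<-\chi$, however, some puncture must see triangles of both signs: otherwise adjacent triangles would share a sign, all signs would agree, and $|e|=-\chi$. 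So the condition is a real constraint.

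This constraint is the source of ``almost every.'' It has nothing to do with non-elementarity or membership in a particular component, neither of which the statement requires. The repair has two steps:
\begin{enumerate}
\item Choose the triangulation and the flipped triangles so that none of these Laurent polynomials in $\lambda$ vanishes identically.
\item Discard their zero sets. These are null, and they descend to the space of Fuchsian representations because all $h$-lengths at $v$ are multiplied by a common factor when the horocycle at $v$ is changed.
\end{enumerate}
The first step is a genuine combinatorial condition, not a formality. On $S_{1,1}$ both triangles of any ideal triangulation carry the same three edges, so flipping one of them makes the sum vanish identically, and no non-Fuchsian representation arises this way.

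Your fallback via ergodicity and the full-measure hyperbolicity theorems cannot replace this step. Here ``almost every'' refers to the Fuchsian locus, where the mapping class group acts properly discontinuously, so nothing transports from one point or one triangulation to a full-measure set. Those theorems also concern non-Fuchsian components of particular surfaces, not the Fuchsian locus.
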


\subsection{Deroin--Tholozan representations}\label{super-maximal}

In this section we summarize work of Maret \cite{maret} which studies the action of the mapping class group on some special components of the relative $\PSLR$--character variety $\Xf(\pi_1(S_{0,n}),\PSLR)$ of a punctured sphere $S_{0,n}$. These components were first identified by Deroin--Tholozan \cite{DT}\index{Deroin--Tholozan (or super-maximal) representation} and consist of representations with non-hyperbolic boundary conditions. They have the property that all simple closed curves are sent to elliptic elements of $\PSLR$. Type-preserving Deroin--Tholozan representations already appeared in Section \ref{conn-comp}, while Maret only considers representations with elliptic boundary conditions. Maret's method will be similar to (and is in fact inspired by) ideas described in Section \ref{compact}, where we will describe various ergodicity results for the dynamics on spaces of representations into compact groups. Deroin--Tholozan referred to these representations as \emph{super-maximal representations} because their definition of the relative Euler class exceeds the Euler characteristic of $S_{0,n}$, breaking the classical Milnor--Wood inequality. Note that the definitions of Euler class introduced in Section \ref{sec:euler} do not extend to these representations because they have elliptic boundary conditions; Deroin--Tholozan use a modified version of the definition involving the volume of the representation closely related to another invariant called the Toledo number, see e.g. \cite{BIW}.

\begin{figure}
[hbt] \centering
\includegraphics[height=3cm]{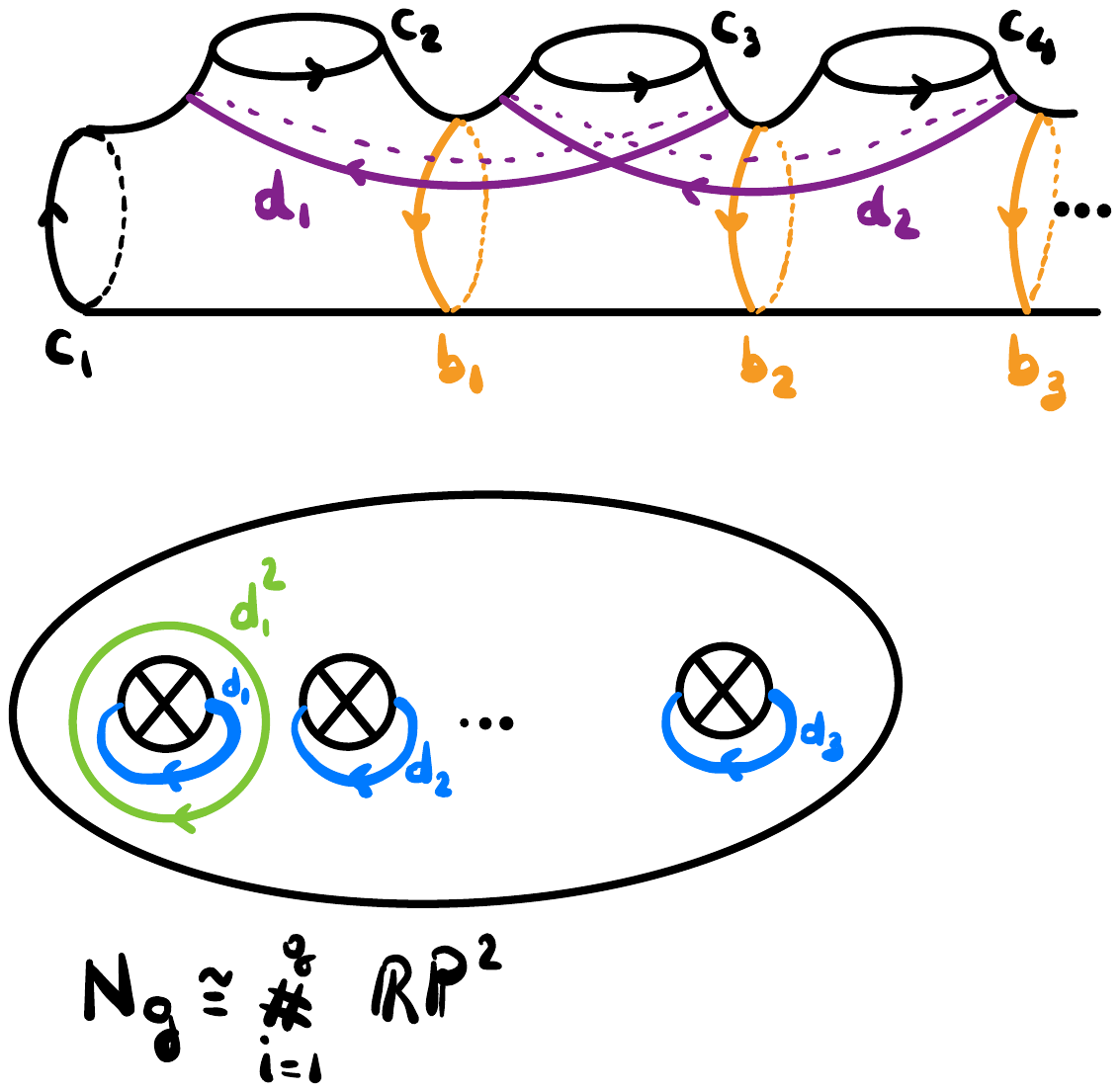}
\caption{The simple closed curves $b_1, \cdots, b_{n-3}$ and $d_1, \cdots, d_{n-3}$ and the peripheral curves $c_1, \cdots, c_{n}$ in $S_{0,n}$, see Figure 5 in \cite{maret}.} 
\label{fig:maret}
\end{figure}

For $n\geq 3$, let $(c_1,\dots,c_n)$ be the generators of $\pi_1(S_{0,n})$, and let $\alpha=(\alpha_1,\dots,\alpha_n)\in(0,2\pi)^n$. The relative character variety $\Xf_\alpha(\pi_1(S_{0,n}),\PSLR)$ consists of conjugacy classes of representations $\rho:\pi_1(S_{0,n})\to\PSLR$ such that $\rho(c_i)$ is an elliptic rotation of angle $\alpha_i$ for each $i$. Deroin--Tholozan showed that if $\alpha_1+\cdots+\alpha_n>2\pi(n-1)$, then there is a nonempty, compact, connected component of $\Xf_\alpha(\pi_1(S_{0,n}),\PSLR)$ of dimension $2(n-3)$, which will be denoted $\Xf_\alpha^{\rm{DT}}(\pi_1(S_{0,n}),\PSLR)$, or just $\Xf_\alpha^{\rm{DT}}(S_{0,n})$ for brevity. These components have the property that every representation sends every simple closed curve to an elliptic element in $\PSLR$. The main theorem of this section is the following:
\begin{theorem}[Maret \cite{maret}] \label{thm:ergodic DT}
    Let $\alpha=(\alpha_1,\dots,\alpha_n)\in(0,2\pi)^n$. Then the action of $\MCG(S_{0,n})$ on $\Xf_\alpha^{\rm{DT}}(S_{0,n})$ is ergodic with respect to the Goldman symplectic measure.
\end{theorem}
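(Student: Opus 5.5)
We will follow the strategy Goldman used for compact groups (to be described in Section \ref{compact}), which applies here because every simple closed curve is sent to an elliptic element. The plan is to exhibit a pants decomposition whose length (angle) functions give action-angle coordinates in which Dehn twists act as rotations of a torus. Ergodicity will then come from combining several such decompositions.

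\textbf{Step 1 (action-angle coordinates).} For a simple closed curve $\gamma$, let $\theta_\gamma\colon \Xf_\alpha^{\rm DT}(S_{0,n})\to(0,2\pi)$ denote the rotation angle of $\rho(\gamma)$. This is well defined because $\rho(\gamma)$ is always elliptic. We will use the pants decomposition given by the curves $b_1,\dots,b_{n-3}$ of Figure \ref{fig:maret}. The functions $\theta_{b_i}$ Poisson-commute, and their Hamiltonian flows are Goldman twist flows with a common period. Since the twist flow is the rotation by $\zeta^t(\rho(b_i))$, which lies in a compact $\mathsf{SO}(2)$, this flow is periodic, and the time-$s$ map $\xi^{s}_{b_i}$ is exactly the Dehn twist $\tau_{b_i}$.

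We will show that the map $\Theta_b=(\theta_{b_1},\dots,\theta_{b_{n-3}})$ is the moment map of a Hamiltonian $T^{n-3}$-action on the compact $2(n-3)$-dimensional symplectic manifold $\Xf_\alpha^{\rm DT}$. By Delzant/Atiyah--Guillemin--Sternberg, its image is then a convex polytope $P_b$. Over the interior of $P_b$ the fibres are Lagrangian tori, and on each fibre the twist $\tau_{b_i}$ acts by translation by $\theta_{b_i}$ (suitably normalized) in the $i$-th torus coordinate.

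\textbf{Step 2 (fibrewise ergodicity).} The subgroup $\langle\tau_{b_1},\dots,\tau_{b_{n-3}}\rangle$ acts on the fibre $\Theta_b^{-1}(x)$ by the translation $x$. For almost every $x$ the entries of $x/2\pi$ are rationally independent, and for such $x$ this translation is ergodic on the torus. Consequently, any $\MCG$-invariant $L^2$ function $f$ is almost everywhere a function of $\Theta_b$ alone. Applying the same argument to the decomposition $d_1,\dots,d_{n-3}$ shows that $f$ is also almost everywhere a function of $\Theta_d$ alone.

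\textbf{Step 3 (combining the two decompositions).} It remains to show that a function depending only on $\Theta_b$ and also only on $\Theta_d$ must be almost everywhere constant. For this we will show that the map $(\Theta_b,\Theta_d)$ is a submersion on an open dense set of full measure, and that the fibres of $\Theta_b$ and $\Theta_d$ together connect the space. Concretely, starting from $b_i$ and $d_i$ we will add intermediate pants decompositions in which one curve is swapped at a time, since consecutive curves meet in an $S_{0,4}$. This reduces the submersion claim to a computation on a four-holed sphere $S_{0,4}$ with elliptic boundary: there $\theta_{b}$ and $\theta_{d}$ are functionally independent on the two-dimensional DT component, a sphere, so their gradients are independent off a measure-zero set.

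Finally we will chain the swaps. Let $f$ be invariant; after each swap, $f$ depends only on the angles of the current decomposition. For adjacent decompositions sharing $n-4$ curves, independence of the swapped pair shows that $f$ cannot depend on the swapped coordinate. Iterating through the chain eliminates every coordinate, so $f$ is constant.

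\textbf{Main obstacle.} The hard part will be Step 1 together with the $S_{0,4}$ independence computation. One has to verify that the DT component is smooth, which requires irreducibility, and that the $\theta_{b_i}$ never reach $0$ or $2\pi$, so that the torus action is free on an open dense set. For this I would try Deroin--Tholozan's explicit description of $\Xf_\alpha^{\rm DT}$ as a toric manifold symplectomorphic to $\mathbb{CP}^{n-3}$: the image of $\Theta_b$ is a simplex, and the DT inequality $\sum\alpha_i>2\pi(n-1)$ forces every $\theta_{b_i}$ strictly into $(0,2\pi)$.
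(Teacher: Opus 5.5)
Your outline is essentially sound, but it takes a different route from the paper.

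\textbf{Your route.} You follow Goldman's original strategy for compact groups. Every simple closed curve is sent to a nontrivial elliptic, so every pants decomposition gives a Hamiltonian torus action on which the corresponding Dehn twists act by translations. Hence an invariant $f$ is a.e.\ a function of the angles of each pants decomposition. You then eliminate the angles one at a time through elementary moves, which reduces the problem to the two-dimensional reduced spaces of a four-holed sphere.

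\textbf{The paper's route.} Maret's proof, as sketched in the paper, uses only the torus action of the $b_i$ together with the individual circle flows of the curves $(\tau_{b_i})^{m_i}d_i$. On the open set $\mathcal{E}$ where the relevant Poisson brackets are nonzero, these $2(n-3)$ twist flows are locally transitive, so an invariant function is locally constant there. The work then goes into showing that $\mathcal{E}$ is connected and of full measure.

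\textbf{Comparison.} Your version is more modular: the only new computation is on $S_{0,4}$. It does require three extra inputs:
\begin{itemize}
\item symplectic reduction along the $n-4$ common curves;
\item connectedness of moment-map fibres (Atiyah), so that the reduced spaces are connected spheres;
\item a genuine connectivity argument on those spheres.
\end{itemize}
The last point matters. The locus where $\{\theta_{b_i},\theta_{d_i}\}=0$ is the fold curve of the map $(\theta_{b_i},\theta_{d_i})$, and it generally disconnects the sphere. So generic functional independence alone does not give your claim that ``$f$ cannot depend on the swapped coordinate''. You must also use, for instance, that the level circles of $\theta_{b_i}$ cross the fold. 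This is exactly the role played by the twisted curves $(\tau_{b_i})^{m_i}d_i$ in the paper's definition of $\mathcal{E}$.

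\textbf{A slip to fix.} For $n\geq 5$ the curves $d_1,\dots,d_{n-3}$ do not form a pants decomposition. The curves $d_i$ and $d_{i+1}$ enclose the overlapping but non-nested sets $\{c_{i+1},c_{i+2}\}$ and $\{c_{i+2},c_{i+3}\}$, so they intersect. Hence Step 2 cannot be applied to them, and they cannot be the endpoint of your chain.

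You do not need them as a whole, however. The curve $d_i$ is disjoint from every $b_j$ with $j\neq i$. It meets $b_i$ twice inside the four-holed sphere bounded by $b_{i-1},c_{i+1},c_{i+2},b_{i+1}$ (with $b_0=c_1$, $b_{n-2}=c_n$). So $\{b_j\}_{j\neq i}\cup\{d_i\}$ is a pants decomposition one elementary move away from $\{b_j\}$. Applying Step 2 to it, together with your $S_{0,4}$ argument, shows that $f$ does not depend on $\theta_{b_i}$. Do this for each $i$. Then use that the pushforward of the measure under $\Theta_b$ is equivalent to Lebesgue measure on the open polytope: this shows $f$ is constant, and no long chain is needed.
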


\begin{proof}[Proof sketch.]
Similar to Goldman--Xia \cite{GX} and Pickrell--Xia \cite{PX1, PX2}, Maret \cite{maret} shows that the action of the mapping class group, or more precisely the twist flows (see Section \ref{measure}) of a collection of simple closed curves on $\Xf_\alpha^{\rm{DT}}(S_{0,n})$ is locally transitive. Let $c_1,\dots,c_n$ denote the peripheral simple closed curves around the punctures, so that these generate the group $\pi_1(S_{0,n})$. Then let
$$b_i:= c_1^{-1}\cdots c_i^{-1}c_{i+1}^{-1},\ \ d_{i}:= c_{i+1}^{-1}c_{i+2}^{-1},\ \ i=1,\dots,n-3.$$ 
(Recall that in this chapter we are using the standard convention of writing the concatenation in the fundamental group from left to right, differently from Maret.) The $b_i$ are curves that separate $c_1,\dots,c_{i+1}$ from $c_{i+2},\dots,c_n$, while the $d_i$ are curves that separate $c_{i+1}$ and $c_{i+2}$ from the rest of the punctures. See Figure \ref{fig:maret}. This means that all of the curves $d_i$ have the same topological type as the curve $b_1$, and therefore these curves can be mapped to each other by permuting the punctures.

For a nontrivial $\gamma\in\pi_1(S)$, let $\theta_\gamma\co\Xf_\alpha^{\rm{DT}}(S_{0,n})\to(0,2\pi)$ be the map defined by $[\rho]\mapsto2\cos^{-1}\left(\frac{\Tr(\rho(\gamma))}{2}\right)$, which maps $[\rho]$ to the rotation angle of the elliptic element $\rho(\gamma)$. Deroin--Tholozan showed that the action of the Hamiltonian flows of the functions $\theta_{b_1},\dots,\theta_{b_{n-3}}$ on $\Xf_\alpha^{\rm{DT}}(S_{0,n})$ is a maximal effective torus action, meaning that the orbits of the Hamiltonian flows are either circles or fixed points. Since the curves $d_i$ can be mapped to $b_1$ by permuting the punctures, the orbits of the twist flows associated with the curves $d_i$ have the same structure as that of $b_1$. Following the work of Delzant \cite{delzant}, Deroin--Tholozan also show that the moment map $\mu=(\theta_1,\dots,\theta_{n-3})$ maps $\Xf_\alpha^{\rm{DT}}(S_{0,n})$ to a convex polytope $\Delta\subset\Rb^{n-3}$. The interior $\overset{\circ}{\Delta}$ of this polytope consists of the images of representations $[\rho]$ whose orbits under the twist flows of $\theta_{b_1},\dots,\theta_{b_{n-3}}$ are homeomorphic to an $(n-3)$--torus. This means that, in a neighborhood of any representation $[\rho]\in\mu^{-1}(\overset{\circ}{\Delta})$, the Hamiltonian vector fields $X_{b_i}$ of the functions $\theta_{b_i}$ span a $(n-3)$-dimensional (Lagrangian) subspace of the $2(n-3)$--dimensional tangent space. The goal now is to find $n-3$ more curves whose Hamiltonian vector fields, along with the vector fields of the functions $\theta_{b_i}$, span the entirety of the tangent space at a representation $[\rho]$.

To characterize such vectors, we will use the Poisson bracket on $\Xf_\alpha^{\rm{DT}}(S_{0,n})$ associated to the Goldman symplectic form $\omega$, see Section \ref{measure}. Given two smooth functions $\zeta_1,\zeta_2\co\Xf_\alpha^{\rm{DT}}(S_{0,n})\to\Rb$ with Hamiltonian vector fields $X_{\zeta_1},X_{\zeta_2}$, their Poisson bracket is
$$\{\zeta_1,\zeta_2\}:=\omega(X_{\zeta_1},X_{\zeta_2})=d\zeta_2(X_{\zeta_1}).$$
Because the fibers of $\mu$ over $\overset{\circ}{\Delta}$ are Lagrangian tori, if for some $[\rho]\in\mu^{-1}([\rho])$ and some $a\in\pi_1(S)$ we have $\{\theta_{b_1},\theta_{a}\}([\rho])\neq 0$, then the vector $X_a$ sits outside of the span of the $X_{b_i}$ in $T_{[\rho]}\Xf_\alpha^{\rm{DT}}(S_{0,n})$. Motivated by this, consider the following set
$$\Ec:=\{[\rho]\in\mu^{-1}(\overset{\circ}{\Delta})\ |\ \forall i=1,\dots,n-3,\ \exists m_i\in\Zb, \{\theta_{b_1},\theta_{(\tau_{b_i})^{m_i}d_i}\}([\rho])\neq 0\},$$
or equivalently
$$\Ec=\mu^{-1}(\overset{\circ}{\Delta})\cap\bigcap\limits_{i=1}^{n-3}\bigcup\limits_{m_i\in\Zb}\{\theta_{b_1},\theta_{(\tau_{b_i})^{m_i}d_i}\}^{-1}(\Rb\setminus\{0\}).$$
This set is open and measurable in $\Xf_\alpha^{\rm{DT}}(S_{0,n})$. Given a $[\rho]\in\Ec$, the Hamiltonian vector fields $X_{b_i},X_{(\tau_{b_i})^{m_i}d_i},\ i=1,\dots,n-3$ generate the tangent spaces in some neighborhood $\Uc$ of $[\rho]$ contained in $\Ec$. The Dehn twists $\tau_{b_i},\tau_{(\tau_{b_i})^{m_i}d_i}$ act by rotations on the circular orbits of the Hamiltonian flows. The representations for which the Dehn twists $\tau_{b_i},\tau_{(\tau_{b_i})^{m_i}d_i}$ have irrational rotation angle have full measure in $\Xf_\alpha^{\rm{DT}}(S_{0,n})$, and irrational rotations act ergodically on the circle, so any $\MCG(S_{0,n})$--invariant function $f\co\Xf_\alpha^{\rm{DT}}(S_{0,n})\to\Rb$ is constant almost everywhere on almost every orbit of the Hamiltonian flows, and therefore $f$ is constant almost everywhere on $\Uc$. This means that $f$ is locally constant on $\Ec$. If $\Ec$ is connected, then $f$ is constant almost everywhere on $\Ec$, and if $\Ec$ has full measure in $\Xf_\alpha^{\rm{DT}}(S_{0,n})$, then $f$ is constant almost everywhere on $\Xf_\alpha^{\rm{DT}}(S_{0,n})$, which then implies that the action of $\MCG(S_{0,n})$ on $\Xf_\alpha^{\rm{DT}}(S_{0,n})$ is ergodic. The proofs of both connectedness and full measure come down to showing that all of the orbits of the Hamiltonian flows of points in $\mu^{-1}(\overset{\circ}{\Delta})$ must intersect $\Ec$ and that the orbits of the points for which the $\theta_{b_i}$ are all irrational are entirely contained in $\Ec$. 
\end{proof}

\vspace{.1in}

Bouilly--Faraco--Maret \cite{BFM-minimalDT} expand on Theorem \ref{thm:ergodic DT}, proving that, except for finite mapping class group orbits, the orbits of the points in $\Xf_\alpha^{\rm{DT}}(S_{0,n})$ under the action of $\MCG(S_{0,n})$ are dense in $\Xf_\alpha^{\rm{DT}}(S_{0,n})$. Recently, Bronstein-Maret \cite{BM} classified the finite orbits of the mapping class group in the components of the relative character varieties of punctured spheres consisting of (conjugacy classes of) Deroin-Tholozan representations, positively answering a conjecture of Tykhyy \cite{Tykhyy}. In particular, when $n \geq 7$, Bronstein--Maret \cite{BM} proved that every orbit is infinite. Therefore, the combination of this result with \cite{BFM-minimalDT} proves that when $n\geq 7$, the action of the mapping class group on $\Xf_\alpha^{\rm{DT}}(S_{0,n})$ is minimal, that is, every orbit is dense.

\section{Representations in compact groups}\label{compact}

In this section we consider the character variety $\Xf(\pi_1(S),\Gs)$ where $S$ is a (not necessarily orientable) surface and $\Gs$ is a compact Lie group. Unlike the case of $\Xf(\pi_1(S),\PSLR)$, where we had components on which the mapping class group  $\MCG(S)$ was acting properly discontinuously, it is expected that the action of $\MCG(S)$ on each connected component of $\Xf(\pi_1(S),\Gs)$ is ergodic and a lot is known in this context.

\subsection{Orientable surface groups}\label{compact-orientable}

The main result of this section is that if $\Gs$ is a compact Lie group and $S$ is a compact, orientable surface with $\chi(S)<0$, then the action of $\MCG(S)$ on each connected component of $\Xf(\pi_1(S),\Gs)$ is ergodic. This is shown in various cases by the following authors:
\begin{enumerate}
    \item Goldman \cite{gol-erg} for any compact surface with $\chi(S)<0$ when $\Gs$ is locally isomorphic to a product of copies of $\SU(2)$ and $\mathsf{U}(1)$.
    \item Goldman--Xia \cite{GX} for any compact surface with $\chi(S)<0$ when $\Gs=\SU(2)$ using a different proof.
    \item Goldman--Lawton--Xia \cite{GLX} for the surface $S_{1,1}$ and $\Gs=\SU(3)$, using a proof similar to Goldman--Xia.
    \item Pickrell--Xia \cite{PX1,PX2} for any compact surface $S_{g,n}$ with $g\geq1$ and any compact, connected Lie group $\Gs$.
\end{enumerate}

In this section we will outline the proofs in Goldman--Xia and Pickrell--Xia. We will delay a discussion of Goldman's original proof until the Section \ref{compact-non-orientable} as it is similar to the proof of a different result discussed there.

\subsubsection{Ergodicity in \texorpdfstring{$\SU(2)$}{SU(2)} Character Varieties}\hfill\\

For this proof, if $S=S_{g,n}$ is the compact genus $g$ surface with $n$ boundary components $c_1, \ldots, c_n$ and $\Pc\in[-2,2]^n$, we will denote by $\Xf_{\Pc}$ the relative character variety consisting of classes of representations with $\mathrm{Tr}(\rho(c_i))={\Pc}_i$ for all $i$.

\textbf{Step 1:} The first step in this proof is to relate the action of the mapping class group to the symplectic structure of the character variety. This is done by describing $\Rb$-valued functions on the character variety whose Hamiltonian flows in a sense give a continuous action that extends the discrete action of Dehn twists around simple closed curves. That is, given a simple closed curve $\alpha$, the action of the Dehn twist $\tau_{\alpha}$ on the character variety is the same as flowing along the corresponding Hamiltonian flow for some amount of time; for the general construction of this see Section \ref{measure}.

Concretely, let $f:\SU(2)\to \Rb$ be the trace function $x\mapsto\mathrm{Tr}(x)$. Each element $x\in\SU(2)$ is conjugate to a matrix of the form $\begin{pmatrix}
    e^{i\theta} & 0 \\ 0 & e^{-i\theta}
\end{pmatrix}$
for some $\theta\in[0,2\pi)$, and $f(x)=2\cos(\theta)$. For $x=g\begin{pmatrix}
    e^{i\theta} & 0 \\ 0 & e^{-i\theta}
\end{pmatrix}g^{-1}$ we get $\zeta^t(x)=g\begin{pmatrix}
    e^{2i\sin(\theta)t} & 0 \\ 0 & e^{-2i\sin(\theta)t}
\end{pmatrix}g^{-1}$. Setting $s(x)=\frac{\theta}{2\sin(\theta)}$ gives a time at which $\zeta^{s(x)}(x)=x$, so the subgroup contains $x$, and $\xi_\alpha^{s(\rho(\alpha_-))}(\rho)=\tau_\alpha(\rho)$. The twist flow descends to the Hamiltonian flow on $\Xf_{\Pc}(\pi_1(S),\SU(2))$ corresponding to the function $f_\alpha:\Xf_{\Pc}(\pi_1(S),\SU(2))\to\Rb$ given by $f_\alpha(\rho)=\mathrm{Tr}(\rho(\alpha))$.

\textbf{Step 2:} Now we want to analyze the action of the Dehn twist on the orbits of this flow. 

We first note that, other than when $x=\pm\mathrm{Id}$, the one-parameter subgroup
$$\zeta^t(x)=g\begin{pmatrix}
    e^{2i\sin(\theta)t} & 0 \\ 0 & e^{-2i\sin(\theta)t}
\end{pmatrix}g^{-1}$$
is a circle. Moreover, if $\theta=\cos^{-1}\left(\frac{\mathrm{Tr}(x)}{2}\right)$ is an irrational multiple of $\pi$, then the subgroup $\ip{x}$ of $\{\zeta^t(x)\}_{t\in\Rb}$ acts ergodically on this circle. Translating this to the language of Dehn twists and twist flow, we have that, for almost every $b\in[-2,2]$, if $f_\alpha(\rho)=b$, then the twist $\tau_\alpha$ acts ergodically on the orbit $\{\xi_\alpha^t([\rho])\}_{t\in\Rb}$. We then have the following:
\begin{proposition}
    Let $\alpha$ be a simple closed curve with twist flow $\xi_\alpha$ and Dehn twist $\tau_\alpha$. Let $\psi:\Xf_{\Pc}\to\Rb$ be a measurable function invariant under the action of $\ip{\tau_\alpha}$. Then there exists a nullset $\mathcal{N}$ of $\Xf_{\Pc}$ such that the restriction of $\psi$ to the complement of $\mathcal{N}$ is constant on each orbit of the twist flow $\xi_\alpha$.
\end{proposition}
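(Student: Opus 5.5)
The strategy is to reduce the statement to a fiberwise ergodicity argument along the circle orbits of the twist flow $\xi_\alpha$, and then to use Fubini over a local transversal. First I would discard a nullset on which the flow has degenerate orbits. This consists of the representations with $\rho(\alpha)=\pm\mathrm{Id}$, together with the level sets $f_\alpha^{-1}(b)$ for the countably many $b=2\cos\theta$ with $\theta/\pi\in\Qb$. Each level set of the real-analytic, non-constant function $f_\alpha$ has measure zero for the Goldman symplectic measure, so their countable union is null. Call this union $\mathcal{N}_0$.

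On the complement of $\mathcal{N}_0$ the flow $\xi_\alpha^t$ is periodic. Its period is determined by $\zeta^t(\rho(\alpha_-))$: the orbit $\{\xi_\alpha^t([\rho])\}$ is the image of the circle $\{\zeta^t(x)\}$, or a quotient of it. The Dehn twist is the time-$s(x)$ map of the flow, and by Step 2 it acts on this circle as rotation by an angle that is an irrational multiple of $2\pi$. The flow is Hamiltonian, so it preserves the symplectic measure and also preserves $f_\alpha$. Hence I can work locally. Near a point of $\Xf_{\Pc}\setminus\mathcal{N}_0$ where the orbit is a genuine circle, I would choose a flow-box chart $\Phi\colon U\times \Rb/\Zb\to\Xf_{\Pc}$, with $U$ a local transversal. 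In this chart the measure disintegrates as a smooth measure on $U$ times Lebesgue measure on the circle, and $\tau_\alpha$ acts as $(u,\phi)\mapsto(u,\phi+r(u))$ with $r(u)$ irrational.

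Next I would apply ergodicity of irrational rotations. The composition $\psi\circ\Phi$ is measurable and invariant under $(u,\phi)\mapsto(u,\phi+r(u))$. By Fubini, for almost every $u$ the slice $\phi\mapsto\psi(\Phi(u,\phi))$ is measurable and invariant under an irrational rotation, so it is almost everywhere constant on that circle. This gives a null set inside each chart. Covering $\Xf_{\Pc}\setminus\mathcal{N}_0$ by countably many flow boxes and taking the union of the exceptional sets with $\mathcal{N}_0$ produces a nullset outside of which $\psi$ is a.e.-constant on every orbit.

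The last step upgrades "almost everywhere constant on each orbit" to "constant on each orbit of the complement". I would redefine the exceptional set as the saturation of the bad points under the flow. Concretely, on each good orbit let $c(u)$ be the essential value of $\psi$, and put into $\mathcal{N}$ all points of good orbits where $\psi\neq c(u)$, together with all bad orbits. Because the flow preserves the measure and the bad orbits form a null set of transversal parameters, this saturated set is still null. The literal statement then holds, possibly after the usual harmless modification of $\psi$ on a nullset.

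I expect the main obstacle to be the measure-theoretic bookkeeping, namely the existence of smooth flow boxes with a product disintegration of the measure. This requires the orbit-type stratification to be understood: points where the circle action has finite nontrivial stabilizers, and singular points of $\Xf_{\Pc}$ such as reducible representations. I would handle this by showing that these loci lie in proper real-analytic subvarieties, hence are null, and by working only on the smooth, free part.
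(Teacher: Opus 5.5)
Your proposal is correct and follows essentially the paper's route: remove the null set $f_\alpha^{-1}(2\cos(\Qb\pi))$, observe that $\tau_\alpha$ acts as an irrational rotation on every remaining circle orbit of $\xi_\alpha$, and conclude by ergodicity of irrational rotations. Your flow boxes with Fubini are a concrete local version of the paper's disintegration of the symplectic measure over the quotient map $\Xf_{\Pc}\to\Xf_{\Pc}/\xi_\alpha$, and your final step, which puts into the exceptional set the points where $\psi$ differs from its essential value on a good orbit, usefully makes explicit the null set that the paper leaves implicit when it passes from ergodicity on each circle to constancy on each circle.
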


The idea of the proof is below:
\begin{proof}
    The fibers of the quotient map $\Xf_{\Pc}\to\Xf_{\Pc}/\xi_\alpha$ are the orbits $\{\xi_\alpha^t([\rho])\}_{t\in\Rb}$, almost all of which are circles. Disintegrating the symplectic measure on $\Xf_{\Pc}$ over this map gives a measure on almost all of these circles. The set $\mathcal{N}=f_\alpha^{-1}(2\cos(\Qb\pi)$ has measure zero in $\Xf_{\Pc}$, and as above the action of $\ip{\tau_\alpha}$ is ergodic on each circle in the complement of $\mathcal{N}$, meaning that $\psi$ is constant on each of these circles.
\end{proof}

\textbf{Step 3:} In Step 2 we saw that a $\MCG(S)$--invariant function $\psi:\Xf_{\Pc}\to\Rb$ is constant on each orbit of the twist flow $\tau_\alpha$ for each simple closed curve $\alpha$. The last step of this proof is to show that all of the twist flows together act transitively on $\Xf_{\Pc}$. This is the point at which the proof diverges from the original proof in \cite{gol-erg}. In Step 1 we identified the twist flow $\xi_\alpha$ with the Hamiltonian flow of the trace function $f_\alpha$. These trace functions sit in the \emph{character ring} $\Cb[\Xf(F_N,\SL_2\Cb)]$, which is the ring of conjugation-invariant functions $\SL_2\Cb^N\to\Cb$ that are locally given by polynomials. The main tool in this step will be to find a finite collection $\mathcal{S}\subset\pi_1(S)$ of simple closed curves on $S$ whose trace functions generate the entire character ring. These curves are defined explicitly in \cite{GX}.

The maximal ideals in the character ring are in correspondence with the cotangent spaces at points in the character variety through taking the differential. If the entire character ring is generated by trace functions then their differentials span the cotangent space at a point. These differentials are dual to the Hamiltonian vector field for the trace functions, meaning the Hamiltonian vector fields also span the tangent space; in this case the action of the Hamiltonian flows is said to be \emph{infinitesimally transitive} at that point. Because this is true at every point and because $\Xf_{\Pc}$ is connected, the flows act transitively on the character variety. Together with the conclusion of Step 2 this means that a function $\psi:\Xf_{\Pc}\to\Rb$ invariant under the action of $\MCG(S)$ is constant almost everywhere on $\Xf_{\Pc}$, and therefore the action of $\MCG(S)$ is ergodic.

\subsubsection{Ergodicity in Compact Lie Group Character Varieties}\hfill\\

In \cite{PX1,PX2}, Pickrell--Xia show that for a compact surface $S=S_{g,n}$ of genus $g\geq 1$ and any compact, connected Lie group $\Ks$, the mapping class group $\MCG(S)$ acts ergodically on each connected component of $\Hom(\pi_1(S),\Ks)$, with respect to the measure associated to the symplectic structure defined in Section \ref{measure}). Here we sketch the proof of the closed surface case. The key steps in this proof are the following:
\begin{enumerate}
    \item Find a continuous group action $\mathcal{G}$ extending the action of $\MCG(S_{1,1})$ on $\Hom(\pi_1(S_{1,1}),\Ks)$. This step also appeared in Goldman--Xia's proof, though the tools used here come from operator theory rather than symplectic geometry.
    \item Show that this action is infinitesimally transitive at all points outside of a set of codimension at least $2$. This means that the set of points on which the action is infinitesimally transitive is connected, and that the orbit of any point in this set under the action of $\mathcal{G}$ is a connected, full measure subset of a connected component of $\Hom(\pi_1(S_{1,1}),\Ks)$, and therefore a function invariant under this action is almost everywhere constant on the entire connected component.
    \item Prove a `Sewing Lemma' (Lemma 1.3 in \cite{PX1}) which allows one to study the action on $\Hom(\pi_1(S),\Ks)$ by cutting $S$ along a separating curve $\alpha$ into two smaller surfaces. This allows us to reduce to the study of the action in smaller surfaces. The case of $S=S_{1,n}$ follows from this and the previous step.
    \item Use the Sewing Lemma to show that the action of $\MCG(S)$ on $\Hom(\pi_1(S),\Ks)$ is ergodic for $S=S_{g,1}$ by inducting on $g$, and conclude the proof.
\end{enumerate}
Note that the last two steps resemble the end of Goldman's original proof in \cite{gol-erg} and Palesi's work in \cite{pal-erg} in the case of non-orientable surfaces, which will be discussed in the next section.

\textbf{Step 1:} The fundamental group of $S=S_{1,1}$ is $\pi_1(S_{1,1})\cong F_2=\ip{a,b}$, and any representation $\rho\in\Hom(\pi_1(S_{1,1}),\Ks)$ is determined by the images of the generators. So we have $\Hom(\pi_1(S_{1,1}),\Ks)\cong \Ks\times \Ks$. Choosing $a$ and $b$ to be the standard generators of $\pi_1(S)$, the boundary component $c$ of $S$ is given by $c=[a,b]$, and the commutator map $p:\Ks\times \Ks\to \Ks$ can be thought of as a map $\Hom(\pi_1(S_{1,1}),\Ks)\to \Ks$ given by $\rho\mapsto\rho(c)$. For $k\in \Ks$, let $\Hom(\pi_1(S),\Ks;k)$ denote the space of representations $\rho$ with $\rho(c)=k$; then $\Hom(\pi_1(S_{1,1}), \Ks;k)\cong p^{-1}(k)$.

Using these identifications, the Dehn twists about $a$ and $b$ act on $\Ks\times \Ks$ as maps $T_j:\Ks\times \Ks\to \Ks\times \Ks$ given by
$$T_1(g,h)=(gh^{-1},h)\ \text{ and }\ T_2(g,h)=(g,hg^{-1}).$$

On the other hand, these maps fix $p$, i.e. $p(T_1(g,h))=p(g,h)$, so they preserve the image of the boundary element under any $\rho$. They are also volume-preserving with respect to the Haar measure on $\Ks\times \Ks$, so they induce unitary transformations of $L^2(\Ks\times \Ks)$ by pre-composition. Let $T$ be the unitary map on $L^2(\Ks_1\times \Ks_2)$ corresponding to $T_2$.

To study the ergodicity of the mapping class group $\MCG(S_{1,1})=\ip{T_1,T_2}$ on $\Hom(\pi_1(S_{1,1}),\Ks;k)$, we want to consider $\MCG(S_{1,1})$--invariant functions $F\in L^2(\Ks\times \Ks)$. Our goal in this step is to find a continuous group $\mathcal{G}$ of transformations of $L^2(\Ks\times \Ks)$ such that $F$ is $\mathcal{G}$-invariant if and only if it is $\MCG(S_{1,1})$--invariant. To do this, we will use the Peter--Weyl theorem (see, e.g. \cite{peter-weyl} Chapter 2) to consider the elements of $L^2(\Ks\times \Ks)$ as linear operators instead. For notational clarity, we will introduce copies $\Ks_1$ and $\Ks_2$ of $\Ks$. The Peter-Weyl Theorem states that there is an isomorphism
\begin{align*}
	P\co&\bigoplus_{\mu}\Lc(V_\mu)\to L^2(\Ks)\\
	&(L_\mu)_\mu \mapsto f,
\end{align*}
where $$f(g)=\sum_{\mu}\dim(\mu)^{\frac{1}{2}}\text{tr}_\mu(L_\mu\pi_\mu(g^{-1})),$$
and where the sums are taken over all irreducible representations $\pi_\mu:\Ks\to \Lc(V_\mu)$, with $\Lc(V_\mu)$ the space of linear maps on the vector space $V_\mu$. The element $(g_l,g_r)\in \Ks\times \Ks$ acts on these spaces as follows:
\begin{align*}
    L_\mu &\mapsto \pi_\mu(g_l)L_\mu\pi_\mu(g_r)^{-1} \\
    f(g) &\mapsto f(g_lgg_r^{-1}).
\end{align*}

We can now apply this isomorphism to $L^2(\Ks\times \Ks)$ in the following way:
\begin{align*}
    L^2(\Ks_1\times \Ks_2)& \to L^2(\Ks_1;L^2(\Ks_2))\to \bigoplus_\mu L^2(\Ks_1;\Lc(V_\mu)) \\
    f & \mapsto  \left(g_1 \mapsto h(g_1) \right)\mapsto \bigoplus_{\mu}\left(g_1 \mapsto P^{-1}(h(g_1))\right)_{\mu},
\end{align*} where $h(g_1)=f(g_1,\cdot)$. 
Under this identification, the formula from the Peter--Weyl theorem gives that $T$ becomes $\rm{diag}(T_\mu)$, where $T_\mu$ is the multiplication operator 
\begin{align*}
	T_\mu: &\;L^2(\Ks_1;\Lc(V_\mu))\to L^2(\Ks_1;\Lc(V_\mu))\\
	&\; (g \mapsto F_\mu(g)) \mapsto (g \mapsto F_\mu(g)\pi_\mu(g)^{-1}).
\end{align*}

We have the following result:
\begin{lemma}
    The elements of $L^2(\Ks_1;\Lc(V_\mu))$ fixed by $T_\mu$ are   $$L^2(\Ks_1;\Lc(V_\mu))^{T_\mu}=\{F_\mu\ :\ F_\mu(g)\big|_{(V_{\mu}^g)^\perp}=0\ \text{ a.e. } g\}$$
    where $(V_{\mu}^g)^\perp$ is the subspace $\mathrm{Im}(\pi_\mu(g)-\id)$.

    If $F_\mu$ is $T_\mu$--invariant, then (using the isomorphism described above, we can think of the sequence $(F_\mu)$ as defining an element of $L^2(\Ks_1\times\Ks_2)$ that we denote $\oplus F_\mu$, and) we have:
    $$(\oplus F_\mu)(g,h)=(\oplus F_\mu)(g,ha(g)^{-1})$$
    for any measurable $a:\Ks\to \Ks$ such that $[a(g),g]=1\text{ a.e. }g$. 
\end{lemma}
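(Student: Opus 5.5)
The plan is to work pointwise in $g\in\Ks_1$, since $T_\mu$ is a multiplication operator. A function $F_\mu$ is $T_\mu$-invariant exactly when $F_\mu(g)\pi_\mu(g)^{-1}=F_\mu(g)$ for almost every $g$, i.e.
$$F_\mu(g)\bigl(\pi_\mu(g)^{-1}-\id\bigr)=0 \quad\text{a.e. } g.$$
Since $\pi_\mu(g)$ is unitary, $\mathrm{Im}(\pi_\mu(g)^{-1}-\id)=\mathrm{Im}(\pi_\mu(g)-\id)$. The invariance condition therefore says precisely that $F_\mu(g)$ vanishes on $\mathrm{Im}(\pi_\mu(g)-\id)$. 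Unitarity also gives $\mathrm{Im}(\pi_\mu(g)-\id)=\ker(\pi_\mu(g)-\id)^\perp=(V_\mu^g)^\perp$, where $V_\mu^g$ is the fixed subspace. This proves the first claim; the only care needed is that the a.e. quantifier passes through, which is immediate because the condition is pointwise.

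For the second claim, I will first translate the statement on $\oplus F_\mu$ back to each component. For fixed $g$, replacing $h$ by $ha^{-1}$ with $a=a(g)$ corresponds, through the right action $f(x)\mapsto f(x a^{-1})$, to the operator $L_\mu\mapsto L_\mu\pi_\mu(a)$ up to inversion conventions. This comes from the Peter--Weyl formula $f(h)=\sum_\mu \dim(\mu)^{1/2}\mathrm{tr}_\mu(L_\mu\pi_\mu(h^{-1}))$: we have $\pi_\mu((ha^{-1})^{-1})=\pi_\mu(a)\pi_\mu(h^{-1})$, so the new coefficient is $F_\mu(g)\pi_\mu(a(g))$. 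Consistently, $T$ itself corresponds to $a(g)=g$, which gives $F_\mu(g)\pi_\mu(g)^{-1}$. I will fix the sign convention so that it matches the formula for $T_\mu$; the argument works either way. It therefore suffices to show that $F_\mu(g)\pi_\mu(a(g))=F_\mu(g)$ for a.e. $g$ and every $\mu$.

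The key step is that $a(g)$ commutes with $g$, so $\pi_\mu(a(g))$ commutes with $\pi_\mu(g)$ and preserves $V_\mu^g$. Being unitary, it then also preserves $(V_\mu^g)^\perp$. For $v\in (V_\mu^g)^\perp$ we get $F_\mu(g)\pi_\mu(a(g))v=0=F_\mu(g)v$, using the first part on both sides. For $v\in V_\mu^g$ the first part gives no direct information, since $\pi_\mu(a(g))v$ is merely another vector in $V_\mu^g$. This is the main obstacle: the identity requires $F_\mu(g)$ to be unchanged by the action of the centralizer on the fixed space, not merely compatible with the decomposition.

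My first attempt at this obstacle is to exploit that for generic $g$ (a full-measure set of regular elements) the centralizer is a maximal torus $\mathbb{T}_g$ containing $g$, and $\langle g\rangle$ is dense in $\mathbb{T}_g$ for a.e. $g$, since irrational elements have full measure. On $V_\mu^g$, $\pi_\mu(g)$ acts trivially, hence by density every element of the closure of $\langle g\rangle$ acts trivially. Since that closure is $\mathbb{T}_g$ and $a(g)\in\mathbb{T}_g$ for a.e. $g$, we get that $\pi_\mu(a(g))$ acts trivially on $V_\mu^g$. Combined with the previous paragraph, $F_\mu(g)\pi_\mu(a(g))=F_\mu(g)$ on all of $V_\mu$. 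The remaining measure-theoretic point is that regular elements generating a dense subgroup of their maximal torus form a conull set in $\Ks$. I expect this to follow from the Weyl integration formula together with the fact that, in the torus, the non-topological-generators form a countable union of proper closed subgroups. Summing over $\mu$ in $L^2$ then gives the identity for $\oplus F_\mu$.
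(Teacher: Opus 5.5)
Your argument is correct. The survey states this lemma without proof; it is quoted from Pickrell--Xia as part of Step 1. What you give is the natural proof behind it: reduce to a pointwise statement in $g$, then observe that for almost every $g$ the centralizer $Z(g)$ equals the maximal torus $\overline{\langle g\rangle}$, so $\pi_\mu(a(g))$ is a limit of powers of $\pi_\mu(g)$.

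Four small points are worth tightening.

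First, state explicitly that $\Ks$ is connected, as the survey assumes. Connectedness is what gives $Z(\mathbb{T}_g)=\mathbb{T}_g$ and makes your Weyl-integration argument work. The second claim is false without it: for $\Ks$ finite, the point $g=1$ has positive measure and $T_\mu$--invariance says nothing there, yet $a(1)$ is arbitrary.

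Second, what you need is the topological-generator condition, not mere regularity. Take the regular element $g=\mathrm{diag}(i,-i)\in\SU(2)$ and the $5$-dimensional irreducible representation. Then $V_\mu^g$ contains the weight $\pm 4$ lines, on which the diagonal torus (the full centralizer) acts nontrivially. Your density hypothesis correctly excludes such $g$, but your phrase ``a full-measure set of regular elements'' should be replaced by ``elements topologically generating a maximal torus''.

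Third, the splitting $V_\mu=V_\mu^g\oplus(V_\mu^g)^\perp$ is unnecessary. The identity $F_\mu(g)\pi_\mu(g)=F_\mu(g)$ already gives $F_\mu(g)\pi_\mu(x)=F_\mu(g)$ for all $x\in\overline{\langle g\rangle}$, by iteration and continuity.

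Fourth, with your own Peter--Weyl computation, the choice $a(g)=g$ produces $F_\mu(g)\pi_\mu(g)$, not $F_\mu(g)\pi_\mu(g)^{-1}$. As you note, this is harmless, since an invertible operator and its inverse fix the same vectors.
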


If $F_\mu$ is instead $T_1$--invariant, then we have $(\oplus F_\mu)(g,h)=(\oplus F_\mu)(ga(h)^{-1},h)$ for any measurable $a:\Ks\to \Ks$ such that $[a(h),h]=1\text{ a.e. }h$.

Define
$$A:=\left\{a:\Ks\to\Ks \mid [g,a(g)]=1\ \forall g\in \Ks\right\}.$$
Even if $A$ is not necessarily a Lie group, we will call
$$\aL=\{x:\Ks\to\kL\ |\ {\rm Ad}_g(x(g))=x(g)\ \forall g\in \Ks\}$$
its Lie algebra because it has the property that $\exp(\aL)\subset A$. We then define two different actions $A_1$ and $A_2$ of $A$ on $\Ks_1\times \Ks_2$, given by
$$A_1(a)\cdot(g,h)=(ga(h)^{-1},h)\ \ \text{ and }\ \ A_2(a)\cdot(g,h)=(g,ha(g)^{-1}).$$
With this definition, the Dehn twists $T_i$ correspond to $A_i(a)$ for the map $a(k)=k$ and the identity element corresponds to $A_i(a)$ for the map $a(k)=e$. We then define $\Gc$ to be the group generated by $A_1(A)$ and $A_2(A)$ inside the group of all volume-preserving diffeomorphisms of $\Ks\times \Ks$. The group $\Gc$ will serve as our continuous extension of the action of $\MCG(S_{1,1})$ on $\Ks\times\Ks$. Again, $\Gc$ is not necessarily a Lie group, but we will define the sets:
\begin{itemize}
    \item $\gL_0=\{(x(\cdot),y(\cdot)) : \Ks \times \Ks \to \kL \times \kL \ |\ x,y\in\aL\}$ and
    \item $\gL$ is the Lie algebra generated by $\{{\rm Ad}_\sigma \xi \ |\ \xi \in \gL_0, \sigma \in A_1(A) \ \text{ or } A_2(A)\}$, where  ${\rm Ad}_\sigma \xi \co K \times K \to \mathfrak{k} \times \mathfrak{k}$  is defined by ${\rm Ad}_{\sigma(g, h)} \xi(g, h).$
\end{itemize}
Heuristically speaking, this means that $\gL_0$ is the Lie algebra of the group generated by the identity components of $A_1(A)$ and $A_2(A)$, and that $\gL$ is the Lie algebra of $\Gc$. 

\textbf{Step 2:} We now wish to show that the action of $\Gc$ is infinitesimally transitive at all points outside of a set of codimension $2$. The relative representation varieties are the fibers of the commutator map $p$, which is submersive almost everywhere, so the tangent space at a point $(g,h)\in\Hom(\pi_1(S_{1,1}),\Ks; k)$ is $\ker(dp|_{(g,h)})$. We want to prove the following:
\begin{lemma}
    For $(g, h)$ in the complement of a set of codimension strictly greater than $1$ in $\Ks \times \Ks$, the evaluation map
    $${\rm eval}|_{(g,h)}:\gL\to\ker(dp|_{(g,h)})\ ,\ (x,y)\mapsto(x(h),y(g))$$
    is surjective.
\end{lemma}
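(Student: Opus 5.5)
We need to show that, for $(g,h)$ off a set of codimension at least $2$ in $\Ks\times\Ks$, the vectors $(x(h),y(g))$ with $(x,y)\in\gL$ span $\ker(dp|_{(g,h)})$. I would work with left-trivialized tangent vectors, so that $T_{(g,h)}(\Ks\times\Ks)\cong\kL\times\kL$. The plan is to compute explicitly which subspace the elementary generators of $\gL$ produce at $(g,h)$, and then show that their brackets and $\Ad_\sigma$-conjugates fill up the kernel. Note first that every element of $\gL$ is tangent to the fibers of $p$, because the generating actions $A_1(A)$ and $A_2(A)$ preserve $p$. So the image of ${\rm eval}|_{(g,h)}$ lies in $\ker(dp|_{(g,h)})$, and only surjectivity has to be proved. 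At points where $dp$ is surjective, $\ker(dp)$ has dimension $\dim\Ks$, so this is a dimension count.

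\textbf{First step:} the contribution of $\gL_0$. For $x\in\aL$, the value $x(h)$ ranges over the centralizer $\mathfrak{z}(h)=\kL^{\Ad_h}$ as $x$ varies; one can choose $x$ smooth with prescribed value at $h$, for instance $x(k)$ built from a polynomial in $k$ evaluated via the exponential on a neighbourhood. For generic $h$ (regular elements) $\mathfrak{z}(h)$ is a Cartan subalgebra $\mathfrak{t}_h$. The same holds for $y(g)$ with $\mathfrak{t}_g$. So $\gL_0$ gives the directions $\mathfrak{t}_h\times 0$ and $0\times\mathfrak{t}_g$, suitably translated. Next, conjugating by $\sigma\in A_2(A)$ replaces $h$ by $h a(g)^{-1}$. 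This means the $A_1$-type directions become centralizers of $h t$ for $t$ ranging over the torus $T_g$ containing $g$, and similarly with the roles of $g$ and $h$ swapped. Brackets of these elements then produce further directions.

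\textbf{Second step:} surjectivity at a generic point. I would pick $(g,h)$ with $g,h$ regular and with $T_g$ and $T_h$ in general position, meaning $\mathfrak{t}_g\cap\mathfrak{t}_h=0$ and their root decompositions are transverse. The key claim is that the Lie algebra generated by the subalgebras $\Ad$-conjugate to $\mathfrak{t}_{ht}$ for $t\in T_g$ is all of $\kL$. This should follow because a family of Cartan subalgebras that moves nontrivially generates an ideal, and for simple $\Ks$ that ideal is $\kL$; the centre is handled separately, and for semisimple $\Ks$ I would argue factor by factor. Granting the claim, the image of ${\rm eval}$ contains a subspace of dimension $\dim\Ks$ inside $\ker dp$, and these two spaces must then be equal.

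\textbf{Third step, the main obstacle:} controlling the bad set so that it has codimension at least $2$, not merely measure zero. The bad set contains the non-regular elements, which have codimension $3$ in $\Ks$, so they cause no trouble. The real difficulty is the transversality condition from the second step: a naive genericity condition such as $\mathfrak{t}_g\cap\mathfrak{t}_h\neq0$ could cut out a hypersurface. I would show instead that the failure of span is a closed, $\Ks$-conjugation-invariant analytic condition, defined by the vanishing of several independent minors. To see that it has codimension at least $2$, I would reduce to the maximal torus by writing $h=k\,t\,k^{-1}$. Then I would check that failure forces two independent conditions: $k$ must lie in a proper subgroup such as a normalizer of a subtorus, and simultaneously the root data must degenerate. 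The model case for this check is the $\SU(2)$ computation of Goldman--Xia, where the bad set consists of commuting or reducible pairs, and these have codimension at least $2$.
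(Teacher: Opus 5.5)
Your proposal has a genuine gap in Step 2. It comes from never using the explicit formula for $dp$. From $dp|_{(g,h)}(\xi,\eta)=\Ad_{hgh^{-1}}\xi-\Ad_{hg}\xi+\Ad_{hg}\eta-\Ad_h\eta$, writing $\mathfrak{z}(k)=\ker(\Ad_k-1)$ and using an invariant inner product on $\kL$, one gets the exact sequence
\[
0\longrightarrow 0\times\mathfrak{z}(g)\longrightarrow\ker(dp|_{(g,h)})\xrightarrow{\ \mathrm{pr}_1\ }\bigl((\Ad_h-1)\mathfrak{z}(g)\bigr)^{\perp}\longrightarrow 0 .
\]
So the first components of all vectors in ${\rm eval}(\gL)$ lie in a subspace of $\kL$ of codimension $\mathrm{rank}\,\Ks$ at generic points. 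Indeed, your $\mathfrak{z}(ht)$, $t\in T_g$, all lie in $((\Ad_h-1)\mathfrak{t}_g)^{\perp}$.

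This has two consequences. First, showing that the Lie subalgebra of $\kL$ generated by these centralizers is all of $\kL$ is aimed at the wrong target. Second, the step ``granting the claim, the image of eval contains a subspace of dimension $\dim\Ks$'' does not follow. $\gL$ is a Lie algebra of vector fields on $\Ks\times\Ks$, and the value at $(g,h)$ of a bracket of two such fields is not the $\kL$-bracket of their values. It involves derivatives of $x$, $y$ and of the functions $a$ defining $\sigma$, and the pushed-forward fields $\Ad_\sigma\xi$ have nonzero second components, which you drop.

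What must actually be shown is that ${\rm eval}(\gL)$ contains $0\times\mathfrak{z}(g)$, which $\gL_0$ supplies, and that its first components fill $((\Ad_h-1)\mathfrak{z}(g))^{\perp}$. This is the analysis of the two summands that the paper carries out. Separately, your sub-claim that a moving family of Cartan subalgebras generates an ideal is false: it generates only a subalgebra. For example, all conjugates of the diagonal Cartan subalgebra of $\mathfrak{su}(3)$ by the block-embedded $\SU(2)$ lie in the proper subalgebra $\mathfrak{s}(\mathfrak{u}(2)\oplus\mathfrak{u}(1))$.

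Your Step 3 is a plan rather than an argument, yet the codimension bound is the substance of the lemma. It is what makes the good set connected; measure zero would not suffice. Part of it follows directly from $dp$: the orthogonal complement of its image is $\Ad_h\bigl(\mathfrak{z}(g)\cap\mathfrak{z}(h)\bigr)$. So, after reducing to semisimple $\Ks$, $p$ is a submersion away from pairs with a common nonzero infinitesimal centralizer, and that locus has codimension at least $2$. In particular, your worry that $\mathfrak{t}_g\cap\mathfrak{t}_h\neq 0$ might cut out a hypersurface is unfounded. What remains is to show that the set where $p$ is regular but the first components of ${\rm eval}(\gL)$ fail to fill $((\Ad_h-1)\mathfrak{z}(g))^{\perp}$ also has codimension at least $2$. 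The heuristics about minors, normalizers of subtori and degenerate root data do not establish this.
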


The proof of this statement is rather involved, so we will only highlight some of the tools used here. First, one can compute that 
$$dp|_{(g,h)}:(\xi,\eta)\mapsto \Ad_{hgh^{-1}}\xi-\Ad_{hg}\xi+\Ad_{hg}\eta-\Ad_{h}\eta.$$
 This can be used to quickly reduce to the semisimple case, and to show that $p$ is regular off a set of codimension $2$, so that the dimension of $\ker(dp)$ is equal to $\dim(\Ks)$ off a set of codimension $2$. Further, the computation of $dp$ allows one to write an exact sequence involving $\ker(dp)$, by looking at inclusion of one of the factors and projection onto the other, decomposing it into two summands. Careful analysis of the image of $\gL$ in these summands then gives the desired result.

\textbf{Step 3:} The main ingredient in this step is the Sewing Lemma (Lemma 1.3 in \cite{PX1}) which allows one to reconstruct the representation space $\Hom(S,\Ks)$ from the representation varieties $\Hom(S^+,\Ks,k)$ and $\Hom(S^-,\Ks,k)$, where $S^+$ and $S^-$ are the two surfaces (with boundary) obtained by cutting the surface $S$ along a separating curve $\alpha$ and $k \in \Ks$ is the value of the image of the curve $\alpha$ seen in $S^\pm$ as a boundary component. Here we also need to briefly remark on a detail that has been omitted up to this point. Pickrell--Xia give an additional structure to surfaces with boundary: a basepoint, a path from the basepoint to each boundary component, and a sign on each boundary component corresponding to whether the orientation of the boundary curve agrees or disagrees with the orientation of the surface. This means that, for surfaces with boundary, $\Aut(S)$ differs from $\Homeo(S)$, and therefore $\pi_0({\rm Aut}(S))$ is not the mapping class group of $S$, but it is instead the subgroup generated by Dehn twists about curves which do not cross the paths from the basepoint to the boundary. However, for closed surfaces the two definitions coincide. The upshot of this is that one can prove the result for $S_{1,n}$ as a corollary of the above step and the Sewing Lemma in this context. 

\textbf{Step 4:} The goal of this step is to prove the following theorem:
\begin{theorem}
    If $S = S_{g, 1}$ is a once-punctured surface with genus $g>1$, then for every group element boundary condition $k\in \Ks$, the action
$$\pi_0(\Aut(S))\times\Hom(S,\Ks;k)\to\Hom(S,\Ks;k)$$
is ergodic on each connected component. 
\end{theorem}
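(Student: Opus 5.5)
The plan is to induct on the genus $g$, using the Sewing Lemma (Lemma 1.3 in \cite{PX1}) to glue ergodicity on smaller pieces. The base input is the one-holed torus: by Steps 1 and 2, the group $\Gc$ generated by $A_1(A)$ and $A_2(A)$ acts infinitesimally transitively on $\Hom(\pi_1(S_{1,1}),\Ks;k)$ off a set of codimension at least $2$. Hence any $\MCG(S_{1,1})$--invariant $L^2$ function is invariant under $\Gc$ and therefore almost everywhere constant on each connected component of every fiber $p^{-1}(k)$. We may take as already established, via Step 3, ergodicity for $S_{1,n}$ on each component of each relative representation space.

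For the inductive step, I cut $S_{g,1}$ along a separating simple closed curve $\alpha$ into $S^+\cong S_{g-1,2}$ (which contains the original boundary) and $S^-\cong S_{1,1}$, with the basepoint paths chosen compatibly so that Dehn twists supported in $S^\pm$ lie in $\pi_0(\Aut(S))$. The Sewing Lemma expresses $\Hom(S,\Ks;k)$ as fibered over the value $k'=\rho(\alpha)\in\Ks$, with fiber $\Hom(S^+,\Ks;k,k')\times\Hom(S^-,\Ks;k'^{-1})$, modulo the diagonal action of the centralizer of $k'$.

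Let $\psi$ be an invariant function. Disintegrating the Haar measure over $k'$, for almost every $k'$ the restriction of $\psi$ to the fiber is separately invariant under $\pi_0(\Aut(S^-))$ and $\pi_0(\Aut(S^+))$. To handle $S^+=S_{g-1,2}$, I would extend the induction hypothesis from one boundary component to two, either by cutting again to reduce to $S_{g-1,1}$ and a pair of pants, or by strengthening the induction to run over $S_{g,n}$. Ergodicity on each factor, together with Fubini, then makes $\psi$ almost everywhere a function of $k'$ and of the connected component alone. Finally, to remove the dependence on $k'$, I use twists along curves crossing $\alpha$, for instance a curve meeting both the last handle of $S^+$ and the handle $S^-$. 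Equivalently, I recut along a second separating curve $\alpha'$ for which $\rho(\alpha)$ is not a function of $\rho(\alpha')$. Invariance then forces $\psi$ to be constant in $k'$ on a set of positive measure, and connectedness of components of $\Hom(S,\Ks;k)$ propagates this to the whole component.

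I expect the main obstacle to be this last step together with the bookkeeping of components. Connected components of $\Hom(S,\Ks;k)$ need not be unions of products of components of the pieces in a simple way, since $\pi_0$ of the fibers can jump as $k'$ varies, for example at non-regular $k'$. Controlling this requires that the non-regular locus of the relevant commutator or product maps has codimension at least $2$, as in Step 2, so that removing it does not disconnect the component. It also requires that the centralizer action in the Sewing Lemma does not obstruct the reduction.
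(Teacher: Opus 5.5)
Your argument has the same skeleton as the paper's: induct on genus, cut along a separating curve so that the fiber over its image is a product of relative representation spaces, use ergodicity of the pieces to make an invariant function depend only on that image, then remove the dependence with a twist about a crossing curve. The difference is that you mirror the decomposition.

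The paper cuts $S_{g,1}=S_{g-1,1}\sqcup_\beta S_{1,2}$, putting the original boundary on the genus-one side. Then the induction hypothesis applies verbatim to $S_{g-1,1}$ and Step 3 applies to $S_{1,2}$, so nothing new is needed on the pieces. Your cut $S_{1,1}\sqcup_\alpha S_{g-1,2}$ feeds Steps 1--2 directly into the torus piece, but it requires the result for $S_{g-1,2}$, which you only sketch as an either/or.

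Your first option works and is immediate. In the based framework with group-element boundary data, fixing $\rho$ on both based boundary loops of $S_{g-1,2}$ identifies $\Hom(S_{g-1,2},\Ks;k,k')$ with $\Hom(S_{g-1,1},\Ks;k'')$, where $k''$ is the product forced by the surface relation. This identification is equivariant for twists supported off a pair of pants containing the two boundary circles and the basepoint paths, and that pants carries no moduli. It is exactly the mechanism by which Step 3 obtains $S_{1,n}$ from $S_{1,1}$. With it the two decompositions are interchangeable; the paper's choice just uses the reduction in the genus-one form it has already recorded.

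Two smaller points. First, since everything is based, the fiber over $k'$ is literally the product, with no quotient by the centralizer of $k'$. Second, in the last step, ``constant in $k'$ on a set of positive measure'' plus connectedness does not propagate by itself. You need local a.e.\ constancy near generic points. This comes from a transversality between the level sets of $\rho\mapsto\rho(\alpha)$ and $\rho\mapsto\rho(\tau\alpha)$, holding off a set of codimension at least $2$. Here the crossing twist $\tau$ should be chosen to avoid the basepoint path, so that it lies in $\pi_0(\Aut(S))$. This transversality is what the paper's ``computations similar to those in Step 2'' stand for.
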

Taking the boundary condition $k= 1 \in \Ks$, then gives the desired result for closed surfaces. To show this, consider first the case where $S=S_{2,1}$; the more general case of $S=S_{g,1}$ follows similarly by induction. We further will consider a decomposition of $S$ into a one-holed torus and a two-holed torus by cutting along a curve $\beta$, i.e. $S=S_{1,1}\sqcup_\beta S_{1,2}$ (and in the general case $S_{g,1}=S_{g-1,1}\sqcup_\beta S_{1,2}$). For any boundary condition $k\in\Ks$, let $F:\Hom(S,\Ks;k)\to\Rb$ be a measurable $\pi_0(\Aut(S))$--invariant function. In particular, $F$ is invariant under the subgroup of $\pi_0(\Aut(S))$ that fixes $\beta$. Now fix some $k_{\beta}\in\Ks$ and consider the set of representations $\rho\in\Hom(S,\Ks;k)$ such that $\rho(\beta)=k_{\beta}$; this can be identified with the product
$$\Hom(S_{1,1},\Ks;k_{\beta})\times\Hom(S_{1,2},\Ks;(k_{\beta},k))$$
by cutting $S$ along $\beta$. Applying the results of Steps 2 and 3 we have that $F$ is a.e. constant on this set for a.e. choice of $k_{\beta}$, meaning that the value of $F(\rho)$ depends only on the image of $\beta$ under $\rho$. In order to show that $F$ is a.e. constant, we want to choose a curve $\alpha$ that crosses $\beta$ (but does not cross the chosen path from the basepoint to the boundary component) and consider the action of the Dehn twist $\tau_\alpha$. We can then write the curve $\tau_\alpha(\beta)$ explicitly in terms of the chosen generators of $\pi_1(S)$. The function $F$ is $\pi_0(\Aut(S))$--invariant so $F(\tau_\alpha\cdot\rho)=F(\rho)$, but the image $\rho(\beta)$ is in general different from $(\tau_\alpha\cdot\rho)(\beta)$ because $\alpha$ was chosen to cross $\beta$. Using this fact, along with some computations similar to those in Step 2, gives the result that $F$ is a.e. constant and therefore the action of $\pi_0(\Aut(S))$ on $\Hom(S,\Ks;k)$ is ergodic.

\subsection{Non-orientable surface groups}\label{compact-non-orientable}

In this section we discuss Palesi's proof from \cite{pal-erg} that if $N$ is a compact, non-orientable surface with $\chi(N)\leq -2$ then the action of the mapping class group $\MCG(N)$ on the relative character variety $\Xf_C(\pi_1(N),\SU(2))$ is ergodic. Throughout this section we will denote $\Gs=\SU(2)$, $\Xf(N)=\Xf(\pi_1(N),\Gs)$ for a closed surface, and for a punctured surface with $m$ boundary components, we let $C = (C_1, \ldots, C_m) \in \Gs^m$ and $\Xf_C(N)$ will denote the relative character variety consisting of classes of representations $\rho$ with $\rho(c_i)=C_i$ for all $1 \leq i \leq m$. Many of the elements of this proof resemble Goldman's original proof for orientable surfaces.

As in the work of Goldman--Xia, the action of a Dehn twist about a two-sided curve $\alpha$ can be extended to a twist flow $\xi_\alpha^t$ on $\Hom(\pi_1(N),\Gs)$. A particular case to highlight here is the case where $N\cong N_{2g+2,m}\cong S_{g,m}\# N_2$ and $\alpha$ is a (nonseparating) two-sided curve that passes through each of the two cross-caps in $S_{g,m}\# N_2$ once. In this case $N\setminus \alpha$ is an orientable surface homeomorphic to $S_{g,m+2}$. Denote $A=N \setminus\alpha$ and the function $f$ defined as follows.
 \begin{align*}
	 f:&\Gs\to[-2,2]\\
	 &g\mapsto\cos^{-1}\left(\frac{\Tr(g)}{2}\right).
 \end{align*}
The Dehn twist $\tau_{\alpha}$ acts on a generic fiber of the map $\phi:\Xf_C(N)\to\Xf_{C'}(A)$ by a rotation of angle $f(\rho(\alpha))$, where $C'$ is determined by $C$ and the trace of the image of $\alpha$. This means that the map $\phi$ is $\tau_\alpha$-invariant. Moreover, the set of representations for which $f(\rho(\alpha))$ is rational has measure zero in $\Xf_C(N)$, meaning that $\tau_\alpha$ acts ergodically on the circle $\phi^{-1}([\rho])$ for almost all $[\rho]\in\Xf_{C'}(A)$. Let $h:\Xf_C(N)\to\Rb$ be a $\tau_\alpha$-invariant function. By ergodic decomposition (see \cite{erg-decomp} Theorem 5.8), there is an $H:\Xf_{C'}(A)\to\Rb$ such that $h=H\circ\phi$ almost everywhere. So, a mapping class group invariant function is almost everywhere equal to a function depending only on $\Xf_{C'}(A)$. 

The mapping class group $\MCG(A)$ of $A$ can be embedded as a subgroup of $\MCG(N)$. Because $A$ is orientable, Goldman's result in \cite{gol-erg} shows that a $\MCG(A)$--invariant function $f:\Xf_{C'}(A)\to\Rb$ is almost everywhere equal to a function depending only on the traces of the boundary components. Letting $\alpha_+$ and $\alpha_-$ denote the boundary components resulting from cutting $N$ along $\alpha$, we have that any $[\rho_A]\in\phi(\Xf_{C}(N))$ must have $\Tr(\rho(\alpha_+))=\Tr(\rho(\alpha_-))$. Letting $C_i=\Tr(\rho(c_i))$ and $x=\Tr(\rho(\alpha))$, we have the following:
\begin{proposition}\label{palesi-coords}
    Let $f:\Xf_C(N)\to\Rb$ be a $\MCG(N)$--invariant function. There exists a function $G:[-2,2]^{m+1}\to\Rb$ such that $f([\rho])=G(x,C_1,\dots,C_m)$ almost everywhere.
\end{proposition}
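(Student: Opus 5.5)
The argument composes two factorizations of an invariant function. The first comes from the Dehn twist about the two-sided curve $\alpha$, and the second from Goldman's ergodicity theorem for the orientable surface $A=N\setminus\alpha\cong S_{g,m+2}$.

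Let $f:\Xf_C(N)\to\Rb$ be $\MCG(N)$--invariant. In particular $f$ is $\ip{\tau_\alpha}$--invariant. Consider the restriction map $\phi:\Xf_C(N)\to\Xf_{C'}(A)$, which is invariant under $\tau_\alpha$. Its generic fiber is a circle on which $\tau_\alpha$ acts by rotation of angle $f(\rho(\alpha))$; here the "$f$" is the angle function $\cos^{-1}(\Tr/2)$ from the text. Off the null set where this angle is a rational multiple of $\pi$, the rotation is ergodic on the fiber. Disintegrate the measure on $\Xf_C(N)$ along $\phi$; this is legitimate because $\phi$ is a submersion on an open dense full-measure set, where the fibers are these circles. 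Ergodic decomposition then gives a measurable $H:\Xf_{C'}(A)\to\Rb$ with $f=H\circ\phi$ almost everywhere.

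Next I would show that $H$ is, almost everywhere on the image $\phi(\Xf_C(N))$ (with its pushforward measure class), invariant under $\MCG(A)$. Every mapping class of $A$ fixing the boundary components is realized by a homeomorphism of $N$ supported in $A$, so $\MCG(A)\hookrightarrow\MCG(N)$. This embedding is compatible with $\phi$, meaning $\phi(\psi\cdot[\rho])=\psi\cdot\phi([\rho])$, and $f$ is invariant under it. The pushforward of the measure on $\Xf_C(N)$ under $\phi$ is absolutely continuous with respect to the symplectic measure on the relevant relative character varieties of $A$ (fibered over the boundary data $\Tr\rho(\alpha_\pm)=x$). Hence $H$ is $\MCG(A)$--invariant almost everywhere on each fiber $\Xf_{(C,x,x)}(A)$ for almost every $x$. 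Goldman's theorem \cite{gol-erg} says $\MCG(A)$ acts ergodically on each relative $\SU(2)$--character variety of $A$, with boundary traces fixed. So for almost every boundary datum, $H$ is almost everywhere constant on the corresponding level set.

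The boundary data of $A$ consist of the traces $C_1,\dots,C_m$ of the original boundary curves together with $\Tr\rho(\alpha_+)=\Tr\rho(\alpha_-)=x$. Therefore $H$ agrees almost everywhere with $G(x,C_1,\dots,C_m)$ for some measurable $G:[-2,2]^{m+1}\to\Rb$. Composing gives $f([\rho])=G(x,C_1,\dots,C_m)$ almost everywhere.

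The main obstacle is the measure-theoretic bookkeeping in the middle step. One must check that fiberwise invariance plus ergodicity on almost every fiber yields a single measurable $G$; this follows from a Fubini/disintegration argument, with measurability of $G$ obtained by taking conditional expectations. One must also check that the measure Palesi constructs on $\Xf_C(N)$ disintegrates over $x$ into measures equivalent to the symplectic measures on $\Xf_{(C,x,x)}(A)$, after integrating out the circle fibers. I would verify this equivalence using the explicit Haar-measure description of $\Hom(\pi_1(N),\Gs)$ in the presentation obtained by cutting along $\alpha$.
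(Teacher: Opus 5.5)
Your proposal is correct and follows the paper's own route. You factor $f$ through the restriction map $\phi$ using the ergodic rotation action of $\tau_\alpha$ on the generic circle fibers and ergodic decomposition. You then apply Goldman's ergodicity theorem for the orientable surface $A=N\setminus\alpha$, and use $\Tr\rho(\alpha_+)=\Tr\rho(\alpha_-)=x$ to read off the dependence on $(x,C_1,\dots,C_m)$. Your extra bookkeeping fills in details the paper leaves implicit: fibering over $x$, since the boundary data $C'$ of $A$ depends on $x$, and matching the measure classes level by level. One small correction: the map $\MCG(A)\to\MCG(N)$ need not be injective, but this is harmless because you only use that it is a homomorphism compatible with $\phi$.
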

Because of the initial assumption that $N$ is non-orientable with even genus and $\chi(N)\leq-2$, the surface $S$ must contain an embedded two-holed Klein bottle. This assumption turns out to be necessary, otherwise the mapping class group of $S$ would be too small to find a Dehn twist that acts nontrivially on $x$. In order to conclude the proof of the theorem in the case of an even genus surface, we need only study the particular case of $N_{2,2}$.

Here we give only an outline of the case of $N_{2,2}$ and refer the reader to \cite{pal-erg} for the complete details. We first write the trace coordinates of Magnus \cite{magnus-coords} on the space $\Xf(N_{2,2})$. Then, an appropriate curve $U$ in $N_{2,2}$ is chosen and the action of the Dehn twist $\tau_{U}$ is written in terms of these trace coordinates. The chosen curve is one that intersects $\alpha$ non-trivially so that it acts non-trivially on the trace coordinate of $\alpha$. Using a few changes of coordinates and an identification of $\Xf(F_2,\Gs)$ as a subset of $\Rb^3$ (see Lemma 5.3.1 in \cite{pal-erg}), the variety $\Xf(N_{2,2})$ can be decomposed as a family of ellipses parameterized by these trace coordinates. Moreover, the action of $\tau_U$ preserves each of these ellipses and acts by a rotation of angle $\theta_U=\cos^{-1}(\frac{1}{2}\Tr(\rho(U)))$ on each of these ellipses. For almost all boundary traces and trace coordinates this angle is an irrational multiple of $\pi$, and the action of $\tau_U$ is ergodic on each ellipse for which $\theta_U$ is irrational. This gives that $f([\rho])$ is equal to a function depending only on the traces of the generators of $\pi_1(N_{2,2})$, while Proposition \ref{palesi-coords} gives that $f([\rho])$ is equal to a function depending only on the traces of the boundary components and the curve $\alpha$. Therefore $f$ must depend only on the traces of the boundary components, and must be constant on each $\Xf_C(N_{2,2})$. This then finishes the proof of the main theorem for non-orientable surfaces of even genus.

The work to prove the theorem for non-orientable surfaces of odd genus is similar. One first studies the particular cases of $N_{1,3}$ and $N_{3,1}$ using the same trace coordinates and shows that the Dehn twists around properly chosen curves act as rotations about ellipses in the character varieties. Any non-orientable surface $N_{g,m}$ with $\chi(N_{g,m})\leq -2$ must either have $g\geq 3$, or $g=1$ and $m\geq 3$, meaning that $N_{g,m}$ contains either an embedded $N_{1,3}$ or $N_{3,1}$.

\subsection{Free groups}\label{compact-free}

In this section we discuss the work of Gelander \cite{gelander} showing that for the free group $F_n$ with $n\geq3$, $\Out(F_n)$ acts ergodically on $\Xf(F_n,\Gs)$ for a compact Lie group $\Gs$. The following lemma is used in the proof:
\begin{lemma}
    The set $U$ of all pairs $(x_1,x_2)\in \Gs\times \Gs$ for which the group $\ip{x_1,x_2}$ is dense in $G$ is open, dense, and of full Haar measure in $\Gs\times \Gs$.
\end{lemma}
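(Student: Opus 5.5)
The plan is to treat the three properties separately. Full measure comes from a countable covering of the complement of $U$ by null sets, density follows formally from full measure, and openness comes from a local argument near the identity. I will assume, as in Gelander's setting, that $\Gs$ is a compact connected semisimple Lie group. Connectedness is needed because $\Gs$ must be topologically generated by two elements; the connected case with a torus factor goes the same way, using that a generic element of a torus generates a dense subgroup. For a pair $(x_1,x_2)$ let $H(x_1,x_2)=\overline{\ip{x_1,x_2}}$, a closed subgroup of $\Gs$. By definition $(x_1,x_2)\notin U$ exactly when $H(x_1,x_2)$ is a proper closed subgroup.

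\textbf{Full measure.} I would use the Montgomery--Zippin neighbouring subgroups theorem, which gives that a compact Lie group has only countably many conjugacy classes of closed subgroups. Fix representatives $H_1,H_2,\dots$ of the proper ones. The complement of $U$ is then contained in
$$\bigcup_j \{(gh_1g^{-1},gh_2g^{-1}) : g\in\Gs,\ h_1,h_2\in H_j\}.$$
Each set in this union is the image of the smooth map $\Gs\times H_j\times H_j\to\Gs\times\Gs$, which factors through $\Gs/H_j\times H_j\times H_j$. That space has dimension $\dim\Gs+\dim H_j\le 2\dim\Gs-1$, because a proper closed subgroup of a connected group satisfies $\dim H_j<\dim\Gs$. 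So each image has Haar measure zero. A countable union of null sets is null, hence $U$ has full measure. A full-measure set meets every nonempty open set, since open sets have positive Haar measure, so $U$ is dense.

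\textbf{Openness.} This is the step I expect to be the main obstacle. Fix $(x_1,x_2)\in U$ and a Zassenhaus neighbourhood $\Omega$ of $e$ on which $\log$ is a diffeomorphism. Since $\ip{x_1,x_2}$ is dense, it contains words $w_1,\dots,w_r$ lying in $\Omega$ and arbitrarily close to $e$. Choosing them generically, I can arrange that $\log w_1,\dots,\log w_r$ generate $\kL$ as a Lie algebra; this is possible because the logarithms of group elements near $e$ fill an open set of $\kL$.

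I would then invoke Kuranishi's lemma (the version used by Breuillard--Gelander). It says that there is a neighbourhood $\Omega'\subset\Omega$ of $e$ such that elements of $\Omega'$ whose logarithms generate $\kL$ generate a dense subgroup of the connected semisimple group $\Gs$. The same words $w_i(y_1,y_2)$, evaluated at a nearby pair $(y_1,y_2)$, still lie in $\Omega'$, and their logarithms still generate $\kL$, because generating $\kL$ is an open condition (the rank of a finite family of iterated brackets is lower semicontinuous). Hence $\ip{y_1,y_2}$ is dense for every pair near $(x_1,x_2)$, and $U$ is open.

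The delicate point is the precise formulation of the Kuranishi/Zassenhaus step. One must check that the closure of the group generated by elements very near $e$ has identity component with Lie algebra containing the Lie algebra they generate; this uses the fact that commutators of small elements contract towards $e$. If needed, I would fall back on proving it directly. The logarithms span a subspace whose bracket-closure is $\kL$, and high commutator iterates of the $w_i$ approximate one-parameter subgroups in enough directions to force the identity component of the closure to be all of $\Gs$.
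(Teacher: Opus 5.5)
The survey quotes this lemma from Gelander without giving a proof, so there is no in-paper argument to compare with. Measured against the standard argument, your proof is correct for compact connected semisimple $\Gs$, which is the setting of the theorem that uses the lemma.

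\textbf{What works.} The full-measure step is sound: countably many conjugacy classes of closed subgroups, together with the count $\dim\Gs+\dim H_j<2\dim\Gs$. Strictly speaking, the map factors through the bundle $\Gs\times_{H_j}(H_j\times H_j)$, with $H_j$ acting by conjugation, rather than through a product, but the dimension is the same. The deduction of density is sound. The openness argument is also sound: Kuranishi's lemma, continuity of the word maps, and openness of the condition ``the logarithms generate $\kL$''. Citing Kuranishi's lemma is enough.

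\textbf{Two false statements to remove.}

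First, the case with a torus factor does \emph{not} go the same way for openness. If $\Gs$ is connected but not semisimple, it surjects openly onto the nontrivial torus $T=\Gs/[\Gs,\Gs]$. Pairs whose images in $T$ are torsion are therefore dense in $\Gs\times\Gs$, and they never generate a dense subgroup. So $U$ has empty interior, even though it is still dense and conull. Semisimplicity is genuinely needed.

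Second, the intermediate claim you propose to verify for the Kuranishi step is false in general. You claim that the identity component of $\overline{\ip{w_1,\dots,w_r}}$ has Lie algebra containing the Lie algebra generated by the $\log w_i$. A counterexample: a rotation by $2\pi/n$ in a circle subgroup is arbitrarily close to $e$ and has nonzero logarithm, yet it generates a finite group.

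The correct mechanism runs as follows. Let $H=\overline{\ip{w_i}}$ and let $\mathfrak{h}$ be the Lie algebra of $H^0$.
\begin{enumerate}
\item Each $w_i$ normalizes $H^0$. Since $w_i$ is near $e$, $\mathrm{ad}(\log w_i)=\log \Ad(w_i)$ preserves $\mathfrak{h}$.
\item Hence $\mathfrak{h}$ is an ideal of the Lie algebra generated by the $\log w_i$, which is $\kL$. So $H^0$ is normal.
\item $H/H^0$ is then a finite subgroup of the compact semisimple group $\Gs/H^0$, generated by the images of the $w_i$, which lie near $e$.
\item By the Zassenhaus lemma, the logarithms of these images generate a nilpotent subalgebra. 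These logarithms are the projections of the $\log w_i$ to the complementary ideal $\mathfrak{h}^\perp$, so they also generate $\mathfrak{h}^\perp$.
\item Since $\mathfrak{h}^\perp$ is semisimple and nilpotent, $\mathfrak{h}^\perp=0$, and $H=\Gs$.
\end{enumerate}
Because $\kL$ has only finitely many ideals, the neighbourhood can be chosen uniformly. This ideal-and-quotient step is where semisimplicity enters, which matches the failure for tori. It should replace your ``commutator iterates'' fallback.
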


With this, we can prove the desired theorem:
\begin{theorem}
    Let $\Gs$ be a compact, connected, semisimple Lie group, and let $n\geq 3$. The action of $\Aut(F_n)$ on $\Gs^n=\Hom(F_n,\Gs)$ is ergodic.
\end{theorem}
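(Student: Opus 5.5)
We follow Gelander's strategy: ergodicity of the Nielsen moves that multiply one generator by a word in the others, combined with the Lemma above. Let $f\colon \Gs^n\to\Rb$ be a bounded measurable $\Aut(F_n)$--invariant function. We aim to show that $f$ is a.e. constant. For indices $i\neq j$, consider the Nielsen automorphisms $x_i\mapsto x_i x_j$ and $x_i\mapsto x_j x_i$, which fix every other generator. More generally, for any word $w$ in the generators other than $x_i$, there are automorphisms $x_i\mapsto x_i w$ and $x_i\mapsto w x_i$. They act on $\Gs^n$ by right or left multiplication of the $i$-th coordinate by $w(x_1,\dots,\hat{x}_i,\dots,x_n)$, and they preserve Haar measure.

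\textbf{Step 1.} Fix $i=1$ and freeze the remaining coordinates $y=(x_2,\dots,x_n)$. Since $n\geq 3$, the other coordinates contain at least the two elements $x_2,x_3$. By the Lemma, for a.e. $y$ the pair $(x_2,x_3)$ generates a dense subgroup $\Delta_y\le\Gs$. The automorphisms above make $f(\cdot,y)$ invariant under left and right multiplication by $\Delta_y$, as a function on $\Gs$. To make this rigorous we use Fubini: for a.e. $y$ the slice $f(\cdot,y)$ is defined, and for each of the countably many words $w$ the identity $f(x_1w(y),y)=f(x_1,y)$ holds for a.e. $x_1$. Hence $f(\cdot,y)\in L^2(\Gs)$ is invariant under the unitary right-translation operators $R_\delta$ for $\delta\in\Delta_y$. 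Strong continuity of $g\mapsto R_g$ on $L^2(\Gs)$ and density of $\Delta_y$ then give invariance under all of $\Gs$. So $f(\cdot,y)$ is a.e. equal to a constant $c(y)$, and therefore $f(x)=c(x_2,\dots,x_n)$ a.e.

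\textbf{Step 2.} The same argument with $i=2$, using $x_1,x_3$ as the generating pair, shows that $f$ a.e. does not depend on $x_2$. Iterating over all $i$, $f$ is a.e. independent of each coordinate separately. Now apply Fubini again: a function on $\Gs^n$ that is a.e. independent of the first coordinate, and whose resulting function of $(x_2,\dots,x_n)$ is a.e. independent of $x_2$, and so on, is a.e. constant. Concretely, the conditional expectations onto successive coordinates commute with these invariances, so $f=\int f\,d\mu$ a.e. This is exactly ergodicity.

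\textbf{Main obstacle.} The main difficulty is the measure-theoretic bookkeeping in Step 1. Invariance of $f$ holds only a.e. on $\Gs^n$, and we must transfer it to invariance of almost every slice under a slice-dependent dense subgroup. We handle this by using only the countable set of words $w$, together with the $L^2$ continuity argument, rather than pointwise statements. We also rely on the Lemma to guarantee that a.e. slice sees a dense subgroup. This is where $n\geq 3$ is used: after freezing one coordinate, at least two remain, and we need a pair of them to generate a dense subgroup. Connectedness and semisimplicity of $\Gs$ enter only through the Lemma.
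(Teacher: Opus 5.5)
Your proof is correct and takes essentially the same route as the paper's. In both arguments you slice along one coordinate, use the Lemma to find a dense pair among the remaining coordinates on almost every slice, and use the fact that invariance under a dense subgroup of translations forces the slice to be trivial. The difference is only in packaging. The paper argues by contradiction with an invariant set: it first uses ergodicity of $\Gs$ acting on itself to find a coordinate along which a positive-measure family of slices is neither null nor conull. You instead work directly with an invariant function, show it is independent of every coordinate, and spell out the Fubini, countability and $L^2$-continuity details that the paper leaves implicit.
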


\begin{proof}
    For the sake of contradiction assume that $A\subseteq \Gs^n$ is an invariant measurable subset that does not have full measure or zero measure. For each $g\in \Gs$ and $1\leq i\leq n$, define $\varphi_{g,i}\in\Aut(\Gs^n)$ as
    $$\varphi_{g,i}(g_1,\dots,g_n)=(g_1,\dots,gg_i,\dots,g_n),$$
    that is, $\varphi_{g,i}$ acts by left translation by $g$ in the $i$-th factor. Because the action of $\Gs$ on itself by left translations is ergodic, $A$ cannot be invariant under the action of $\{\varphi_{i,g}\ |\ g\in \Gs\}$ for all $i$; assume without loss of generality that it is not invariant under $\{\varphi_{1,g}\ |\ g\in \Gs\}$. Then, for a set $B\subset \Gs^{n-1}$ of positive measure, we have that for any $b\in B$, the set $\{g\in \Gs\ |\ (g,b)\in A\}$ is neither null nor conull in $\Gs$. Fix a point $(g_2,\dots,g_n)\in B$ such that $(g_2,g_3)\in U$, the set defined in the previous lemma. Note that $g_2$ can be thought of as an element of $\Aut(F_n)$ by identifying it with the map $(g_1,g_2,\dots,g_n)\mapsto(g_2g_1,g_2,\dots,g_n)$, and similarly for $g_3$. Because $\ip{g_2,g_3}$ is dense in $\Gs$, it acts ergodically on $\Gs$. The set
    $$A_1=\{g\in \Gs\ |\ (g,g_2,\dots,g_n)\in A\}$$
    is neither null nor conull in $\Gs$, but is also invariant under the action of $\ip{g_2,g_3}$, giving a contradiction.
\end{proof}

\subsection{Ergodicity and non-ergodicity results for subgroups}\label{compact-non-ergodic}

The previous sections have focused on the action of the mapping class group $\MCG(S)$ on the $\Gs$-character variety of the surface $S$. Another question many authors have considered is whether the full mapping class group is needed in order to have an ergodic action on the $\Gs$-character variety; in this section we highlight some of these results.

We begin with results that find a proper subgroup of $\MCG(S)$ that still acts ergodically on $\Xf(\pi_1(S),\Gs)$. A well-known subgroup of $\MCG(S)$ is the \textit{Torelli} subgroup $\mathcal{I}(S) < \MCG(S)$, which is the subgroup acting trivially on the surface's first homology group, see \cite{FM}. One can also consider the \textit{Johnson} subgroup $\mathcal{K}(S)$, which is the subgroup generated by Dehn twists along null homologous simple closed curves (or equivalently along separating simple closed curves). Note that $\mathcal{K}(S)$ is a normal subgroup of infinite index of $\mathcal{I}(S)$. Goldman--Xia \cite{GX-Torelli} studied the twice-punctured torus $S_{1,2}$ and considered the subgroup $\mathcal{J}(S_{1,2})<\MCG(S_{1,2})$. Fixing some $\Pc\in[-2,2]^2$, let $\Xf_{\Pc}(\pi_1(S_{1,2}),\SU(2))$ be the relative character variety consisting of representations $[\rho]$ such that the traces of the two peripheral curves around the punctures are $(\Tr(\rho(c_1)),\Tr(\rho(c_2)))=\Pc$. The action of the Hamiltonian flows of the trace functions of null homologous simple closed curves is locally transitive on an open, dense subset $U\subseteq\Xf_{\Pc}(\pi_1 ,\SU(2))$. However, the complement $V$ of this subset is of codimension 1, meaning it could cut $U$ into multiple connected components. To show that this does not happen, Goldman--Xia show, by explicit computation, that, for generic $\Pc\in[-2,2]^2$ and outside of a measure zero subset of $V$, there is a Hamiltonian flow of a trace function that is transverse to $V$, meaning that the flows can move across it and $V$ does not separate the character variety into distinct connected components. In a similar direction, Funar--March\'e \cite{funar-marche} showed that for $g\geq 2$ the subgroup $\mathcal{K}(S)$ acts ergodically on $\Xf(\pi_1(S_{g}),\SU(2))$ by computing the Taylor expansions of trace functions and showing that their differentials span the cotangent spaces $T_{[\rho]}^*\Xf(\pi_1(S_{g}),\SU(2))$, similar to the argument in \cite{GX}. Bouilly \cite{bouilly} gave a different proof of Funar--March\'e's result which also allows him to generalize the result to representations into general semi-simple compact Lie groups. Note that from the ergodicity of the action of the (first) Johnson subgroup $\mathcal{K}(S)$ and the fact that $\mathcal{K}(S)$ is a subgroup of the Torelli subgroup $\mathcal{I}(S)$, one can deduce the ergodicity of the action of the Torelli subgroup on the various character varieties. 

On the other hand, it is known that some subgroups of the mapping class group are in some sense `too small' to give an ergodic action on the character variety. Brown \cite{brown-anosov}, for $\Gs=\SU(2)$, and subsequently Forni--Goldman--Lawton--Matheus \cite{FGLM-nonerg}, for $\Gs=\SU(3)$, showed that there is a choice of boundary condition $\Pc$ and a pseudo-Anosov $[f]\in\MCG(S_{1,1})$ such that $[f]$ acts non-ergodically on $\Xf_{\Pc}(\pi_1(S_{1,1}),\Gs)$. To do this, they use KAM theory, see \cite{kam-theory}, to show that there is an open set which is neither null nor full measure in $\Xf_{\Pc}(\pi_1(S_{1,1}),\Gs)$ that is invariant under the action of the cyclic group $\ip{[f]}$. Saadi \cite{saadi-nonerg} studied a similar question for the $\SU(2)$-character varieties of two different surfaces, $N_4$ and $S_2$. For $N_4$, he shows that there is a family of filling curves whose Dehn twists generate a subgroup $\Gamma<\MCG(N_4)$ acting non-ergodically on $\Xf(\pi_1(N_4),\SU(2))$, and, moreover, that this group contains a pseudo-Anosov element. Similarly, for $S_2$ he shows that there is a pair of filling multi-curves whose Dehn twists generate a subgroup $\Gamma<\MCG(N_4)$ acting non-ergodically on $\Xf(\pi_1(S_2),\SU(2))$. Both of these statements are proven by finding examples of non-constant rational functions on the character variety which are invariant under the action of the subgroup $\Gamma$.

\section{Representations in \texorpdfstring{$\mathsf{PSL}_2(\Cb)$}{PSL(2,C)} and other rank one Lie groups}\label{PSL-C}

\subsection{Convex-cocompact representations}\label{CC}

In this section, we will introduce convex-cocompact representations in semi-simple Lie groups of real rank one, and then restrict to $\PSLC$ for most of the discussion. We fix $\Gs$ a rank one semi-simple Lie group and $X$ its associated symmetric space, that is, $X=\Gs/\Ks$, where $\Ks$ is a maximal compact subgroup of $\Gs$. The space $X$ is a Riemannian symmetric space of negative curvature. An example of particular importance is when $X$ is the $n$--dimensional real hyperbolic space $\Hb^n$. In this case $\Gs=\mathsf{PO}(n,1)$ and $\Ks=\mathsf{O}(n)$. For a complete classification of semi-simple Lie groups of real rank one, see Knapp \cite{Knapp}. 

\begin{definition}[Convex-cocompact representations]\mbox{}
\begin{itemize}
    \item We say that a subgroup $\Gamma$ of $\Gs$ is \emph{convex-cocompact}\index{Convex-compact representation} if there exists a non-empty convex subset of $X$ invariant by the action of $\Gamma$ and on which $\Gamma$ acts properly discontinuously with compact quotient. Note that this implies that the subgroup $\Gamma$ must be discrete. 
    \item We say that a representation $\rho \co\Gamma \to \Gs$ is \emph{convex-cocompact} if the subgroup $\rho(\Gamma)$ is convex-cocompact and if the kernel of $\rho$ is finite. 
\end{itemize}
\end{definition} 
In particular, a convex-cocompact representation $\rho$ is faithful whenever $\Gamma$ is torsion-free.

In this setting, there is a metric point of view on this notion which we now give. 
When $\Gamma$ is a finitely generated group, it can be endowed with a word metric by considering any finitely generating set of $\Gamma$. Let $o \in X$ and consider the orbit map $\tau_{\rho,o}\co \Gamma \to X$, which sends an element $\gamma$ in $\Gamma$ to $\rho(\gamma).o$ in $X$. Recall that a map $f \co(\Xc,d_{\Xc}) \to (\Yc,d_{\Yc})$ between two metric spaces is a \emph{quasi-isometric embedding} if there exist two constants $\lambda>0$ and $C\geq0$ such that for all pairs of points $x_1,x_2$ in~$\Xc$, $\frac{1}{\lambda}d_{\Xc}(x_1,x_2)-C \leq d_{\Yc}(f(x_1),f(x_2)) \leq \lambda d_{\Xc}(x_1,x_2)+C$. We say that $\rho\co\Gamma \to \Gs$ is a \emph{quasi-isometric embedding} if there exists $o \in X$ such that the orbit map $\tau_{\rho,o}$ is a quasi-isometric embedding. Observe that whether $\tau_{\rho,o}$ is a quasi-isometric embedding does not depend on the choice of the finite generating set of $\Gamma$ nor on the point $o$ in $X$. For a finitely generated group, the second inequality in the quasi-isometric embedding condition is automatically satisfied.

\begin{proposition}\label{prop:CC=QIE}
Let $\Gamma$ be a finitely generated group. Then a representation $\rho \co \Gamma \to \Gs$ is convex-cocompact if and only if it is a quasi-isometric embedding.
\end{proposition}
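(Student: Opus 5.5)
The proof splits into the two implications. Throughout I use that $X$ is a proper, Gromov-hyperbolic, uniquely geodesic CAT($-1$)-type space (after rescaling the metric), on which $\Gs$ acts properly by isometries.

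\emph{Convex-cocompact implies quasi-isometric embedding.} Suppose $\rho(\Gamma)$ preserves a non-empty convex set $\Cc\subset X$ and acts on it properly discontinuously and cocompactly, and $\ker\rho$ is finite. Since $\Cc$ is convex in a uniquely geodesic space, it is a proper geodesic metric space with the induced metric. Apply the Švarc--Milnor lemma to the action of $\rho(\Gamma)$ on $\Cc$: for $o\in\Cc$, the orbit map $\rho(\Gamma)\to\Cc$ is a quasi-isometry. Because $\ker\rho$ is finite, the map $\Gamma\to\rho(\Gamma)$ is a quasi-isometry for the word metrics. It is also surjective with finite fibres, so $\rho(\Gamma)$ is finitely generated. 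Finally, since $\Cc$ is convex, the distance in $\Cc$ coincides with the distance in $X$, so the inclusion $\Cc\hookrightarrow X$ is isometric. Composing these maps shows that $\tau_{\rho,o}$ is a quasi-isometric embedding.

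\emph{Quasi-isometric embedding implies convex-cocompact.} Suppose $\tau_{\rho,o}$ is a $(\lambda,C)$-quasi-isometric embedding.

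First, the lower bound $d(\rho(\gamma)o,o)\ge |\gamma|/\lambda-C$ shows that only finitely many $\gamma$ map into any bounded set. Hence $\ker\rho$ is finite and $\rho(\Gamma)$ is discrete, since $\Gs$ acts properly on $X$.

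Second, $\Gamma$ is quasi-isometric to the quasi-convex orbit (justified in the next step) in a hyperbolic space, so $\Gamma$ is Gromov hyperbolic. Take its Cayley graph and map each geodesic edge to the geodesic segment in $X$ joining the images of its endpoints. Images of Cayley-graph geodesics are then quasi-geodesics in $X$.

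Third, the Morse lemma in the hyperbolic space $X$ says these quasi-geodesics stay within a uniform distance $R$ of true geodesics. Consequently the orbit $\rho(\Gamma)o$ is $R'$-quasi-convex.

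Fourth, define $\Cc$ to be the closed convex hull of the orbit, or equivalently the convex hull of the limit set $\Lambda_\rho\subset\partial X$ together with the orbit. This set is $\rho(\Gamma)$-invariant and convex, and $\rho(\Gamma)$ acts on it properly discontinuously because $\rho(\Gamma)$ is discrete.

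It remains to prove cocompactness, which reduces to showing that $\Cc$ lies in a bounded neighborhood of the orbit. I expect this to be the main obstacle. In pinched negative curvature the convex hull of a $\delta$-quasi-convex set $Y$ lies in a uniform $D(\delta,R')$-neighbourhood of $Y$. One justification uses that iterated geodesic hulls stabilize up to bounded distance, a result of Anderson; another reduces to the convex hull of finitely many boundary points being close to the union of the geodesics between them. I would cite this fact for negatively curved symmetric spaces, pinched curvature $-4\le K\le -1$, rather than reprove it. Given the bounded-neighborhood property, a ball of radius $D$ around $o$ projects onto $\Cc/\rho(\Gamma)$, so the quotient is compact.

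If $\rho(\Gamma)$ is finite (elementary), $\Cc$ can be taken to be a single point fixed by the finite group, and the statement is trivial. I would treat this case separately at the start.
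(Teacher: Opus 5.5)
Your proof is correct and follows essentially the same route as the paper. The forward direction uses \v{S}varc--Milnor. For the converse, the kernel is finite by the lower quasi-isometry bound, the Morse lemma controls the orbit, and cocompactness comes from the fact that convex hulls in pinched negative curvature stay uniformly close to geodesic hulls. The only real difference is that you take the closed convex hull of the orbit rather than the convex hull $\mathrm{CH}(\Lambda)$ of the limit set; this also covers the case where $\rho(\Gamma)$ is finite and $\Lambda$ is empty, which the paper's choice does not handle.
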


\begin{proof}
The direct implication is a consequence of the Švarc–Milnor lemma (Proposition I.8.19 in Bridson-Haefliger \cite{bridson-haefliger}). In fact, note that for this implication, we do not need to assume that $\Gamma$ is finitely generated as it is also a consequence of the lemma. 

For the reverse implication, the assumption that $\rho$ is a quasi-isometric embedding, together with the fact that $\Gamma$ is finitely generated, immediately gives the finiteness of the kernel of $\rho$. Now consider $\Lambda = \Lambda_{\Gamma}$, the limit set of $\Gamma$, which is defined as the set of accumulation points of an orbit of $\rho(\Gamma)$ in $\partial X$, that is,  $\Lambda=\overline{\rho(\Gamma)o} \cap \partial X$, for some $o \in X$. The limit set does not depend on the choice of $o$ in $X$. To define the convex set on which $\rho(\Gamma)$ acts, take the convex hull $\mathrm{CH}(\Lambda)$ of $\Lambda$ in $X$. It is a non-empty convex subset in $X$ on which $\rho(\Gamma)$ acts properly discontinuously since $\rho$ is a quasi-isometric embedding. To see that $\rho(\Gamma)$ acts cocompactly on $\mathrm{CH}(\Lambda)$, it suffices to verify that the convex hull $\mathrm{CH}(\Lambda)$ stays in a bounded neighborhood of the orbit $\rho(\Gamma)o$. This follows from two facts. The first fact is that $\mathrm{CH}(\Lambda)$ stays in a bounded neighborhood of the Gromov hull of $\Lambda$, which is defined as the union of all geodesics in $X$ with endpoints in $\Lambda$ (see \cite{bourdon}, this is a consequence of the fact that any point in a simplex is at uniform bounded distance to the edges of the simplex). The second fact is that the Gromov hull of $\Lambda$ stays in a bounded neighborhood of the orbit $\rho(\Gamma)o$ by the Morse lemma in hyperbolic spaces (see for example Theorem 1.7 in Bridson-Haefliger \cite{bridson-haefliger} or Theorem 3.1 in Coornaert-Delzant-Papadopoulos\cite{CDP}).
\end{proof}

\begin{remark}
    There are many other caracterizations of convex-cocompact representions in the rank-one setting. The interested reader can consult Bowditch \cite{bow_geo} and Theorem 1.36  in Kapovich-Leeb's survey \cite{KL-survey} for further details.
\end{remark}

The key observation is now that the quasi-isometric embedding property is stable in hyperbolic spaces. In addition, the constants of quasi-isometry can be chosen uniformly in a small neighborhood.

\begin{proposition}\label{prop:local-uniform-cst-qie}
Let $\rho : \Gamma \to \mathsf{G}$ be a quasi-isometric embedding. Then there exists an open neighborhood $\Uc$ of $\rho$ in $\mathrm{Hom}(\Gamma,\mathsf{G})$ and two constants $\lambda>0$ and $C\geq0$ such that every representation $\rho \in \Uc$ is a quasi-isometric embedding with constants $\lambda$ and $C$.
\end{proposition}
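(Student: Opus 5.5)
The plan is the standard local-to-global argument for quasi-geodesics in Gromov hyperbolic spaces. $X$ is a negatively curved symmetric space, so it is $\delta$-hyperbolic. In such a space a \emph{local} quasi-geodesic, at a large enough scale, is a \emph{global} quasi-geodesic (Bridson--Haefliger III.H.1.13, or Coornaert--Delzant--Papadopoulos Ch.~3). Precisely: for every $\lambda,C$ there are $L=L(\delta,\lambda,C)$ and $\lambda',C'$ such that every $L$-local $(\lambda,C)$-quasi-geodesic in $X$ is a global $(\lambda',C')$-quasi-geodesic. Because local quasi-geodesicity only involves finitely many group elements, it is an open condition on $\rho$, and this yields the uniform neighborhood $\Uc$.

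\textbf{Step 1: reduce to geodesic paths in $\Gamma$.} Fix a finite symmetric generating set $\Sigma$, a basepoint $o$, and $(\lambda_0,C_0)$ for $\rho$. It suffices to control $d(\rho'(\gamma)o,o)$ from below for all $\gamma$. Write $\gamma=s_1\cdots s_n$ as a word geodesic and set $\gamma_k=s_1\cdots s_k$. Join consecutive orbit points $\rho'(\gamma_k)o$ and $\rho'(\gamma_{k+1})o$ by geodesic segments. This gives a piecewise-geodesic path $c_{\rho'}$ whose segments have length at most $M(\rho')=\max_{s\in\Sigma}d(\rho'(s)o,o)$, which is uniformly bounded near $\rho$. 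By the hypothesis, $c_\rho$ is a $(\lambda_0,C_1)$-quasi-geodesic, with $C_1$ absorbing the bounded segments.

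\textbf{Step 2: local control from finitely many elements.} Take $L$ as above for the constants $(\lambda_0,C_1+1)$, and let $R$ be an integer with $R\geq \lambda_0(L+C_1+1)$, so that any subpath of length at most $L$ spans at most $R$ consecutive letters. Consider the finite set $F$ of elements of word length at most $R$. For $\gamma'$ in the ball $B_R$, the map $\rho'\mapsto d(\rho'(\gamma')o,o)$ is continuous. So there is a neighborhood $\Uc$ of $\rho$ in which $|d(\rho'(\gamma')o,o)-d(\rho(\gamma')o,o)|<\varepsilon$ for all $\gamma'\in B_R$. By equivariance, $d(\rho'(\gamma_i)o,\rho'(\gamma_j)o)=d(\rho'(\gamma_i^{-1}\gamma_j)o,o)$. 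Hence every subpath of $c_{\rho'}$ covering at most $R$ letters satisfies the $(\lambda_0,C_1+1)$ inequalities. In other words, $c_{\rho'}$ is an $L$-local $(\lambda_0,C_1+1)$-quasi-geodesic, with the same constants for every $\rho'\in\Uc$ and every $\gamma$.

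\textbf{Step 3: globalize.} Apply the local-to-global theorem to conclude that $c_{\rho'}$ is a global $(\lambda',C')$-quasi-geodesic. Evaluating at its endpoints gives
\[
d(\rho'(\gamma)o,o)\geq \tfrac1{\lambda''}|\gamma|-C''
\]
with constants independent of $\rho'\in\Uc$. The upper bound is automatic, with constant $\sup_{\Uc}M(\rho')$, which is uniform after shrinking $\Uc$ to a relatively compact set.

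\textbf{Main obstacle.} The delicate point is the bookkeeping in Step 2. The quasi-geodesic parametrization must be by arclength, or by letter index, consistently. The additive error $\varepsilon$ must not degrade the constants used to choose $L$, which is why $C_1+1$ is fixed before choosing $L$ and then $R$. One must also make sure the local-to-global theorem is applied in a version valid for piecewise geodesic or discrete paths. If needed, I would instead work directly with sequences of points: a sequence is called $k$-local quasi-geodesic if every window of $k$ consecutive points satisfies the inequalities. I would then cite the version in Coornaert--Delzant--Papadopoulos, Thm.~3.1.4, which is formulated for such discrete paths.
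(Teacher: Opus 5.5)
Your proof is correct and follows the approach the paper indicates when it defers to Canary's Proposition 4.1 and names stability of quasi-geodesics as the key ingredient: finitely many group elements give uniform local control of the orbit paths near $\rho$, and the local-to-global theorem for quasi-geodesics in the $\delta$-hyperbolic space $X$ turns this into uniform global constants. One citation needs correcting: Bridson--Haefliger III.H.1.13 is the local-to-global statement for local \emph{geodesics}, so the result you actually use is the version for local quasi-geodesics in Coornaert--Delzant--Papadopoulos, Chapter 3, which you also cite.
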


We refer to Canary's survey \cite{can_dyn} (Proposition 4.1) for a proof of this result. We emphasize on the fact that the key ingredient is the stability of quasi-geodesics in Gromov-hyperbolic spaces.

Proposition \ref{prop:local-uniform-cst-qie} immediately proves that the set of convex-cocompact representations is open. This fact was first due to Marden \cite{marden_geo} in the case $\mathsf{G}=\PSLC$ and was then generalized by Thurston \cite{thu_geometry} for $\mathsf{G}=\mathrm{PO}(n,1)$. Note that the notion of convex-cocompactness is invariant by conjugation by an element of $\mathsf{G}$, and hence is a well-defined notion in the character variety $\X(\Gamma,\mathsf{G})$. Moreover, this notion is also preserved by $\mathrm{Aut}(\Gamma)$ as a notion in $\mathrm{Hom}(\Gamma,\mathsf{G})$ or by $\mathrm{Out}(\Gamma)$ as a notion in $\X(\Gamma,\mathsf{G})$. Proposition \ref{prop:local-uniform-cst-qie} is also the crucial step to study the action of the outer automorphism group $\mathrm{Out}(\Gamma)$ of $\Gamma$ on the set of convex-cocompact representations.

\begin{Theorem} \label{thm:prop-discCC} 
	The action of $\mathrm{Out}(\Gamma)$ on the set of convex-cocompact representations in the character variety $\X(\Gamma,\mathsf{G})$ is properly discontinuous. 
\end{Theorem}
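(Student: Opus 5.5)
We will prove the statement by combining Proposition \ref{prop:CC=QIE} with the uniform constants of Proposition \ref{prop:local-uniform-cst-qie}, in the spirit of the argument in Canary's survey. Fix a finite generating set $\Sigma$ of $\Gamma$, write $|\gamma|$ for the word length, and fix a basepoint $o\in X$. It suffices to check proper discontinuity at the level of $\mathrm{Hom}(\Gamma,\Gs)$ modulo conjugation. Let $K\subset \X(\Gamma,\Gs)$ be a compact set of convex-cocompact classes.

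\textbf{Step 1: uniform constants and a normalization.} Cover $K$ by finitely many neighborhoods given by Proposition \ref{prop:local-uniform-cst-qie}. This yields constants $\lambda>0$ and $C\geq 0$ such that every $\rho$ representing a class in $K$ satisfies
\[
d(\rho(\gamma)o,\, o)\geq \tfrac{1}{\lambda}|\gamma|-C .
\]
Here we may need to conjugate so that $o$ lies near the convex hull of the limit set. We will arrange this by choosing, for each class, a representative minimizing $\max_{s\in\Sigma} d(\rho(s)o,o)$. We use compactness of $K$, together with the Hausdorff/polystable structure, to get a uniform bound
\[
\max_{s\in\Sigma} d(\rho(s)o,o)\leq D
\]
over a compact set of representatives in $\mathrm{Hom}$. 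This lifting of compactness from $\X$ to $\mathrm{Hom}$ is delicate. For convex-cocompact $\rho$ the image is non-elementary (or finite/virtually cyclic, which we handle separately), so the conjugation action is proper near $\rho$. Hence $K$ lifts to a compact set $\widetilde K\subset\mathrm{Hom}$ modulo a compact subgroup.

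\textbf{Step 2: bounding automorphisms.} Suppose $[\alpha]\cdot[\rho]\in K$ with $[\rho]\in K$. Then there is $g\in\Gs$ such that $g(\rho\circ\alpha^{-1})g^{-1}\in\widetilde K$. For each generator $s$, the element $\beta=\alpha^{-1}(s)$ satisfies
\[
d(\rho(\beta)g^{-1}o,\,g^{-1}o)\leq D .
\]
Let $\mu$ be the minimal displacement $\max_s d(\rho(s)x,x)$ over $x\in X$, which is bounded below by a positive constant on $\widetilde K$. The fact that all generators $\alpha^{-1}(s)$ move $g^{-1}o$ by at most $D$ forces $g^{-1}o$ to lie within a uniformly bounded distance $R$ of $o$. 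For this we argue with the quasi-isometric embedding property: the points $\rho(\gamma)g^{-1}o$ stay close to $\rho(\gamma)o$ when $g^{-1}o$ is near the hull, and the subgroup $\alpha^{-1}(\Gamma)=\Gamma$ has a non-elementary orbit. We then obtain
\[
\tfrac{1}{\lambda}|\alpha^{-1}(s)|-C\leq D+2R,
\]
so $|\alpha^{-1}(s)|$ is uniformly bounded for all $s\in\Sigma$. Only finitely many automorphisms satisfy this, hence only finitely many outer classes $[\alpha]$ move $K$ to meet itself.

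\textbf{Main obstacle.} The main difficulty is the bound on $g^{-1}o$ in Step 2. It amounts to showing that the conjugating element stays in a compact set, which is where properness of the conjugation action on non-elementary representations enters. I would first try the displacement argument: if $g^{-1}o$ were far from $o$, then by convexity and hyperbolicity some generator image would have displacement at $g^{-1}o$ comparable to $d(o,g^{-1}o)$, because a non-elementary group cannot have all generators nearly fixing a far point. Elementary convex-cocompact cases, where $\Gamma$ is finite or virtually cyclic, are treated directly since $\Out(\Gamma)$ is then finite.
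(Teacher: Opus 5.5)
There is a genuine gap in Step 2. The claim that $g^{-1}o$ lies within a uniformly bounded distance $R$ of $o$ is false. So is your conclusion that only finitely many \emph{automorphisms} $\alpha$ satisfy $[\alpha]\cdot K\cap K\neq\emptyset$.

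Take $\alpha$ inner, $\alpha^{-1}(\gamma)=\eta\gamma\eta^{-1}$. Then $[\alpha]\cdot[\rho]=[\rho]\in K$, and the conjugator can be taken to be $g=\rho(\eta)^{-1}$. Hence $d(o,g^{-1}o)=d(o,\rho(\eta)o)\geq\frac{1}{\lambda}|\eta|-C$ is unbounded. Your inequality $\frac{1}{\lambda}|\alpha^{-1}(s)|-C\leq D+2R$ fails too, since $|\eta s\eta^{-1}|$ is unbounded as $\eta$ varies.

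Your heuristic that ``a non-elementary group cannot have all generators nearly fixing a far point'' breaks down because the generating set $\rho(\alpha^{-1}(\Sigma))$ changes with $\alpha$. The elements $\rho(\eta s\eta^{-1})$ move the far point $\rho(\eta)o$ by exactly $d(\rho(s)o,o)\leq D$. Your argument only uses that you work modulo $\mathrm{Inn}(\Gamma)$ in its last line. As written it would therefore prove that $\mathrm{Aut}(\Gamma)$ acts properly discontinuously on $\X(\Gamma,\Gs)$, which is absurd because $\mathrm{Inn}(\Gamma)$ acts trivially.

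What your displacement idea actually gives is that $x=g^{-1}o$ is uniformly close to the convex hull $\mathrm{CH}(\Lambda_\rho)$. The uniform constants give $l(\rho(\gamma))\geq\frac{1}{\lambda}\Vert\gamma\Vert$, hence (for torsion-free $\Gamma$) a uniform positive lower bound on translation lengths of non-trivial elements. Such a loxodromic element, whose axis lies in the hull, displaces a point at distance $r$ from the hull by at least $2r-c$ for a uniform $c$.

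The missing step is then to use cocompactness of $\rho(\Gamma)$ on the hull. The constants can be made uniform via the Morse lemma, as in the proof of Proposition \ref{prop:CC=QIE}. This gives $\eta\in\Gamma$ with $d(x,\rho(\eta)o)\leq R$. Now replace $\alpha^{-1}$ by $\iota_{\eta^{-1}}\circ\alpha^{-1}$, which lies in the same outer class and satisfies $d\bigl(\rho(\eta^{-1}\alpha^{-1}(s)\eta)o,o\bigr)\leq D+2R$. So every outer class with $[\alpha]\cdot K\cap K\neq\emptyset$ has a representative in a fixed finite set of automorphisms.

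Once repaired, your route is a legitimate and more hands-on alternative. It replaces Canary's finiteness result (Proposition \ref{prop:canary-EA}) by a direct bound on images of generators. The price is lifting $K$ to $\mathrm{Hom}(\Gamma,\Gs)$ and controlling conjugators. The paper avoids conjugators altogether: only the multiplicative constant $\lambda$ survives in the conjugation-invariant inequality $\frac{1}{\lambda}\Vert\gamma\Vert\leq l(\rho(\gamma))\leq\lambda\Vert\gamma\Vert$. This yields $\Vert\Phi(\gamma)\Vert\leq\lambda^2\Vert\gamma\Vert$, and the inner-automorphism ambiguity is absorbed into Proposition \ref{prop:canary-EA}.
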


\begin{proof}
Let us start with some observations. If $\rho\co \Gamma \to \mathsf{G}$ is a $(\lambda,C)$--quasi-isometric embedding, then we have:
		\begin{equation} \label{eq:QI}
			\frac{1}{\lambda} | \gamma | -C \leq d(o,\rho(\gamma)o) \leq \lambda |\gamma|+C, \qquad \forall \gamma \in \Gamma.
		\end{equation}
		Now denote, for all $g \in G$, $$l(g):=\underset{x \in X}{\inf} \; d(x,gx)$$ and for all $\gamma \in \Gamma$, let $\Vert \gamma \Vert$ denote the cyclically reduced word length of $\gamma$.  
		Noticing that we have  $$\displaystyle l(\rho(\gamma)) = \nobreak \underset{n \to \infty}{\lim} \frac{1}{n} d(o,\rho(\gamma)^no)$$ and $$\displaystyle \Vert \gamma \Vert = \underset{n \to \infty}{\lim} \frac{1}{n} | \gamma^n|,$$ we deduce, applying Equation \eqref{eq:QI} to the element $\gamma^n$ and taking the limit, that:
		\begin{equation}\label{eq:WD}
			\frac{1}{\lambda} \Vert \gamma \Vert \leq l(\rho(\gamma)) \leq \lambda \Vert \gamma \Vert, \qquad \forall \gamma \in \Gamma.
		\end{equation}
		Note that the inequality \eqref{eq:WD} is invariant by conjugation.
		
		Let $K$ be a compact set consisting only of (conjugacy classes of) convex-cocompact representations, then the elements in $K$ are (conjugacy classes of) quasi-isometric embeddings. Moreover, since $K$ is compact, using Theorem \ref{thm:unif-QI}, we can choose uniform constants for the quasi-isometry for the elements in $K$ (choose a covering of $K$ by open sets consisting of uniform quasi-isometric embeddings and then extract a finite one). Let $(\lambda,C)$ be these uniform constants.

	Take $[\rho] \in K$  and $\Phi \in \mathrm{Out}(\Gamma)$ such that $\Phi.[\rho] \in K$. On the one hand, the representation $\rho$ is a $(\lambda,C)$--quasi-isometric embedding, so by Equation \eqref{eq:WD} we have:
	\begin{equation} \label{eq:rho}
		\frac{1}{\lambda} \Vert \gamma \Vert \leq l(\rho(\gamma)) \leq \lambda \Vert \gamma \Vert , \qquad \forall \gamma \in \Gamma.
	\end{equation}
	On the other hand, since a representative in $\Phi.[\rho]=[\rho \circ \Phi^{-1}]$ is also a $(\lambda,C)$--quasi-isometric embedding, again by \eqref{eq:WD}, we have: 
	\begin{equation*}
		\frac{1}{\lambda} \Vert \gamma \Vert \leq l(\rho (\Phi^{-1}(\gamma))) \leq \lambda \Vert \gamma \Vert, \qquad \forall \gamma \in \Gamma
	\end{equation*}
	which can be rewritten as:
	\begin{equation}\label{eq:Phi.rho}
		\frac{1}{\lambda} \Vert \Phi(\gamma) \Vert \leq l(\rho(\gamma)) \leq \lambda \Vert \Phi(\gamma) \Vert, \qquad \forall \gamma \in \Gamma.
	\end{equation}
	Note that there is an ambiguity while writing $\rho(\Phi^{-1}(\gamma))$ and $\Phi(\gamma)$, but since the lengths $l(\cdot)$ and  $\Vert \cdot \Vert$ are invariant by conjugation, the quantities $l(\rho (\Phi^{-1}(\gamma)))$ and  $\Vert \Phi(\gamma) \Vert$ are well-defined.
    
	Combining Equations \eqref{eq:rho} and \eqref{eq:Phi.rho} we deduce that:
	\begin{equation}\label{eq:bound-Phi}
	 \Vert \Phi(\gamma) \Vert \leq \lambda^2\Vert \gamma \Vert, \qquad \forall \gamma \in \Gamma.
	\end{equation}
	
	Now we use the following proposition of Canary \cite{can_dyn}:
	\begin{proposition}[Canary Proposition 2.3 \cite{can_dyn}] \label{prop:canary-EA}
	If $\Gamma$ is a torsion free hyperbolic group, then for any $A>0$, the set 
	\begin{equation*}
		E_A :=\{\Phi \in \mathrm{Out}(\Gamma) :  \Vert \Phi(\gamma) \Vert \leq A\Vert \gamma \Vert \quad \forall \gamma \in \Gamma \}
	\end{equation*}
	is finite. 
	\end{proposition}
	Note that the proposition \ref{prop:canary-EA} stated here is weaker than the one proved in \cite{can_dyn}. In fact, Canary only requires that the inequality defining the set $E_A$ be true for a finite collection of elements of $\Gamma$. 
	
	We can now conclude since Equation \eqref{eq:bound-Phi} gives a constant $A >0$ ($A=\lambda^2$), such that the set
	\begin{equation*}
		\{ \Phi \in \mathrm{Out}(\Gamma) : \Phi(K) \cap K \neq \emptyset\}
	\end{equation*}
	is contained in $E_A$ which is finite  by Proposition \ref{prop:canary-EA}. 
	\end{proof} 

When $S$ is a closed orientable surface, $\Gamma=\pi_1(S)$ and $\mathsf{G}=\PSLR$, the set of convex-cocompact representations in the character variety $\X(\pi_1(S),\PSLR)$ is nothing but the two connected components corresponding to the Teichmüller space of $S$, that is, $\mathrm{Teich}(S)$ and $\mathrm{Teich}(\overline{S})$, see Section \ref{o-surface-PSL-R}. 

When $\mathsf{G}=\PSLC$, and hence $X=\Hb^3$, the convex-cocompact representations of $\Gamma = \pi_1(S)$ correspond to the quasi-Fuchsian representations,\index{Quasi-Fuchsian representation} which play a fundamental role in the theory of Kleinian groups. Recall that the limit set $\Lambda$ of a representation $\rho \co\Gamma \to \PSLC$ is the set of accumulation points of an orbit of $\rho(\Gamma)$ in the boundary of $\Hb^3$, that is, $\Lambda=\overline{\rho(\Gamma)o}\cap \partial \Hb^3.$ (Note that this set does not depend on the choice of the orbit.) A representation $\rho \co\pi_1(S) \to \PSLC$ is called \emph{quasi-Fuchsian} if it is faithful and discrete and if its limit set $\Lambda$ in $\partial \Hb^3$ is a quasi-circle. The group $\rho(\Gamma)$ preserves its limit set $\Lambda$ and acts minimally on it, that is, every orbit in $\Lambda$ is dense. The `opposite' behavior happens, instead, on the complement, that is, $\rho(\Gamma)$ acts properly discontinuously on $\partial \Hb^3 \setminus \Lambda = \Omega$. The set $\Omega$ is called the domain of discontinuity for $\rho(\Gamma)$.

Let $\X_{QF}$ be the set of (conjugacy classes of) quasi-Fuchsian representations in the character variety $\X(\Gamma,\PSLC)$, and let $\X_{DF}$ be the set of (conjugacy classes of) discrete and faithful representations in $\X(\pi_1(S), \PSLC)$. (Recall that the set $\X_{QF}$ coincides with the set of (conjugacy classes of) convex-cocompact representations.) Sullivan \cite{sullivan} proved that $\X_{QF}$ is the interior of $\X_{DF}$ and Namazi--Souto \cite{namazi-souto} and Ohshika \cite{ohshika}, showed that the closure of $\X_{QF}$ is $\X_{DF}$. In addition, there are many results showing that the boundary of $\X_{QF}$ has a very complicated topology. For example, McMullen \cite{mcmullen_complex} showed that $\X_{QF}$ self-bump and Bromberg \cite{bro_sel} and Magid \cite{magid} proved that there are points in the boundary where $\X_{DF}$ is not locally connected. 

The dynamics of the mapping class group $\MCG(S)$ on the complement of $\X_{QF}$ is much more mysterious. A first observation is that the action of $\MCG(S)$ on the boundary $\partial \X_{QF}$ of $\X_{QF}$ cannot be properly discontinuous. Indeed, if $\Phi$ is a pseudo-Anosov mapping class in $\MCG$ (for a classification of the elements in the mapping class group, see Farb-Margalit \cite{FM}, for example), one can consider a $3$--manifold which fibers over the circle with fiber $S$ and monodromy $\Phi$. The manifold can be endowed with a finite volume hyperbolic structure by Thurston \cite{thurston2}, see also Otal \cite{otal-hyperbolisation}. This gives a representation $\rho\co\pi_1(S) \to \PSLC$ whose image in the character variety $\X(\pi_1(S), \PSLC)$ lies in the boundary of $\X_{QF}$ and is fixed under the action of $\Phi$. The existence of a fixed point for an infinite order mapping class on the boundary of $\X_{QF}$ prevents the action of $\MCG$ from being properly discontinuous on $\partial \X_{QF}$. Lee \cite{lee_thesis} furthermore proved that, in fact, no point on $\partial \X_{QF}$ can lie in an open set on which the action of $\MCG(S)$ is properly discontinuous (Proposition IV. 6 in \cite{lee_thesis}). This relies on a result of Minsky saying that no point on $\partial \X_{QF}$ corresponding to a compact hyperbolic manifold with an accidental parabolic can lie on an open domain of discontinuity (Lemma IV.4 in \cite{lee_thesis}) and on the fact that geometrically finite points are dense on the boundary of $\X_{QF}$. As a consequence, there is no connected open domain of discontinuity in $\X(\pi_1(S),\PSLC)$ strictly containing the set $\X_{QF}$ of quasi-Fuchsian representations. In fact, Goldman conjectured that $\X_{QF}$ is the maximal domain of discontinuity for the action of the mapping class group $\MCG(S)$ on $\X(\pi_1(S),\PSLC)$, see \cite{can_dyn}. Souto-Storm \cite{Souto-Storm} study the dynamics of $\MCG(S)$ on $\partial \X_{QF}$ from a topological viewpoint. They proved that a $\MCG(S)$-invariant continuous function on $\partial \X_{QF}$ must be constant. More precisely, they study the action on a nowhere dense topologically perfect subset $\overline{\mathcal{C}}$ of $\partial \X_{QF}$ which is contained in the orbit closures of points that form a dense subset in $\partial \X_{QF}$. They show that the action of $\MCG(S)$ on $\overline{\mathcal{C}}$ is topologically transitive, meaning that there exists a dense orbit. 

\subsection{Primitive-stable representations}\label{PS}

We now restrict our attention to the case of non abelian free groups and primitive-stable representations introduced by Minsky \cite{Minsky-primitive}. 

Let $F_n$ be a free group of rank $n$, with $n\geq 2$. We say that an element in $F_n$ is \emph{primitive} if it is contained in a free basis of $F_n$ and we denote $\mathcal{Pr}$ the set of primitive elements in $F_n$. Notice that $\mathcal{Pr}$ is invariant under conjugation and inversion. Fix $\Sigma$ a free generating set of $F_n$ and consider $\mathrm{Cay}$, the Cayley graph of $F_n$ with respect to the generating set $\Sigma$. The Cayley graph $\mathrm{Cay}$ comes equipped with a distance coming from the word metric associated to the generating set $\Sigma$. Every element $\gamma \in F_n \setminus \{\mathrm{id}\}$ has an \emph{axis} in $\mathrm{Cay}$, which is the unique $\gamma$--invariant geodesic in $\mathrm{Cay}$. We denote by $\mathrm{Ax}_{\mathcal{Pr}}$ the set of \emph{primitive geodesics} in $\mathrm{Cay}$, that is, the set of geodesics in $\mathrm{Cay}$ which are axes of some primitive element in $F_n$. 

Recall that we defined in section \ref{CC}, for a basepoint $o \in X$ and a representation $\rho \co \Gamma \to \Gs$, the orbit map $\tau_{\rho,o}$ that sends $\gamma \in \Gamma$ to $\rho(\gamma)o$. We can extend the orbit map defined on $\Gamma$ to the whole Cayley graph $\mathrm{Cay}$ by requiring that the map sends the edges of the graph to geodesic segments.  

\begin{definition}[Minsky \cite{Minsky-primitive}] \label{def:PS} Let $\rho \co F_n \to \PSLC$ be a representation of $F_n$ in $\PSLC$. We say that $\rho$ is \emph{primitive-stable}\index{Primitive-stable representation} if there exist two constants $\lambda >0$, $C\geq 0$ such that for all primitive geodesic $L \in \mathrm{Ax}_{\mathcal{Pr}}$, the orbit map $\tau_{\rho,o}$ restricted to $L$ is a $(\lambda,C)$--quasi-isometric embedding.
\end{definition}

Note that this definition does not depend on the choice of the basepoint $o \in \Hb^3$. The primitive-stability condition is also invariant by conjugation, hence it makes sense in the character variety. In addition, one can observe that the set of (conjugacy classes of) primitive-stable representations is invariant under the action of $\mathrm{Out}(F_n)$, since $\mathrm{Out}(F_n)$ preserves the set of (conjugacy classes of) primitive elements in $F_n$. It is clear, in view of Proposition \ref{prop:CC=QIE}, that all convex-cocompact representations are in particular primitive-stable. We denote by $\X_{PS}(F_n,\PSLC)$ the set of (conjugacy classes of) primitive-stable representations in the character variety $\X(F_n,\PSLC)$.  Minsky proved the following result:
\begin{Theorem}[Minsky \cite{Minsky-primitive}] \label{thm:PS}
	The set $\X_{PS}(F_n,\PSLC)$ is open in $\X(F_n, \PSLC)$, and the group $\mathrm{Out}(F_n)$ acts on it properly discontinuously. Moreover, the set $\X_{PS}(F_n,\PSLC)$ of (conjugacy classes of) primitive-stable representations strictly contains the set $\X_{CC}(F_n,\PSLC)$ of (conjugacy classes of) convex-cocompact representations. 
\end{Theorem}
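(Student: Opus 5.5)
The plan has three parts, matching the three claims of Theorem \ref{thm:PS}: openness, proper discontinuity, and strict containment. Throughout, fix the free basis $\Sigma$ and a basepoint $o\in\Hb^3$.

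\textbf{Openness.} I would imitate Proposition \ref{prop:local-uniform-cst-qie}, using the local-to-global principle for quasi-geodesics in $\Hb^3$. There are constants $L$ and $(\lambda',C')$, depending only on $(\lambda,C)$, with the following property: a path whose restriction to every subsegment of length $L$ is a $(2\lambda,2C)$--quasi-geodesic is globally a $(\lambda',C')$--quasi-geodesic. The key observation is that every primitive axis in $\mathrm{Cay}$ is a bi-infinite periodic reduced word. By translation invariance, its $L$--subsegments are, up to the left action of $F_n$, among the finitely many reduced words of length $L$ that occur as subwords of cyclically reduced primitive elements. For each such word $w$, the condition that $\tau_{\rho,o}$ restricted to $w$ is a $(2\lambda,2C)$--quasi-geodesic is an open condition in $\rho$, since it involves only finitely many points. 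Moreover it holds with the strict constants $(\lambda,C)$ at the original $\rho$. Intersecting these finitely many open conditions gives a neighbourhood $\Uc$ of $\rho$ in which every representation is primitive-stable with uniform constants $(\lambda',C')$. Conjugation invariance then passes the statement to $\X(F_n,\PSLC)$.

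\textbf{Proper discontinuity.} I would follow the proof of Theorem \ref{thm:prop-discCC}, restricted to primitive elements. Take a compact $K\subset\X_{PS}$. Cover $K$ by the neighbourhoods from the previous step and extract a finite subcover, which yields uniform constants $(\lambda,C)$. Applying the quasi-isometry inequality to $\gamma^n$ for primitive $\gamma$ and letting $n\to\infty$ gives
\[
\tfrac1\lambda\Vert\gamma\Vert\le l(\rho(\gamma))\le\lambda\Vert\gamma\Vert \quad \text{for all } \gamma\in\mathcal{Pr}.
\]
Now suppose $\Phi\cdot[\rho]\in K$ with $[\rho]\in K$. Since $\Phi$ preserves $\mathcal{Pr}$, the same comparison for $\rho\circ\Phi^{-1}$ gives $\Vert\Phi(\gamma)\Vert\le\lambda^2\Vert\gamma\Vert$ for all primitive $\gamma$. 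Here I would invoke the strong form of Proposition \ref{prop:canary-EA} noted after its statement: the bound is only needed on a finite set of elements. I would take that finite set to be primitive, for instance the basis elements and the products $x_ix_j$ and $x_ix_j^{-1}$. The underlying reason this suffices is that bounding $\Vert\Phi(\cdot)\Vert$ on such a finite set of primitives bounds the lengths of the images of a basis up to conjugacy, and there are only finitely many such outer classes. Hence $\{\Phi:\Phi K\cap K\neq\emptyset\}$ is finite.

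\textbf{Strict containment and the main obstacle.} I expect this to be the hardest step. The inclusion $\X_{CC}\subseteq\X_{PS}$ is immediate from Proposition \ref{prop:CC=QIE}. For strictness, I would take a Fuchsian representation of a one-holed torus, $F_2=\pi_1(S_{1,1})$, for which the commutator $[a,b]$ is parabolic. This representation is not convex-cocompact, because $[a,b]$ is parabolic, yet $[a,b]$ is not primitive. The work lies in showing uniform quasi-geodesicity along all primitive axes. Primitive elements of $F_2$ correspond to simple closed curves on $S_{1,1}$, so their geodesics avoid a fixed cusp neighbourhood, by the standard fact that simple geodesics stay out of a horoball of definite size. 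This avoidance gives a uniform comparison between hyperbolic length and word length on primitive elements. Upgrading this comparison from translation lengths to quasi-geodesic axes requires the local-to-global argument again, applied to paths that stay in the thick part. This is where I expect the difficulty to concentrate.
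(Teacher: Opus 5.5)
Your openness and proper-discontinuity steps are sound and follow the paper's route. Local-to-global gives the primitive analogue of Proposition~\ref{prop:local-uniform-cst-qie}, and the proof of Theorem~\ref{thm:prop-discCC} restricted to primitives goes through with your primitive test set $x_i$, $x_ix_j^{\pm1}$ ($i\neq j$). The gap is in the strict containment, and it has two parts.

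\textbf{First, the rank.} The statement is for every $n\ge 2$, but your argument only covers $n=2$. It rests on the coincidence between primitive classes and simple closed curves on $S_{1,1}$, which the paper stresses (Section~\ref{bow-PS}) is special to rank two. You also cannot extend your example from a free factor, because a primitive-stable representation is convex-cocompact on every proper free factor (Minsky's Lemma~3.2(3), quoted in Section~\ref{PS}). Concretely, $[a,b]^kx_3$ is primitive in $F_3$, and its axis contains the parabolic excursion $[a,b]^k$.

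The missing idea is Minsky's blocking mechanism. Take a geometrically finite representation whose only parabolics are conjugates of powers of a word $w$, some power of which never occurs as a subword of a cyclically reduced primitive word. By Whitehead's cut-vertex lemma, any $w$ whose Whitehead graph is connected without cut vertex has this property. Examples are the boundary words $\prod_i[a_i,b_i]$ of $S_{g,1}$ and $d_1^2\cdots d_n^2$ of $N_{n,1}$, whose Whitehead graphs are single cycles. It is this combinatorial property, not simplicity of curves, that keeps primitive geodesics out of the cusp. Your punctured torus is the rank-two instance, but your reasoning does not reach the higher-rank one-cusped examples.

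\textbf{Second, the quasi-geodesic step for $n=2$.} This decisive step is missing, and the route you sketch does not supply it. A uniform comparison between $l(\rho(\gamma))$ and $\Vert\gamma\Vert$ on primitives is just condition (BQ4). Upgrading it to primitive-stability is exactly the hard direction of the theorem of Lupi, Lee--Xu and Series in Section~\ref{bow-PS}; citing that theorem would close this case but is far heavier than needed. Local-to-global cannot do the upgrade either: it presupposes uniform constants on orbit segments of length $L\ge L_0(\lambda,C)$, which is precisely what is to be proved.

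Use the geometry directly instead.
\begin{itemize}
\item Let $\widetilde C\subset\mathbb{H}^2$ be the complement of the $\rho(F_2)$--invariant family of disjoint open horoballs projecting to a cusp neighbourhood that simple closed geodesics avoid, and take the basepoint $o\in\widetilde C$.
\item $\rho(F_2)$ acts properly and cocompactly on $\widetilde C$ with its induced path metric $d_{\widetilde C}$. By \v{S}varc--Milnor, the orbit map $F_2\to(\widetilde C,d_{\widetilde C})$ is a $(K,\epsilon)$--quasi-isometry, and $(\widetilde C,d_{\widetilde C})$ is Gromov hyperbolic.
\item For primitive $\gamma$, the $\mathbb{H}^2$--axis $A$ of $\rho(\gamma)$ lies in $\widetilde C$. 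So $A$ is also a $d_{\widetilde C}$--geodesic, and $d_{\widetilde C}=d_{\mathbb{H}^2}$ along $A$.
\item The orbit points $\rho(g_i)o$ along the Cayley axis of $\gamma$ form a $(K,\epsilon)$--quasi-geodesic in $\widetilde C$. It has the same endpoints as $A$, since both are $\rho(\gamma)$--invariant.
\item The Morse lemma places each $\rho(g_i)o$ within $d_{\widetilde C}$--distance $H$, hence $\mathbb{H}^2$--distance $H$, of a point $p_i\in A$, with $H$ independent of $\gamma$.
\end{itemize}
This gives $d_{\mathbb{H}^2}(\rho(g_i)o,\rho(g_j)o)\ge d_{\widetilde C}(p_i,p_j)-2H\ge |i-j|/K-\epsilon-4H$, which is primitive-stability.
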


In fact, Minsky constructed a primitive-stable representation on the boundary of the set of convex-cocompact representations. Together with the openness of the set of primitive-stable representations, this implies the existence of a non-discrete primitive-stable representation (since convex-cocompact representations form the interior of the set of faithful and discrete representations, see Sullivan \cite{sullivan}), and even a primitive-stable representation with dense image. This behavior is in great contrast with the closed surface case, where no point on the boundary of the set of convex-cocompact representations can lie in an open domain of discontinuity, see Section \ref{CC}. 

The proof of the openness of $\X_{PS}(F_n,\PSLC)$ and of the fact that $\mathrm{Out}(F_n)$ acts properly discontinuously on it in Theorem \ref{thm:PS} follows the same structure as the proof of the same results for convex-cocompact representations (see Proposition \ref{prop:local-uniform-cst-qie} and Theorem \ref{thm:prop-discCC}) by establishing an analogue of Proposition \ref{prop:local-uniform-cst-qie} for primitive-stable representations. The core of the work of Minsky is to prove the existence of a primitive-stable representation on the boundary of the set of convex-cocompact representations. He constructs his example as a discrete and faithful, geometrically finite representation with one cusp $c$ which corresponds to a blocking curve, namely a curve with some power that does not appear as a subword of a cyclically reduced primitive word. Because of this property, the geodesics representing the primitive elements will be forced to avoid the cusp and to stay in a compact region of the quotient manifold $\Hb^3/\rho(F_n)$, see Minsky \cite{Minsky-primitive}. 

The question of which points on the boundary of the set of convex-cocompact representations correspond to primitive-stable representations has been addressed by Jeon--Kim--Lecuire--Ohshika in \cite{JKLO}. They proved a conjecture of Minsky (in \cite{Minsky-primitive}) which says that if $\rho \co F_n \to \PSLC$ is discrete, faithful and without parabolics, then $\rho$ is primitive-stable, see Theorem 1.1 in \cite{JKLO}. Moreover, they proved the following geometric criterion: a discrete and faithful representation of $F_n$ in $\PSLC$ is primitive-stable if and only if in the quotient $\Hb^3/\rho(F_n)$ each component of the parabolic loci and each ending lamination is disk-busting, see Theorem 1.2 in \cite{JKLO}). Recall that a measured lamination $\lambda$ (or a simple closed curve) is called \textit{disk-busting} if there exists $\eta>0$ such that for any essential disk $A$,  we have that $i(\partial A, \lambda) > \eta$.

Furthermore, there is a larger set on which the action of $\mathrm{Out}(F_n,\PSLC)$ cannot be properly discontinuous. Let $\X_{PS}'(F_n,\PSLC)$ be the set of (conjugacy classes of) representations which are convex-cocompact on every proper free factor. The set of primitive-stable representations $\X_{PS}(F_n,\PSLC)$ is contained in $\X_{PS}'(F_n,\PSLC)$ (Lemma 3.2 (3) in \cite{Minsky-primitive}) and $\X_{PS}'(F_n,\PSLC)$ is contained in the complement of  the set $\X_{Rdn}(F_n,\PSLC)$ of redundant representations. A representation $\rho \in \X(F_n,\PSLC)$ is called \emph{redundant} if there exists a proper free factor $A$ of $F_n$ with $\rho(A)$ dense in $\PSLC$. In fact, $\X_{Rdn}(F_n, \PSLC)$ is dense in $\X(F_n,\PSLC) \setminus \X_{PS}'(F_n,\PSLC)$, see Lemma 5.1 (1) in \cite{Minsky-primitive}. Minsky proved that every domain of discontinuity must be contained in $\X_{PS}'(F_n,\PSLC)$, see Lemma 5.1 (2) in \cite{Minsky-primitive}. He asked if $\X_{PS}(F_n,\PSLC)$ is the interior of $\X_{PS}'(F_n,\PSLC)$ and if so, the set of primitive-stable representations would be the largest domain of discontinuity for the action of $\mathrm{Out}(F_n)$ on $\X(F_n,\PSLC)$. In addition, Gelander and Minsky \cite{gelander-minsky} proved a conjecture of Lubotzky \cite{lub-survey} that the action of $\mathrm{Aut}(F_n,\PSLC)$ on the set $\X_{Rdn}(F_n,\PSLC)$ of redundant representations is ergodic with respect to the Haar measure on $\X(F_n,\PSLC)$. See also Lubotzky's survey \cite{lub-survey}.

Lee extended Minsky's results in different ways. In \cite{lee-twisted} she considers representations from $\pi_1(M)$, where $M$ is a twisted interval bundle (that is, an interval bundle which is not a product). Note that in this case, this group $\pi_1(M)$ is the fundamental group of a non-orientable surface. This, in particular, shows substantial difference on the dynamics on the $\PSLC$--character varieties associated to orientable and non-orientable surface groups. 
On the other hand, in \cite{lee_dyn} she focuses on representations of fundamental groups of compression bodies $M$ (without toroidal boundary components). In that case, the fundamental group $\pi_1(M)$ is the free product of a free group and a finite number of closed orientable surface groups. 

\begin{theorem}[Lee \cite{lee-twisted, lee_dyn}]
    Let $M$ be a hyperbolizable twisted $I$--bundle over a non-orientable hyperbolic surface or let $M$ be a nontrivial hyperbolizable compression body without toroidal boundary components. Then there exists an open, $\Out(\pi_1(M))$--invariant subset $\X(\pi_1(M), \PSLC)$ in $\X(\pi_1(M), \PSLC)$ strictly containing the interior of the set $\X_{DF}(\pi_1(M), \PSLC)$ and on which $\mathrm{Out}(\pi_1(M))$ acts properly discontinuously.
\end{theorem}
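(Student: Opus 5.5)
The natural generalization to use here is Minsky's primitive-stability, replacing primitive elements by a suitable $\Out(\pi_1(M))$-invariant family of ``non-peripheral-like'' elements. Set $\Gamma=\pi_1(M)$. In the compression-body case, $\Gamma=F_k * \pi_1(S_1)*\cdots*\pi_1(S_m)$. In the twisted $I$-bundle case, $\Gamma=\pi_1(N)$ for a non-orientable surface $N$, and this group is free of rank $1-\chi(N)$ only when $\partial N\neq\emptyset$.

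For the compression body, take $\mathcal{D}\subset\Gamma$ to be the set of elements whose conjugacy classes are represented by curves in $\partial_+M$ disjoint from some meridian disk; these are essentially ``primitive relative to the splitting''. For the twisted bundle, take $\mathcal{D}$ to be the set of elements represented by one-sided simple closed curves on $N$, or more generally by curves missing an essential annulus or Möbius band. In both cases $\mathcal{D}$ is conjugation-invariant and $\Out(\Gamma)$-invariant. Define $\rho$ to be \emph{$\mathcal{D}$-stable} if there are uniform $(\lambda,C)$ such that the orbit map restricted to the axis of every $\gamma\in\mathcal{D}$ in a fixed Cayley graph is a $(\lambda,C)$-quasi-isometric embedding. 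Let $\Omega$ be the set of $\mathcal{D}$-stable classes.

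The proof then has three steps. First, $\Omega$ is open. Here I would follow the argument of Proposition \ref{prop:local-uniform-cst-qie}: uniform local-to-global stability of quasigeodesics in $\Hb^3$ only needs control on segments of bounded length, and the axes of elements of $\mathcal{D}$ are geodesics in $\mathrm{Cay}$. Consequently, a neighborhood of $\rho$ with uniform constants exists.

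Second, $\Out(\Gamma)$ acts properly discontinuously on $\Omega$. I would copy the proof of Theorem \ref{thm:prop-discCC}. On a compact $K\subset\Omega$, take uniform constants. If $\Phi.K\cap K\neq\emptyset$, these give $\Vert\Phi(\gamma)\Vert\le\lambda^2\Vert\gamma\Vert$ for all $\gamma$ in $\mathcal{D}$. Canary's Proposition \ref{prop:canary-EA} only needs this inequality on a finite collection of elements, so $\Phi$ lies in a finite set once $\mathcal{D}$ contains such a collection. For this I would choose a finite generating set together with products of pairs of generators, all lying in $\mathcal{D}$.

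Third, $\Omega$ contains the interior of $\X_{DF}$ and is strictly bigger. Convex-cocompact representations are $\mathcal{D}$-stable by Proposition \ref{prop:CC=QIE}. By Sullivan, the interior of $\X_{DF}$ consists of convex-cocompact representations, which gives the containment. For strictness I would mimic Minsky's construction: build a geometrically finite discrete faithful $\rho_0$ with a single rank-one cusp at a \emph{blocking} curve $c$. This means some power $c^N$ never appears as a subword of cyclically reduced words in $\mathcal{D}$, so the geodesics of $\mathcal{D}$-elements stay in a compact core of $\Hb^3/\rho_0(\Gamma)$ and $\rho_0$ is $\mathcal{D}$-stable. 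Since $\rho_0$ has parabolics, it lies on $\partial\X_{CC}$. By openness, $\Omega$ then contains non-discrete points, hence points outside the interior of $\X_{DF}$.

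The main obstacle is this third step. One has to find a blocking curve for $\mathcal{D}$ that can also be pinched to a cusp, and this is exactly where orientability matters. On orientable closed surfaces no such curve exists, since by Lee's Minsky-type lemma boundary points are never in a domain of discontinuity. For a twisted $I$-bundle I would first try a two-sided curve that fills together with the one-sided curves, such as a curve crossing every cross-cap. I would then verify the blocking property by a Whitehead-graph argument, showing that long powers of $c$ force a cut vertex incompatible with the one-sided or disk-missing condition. In the compression-body case I would use a disk-busting curve on $\partial_+M$, as in the JKLO criterion. Realizing the cusp then uses Thurston's hyperbolization for pared manifolds.
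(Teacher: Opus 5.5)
Your scheme (Minsky-type stability along an $\Out$-invariant family of elements) is the one the paper attributes to Lee: primitive-stable representations for twisted $I$-bundles, separable-stable ones for compression bodies. Openness and the containment of convex-cocompact points go through as in Minsky. However, two steps fail as you propose them.

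\textbf{The special compression body.} Your disk-missing family breaks down for $M=S_1\times I\natural S_2\times I$, the compression body obtained by joining two trivial $I$-bundles over closed surfaces. This case lies within the statement, and the paper singles it out.
\begin{itemize}
\item The separating disk is the only essential disk, so $\mathcal{D}$ is just the set of conjugates of elements of $A=\pi_1(S_1)$ and $B=\pi_1(S_2)$.
\item A product of an element conjugate into $A$ with one conjugate into $B$ is never in $\mathcal{D}$, so your Canary set cannot exist.
\item In fact proper discontinuity fails. Take Fuchsian $\rho_A,\rho_B$ and $a_0\in A$, $b_0\in B$ whose images have equal translation length. Choose $g$ with $g\rho_A(a_0)g^{-1}=\rho_B(b_0)^{-1}$, and let $\rho$ be $g\rho_Ag^{-1}$ on $A$ and $\rho_B$ on $B$. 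Then $\rho$ is $\mathcal{D}$-stable, since its restrictions to the factors are convex-cocompact.
\item Let $\Psi$ be the automorphism that conjugates $A$ by $b_0a_0$ and fixes $B$. It has infinite order in $\Out(\pi_1(M))$, yet $\rho(b_0)\rho(a_0)=1$ gives $\rho\circ\Psi=\rho$. So $[\rho]$ has infinite stabilizer.
\end{itemize}
This is why, in that case, separability is defined via essential annuli in one of the two product pieces.

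\textbf{The twisted $I$-bundle.} Here the substance of the theorem is Step 3, and the route you sketch cannot work.
\begin{itemize}
\item If $\mathcal{D}$ consists only of one-sided curves, Step 2 also fails as written. The product of two one-sided elements is two-sided, so generators and their pairwise products never all lie in $\mathcal{D}$.
\item No curve that is simple on $N$ can be the cusp. A one-sided one lies in $\mathcal{D}$ itself. If a two-sided $c$ bounds a M\"obius band with core $d$, then $\rho_0(d)$ is parabolic and $d\in\mathcal{D}$.
\item Otherwise $\tau_c\in\MCG(N)$ preserves $\mathcal{D}$. Suppose some $\delta\in\mathcal{D}$ crosses $c$, as happens for both of your families, and $\rho_0(c)$ is parabolic. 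Then $l(\rho_0(\tau_c^n\delta))=O(\log n)$ while $\Vert\tau_c^n\delta\Vert$ grows linearly, contradicting $\mathcal{D}$-stability. Your candidate, the curve through every cross-cap, is simple and so is excluded.
\end{itemize}
A cusp produced by pared hyperbolization sits at a simple curve of $\partial M=\widetilde N$. So what you need is a simple curve $\tilde c\subset\widetilde N$ that cannot be isotoped off its image under the covering involution, so that it projects to a non-simple curve of $N$. This is exactly the feature absent for product $I$-bundles over orientable surfaces. You also need a proof that geodesics of elements of $\mathcal{D}$ stay out of its cusp. The Whitehead-graph and cyclically-reduced-subword arguments you invoke are free-group tools and do not apply to $\pi_1(N)$ for closed $N$. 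So this step is missing, not merely deferred.
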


In the first case, the set used by Lee in her proof is the set of primitive-stable representations, while in the second case, she considers the set of separable-stable representations. Recall that if $M$ is a compression body that is not the connected sum of two trivial $I$--bundles over closed surfaces, an element in $\pi_1(M)$ is called \textit{separable} if it corresponds to a loop in $M$ that can be freely homotoped to miss an essential disk. When $M$ is the connected sum of two trivial $I$--bundles over closed surfaces, instead, an element in $\pi_1(M)$ is called separable if it corresponds to a loop in $M$ that can be freely homotoped to miss an essential annulus contained in one of the two trivial interval bundles. 

In addition, in \cite{lee-twisted} and in \cite{kim-lee} (jointly with Kim for compression bodies), she also characterized exactly which points in the boundary of $\X_{DF}(\pi_1(M), \PSLC)$ are contained in the above sets. Since the work on compression bodies is outside the scope of this chapter (since $\pi_1(M)$ in that case is not a surface or a free group), we focus on the case of twisted interval bundles. 

\begin{theorem}[Lee \cite{lee-twisted}]
    Let $M$ be a hyperbolizable twisted $I$--bundle over a non-orientable hyperbolic surface, and let $\rho \in \X_{DF}(\pi_1(M, \PSLC)$. Then $\rho$ lies in the complement of the set of primitive-stable representations $\X_{PS}(M)$ if and only if there exists a primitive element $g \in \pi_1(M)$ with parabolic image. In addition a representation in $\X_{DF}(M) \setminus \X_{PS}(M)$ does not lie in any domain of discontinuity for the action of $\mathrm{Out}(\pi_1(M))$ on $\X(M)$. 
\end{theorem}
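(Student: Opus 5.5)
The statement has two parts: a characterization of which discrete faithful representations of $\pi_1(M)$ are primitive-stable, and a non-properness statement for those that are not. I treat them in order.

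\emph{Easy direction of the characterization.} Suppose a primitive element $g$ has parabolic image. Then $l(\rho(g^n))=0$ for all $n$, while $\Vert g^n\Vert\to\infty$. If $\rho$ were primitive-stable, the orbit map would be a $(\lambda,C)$--quasi-isometric embedding along the axis of $g$. The limiting argument behind \eqref{eq:WD} would then give $l(\rho(g))\geq \Vert g\Vert/\lambda>0$, a contradiction.

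\emph{Hard direction of the characterization.} Suppose no primitive element is parabolic. I would follow the strategy of Jeon--Kim--Lecuire--Ohshika. Here $\pi_1(M)$ is free when the base surface has boundary, and the twisted $I$-bundle plays the role of the handlebody. I would argue by contradiction.
\begin{itemize}
\item Failure of primitive-stability yields primitive elements $g_k$ whose geodesic representatives in $\Hb^3/\rho(\pi_1(M))$ fail to be uniformly quasi-geodesic.
\item After rescaling, these either escape into a cusp or exit a geometrically infinite end.
\item In the cusp case, the parabolic locus of $M$ consists of annuli in $\partial M$. Since $M$ is a twisted $I$-bundle, $\partial M$ is the orientation double cover of the base surface, and every essential annulus is vertical. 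So the core of a parabolic annulus is a curve on the base. Such a curve is primitive unless it is disk-busting in the appropriate sense, and here I would use that the base is non-orientable with $\pi_1(M)$ a surface group of the base. I expect to show directly that every curve in the parabolic locus is primitive, which would contradict the hypothesis.
\item In the end case, I would show that the ending lamination blocks primitive sequences. A limit of primitive curves converges to a lamination on the base surface that cannot be the ending lamination, using the Masur domain condition (ending laminations are disk-busting).
\end{itemize}
This step is the main obstacle. It requires controlling the Hausdorff limits of primitive curves in the twisted-bundle setting, where one must understand which curves on the double cover are primitive. I would try to reduce this to a statement about one-sided versus two-sided curves on the base.

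\emph{The second claim.} Let $[\rho]\in \X_{DF}\setminus\X_{PS}$, so some primitive $g$ has $\rho(g)$ parabolic. Complete $g$ to a free basis or surface-adapted generating set, so that Dehn-twist-type automorphisms fixing $g$ exist in $\Out(\pi_1(M))$. I would mimic Minsky's Lemma 5.1 and Lee's argument for $\partial\X_{QF}$:
\begin{itemize}
\item Perturb $\rho$ within $\X_{DF}$ so that $\rho(g)$ becomes elliptic of irrational angle, or so that a proper free factor containing $g$ has non-discrete image. Such points are dense near $[\rho]$.
\item At these perturbed points, infinitely many automorphisms supported on the factor, for instance powers of a twist about $g$, move points back into any fixed neighborhood.
\end{itemize}
Hence no neighborhood of $[\rho]$ is wandering, and $[\rho]$ lies in no domain of discontinuity.
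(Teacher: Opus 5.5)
The survey only quotes this theorem from Lee, so there is no proof in the paper to compare with. Judged on its own, your easy direction is fine, but the hard direction and the second claim have genuine gaps.

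\textbf{Hard direction.} You transplant the Jeon--Kim--Lecuire--Ohshika handlebody argument, but the theorem is really about a \emph{closed} base $N$; the survey stresses that $\pi_1(M)$ is then a non-orientable surface group, not a free group. In that case $\partial M=\tilde N$ is incompressible and $M$ has no essential disks. So ``disk-busting'' and the Masur domain are vacuous, and they cannot be what keeps primitive curves out of a degenerate end. Even for handlebodies your dichotomy ``primitive unless disk-busting'' is false: $[a,b]\in F_3$ is neither.

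The missing ingredient is the covering involution $\iota$ of $\tilde N$. The ending lamination $\lambda$ lives on $\tilde N$, not on the base. The $\pi_1(\tilde N)$-cover of $\Hb^3/\rho(\pi_1 M)$ is doubly degenerate with ending laminations $\lambda$ and $\iota(\lambda)$, which are distinct and fill, so $i(\lambda,\iota(\lambda))>0$. The preimage of a simple closed curve of $N$ is an $\iota$-invariant simple multicurve. If a component $\tilde\delta_n$ converged to $\lambda$, then $\iota(\tilde\delta_n)$, which equals $\tilde\delta_n$ or is disjoint from it, would converge to $\iota(\lambda)$, a contradiction. To use this you must first fix what ``primitive'' means in the non-free group $\pi_1(N)$ and relate it to simple curves of $N$. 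The proposal never does this.

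Your cusp case is wrong as stated. The parabolic locus consists of annuli in $\tilde N$ with simple cores $\tilde\gamma$. Verticality of essential annuli says nothing about these cores, and $\tilde\gamma$ is homotopic to a simple curve of $N$ only if $i(\tilde\gamma,\iota(\tilde\gamma))=0$. Your planned claim that every parabolic-locus curve is primitive is false. Pinching a separating $\tilde\gamma$ with $i(\tilde\gamma,\iota(\tilde\gamma))>0$ gives a point of $\X_{DF}$ whose parabolic elements are orientation-preserving, null in $H_1(N;\Zb/2)$, and not represented by simple curves of $N$.

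What you need instead is a blocking-type lemma: primitive elements can contain arbitrarily high powers of a parabolic element only if some primitive element is itself parabolic. For instance, a simple closed geodesic of $N$ cannot fellow-travel the self-intersecting geodesic of $\tilde\gamma$ in $N$ for long. But if $\tilde\gamma$ lifts a two-sided simple curve $\gamma$ of $N$, then $T_\gamma^n$ applied to a primitive curve crossing $\gamma$ yields primitive elements penetrating the cusp arbitrarily deeply. So the lemma can only hold if $\gamma$, or a power of it, counts as primitive. Again everything hinges on the definition you have not given.

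\textbf{Second claim.} This does not go through as written either.
\begin{itemize}
\item Since $\pi_1(M)$ is torsion-free, no point of $\X_{DF}$ has elliptic elements. You cannot perturb \emph{within} $\X_{DF}$ to make $\rho(g)$ an irrational rotation; the perturbation must leave $\X_{DF}$.
\item The alternative through a proper free factor containing $g$ is unavailable, because a closed surface group is freely indecomposable.
\item The recurrence argument, as in the accidental-parabolic argument for $\partial\X_{QF}$, needs an infinite-order element of $\Out(\pi_1 M)$ whose powers act near $[\rho']$ through powers of $\rho'(g)$. That means a Dehn twist along $g$; automorphisms that merely fix $g$ give no recurrence.
\end{itemize}
So you must show that $g$ is represented by a two-sided simple closed curve of $N$ not bounding a M\"obius band. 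A one-sided or non-simple $g$ has no twist, and the twist about a curve bounding a M\"obius band is trivial in $\MCG(N)$. This once more depends on the notion of primitivity you never specify.
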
 

The dynamics of $\mathrm{Out}(\pi_1(M))$ on the associated $\PSLC$--character variety for more general hyperbolizable $3$--manifolds (both with incompressible and with compressible boundary) is discussed in Canary \cite{can_dyn}. In particular, there is important work from Canary--Storm \cite{canary-storm}.\\

We end this section by mentioning that there is a natural analogue of primitive-stability for surface groups $\pi_1(S)$ (possibly with punctures) by considering essential simple closed curves on $S$ instead of primitive elements. The reader can easily convince himself that Definition \ref{def:PS} makes sense when replacing the set of primitive geodesic axes in the Cayley graph by the set of geodesic axes in the Cayley graph corresponding to essential simple closed curves on the surface $S$. We call the representations satisfying this property \emph{simple-stable} representations\index{Simple-stable representation}. Since the set of free homotopy classes of unoriented essential simple closed curves on $S$ is invariant under the mapping class group, we obtain a $\MCG(S)$-invariant subset in the character variety, which we denote by $\X_{SS}(\pi_1(S),\PSLC)$. By a similar proof than the one for primitive-stable representations (Theorem \ref{thm:PS}), one can prove that $\X_{SS}(\pi_1(S),\PSLC)$ is open and that $\MCG(S)$ acts properly discontinuously on it. Moreover, $\X_{SS}(\pi_1(S),\PSLC)$ contains $\X_{CC}(\pi_1(S),\PSLC)$, the set of (conjugacy classes of) convex-cocompact representations. When $S$ is a closed orientable surface, in light of Lee's result (Proposition IV.6 in \cite{lee_thesis} mentioned at the end of Section \ref{CC}) which ensures that no point on the boundary of the quasi-fuchsian space can lie in an open domain of discontinuity, we deduce that $\X_{SS}(\pi_1(S),\PSLC) \cap \partial \X_{CC}(\pi_1(S),\PSLC) = \emptyset$, which contrasts with the result of Minsky for free groups in Theorem \ref{thm:PS} (In fact, one could also deduce this fact directly from the classification of Kleinian groups). 

Remfort-Aurat considered in \cite{remfort-aurat} simple-stable representations of closed surface groups in $\mathsf{PU}(2,1)$, which is the isometry group of the complex hyperbolic plane $\Hb_{\Cb}^2$. He proved \cite{remfort-aurat} in this setting that, unlike in the $\PSLC$ case, $\X_{SS}(\pi_1(S),\mathsf{PU}(2,1))$ contains points on the boundary of $\X_{CC}(\pi_1(S),\mathsf{PU}(2,1))$, as well as points corresponding to representations which are not discrete and faithful. To do so, he starts from a discrete, faithful, and geometrically finite representation of a triangle group in $\mathsf{PU}(2,1)$ which contains a parabolic element known explicitly. He then considers the restriction of this representation to a closed surface subgroup and proves that the parabolic element corresponds to a non-simple closed curve in this surface group. Finally, he proves that the representation of the surface subgroup must be simple-stable, but it cannot be convex-cocompact, in view of the existence of the parabolic element. 

\subsection{Bowditch representations}\label{Bow}

In this section we discuss a theory inspired by the foundational work of Bowditch \cite{bow_mar} on the study of the action of the mapping class group of punctured surfaces on the associated $\mathsf{PSL}_2(\mathbb{C})$--character varieties. We will then discuss more recent results related to these representations that have connections with representations discussed in the previous section. In particular, we will mostly focus on the following results of:
\begin{enumerate}
    \item Bowditch \cite{bow_mar} on representations for the once-punctured torus $S_{1,1}$;
    \item Tan--Wong--Zhang \cite{TWZ, TWZ-corr} on representations for the one-holed torus $S_{1,1}$. 
    \item Maloni--Palesi--Tan \cite{MPT} on representations of the four-holed sphere $S_{0,4}$;
    \item Maloni--Palesi \cite{MP} on representations for the three-holed projective plane $N_{1,3}$. 
\end{enumerate}

In the rest of this section we will consider $S = S_{1,1}$, $S= S_{0,4}$, or $S= N_{1,3}$. Note that the associated fundamental group $\pi_1(S)$ is the non-abelian free group $F_2$, in the first case, and $F_3$, in the other two cases. We consider the character variety $\mathfrak{X}(S)= \mathfrak{X}(\pi_1(S), \mathsf{PSL}_2(\mathbb{C}))$. In all these cases the mapping class groups $\MCG(S)$ is a subgroup of the outer automorphism group $\mathrm{Out}(\pi_1(S))$, and as such it acts on the character variety $\mathfrak{X}(S)$. The main result of this section will be the description of a domain of discontinuity for such actions. In particular we will discuss the following result. 
\begin{theorem}[Bowditch \cite{bow_mar}, Tan--Wong--Zhang \cite{TWZ}, Maloni--Palesi--Tan \cite{MPT}, Maloni--Palesi \cite{MP}]\label{bow}
Given a surface $S \in \{S_{1,1}, S_{0,4}, N_{1,3}\}$, there exist open and $\MCG(S)$--invariant subsets $\X_{BQ}(S)$ in $\mathfrak{X}(S)$ on which the mapping class group $\MCG(S)$ acts properly discontinuously and which contain the sets $\X_{PS}(\pi_1(S))$ of primitive-stable representations discussed in the previous section. In addition,  for $S \in \{S_{0,4}, N_{1,3}\}$ this last containment is strict.
\end{theorem}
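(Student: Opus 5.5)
We will define $\X_{BQ}(S)$ through the \emph{Bowditch Q-conditions} and then run the same quasi-isometry/finiteness argument used for Theorem \ref{thm:prop-discCC}. Let $\mathcal{S}$ denote the set of free homotopy classes of essential non-peripheral (two-sided, in the case of $N_{1,3}$) simple closed curves on $S$. We say $[\rho]\in\X_{BQ}(S)$ if (i) $\rho(\gamma)$ is loxodromic for every $\gamma\in\mathcal{S}$, and (ii) the set $\{\gamma\in\mathcal{S} : |\Tr\rho(\gamma)|\le 2\}$ is finite. For $N_{1,3}$ we use instead the one-sided curves and their traces, which are the natural primitive elements there. Both conditions are conjugation invariant. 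They are $\MCG(S)$-invariant because $\MCG(S)$ permutes $\mathcal{S}$.

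\textbf{Combinatorial structure.} We identify $\mathcal{S}$ with the complementary regions of a properly embedded trivalent tree $\Sigma$. For $S_{1,1}$ these are the Farey rationals. For $S_{0,4}$ and $N_{1,3}$ they are the analogous tree coming from the curve complex of a complexity-one surface. Around each edge the traces satisfy a vertex relation: the Markoff-type equation $x^2+y^2+z^2-xyz=\mu$ with edge relation $xy=z+w$ for $S_{1,1}$, and the Fricke relations with boundary-trace corrections for $S_{0,4}$ and $N_{1,3}$. Orienting each edge toward the region with smaller $|\Tr|$ produces a flow on $\Sigma$. The key lemma we need is the Bowditch/Tan--Wong--Zhang ``attracting subtree'' lemma. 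For any $K$ larger than a constant depending only on the boundary traces, the set of regions with $|\Tr|\le K$ is connected, and the edges outside a finite subtree all point toward it. Under (i) and (ii) this subtree $T_\rho$ is finite. Moving away from $T_\rho$, the value $\log|\Tr|$ grows at least linearly in the Fibonacci function of the region. Consequently there exist $c>0$ and $C$ with $\log|\Tr\rho(\gamma)|\ge c\,\|\gamma\|-C$ for all $\gamma\in\mathcal{S}$, where $\|\gamma\|$ is cyclically reduced length, which is comparable to the Fibonacci growth. The reverse upper bound is automatic.

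\textbf{Openness and proper discontinuity.} Openness will follow by showing that the finite tree $T_\rho$ and the linear growth constants persist under small perturbation. Near $\rho$ the flow directions on the finitely many edges bordering $T_\rho$ are unchanged. Beyond those edges, the escape estimate depends only on trace values on the boundary of $T_\rho$, so it is stable. This also yields constants $c,C$ that are uniform on compact sets $K\subset\X_{BQ}(S)$. Proper discontinuity then mirrors the proof of Theorem \ref{thm:prop-discCC}. If $\Phi\cdot[\rho]\in K$ with $[\rho]\in K$, the uniform two-sided comparison of $\log|\Tr|$ with word length on $\mathcal{S}$ gives $\|\Phi(\gamma)\|\le A\|\gamma\|$ for the finitely many curves of $T_\rho$. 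A mapping class that moves a fixed finite set of curves only a bounded amount lies in a finite set, which is the analogue of Proposition \ref{prop:canary-EA} restricted to simple curves. Alternatively, one can argue that $\Phi$ must send $T_{\rho}$ into a bounded region of $\Sigma$, and only finitely many tree automorphisms do this.

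\textbf{Containment and strictness.} If $\rho$ is primitive-stable, the orbit map is a uniform quasi-isometric embedding on primitive axes, and simple curves here correspond to primitive elements. Hence $l(\rho(\gamma))\ge \|\gamma\|/\lambda$, which gives (i) and (ii). For strictness in the cases $S_{0,4}$ and $N_{1,3}$, note that $\pi_1(S)=F_3$ but $\mathcal{S}$ is only a small subset of the primitive elements. We therefore build $\rho$ in which some primitive but non-simple element (for instance a boundary-type generator $c_i$ of $S_{0,4}$) is sent to an elliptic or parabolic element, while the simple-curve traces remain in the Bowditch region. Such a $\rho$ is in $\X_{BQ}$ but not primitive-stable. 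The main obstacle is the attracting-subtree lemma for $S_{0,4}$ and $N_{1,3}$: the boundary-trace terms in the edge relation break the clean Markoff estimates, so the threshold $K$ must be chosen in terms of the boundary traces. I would first try to prove it by adapting Tan--Wong--Zhang's estimate $|z+w|=|xy-\text{const}|$ edge by edge.
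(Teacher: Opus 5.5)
\textbf{Gap: the definition of $\X_{BQ}(S)$ for $S_{0,4}$ and $N_{1,3}$.}
The overall architecture of your argument is the one the paper follows: a finite attracting subtree gives Fibonacci lower growth, which gives openness and proper discontinuity; $\X_{PS}\subset\X_{BQ}$ because simple curves are primitive in these three surfaces; and strictness comes from cusped hyperbolic structures, whose peripheral primitive generators are parabolic. The problem is your condition (ii). With the fixed threshold $2$ it is correct only for $S_{1,1}$. For the other two surfaces the paper requires two things:
\begin{itemize}
\item (BQ2) with the boundary-dependent constant $M(\mathcal{P})=2+\tfrac12\max(|ab+cd|,|bc+ad|,|ac+bd|)$, respectively $2+\max(|x|,|y|,|z|)$ for $N_{1,3}$;
\item \emph{in addition} the nondegeneracy condition (BQ3), $\sigma(\gamma)\neq 0$ for all $\gamma\in\mathcal{S}_2$.
\end{itemize}
Equivalently, one can take (BQ1) together with finiteness of $\mathcal{S}_2(K)$ for \emph{every} $K$, which is (BQ2'). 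You do notice at the end that the threshold must depend on the boundary traces, but you build this only into the lemma, not into the definition. Finiteness of $\{|\Tr\rho(\gamma)|\le 2\}$ gives no control over $\{|\Tr\rho(\gamma)|\le K\}$ for $K\ge M(\mathcal{P})>2$. So the claim ``under (i) and (ii) $T_\rho$ is finite'' does not follow.

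\textbf{Why raising the threshold alone is not enough.}
Since $M(\mathcal{P})\ge 2$, every $\rho$ satisfying (BQ1) and (BQ2) satisfies your (i) and (ii). The paper nevertheless imposes (BQ3) separately, because $\sigma(X)=0$ produces exactly the escaping rays of case (1) in Step (3), which the trace bound does not see. By Theorem~\ref{Main-characterization}, such a $\rho$ fails (BQ4) and (BQ5). The mechanism is as follows.
\begin{itemize}
\item Take a region $X$ with $x\notin[-2,2]$. The traces $u_n$ of the regions adjacent to $X$ satisfy an inhomogeneous recurrence $u_{n+1}+u_{n-1}=\pm x\,u_n+k_n$, where $k_n$ alternates between two boundary-trace constants.
\item Hence $u_n=A\lambda^n+B\lambda^{-n}+p_n$, with $p_n$ $2$-periodic and determined by $x$ and $\mathcal{P}$.
\item $\sigma(X)=0$ corresponds to reducibility of $\rho$ on a pair of pants bounded by $X$. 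It kills one exponential mode, so $u_n\to p_n$ along one direction of $\partial X$.
\item When $|p_n|>2$, your conditions (i) and (ii) do not rule this out.
\end{itemize}
Yet for such $\rho$ the following hold:
\begin{itemize}
\item $\partial X$ carries an escaping ray, so $T_\rho(K)$ is infinite.
\item The curves $\tau_X^{n}(Y)$ have bounded trace and unbounded word length, so your bound $\log|\Tr\rho(\gamma)|\ge c\|\gamma\|-C$ fails.
\item For the appropriate sign, $\tau_X^{n}\cdot[\rho]$ converges in $\mathfrak{X}(S)$, so your proper-discontinuity argument breaks down there.
\end{itemize}
The fix therefore has to go into the definition. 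Once you use (BQ1)+(BQ2'), or (BQ1)+(BQ2)+(BQ3), your remaining steps go through as in the paper.

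\textbf{A smaller inaccuracy for $N_{1,3}$.}
The dual tree is $4$-valent, not trivalent. Its complementary regions are the one-sided curves, and the two-sided curves $\xi_{\alpha,\beta}$ correspond to edges. The paper's $\mathcal{S}_2(K)$ couples the two kinds of curves, so ``one-sided curves with threshold $2$'' is not the condition used there.
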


The relationship between the set of Bowditch representations $\X_{BQ}(S_{1,1})$ and the set of primitive-stable representations in the case of $S=S_{1,1}$ will be discussed in Section \ref{bow-PS}.

In order to explain the proof of Theorem \ref{bow}, we have to first recall the structure of a few complexes associated with these surfaces.  

\subsubsection{Curves in \texorpdfstring{$S_{1,1}$}{S11}, \texorpdfstring{$S_{0,4}$}{S04} and \texorpdfstring{$N_{1,3}$}{N13}}\label{curve-complexes}

\begin{figure}
[hbt] \centering
\includegraphics[height=3 cm]{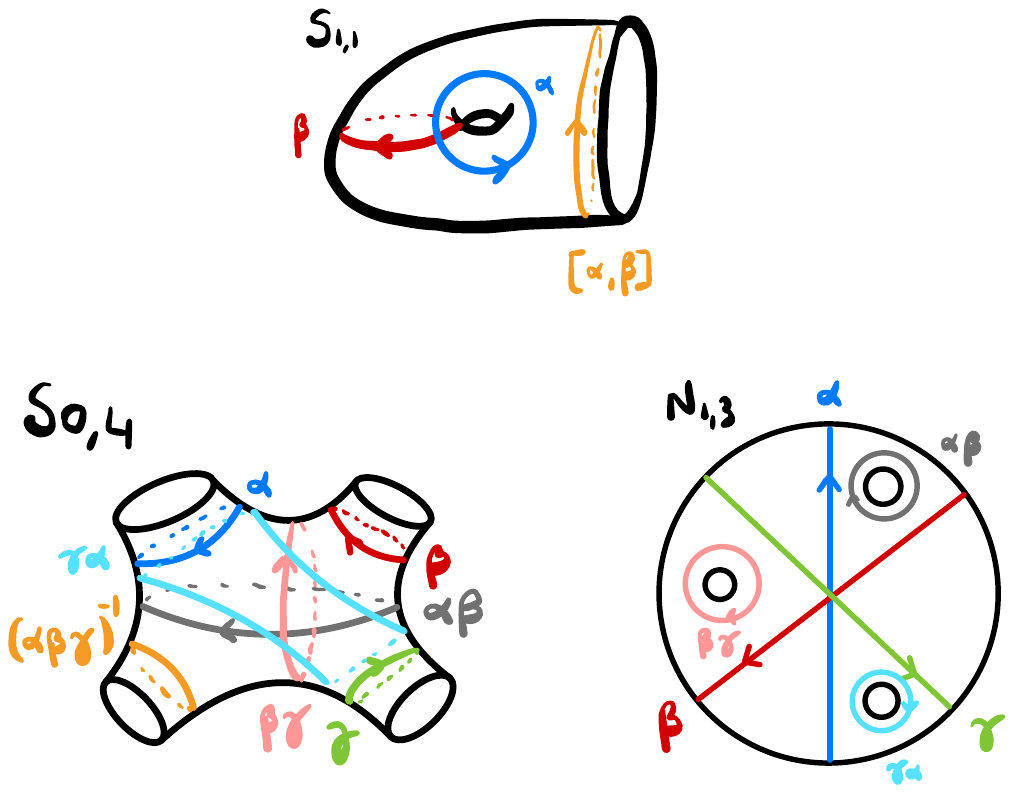}
\includegraphics[height=3.5 cm]{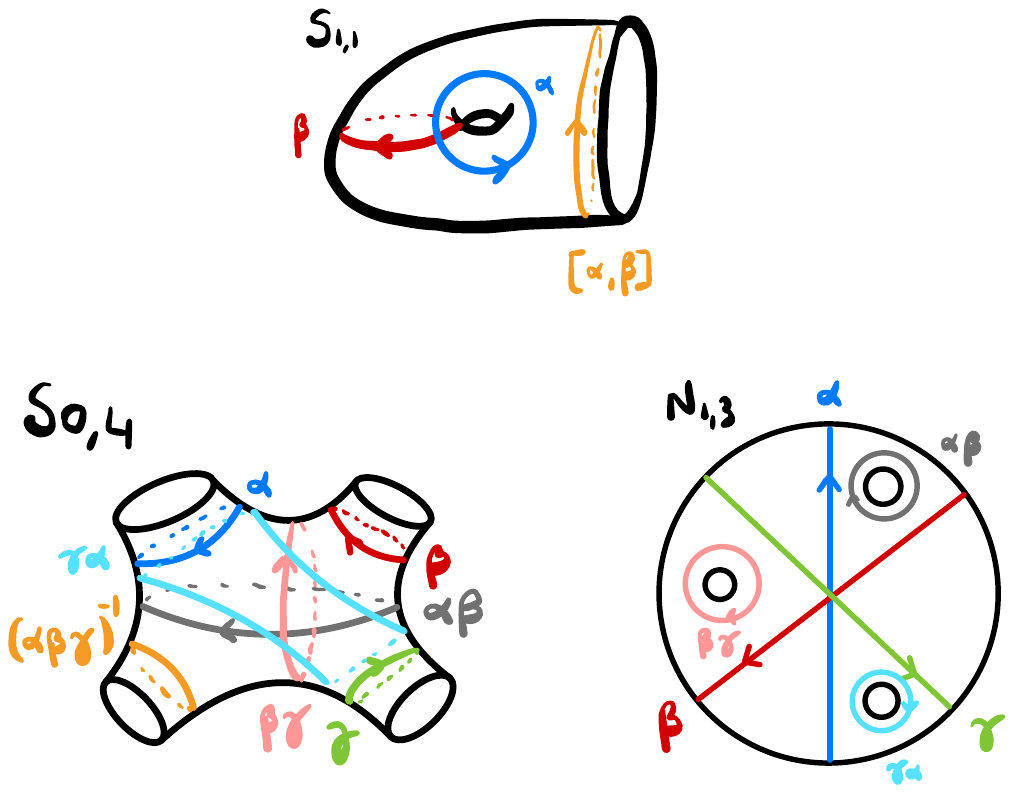}
\includegraphics[height=3.5cm]{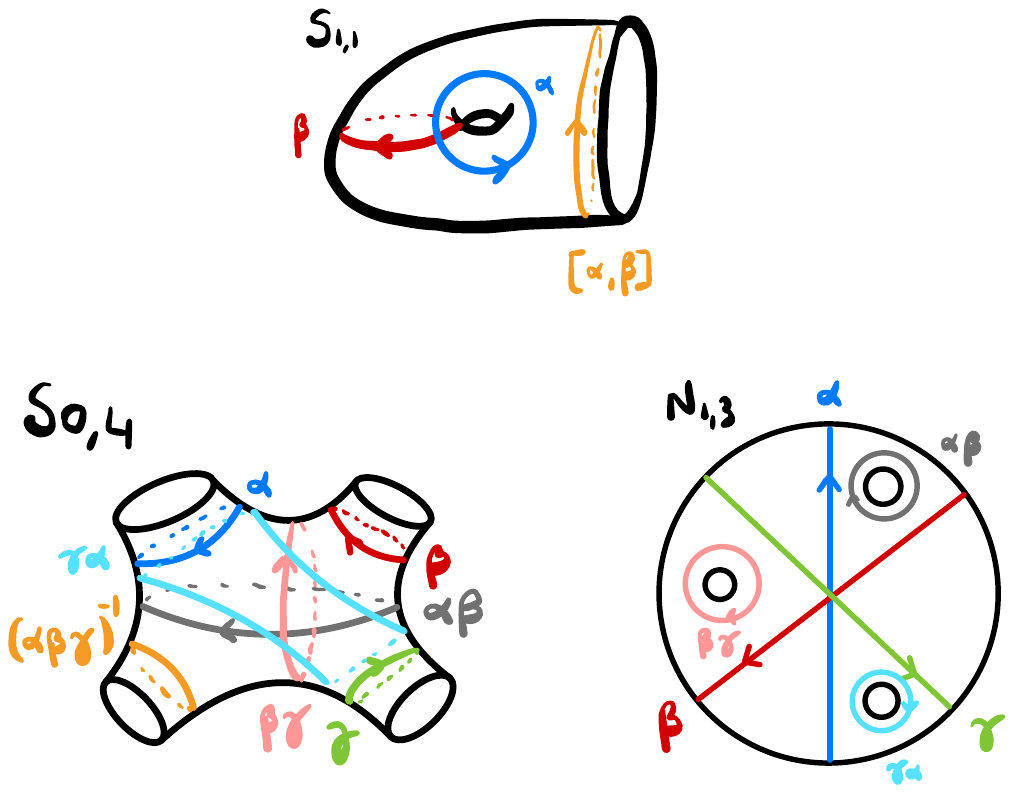}
\caption{On the left, $S = S_{1,1}$ and the curves $\alpha$, $\beta$, and $[\alpha, \beta]$. On the left, $S = S_{0,4}$ and the curves $\alpha$, $\beta$, $\gamma$, $(\alpha\beta\gamma)^{-1}$, $\alpha\beta$, $\beta\gamma$, and $\gamma\alpha$. On the right, $S = N_{1,3}$ and the curves $\alpha$, $\beta$, and $\gamma$.} 
\label{fig:fig-surf}
\end{figure}

Let $S \in \{S_{1,1}, S_{0,4}, N_{1,3}\}$. The fundamental group $\Gamma = \pi_1(S)$ is isomorphic to the non-abelian rank--$2$ free group $F_2:=\langle \alpha, \beta \rangle$ (if $S = S_{1,1}$), or to the non-abelian rank--$3$ free group $F_3:=\langle \alpha, \beta, \gamma \rangle$ (if $S = S_{0,4}$ or $N_{1,3}$). Abusing notation, we will use the following conventions:
\begin{itemize}
	\item[$S = S_{1,1}$] Let $\alpha$ and $\beta$ represent the (free homotopy class of) loops associated with the two generators so that the peripheral curve $\gamma$ going around the boundary component of $S$ is oriented so as to be homotopic to $[\alpha,\beta]:=\alpha\beta\alpha^{-1}\beta^{-1}$. 
	\item[$S = S_{0,4}$] Let $\alpha$, $\beta$, $\gamma$ represent the (free homotopy class of) loops associated with the three generators so that the peripheral curves around the boundary component of $S$ are oriented so as to be homotopic to $\{\alpha,\beta, \gamma, (\alpha\beta\gamma)^{-1}\}$. 
	\item[$S = N_{1,3}$] Let $\alpha$, $\beta$, $\gamma$ represent the (free homotopy class of) $1$--sided loops associated with the three generators so that the peripheral curves around the boundary component of $S$ are oriented so as to be homotopic to $\{\alpha\beta, \beta\gamma, \gamma\alpha\}$. 
\end{itemize} 

Let $\Sc = \Sc(S)$ be the set of free homotopy classes of simple closed essential unoriented curves on $S$. Recall that we call a curve \emph{essential} if it is non-trivial and non-peripheral, or equivalently if it does not bound a disk, or a one-holed disk. We will generally omit the word essential from now on. Note that one can identify $\Sc$ with a well-defined subset of $\Gamma/\!\sim$, where the equivalence relation $\sim$ on $\Gamma$ is defined as follows: $g \sim h$ if and only if $g$ is conjugate to $h$ or $h^{-1}$. Simple closed curves in a non-orientable surface are of two types: $1$--{\em sided} if its tubular neighborhood is homeomorphic to a M\"obius strip, and $2$--{\em sided} if its tubular neighborhood is homeomorphic to an annulus. We let $\Sc_i$, where $i = 1, 2$, be the subset of $\Sc$ corresponding to $i$--sided simple closed curves, so that we have, for example, $\Sc(N_{1,3}) = \Sc_1(N_{1,3})\cup \Sc_2(N_{1,3})$.

\subsubsection{Simplicial complexes associated with \texorpdfstring{$\mathcal{S}({S})$}{S(S)}}\label{ss:simple}

In this section, we will recall the definition of a few complexes that will be used in the description of the proof sketch. See Farb--Margalit \cite{FM} for a discussion on the complex of curves of orientable surfaces, and Scharlemann \cite{sch_the} for a more detailed discussion on the complex of curves of non-orientable surfaces.

\begin{definition} \mbox{}
\begin{itemize}
    \item The curve complex $\mathcal{C} = \Cc(S)$ of $S \in \{S_{1,1}, S_{0,4}\}$ is the $2$--dimensional abstract simplicial complex, defined by setting $k$--simplices to be subsets of $k+1$ distinct (free homotopy classes of) essential simple closed curves in $S$ that pairwise intersect once for $S_{1,1}$ or twice for $S_{0,4}$.
    \item The complex of curves $\mathcal{C} = \mathcal{C}(N_{1,3})$ of $N_{1,3}$ is the $3$--dimensional abstract simplicial complex, where the $k$--simplices are given by subsets of $k+1$ distinct (homotopy classes of) $1$--sided simple closed curves in $N$ that pairwise intersect once.
\end{itemize}
\end{definition}
 
Recall (see \cite[Remark 2.1]{MP}) that in $N_{1,3}$, there is a $1$--to--$1$ correspondence between:
  \begin{itemize}
  	\item (unordered) pairs $(\alpha, \beta)$ of (free homotopy classes of) $1$--sided simple closed curves intersecting exactly once, that is, edges in $\mathcal{C}(N_{1,3})$; and 
	\item (free homotopy classes of) $2$--sided simple closed curves $\xi_{\alpha, \beta}$.
\end{itemize} 
This correspondence comes from the fact that the $\epsilon$--neighborhood of any pair of $1$--sided simple closed curves intersecting once corresponds to an embedded two holed projective plane $M$, where one of the boundary components of $M$ is homotopic to a boundary component of $N$, and such that the other boundary curve is $2$--sided curve, and corresponds to the element $\alpha\beta^{-1}$. 

Note that in the case of $S \in \{S_{1,1}, S_{0,4}\}$, the curve complex $\mathcal{C}$ is isomorphic to the Farey complex. The identification depends on a choice of generators for $\pi_1(S)$. Since we will not need this identification in the rest of the chapter, we will not discuss the details. 

\begin{definition}
Let $\mathcal{T}$ be the simplicial complex defined by letting the set of $k$--simplices in $\mathcal{T}$, denoted by $\mathcal{T}^{(k)}$, be the set $\mathcal{T}^{(k)} = \mathcal{C}^{(dim(\mathcal{C})-k)}$.  The complex $\mathcal{T}$ is called the simplicial dual of $\mathcal{C}$ and is a countably infinite simplicial tree properly embedded in  hyperbolic $2$--space $\Hb^{dim(\mathcal{C})}$ whose vertices all have degree $dim(\mathcal{C}) +1$. This means that $\mathcal{T}$ is a $3$--valent tree for $S = S_{1,1}, S_{0,4}$, and a $4$--valent tree for $S = N_{1,3}$. 
\end{definition}

\begin{figure}
[hbt] \centering
\includegraphics[height=6 cm]{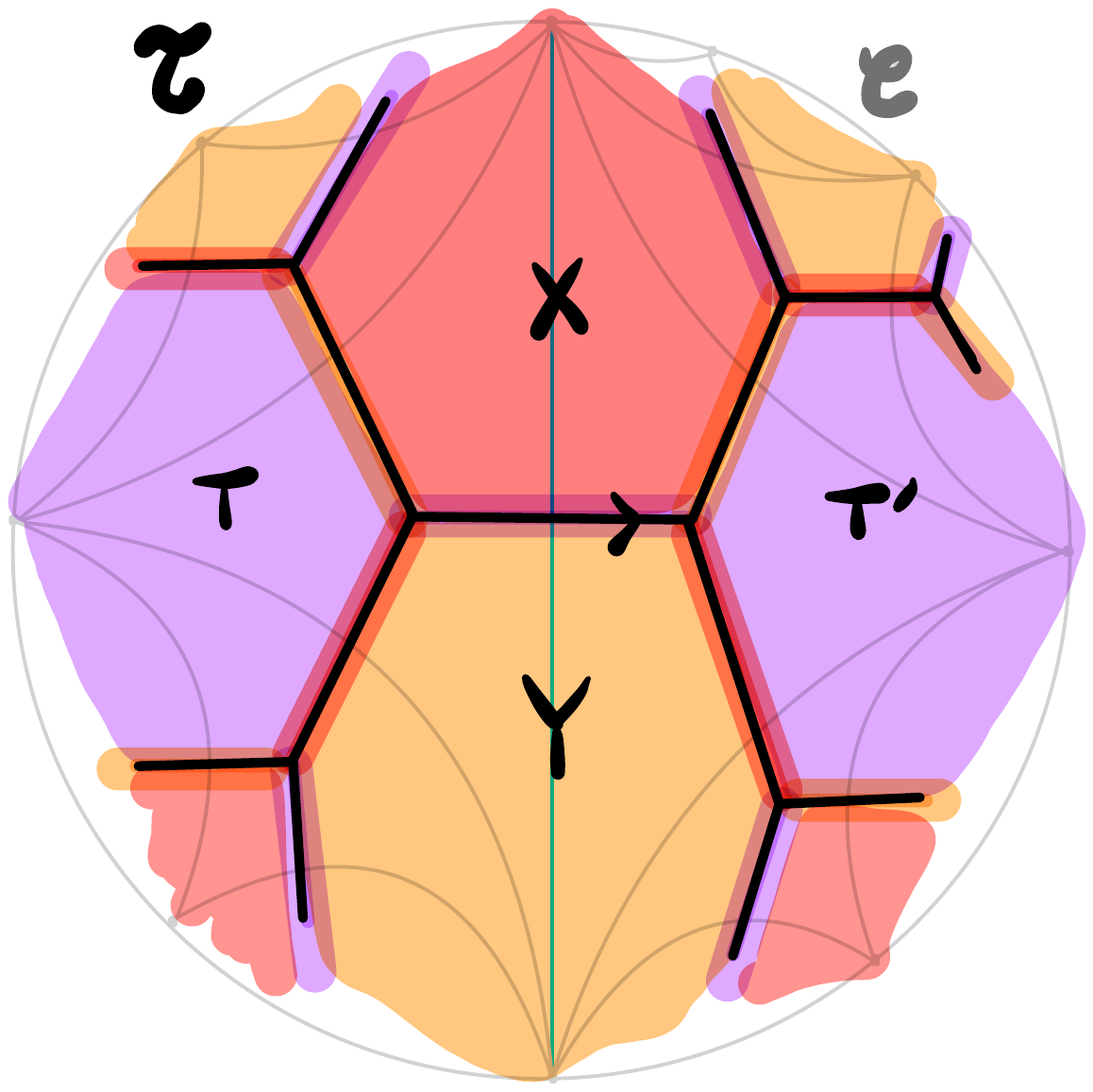}
\caption{The complexes $\mathcal{C}(S)$ (in grey) and $\mathcal{T}(S)$ (in black) for $S = S_{1,1}, S_{0,4}$. The coloring of $\mathcal{T}^{(1)}$ and for the regions in $\Omega$ and an orientation for an edge $e = X \cap Y$.}
\label{fig:f-t}
\end{figure}

Figure \ref{fig:f-t} illustrates $\mathcal{C}$ and $\mathcal{T}$ for $S = S_{1,1}$. Note that the graph $\mathcal{T}$ can be identified with the graph of (certain) ideal triangulations of $S$, with edges corresponding to edge-flips. Again, since this is not needed in the rest of the paper, we will not discuss the details of this correspondence. 

\subsubsection{Relative character varieties}\label{rel-char}

A classical result on the character varieties (see, for example, Fricke and Klein \cite{fri_vor}) states that the maps
\begin{align*}
	f_2 : \mathfrak{X}(F_2, \mathsf{SL}_2(\mathbb{C})) & \longrightarrow \, \, \, \mathbb{C}^3 \\
		[ \rho ] & \longmapsto 
		\begin{pmatrix} a \\ b \\ c \end{pmatrix} = 
		\begin{pmatrix} \mathrm{Tr}( \rho (\alpha)) \\ \mathrm{Tr}( \rho (\beta))\\ \mathrm{Tr}( \rho (\alpha\beta))) 				\end{pmatrix}
\end{align*}
and 
\begin{align*}
	f_3 : \mathfrak{X}(F_3, \mathsf{SL}_2(\mathbb{C})) & \longrightarrow \, \, \, \mathbb{C}^7 \\
		[ \rho ] & \longmapsto 
		\begin{pmatrix} a \\ b \\ c \\ d \\ x \\ y \\ z \end{pmatrix} = 
		\begin{pmatrix} \mathrm{Tr}( \rho (\alpha)) \\ \mathrm{Tr}( \rho (\beta))\\ \mathrm{Tr}( \rho (\gamma))\\ \mathrm{Tr}( \rho (\alpha\beta\gamma))\\ \mathrm{Tr}( \rho (\alpha\beta))\\ \mathrm{Tr}( \rho (\beta\gamma))\\ \mathrm{Tr}( \rho (\alpha\gamma)) 				\end{pmatrix}
\end{align*}
 provides an identification of the space $\mathfrak{X}(F_2, \mathsf{SL}_2(\mathbb{C})$ with $\mathbb{C}^3$ and an identification of the space $\mathfrak{X}(F_3, \mathsf{SL}_2(\mathbb{C})$ with the set
    \begin{equation} 
        \left\{(a,b,c,d, x,y,z) \in \mathbb{C}^7 \mid \mbox{equation } \eqref{vertex} \mbox{ holds} \right\},
    \end{equation}
where
\begin{equation} \label{vertex}
  a^2+b^2+c^2+d^2+abcd = x(ab+cd)+y(bc+ad)+z(ac+bd)+4-x^2-y^2-z^2-xyz.
\end{equation}
Recall that if $\rho \in \mathfrak{X}(F_2, \mathsf{SL}_2(\mathbb{C})$, we have 
\begin{equation*} \label{vertex_11}
  \mathrm{Tr}( \rho([\alpha,\beta])) = a^2+b^2+c^2-abc-2.
\end{equation*}

The relative character varieties correspond to the set of (classes of) representations for which the traces of the boundary curves are fixed. In particular, these are defined as:
\begin{itemize}
	\item[$S = S_{1,1}$] Let $\tau \in \Cb$, then  $$\mathfrak{X}_{\tau}(S) = \{ (a,b,c)\in \Cb^3 \mid a^2+b^2+c^2-abc-2 - \tau\}.$$ 
	\item[$S = S_{0,4}$] Let $(a, b, c, d) \in \Cb^4$, then we can define $$\mathfrak{X}_{(a, b, c, d)}(S) = \{ (x, y, z)\in \Cb^3 \mid \mbox{equation }\eqref{vertex} \mbox{ holds} \}.$$
	\item[$S = N_{1,3}$] Let $(x,y,z) \in \Cb^3$, then we can define $$\mathfrak{X}_{(x,y,z)}(S) = \{ (a,b, c, d)\in \Cb^4 \mid \mbox{equation }\eqref{vertex} \mbox{ holds} \}.$$
\end{itemize} 

To uniformize the notation we will write $\X_{\mathcal{P}}(S)$ to denote the above relative character varieties: 
\begin{itemize}
	\item $\mathfrak{X}_{c}(S)$ for $c \in \Cb$.
	\item $\mathfrak{X}_{(a, b, c, d)}(S)$ for $(a, b, c, d) \in \Cb^4$.
	\item $\mathfrak{X}_{(x,y,z)}(S)$ for  $(x,y,z) \in \Cb^3$.
\end{itemize} 

\subsubsection{Definition of \texorpdfstring{$\X_{BQ}(S)$}{XBQ(S)} and different characterizations}\label{characterizations}

We can now define the set $\X_{BQ}(S)$ for $S = S_{1,1}, S_{0,4}, N_{1,3}$. In fact, we will define this set for each relative character variety $\X_{BQ, \mathcal{P}}(S) \subset \X_{\mathcal{P}}(S)$ and then define $\X_{BQ}(S)$ to be the set obtained as the union over all the boundary values $\mathcal{P} \in \Cb^n$ for $n$ equal to the number of boundary components. 

Recall that if $k = 1,2$, then the set $\Sc_k = \Sc_k(S)$ is the set of free homotopy classes of essential unoriented $k$--sided simple closed curves in $S$. 

In the definition, we will need to define a value $M = M(\mathcal{P})$ which is the following
\begin{itemize}
     \item[ $S = S_{1,1}$]  $\mathcal{P} = \tau \in \Cb$ and $M = M(\tau) = 2$.
    \item[$S= S_{0,4}$] $\mathcal{P} = (a,b, c, d) \in \Cb^4$ and $M = M\left((a,b,c,d)\right) = 2 +\frac{1}{2} \mathrm{max}(|ab+cd|, |bc+ad|, |ac+bd|)$, see Section 3 in \cite{MPT}.
    \item[\;$S = N_{1,3}$] $\mathcal{P} = (x,y,z) \in \Cb^3$ and $M = M\left((x,y,z)\right) = 2 + \mathrm{max}(|x|, |y|, |z|)$, see Section~3 in \cite{MP}.
\end{itemize}

We also define for $K >0$ the set $\mathcal{S}_2(K)$ as follows:
\begin{itemize}
    \item For $S= S_{1,1}, S_{0,4}$, then $$\mathcal{S}_2(K) := \left\{\gamma \in \mathcal{S} \mid |\Tr\rho(\gamma)| \le K\right\},$$
    \item For $S= N_{1,3}$, then
$$\mathcal{S}_2(K)  := \left\{\xi_{\alpha, \beta} \in \mathcal{S}_2 \mid (|\Tr\rho(\alpha)| \le K \text{ or } |\Tr\rho(\beta)| \le K )\text{ and } |\Tr\rho(\xi_{\alpha, \beta})| \le K^2+ M(\mathcal{P}) -2\right\},$$
\end{itemize}

We will also need to define an \textit{auxiliary function}:
$$\sigma\co \mathcal{S}_2(S) \to \Cb.$$
The precise definition is a bit technical and, since equation \eqref{vertex} is not symmetric in its entries, depends on the `coloring' of the curves as defined in the following section, but as an example, we can see the following definition for certain choices of elements in $\mathcal{S}_2(S)$ (that is, the ones corresponding to the role played by $\alpha$ for $S = S_{1,1}, S_{0,4}$ or the one played by the curve $\alpha\beta^{-1}$ for $S = N_{1,3}$):
$$\left\{
     \begin{array}{lr}
     \sigma(x) = x^2 - (\tau+2) & \text{ if } x \in \mathcal{S}_2(S_{1,1})\\
      \sigma(x) = \left(x^2+a^2+b^2-abx-4\right) \left(x^2+c^2+d^2-cdx-4\right) & \text{ if } x \in \mathcal{S}_2(S_{0,4})\\
      \sigma(\xi_{a,b}) = \left(x^2+a^2+b^2-abx-4\right) \left(y^2+z^2+(ab-x)^2-yz(ab-x)-4\right) & \text{ if } x \in \mathcal{S}_2(N_{1,3}) \\
     \end{array}
   \right.$$
The zeros of this function are related to representations that,
restricted to certain subsurfaces, are reducible.

\begin{definition}[Bowditch \cite{bow_mar}, Tan--Wong--Zhang \cite{TWZ}, Maloni--Palesi--Tan \cite{MPT}, Maloni--Palesi \cite{MP}] \label{def:BQ-PSLC}
    A representation $\rho$ is in the set $\X_{BQ, \mathcal{P}}(S)$ if it satisfies the following conditions:
    \begin{enumerate}
  \item[{(BQ1)}] $\forall \gamma \in \Sc$, $\rho(\gamma)$ is loxodromic.
  \item[{(BQ2)}] $\# \mathcal{S}_2(M)< \infty$, where $M = M(\mathcal{P})> 0$ is defined as above.
  \item[{(BQ3)}] (Only for $S = S_{0,4}, N_{1,3}$) For all $\gamma \in \mathcal{S}_2$, we have that $\sigma(\gamma) \neq 0$. 
  \end{enumerate}
  A representation $\rho \in \X_{BQ, \mathcal{P}}(S)$ is called a \textit{Bowditch}\index{Bowditch representation} representation.
\end{definition}

We will now state various definitions that turn out to be equivalent to the definition given above, but we need to fix some notation first. Given $\rho$ in the character variety $\mathfrak{X}$, we can define the function $\mathrm{L}_\rho = \mathrm{L}(\rho(\cdot)) \co \Sc(S) \to \Cb$, well-defined modulo $2i\pi$ by 
\begin{equation}\label{eq:trace-length}
\Tr(\rho(\gamma)) = 2 \mathrm{cosh}(\mathrm{L}(\rho(\gamma))/2).
\end{equation}
The function $\mathrm{L}_\rho$ is the complex translation length function, and its real part corresponds to the translation length. Also, let $\mathrm{W}$ denote the minimal cyclically reduced word length with respect to some generating set. We can define the following properties:
\begin{enumerate}
  \item[{(BQ2')}] $ \forall K>0$, $\; \#\mathcal{S}_2(K)  < \infty.$
  \item[{(BQ4)}] There exists $k, m >0$ such that $|l (\rho (\gamma))| \geq k W(\gamma) - m$ for all $\gamma \in \Sc$.
  \item[{(BQ5)}] $\forall K\geq M(\mathcal{P})$, the attracting subgraph $T_\rho (K)$ (which we will define in the next section) is finite.
  \end{enumerate}
  
As a by-product of the proof, one can also prove various equivalent characterizations of the definition above:
\begin{theorem}[Bowditch \cite{bow_mar}, Tan--Wong--Zhang \cite{TWZ}, Maloni--Palesi--Tan \cite{MPT}, Maloni--Palesi \cite{MP}]\label{Main-characterization} 
Let $\rho\in\X_{\mathcal{P}}$. The following are equivalent:
\begin{enumerate}
\item[$(1)$] The representation $\rho$ satisfies $(BQ1)$,  $(BQ2)$ and $(BQ3)$;
\item[$(2)$] The representation $\rho$ satisfies $(BQ1)$ and $(BQ2')$;
\item[$(3)$] The representation $\rho$ satisfies $(BQ4)$;
\item[$(4)$] The representation $\rho$ satisfies $(BQ5)$.
\end{enumerate}
\end{theorem}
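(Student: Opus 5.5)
The strategy is to prove the cycle of implications $(1)\Rightarrow(4)\Rightarrow(3)\Rightarrow(2)\Rightarrow(1)$, working throughout on the tree $\mathcal{T}$ dual to the curve complex $\mathcal{C}$. The basic combinatorial tool is the \emph{edge relation}. For $S_{1,1}$, if $X,Y,Z,W$ are the four regions around an edge $e=X\cap Y$, then the trace identity gives $zw=xy\cdot(\ldots)$ with $z+w=xy$. The analogous sum relations for $S_{0,4}$ and $N_{1,3}$ come from the vertex equation \eqref{vertex}; in those cases $z+w$ is $xy$ plus a constant depending on $\mathcal{P}$. Using these relations, we orient each edge of $\mathcal{T}$ toward the endpoint whose opposite region has the smaller trace modulus. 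For $K\ge M(\mathcal{P})$, we define the attracting subgraph $T_\rho(K)$ as the union of edges $X\cap Y$ with $X,Y\in\mathcal{S}_2(K)$ together with the edges pointing into them.

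The first ingredient is a \emph{fork lemma}. At any vertex of $\mathcal{T}$, at most one outgoing arrow is allowed unless all the adjacent traces are small; this is where the constant $M(\mathcal{P})$ enters. The second ingredient is a \emph{connectedness lemma}: for $K\ge M$, the subgraph spanned by edges bounding a fixed region in $\mathcal{S}_2(K)$ is connected. We also need the \emph{escaping ray lemma}: along a descending infinite ray, either traces stay bounded and eventually converge to a bounded region, or we find a region $X$ whose neighbours around it have traces growing. Concretely, the neighbours of a region $X$ with $\tau=\Tr\rho(X)$ satisfy a linear recurrence $y_{n+1}=\tau y_n-y_{n-1}$ (up to constants). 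If $\rho(X)$ is loxodromic and $\sigma(X)\neq0$, these traces grow exponentially in both directions. The condition $\sigma\neq 0$ excludes exactly the degenerate case in which the coefficient of the growing eigenvalue vanishes. With (BQ1) and (BQ3), each region in $\mathcal{S}_2(M)$ therefore has only finitely many small neighbours, and (BQ2) combined with connectedness gives that $T_\rho(K)$ is finite. This proves $(1)\Rightarrow(4)$.

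For $(4)\Rightarrow(3)$, a finite attracting subgraph means every edge of $\mathcal{T}$ outside a finite subtree points toward it. I will show that $\log|\Tr\rho|$ grows at least linearly, via Fibonacci-type growth, along geodesics leaving this subtree. Concretely, I will compare with the Fibonacci function $F_e$ of Bowditch, which is comparable to word length, and establish $\log^+|\Tr\rho(X)|\ge k F(X)-m$. Since $W$ and $F$ are comparable, this gives (BQ4).

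$(3)\Rightarrow(2)$ is immediate. The lower bound $|l(\rho(\gamma))|\ge kW(\gamma)-m$ makes $\rho(\gamma)$ loxodromic except for the finitely many $\gamma$ with small word length. To handle those, I will apply the bound to $\gamma^n$, which is non-simple but lies on the same geodesic line, so the stable length is positive. For the non-orientable case I replace $\gamma$ by the corresponding two-sided $\xi_{\alpha,\beta}$ together with an estimate on its trace. Each set $\mathcal{S}_2(K)$ is finite, because only finitely many words of bounded length exist.

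Finally, $(2)\Rightarrow(1)$: (BQ2') contains (BQ2). For (BQ3), if $\sigma(X)=0$, the recurrence around $X$ has a vanishing growing coefficient, so the neighbours of $X$ have bounded traces. This produces infinitely many elements of some $\mathcal{S}_2(K)$, contradicting (BQ2'). The main obstacle will be the escaping-ray and recurrence analysis underlying $(1)\Rightarrow(4)$, particularly for $N_{1,3}$. There the tree is $4$-valent and $\mathcal{S}_2(K)$ is defined through pairs of one-sided curves, so the fork lemma constants must be tuned to $M=2+\max(|x|,|y|,|z|)$. I would first try reducing to the $S_{0,4}$ computation by viewing the neighbours around $\xi_{\alpha,\beta}$ as an $S_{0,4}$-type sequence.
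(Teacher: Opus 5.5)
Your $(1)\Rightarrow(4)$ has a genuine gap, and it sits at the core of the theorem. (BQ2) only gives finiteness of $\mathcal{S}_2(M)$ at the single threshold $M=M(\mathcal{P})$, whereas (BQ5) requires $T_\rho(K)$ to be finite for every $K\geq M$. With your definition of $T_\rho(K)$, and since the regions of trace modulus at most $K$ form a connected set, this amounts to finiteness of $\mathcal{S}_2(K)$ for all $K$. So this step secretly contains the hard implication (BQ2)$\Rightarrow$(BQ2').

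The facts you cite do not give it. These are that each region of $\mathcal{S}_2(M)$ has only finitely many neighbours of trace at most $K$, plus connectedness. Neither rules out an infinite connected family of regions with $M<|x|\le K$ attached to the finite set $\mathcal{S}_2(M)$.

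What rules this out is the escaping-ray lemma in its correct form (Step (3) of the paper). An escaping ray is either eventually contained in $\partial X$ for a region with $x\in[-2,2]$ or $\sigma(X)=0$, or it meets infinitely many distinct regions of trace modulus at most $M$. Your version (``traces stay bounded \ldots\ or neighbours growing'') is not this dichotomy, and you never actually use it. Under (BQ1)--(BQ3) both alternatives are impossible, so no escaping ray exists. Combined with the fork lemma, this makes the finite tree built from the finitely many regions of $\mathcal{S}_2(M)$ an \emph{attracting} subtree. Only then does the lower Fibonacci bound, which you reserve for $(4)\Rightarrow(3)$, yield finiteness of $\mathcal{S}_2(K)$, hence of $T_\rho(K)$, for every $K\geq M$.

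Your derivation of (BQ1) in $(3)\Rightarrow(2)$ also fails as written. (BQ4) is assumed only for $\gamma\in\Sc$, so it cannot be applied to the non-simple element $\gamma^n$; the remark that it ``lies on the same geodesic line'' does not help. Use instead the simple curves adjacent to $\gamma$. Suppose $\Tr\rho(\gamma)\in[-2,2]$. Then the neighbouring regions $Y_n$ of $X=\gamma$ satisfy a recurrence whose characteristic roots lie on the unit circle. Hence $|\Tr\rho(Y_n)|$ grows at most polynomially in $n$, and $|l(\rho(Y_n))|=O(\log n)$, while $W(Y_n)$ grows linearly in $n$. This contradicts (BQ4) for the simple curves $Y_n$. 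A smaller slip: for $S_{1,1}$ the edge relation is $z+w=xy$ and $zw=x^2+y^2-\tau-2$, not $zw=xy(\ldots)$.
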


Note that in the case of $N= N_{1,3}$, condition $(BQ3)$ is also equivalent to $(BQ4')$ or $(BQ4'')$, where
\begin{enumerate}
   \item[{(BQ4')}] There exists $k, m >0$ such that $|l (\rho (\gamma))| \geq k W(\gamma) - m $ for all $\gamma \in \Sc_1$.
   \item[{(BQ4'')}] There exists $k, m >0$ such that $|l (\rho (\gamma))| \geq k W(\gamma) - m $ for all $\gamma \in \Sc_2$.
\end{enumerate} 

\subsubsection{Sketch of the main proof}\label{proof}

We now give an overview of the proof of Theorem \ref{bow}. For simplicity, we will focus on the case $S = S_{0,4}$ but the main ideas can be adapted to the other cases. We will point out how to adapt the arguments in the other cases, especially for $S=N_{1,3}$ as that has some extra complications. In fact, the case $S= S_{1,1}$ is simpler since equation \eqref{vertex_11} is symmetric in the entries $(a,b,c)$, differently from equation \eqref{vertex}. The case $S= N_{1,3}$ is slightly more technical since we have a different role played by $1$--sided and $2$--sided curves, but we decided not to go into these technical details here. 

For the proof we will need to consider the set $\Omega:=\pi_0(\mathbf{H}^2- \mathcal{T})$ of connected components of the space $\mathbf{H}^2 - \mathcal{T}$, which is in one-to-one correspondence with the set of vertices of $\mathcal{C}$ and hence with the set $\Sc$; that is, we have $$\Omega \cong \mathcal{C}^{(0)} \cong \Sc.$$ We will denote regions in $\Omega$ with capital letters, for example $X \in \Omega$. 

The proof can be organized into six main steps:
\begin{enumerate}
	\item[(1)] orientation (and coloring) on edges of $\mathcal{T}$, 
    \item[(2)] fork lemma and connectivity of the set of regions with small trace;
	\item[(3)] growth of traces for neighbors around a region and behavior of neighboring regions of escaping rays;
   \item[(4)] definition of an attracting subgraph of $\mathcal{T}$;
	\item[(5)] Fibonacci growth and equivalent characterizations of Bowditch representations;
	\item[(6)] proof that the set $\X_{BQ}$ strictly contains primitive-stable representations when $S = S_{0,4}$ or $S = N_{1,3}$. 
\end{enumerate}

\begin{enumerate}
\item[Step (1):] We have a $3$--coloring of the edges of $\mathcal{T}(S_{0,4})$ and a $4$-coloring of the edges of $\mathcal{T}(N_{1,3})$ such that at each vertex every edge has a different color. This coloring extends to regions as explained in Figure \ref{fig:f-t}. This coloring reflect the fact that Equation \eqref{vertex} is not symmetric with respect to its variables. For the first step, we notice that any representation $\rho\in \X_{\mathcal{P}}$ assigns to each region $X \in \Omega$ a complex value $x := \mathrm{Tr} (\rho (X)) \in \Cb$. We will always use the convention that the image of this map will be denoted with the corresponding lower case letter. 

With this, we can then define an orientation on the edges of $\mathcal{T}$ as follows. Let $e = X \cap Y$ be an edge  in $\mathcal{T}$ between regions $T$ and $T'$, see Figure \ref{fig:f-t}. If $t = |\mathrm{Tr} (\rho (T)) | > |\mathrm{Tr} (\rho (T'))| = t' $, then the oriented edge points towards $T'$. In the case where there is an equality, one can choose the orientation arbitrarily (and the choice will not affect the result). For $S = N_{1,3}$ the definition is similar, but an edge correspond to $e = X \cap Y \cap Z$. 

\item[Step (2):] We say that a vertex $v \in \mathcal{T}^{(0)}$ is a \textit{fork} if there are (at least) two arrows pointing away from $v$. We then prove a `Fork Lemma' showing that if a vertex $v = X \cap Y \cap Z$ is a fork, then $\min \{|x| , |y|, |z| \} \leq M(\mathcal{P})$ for $x=\mathrm{Tr}(\rho(X))$, $y=\mathrm{Tr}(\rho(Y))$ and $z=\mathrm{Tr}(\rho(Z))$. We use this result to show, for $\rho \in \X_{\mathcal{P}}$ and $K > M(\mathcal{P})$, that the set $\Omega_\rho (K)$  of regions with traces smaller than $K$ is connected. The proof is by contradiction and considers the distance between two connected components. The Fork Lemma is then used to reach the conclusion when such a distance is greater than $2$ since in that case a fork will appear. When $S = N_{1,3}$ the Fork Lemma needs to consider both regions (associated to $1$--sided curves) touching the vertex and faces (associated to $2$--sided curves) touching the vertex, and similarly one can then conclude the connectivity for the union of regions with small trace and the edge connectivity  for the union of faces with small trace. 

\item[Step (3)] For the third step, we first study the behavior of the values of the (norm of the) traces for regions that are all neighbors of a central region $X$. An important conclusion is that if $x = \mathrm{Tr}(\rho(X)) \notin [-2,2]$ and such that $\sigma(X)\neq 0$, then the sequence $(u_n)_{n\in \Nb}$ defined by $u_n = \mathrm{Tr} (\rho (X_n))$ grows exponentially in both directions. We then consider \emph{escaping rays}, that is, infinite geodesic rays where each edge $e_n$ is directed from $v_n$ to $v_{n+1}$. We show that if $\{ e_n \}_{n\in \Nb}$ is an escaping ray, then there are two cases: 
\begin{enumerate}
\item[$(1)$] there exists a region $\alpha$ such that the ray is eventually contained in the boundary of a region $X$, such that $x \in [-2,2]$, or $\sigma(X) = 0$, or 
\item[$(2)$] the ray meets infinitely many distinct regions with trace smaller than $M(\mathcal{P})$.
\end{enumerate}
When $S = N_{1,3}$, these conditions need to be adapted to ask that either there exists a face $X$ such that the ray is eventually contained in the boundary of a face such that $x \in [-2,2]$, or $\sigma(X) = 0$, or the ray meets infinitely many distinct faces in $\mathcal{S}_2(M)$ where $M = M(\mathcal{P})$.

\item[Step (4)]
Using the description of the growth of the traces for regions around a fixed central region, we define, for every representation $\rho \in \X_{\mathcal{P}}$ and for every $K > M(\mathcal{P})$, a connected attracting subgraph $T_\rho(K)$ of $\mathcal{T}$. The connectivity of $T_\rho(K)$ comes from the connectivity shown at step $(2)$, while the fact that $T_\rho(K)$ is attractive comes from the definition of $T_\rho(K)$. We then show that for representations in $\X_{BQ}$ such a graph $T_\rho(K)$ is finite for all $K > M(\mathcal{P})$.

\item[Step (5):] In the penultimate step, given a vertex $v \in \mathcal{T}^{(0)}$ we define a Fibonacci function $F_v : \mathcal{S} \rightarrow \Nb$. We then show that this function has the following property: if $\{\alpha, \beta, \gamma\}$ is a set of free generators for $F_3$ and $v = X \cap Y \cap Z \in \mathcal{T}^{(0)}$ with $X = [\alpha\beta]$, $Y = [\beta\gamma]$ and $Z = [\gamma\alpha]$, then for any element $\omega \in \Omega = \mathcal{S}$ the Fibonacci function corresponds to (a multiple of) the word length of $\omega$ with respect to $\{\alpha, \beta, \gamma\}$, that is $F_v(\gamma) = W(\gamma)$. Note that we only care about the asymptotic growth of this function, as you will see, so the choice of $v$ will not affect our results. We then define what it means for a function $g \co\mathcal{S} \rightarrow [0, \infty)$ to have \textit{Fibonacci growth}: that there exist constants $\kappa_1, \kappa_2 >0$ such that $$\kappa_1 F_v (X) \leq g(X) \leq \kappa_2 F_v (X)$$ for all $X \in \Omega$. If only the lower (respectively upper) bound is satisfied, the function $g$ is said to have a lower (respectively upper) Fibonacci growth. Given any $\rho \in \mathfrak{X}_{\mathcal{P}}$, we denote by $\phi_\rho \co\Omega \rightarrow \Cb$ the function $\phi_\rho (X) := \log | \mathrm{Tr} (\rho (X)) |$. We consider the function $\phi_\rho^+ := \max \{ \phi_\rho , 0 \}$. We then show that this function, for any $\rho \in \X_{\mathcal{P}}$, has an upper Fibonacci growth, but for representations in $\rho \in \mathfrak{X}_{BQ}$ we show that the function has upper and lower Fibonacci growth. 

We use this result to show that we can characterize representations in $\X_{BQ}$ in terms of the attracting graph $T_\rho(K)$ as follows: $\rho$ is in $\X_{\mathcal{P}, BQ}$ if and only if the subgraph $T_\rho (K)$ is finite for all $K> M(\mathcal{P})$, which proves Theorem \ref{Main-characterization}. This characterization is crucial to show that the set $\X_{BQ}$ is open and the mapping class group acts on it properly discontinuously, and hence to prove \ref{bow}.

\item[Step (6):] In order to show that the set $\X_{BQ}$ strictly contains primitive-stable representations when $S = S_{0,4}$ or $S = N_{1,3}$, we consider hyperbolic structures with parabolic boundary components, which are in $\X_{BQ}$ but not in $\X_{PS}$. Note that we do not know the strict inclusion for all boundary values. 
\end{enumerate}

\begin{remark}
Lawton--Maloni--Palesi \cite{LMP} generalize this theory to representations $\rho\co \pi_1(S_{1,1}) \to \mathsf{SU}(2,1) = \mathrm{Isom}(\mathbb{H}_{\Cb})$. To do so, they have to introduce a new graph $\mathcal{E}$, the edge graph, that better encodes the different structure of the character variety in this setting, but they defined the Bowditch set in that context, showed the equivalences and showed that this is a domain of discontinuity for $\MCG(S_{1,1})$ strictly containing convex-cocompact representations. 
\end{remark}

\subsubsection{Bowditch representations in Gromov hyperbolic spaces}\label{gromov}

We will now explain how the above discussion on Bowditch representations and the approach described in Section \ref{characterizations} can be  generalized for representation of the once-punctured torus in isometry groups of Gromov-hyperbolic spaces. 

Let us first recall that in the setting of geodesic metric space, a \emph{$\delta$-hyperbolic space}, for $\delta \geq 0$, is a geodesic metric space where every triangle is $\delta$--thin. A triangle is \emph{$\delta$--thin} if every side of the triangle is contained in the $\delta$--neighborhood of the union of the two other sides. $\delta$--hyperbolic spaces were introduced by Gromov \cite{gromov} and give a large-scale metric framework for hyperbolicity, see Section \ref{gromov}. To every $\delta$--hyperbolic space, one can associate its Gromov-boundary. A geodesic $\delta$--space is said to be \emph{visible} when every distinct pair of points on the Gromov-boundary of the space can be joined by a bi-infinite geodesic. Examples of geodesic, $\delta$--hyperbolic spaces include all symmetric spaces of rank one, as the usual $n$--dimensional real hyperbolic space $\Hb^n$, but also non-proper examples, as the infinite-dimensional real hyperbolic space $\Hb^\infty$ or the curve complex associated with a hyperbolic surface, see Masur--Minsky \cite{MM1} about the last example. For a detailed account on Gromov-hyperbolic spaces, we refer the interested reader to Bridson--Haefliger \cite{bridson-haefliger} and Coornaert--Delzant--Papadopoulos \cite{CDP}. 

Analogously to the $n$--dimensional real hyperbolic space case $\Hb^n$, isometries of a $\delta$--hyperbolic space can be classified into three types: hyperbolic, parabolic and elliptic isometries. When $A$ is a hyperbolic isometry of a $\delta$--hyperbolic space $X$ and $o \in X$ is any point, the orbit map $\Zb \to X$ which sends $n\in \Zb$ to $A^no \in X$ is a quasi-isometric embedding and the isometry $A$ preserves exactly two distinct points on the boundary of the space.

As the trace of an isometry does not make sense anymore in $\delta$--hyperbolic spaces, one needs to replace it by another notion in order to define analogous conditions to $(BQ2)$ and $(BQ2')$ defined in Definition \ref{def:BQ-PSLC} and after. A natural notion to consider is the length of an isometry $A$, which can be defined either as the \textit{translation length} $$l(A):=\underset{x \in X}{\min} \, d(Ax,x),$$ or as the \textit{stable length} $$l_S(A):=\underset{n \to \infty}{\lim} \frac{1}{n}d(A^nx,x),$$ for $x\in X$. Observe that the stable length does not depend on the choice of $x \in X$. Note that one can easily see that in $\PSLC$ both notions are related to the trace, see Equation \eqref{eq:trace-length}. Note that these two notions are conjugacy invariant and that they are very close to each other: for rank one symmetric spaces $l(A)=l_S(A)$ and, in general, they only differ by an additive constant: $l_S(A) \leq l(A) \leq l_S(A)+16\delta$ (see \cite{CDP}). Also recall that the stable length characterizes hyperbolicity: an isometry $A$ is hyperbolic if and only if $l_S(A) >0$. 

We can now define, for $K>0$, the set $\Sc(K)$ in this setting as follows:
$$\Sc(K)=\{\gamma \in \Sc \mid l_S(\rho(\gamma)) \leq K\}.$$
Note that since we are working with the once-punctured torus, which is an orientable surface, all simple closed curves are $2$-sided, so $\Sc=\Sc_2$.

We now define a Bowditch set in this setting:
\begin{definition} \label{def:BQ in gromov-hyp} Let $X$ be a $\delta$--hyperbolic, geodesic, visible space. A representation $\rho\co F_2 \to \mathrm{Isom}(X)$ is in the set $\X_{BQ}(F_2,\mathrm{Isom}(X))$ if it satisfies the following conditions: 
	\begin{enumerate}
		\item[($\mathcal{BQ}1$)]  $\forall \gamma \in \Sc$, $\rho(\gamma)$ is hyperbolic.
		\item[($\mathcal{BQ}2$)] $\# \Sc(K_\delta)<\infty$, where $K_\delta=329\delta$.
	\end{enumerate}
\end{definition}

Similarly to the $\PSLC$ case, one can define, for every representation $\rho\co F_2 \to \mathrm{Isom}(X)$ and every $K>K_\delta$, a subtree of the Farey tree $\Tc$ denoted $T_\rho(K)$. This tree will be crucial to prove Theorem \ref{thm:BQopen prop disc} and its finiteness will characterize belonging to the Bowditch set. Note that, in contrast with the $\PSLC$ case, the large-scale nature of the setting prevents the tree $T_\rho(K)$ from being fully attracting, but in some sense remains ``quasi''-attracting, see \cite{schlich-equivalence}. 

We now also define the following conditions:
\begin{enumerate}
		\item[($\mathcal{BQ}2'$)] $\forall K >0$, $\# \Sc(K)<\infty$
		\item[($\mathcal{BQ}4$)] There exists $k,m>0$ such that $l_S(\rho(\gamma)) \geq kW(\gamma)-m$ for all $\gamma \in \Sc$. 
		\item[{($\mathcal{BQ}5$)}] For all $K\geq K_\delta$ ($K_\delta$ is defined in Definition \ref{def:BQ in gromov-hyp}), the tree $T_\rho(K)$ is finite.
	\end{enumerate}

Various analogous equivalent characterizations of the Bowditch set are in $\delta$--hyperbolic spaces:
\begin{Theorem}[Schlich \cite{schlich-equivalence}] \label{thm:BQ in Gromov-hyp}Let $X$ be a $\delta$--hyperbolic, geodesic, visible space. Let $\rho \co \Fb_2 \to \mathrm{Isom}(X)$. The following are equivalent:
	\begin{enumerate}
		\item The representation $\rho$ satisfies ($\mathcal{BQ}1$) and ($\mathcal{BQ}2$);
		\item The representation $\rho$ satisfies ($\mathcal{BQ}1$) and ($\mathcal{BQ}2'$);
		\item The representation $\rho$ satisfies ($\mathcal{BQ}4$);
		\item The representation $\rho$ satisfies ($\mathcal{BQ}5$).
	\end{enumerate}
\end{Theorem}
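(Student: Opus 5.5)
We prove the cycle of implications $(4)\Rightarrow(1)\Rightarrow(2)\Rightarrow(3)\Rightarrow(4)$, along the lines of Steps (1)--(5) of Section \ref{proof}. Lengths replace traces throughout. The basic tool is the $\delta$--hyperbolic substitute for the trace identity $\Tr(AB)+\Tr(AB^{-1})=\Tr A\Tr B$: for hyperbolic isometries $A,B$, the stable lengths $l_S(AB)$, $l_S(AB^{-1})$, $l_S(A)$, $l_S(B)$ are tied by coarse inequalities with additive error a multiple of $\delta$. The key one is that $\max\{l_S(AB),l_S(AB^{-1})\}\ge l_S(A)+l_S(B)-c\delta$ once the axes are far apart or the lengths are large compared with $\delta$. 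We obtain these by placing quasi-axes and using thin triangles and the Morse lemma; visibility of $X$ guarantees that hyperbolic isometries have genuine bi-infinite geodesic axes between their two fixed boundary points.

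\textbf{Step 1: Fork lemma and connectedness.} With these inequalities, orient the edges of $\Tc$ from larger to smaller stable length, as in Step (1) of Section \ref{proof}. Prove a coarse Fork Lemma: at a fork, some adjacent region has $l_S\le K_\delta$. The constant $329\delta$ should come out of adding up the $\delta$-errors. Deduce that $\Omega_\rho(K)=\{X : l_S(\rho(X))\le K\}$ is connected for $K\ge K_\delta$. Next, study the neighbours $X_n$ of a fixed region $X$ with $\rho(X)$ hyperbolic; these are obtained from a neighbour $Y$ by Dehn twisting. Show that $l_S(\rho(X_n))$ grows at least linearly, like $|n|\,l_S(\rho(X))-C$, in both directions, with only a bounded number of exceptions. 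This uses that $\rho(X)^n\rho(Y)$ translates by roughly $|n|\,l_S(\rho(X))$ plus the distance between axes. From this, define $T_\rho(K)$ as the union over $X\in\Omega_\rho(K)$ of the finite arcs of $\partial X$ on which neighbours have length $\le K$, plus edges between small regions. Then show it is connected and ``quasi-attracting'': every edge outside it points towards it, up to a bounded defect.

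\textbf{Step 2: Easy implications.} $(4)\Rightarrow(1)$: if $T_\rho(K)$ is finite, then only finitely many regions have small length, giving $(\mathcal{BQ}2)$. Any non-hyperbolic $\rho(\gamma)$ would have $l_S=0$, so its region would lie in $\Omega_\rho(K)$ for every $K$. The growth estimate fails around such a region, and its infinitely many neighbours with bounded length would force $T_\rho(K)$ to be infinite; this gives $(\mathcal{BQ}1)$. $(1)\Rightarrow(2)$: by connectedness of $\Omega_\rho(K)$ and the neighbour growth around each of its finitely many hyperbolic regions, $\Omega_\rho(K)$ is finite for all $K$. $(3)\Rightarrow(4)$: $(\mathcal{BQ}4)$ gives $l_S\ge kW-m$, so only finitely many $\gamma$ have $l_S\le K$. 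Hence $T_\rho(K)$, which is a union of boundedly many arcs around finitely many regions, is finite.

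\textbf{Step 3: Fibonacci growth, $(2)\Rightarrow(3)$, the main obstacle.} Take a finite subtree $T_\rho(K)$ and a vertex $v$ in it. For regions far from $T_\rho(K)$, use the quasi-attracting property and the coarse inequality $l_S(\rho(XY))\gtrsim l_S(\rho(X))+l_S(\rho(Y))-c\delta$ along descending paths. Then run Bowditch's induction on the Farey tree: the additive errors must be absorbed by the growth, which is possible once lengths exceed a threshold such as $K\gg\delta$. This yields $l_S(\rho(\gamma))\ge \kappa F_v(\gamma)-m$, and $F_v$ is comparable to $W$. The hard part is controlling the accumulated $\delta$-errors, since the tree is only quasi-attracting. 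I would handle this by restricting the induction to regions whose length already exceeds a large multiple of $\delta$, where the errors are relatively small, and treating the finite region near $T_\rho(K)$ by compactness.
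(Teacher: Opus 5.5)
Your overall route is the one the paper sketches: a coarse replacement for the trace identity, a coarse Fork Lemma, connectedness of $\Omega_\rho(K)$, neighbour growth, a quasi-attracting tree, and a Fibonacci-growth argument, i.e.\ Steps (1)--(5) of Section~\ref{proof} with $\max\{l_S(AB),l_S(AB^{-1})\}\ge l_S(A)+l_S(B)-C_\delta$ as the key input. However, your step $(4)\Rightarrow(1)$ fails, and with your definition of $T_\rho(K)$ the equivalence is actually false. You claim that a non-hyperbolic $\rho(X)$ has infinitely many neighbours of bounded length. That is true for elliptic images but not for parabolic ones, whose displacement grows sublinearly but without bound. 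Take $\rho\colon F_2=\langle a,b\rangle\to\PSLR=\mathrm{Isom}^+(\Hb^2)$ to be the holonomy of a hyperbolic pair of pants with one cusp and two funnels, with $\rho(a)$ parabolic. Then $\Tr\rho(a^nb)$ is affine in $n$ with nonzero slope, so $l_S(\rho(a^nb))\sim 2\log|n|$. By discreteness and geometric finiteness, only finitely many primitive classes have $l_S\le K$, for every $K$. Hence all your arcs are finite and $T_\rho(K)$ is finite for every $K$, yet $(\mathcal{BQ}1)$ fails (while $(\mathcal{BQ}2')$ even holds). The definition of $T_\rho(K)$ must itself detect non-hyperbolic regions. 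For example, put all of $\partial X$ into $T_\rho(K)$ whenever $\rho(X)$ is not hyperbolic; such an $X$ lies in every $\Omega_\rho(K)$, just as regions with trace in $[-2,2]$ are treated separately in Step (3) of Section~\ref{proof}. Then $(4)\Rightarrow(\mathcal{BQ}1)$ is immediate, but your $(3)\Rightarrow(4)$ now needs $(\mathcal{BQ}4)\Rightarrow(\mathcal{BQ}1)$, which you skip. To get it, apply $(\mathcal{BQ}4)$ to the neighbours $X^nY$: their word length grows linearly in $n$, while $l_S(\rho(X^nY))\le d(o,\rho(X)^no)+d(o,\rho(Y)o)=o(n)$ when $l_S(\rho(X))=0$. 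You would also have to recheck connectedness and quasi-attraction for the modified $T_\rho(K)$.

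Second, your $(1)\Rightarrow(2)$ is not justified. Connectedness of $\Omega_\rho(K)$, together with each region having finitely many short neighbours, does not make $\Omega_\rho(K)$ finite: an infinite chain of successively adjacent short regions is connected and locally finite. This is a substantive part of the theorem (compare $(1)\Leftrightarrow(2)$ in Theorem~\ref{Main-characterization}), and it needs the escaping-ray analysis of Step (3), which your plan omits. If $\Omega_\rho(K)$ were infinite, $T_\rho(K)$ would contain an infinite ray, and quasi-attraction must be used to show this ray is coarsely descending. One must then show that such a ray either eventually runs along $\partial X$ with $\rho(X)$ non-hyperbolic, or meets infinitely many regions of $\Omega_\rho(K_\delta)$, contradicting $(\mathcal{BQ}1)$ or $(\mathcal{BQ}2)$ respectively. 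Alternatively, you can fold this into your Fibonacci step by proving the lower bound along descending paths directly from $(\mathcal{BQ}1)+(\mathcal{BQ}2)$. Either way, this is exactly where the lack of genuine attraction has to be confronted. Relatedly, your neighbour-growth claim cannot be unconditional. In $\Hb^2$, if $\rho(Y)$ maps the attracting fixed point of $\rho(X)$ to its repelling one, then $\Tr\rho(X^nY)\to 0$ as $n\to+\infty$, so the claimed two-sided growth fails. You need to state the hypotheses under which two-sided growth holds and use them.
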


Note that the constant $K_\delta$ used to define ($\mathcal{BQ}2$) in Definition \ref{def:BQ in gromov-hyp} and which makes Theorem \ref{thm:BQ in Gromov-hyp} true only depends on the $\delta$--hyperbolic space $X$ and not on the value of the length of the boundary curve $[\alpha,\beta]$, that is, the condition used to define Bowditch representation does not depend on the relative character varieties of the one-holed torus. This was already the case in $\PSLC$ where Bowditch \cite{bow_mar} and Tan--Wong--Zhang \cite{TWZ} proved that the constant $M(\tau)=2$ works (recall that in this case the constant is a bound for the modulus of the trace), but this is in contrast with the case of the surfaces $S_{0,4}$ and $N_{1,3}$ where the definition of $M$ depends on the traces of the boundary components of the corresponding surfaces, see Section \ref{characterizations}. 

Using Theorem \ref{thm:BQ in Gromov-hyp} and in particular characterization ($\mathcal{BQ}5$), we can prove that the set $\X_{BQ}(F_2,\mathrm{Isom}(X))$ is an open domain of discontinuity in the character variety:

\begin{Theorem}[Schlich \cite{schlich-equivalence}] \label{thm:BQopen prop disc}
	The set $\X_{BQ}(F_2,\mathrm{Isom}(X))$ is open in $\X(F_2,\mathrm{Isom}(X))$, $\MCG$-invariant and $\MCG(S_{1,1})$ acts properly discontinuously on it. 
\end{Theorem}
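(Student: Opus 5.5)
We will derive the three assertions of Theorem \ref{thm:BQopen prop disc} from the equivalences of Theorem \ref{thm:BQ in Gromov-hyp}. The argument parallels the proofs of Proposition \ref{prop:local-uniform-cst-qie} and Theorem \ref{thm:prop-discCC}, but restricted to simple closed curves.

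\emph{Invariance.} $\MCG(S_{1,1})$ permutes $\Sc$. Moreover, $l_S$ is conjugation invariant, and for the standard generating set the word length $W(\phi(\gamma))$ is bounded above and below by constant multiples of $W(\gamma)$. Hence condition ($\mathcal{BQ}4$) is preserved by any $\phi\in\MCG(S_{1,1})$, with modified constants $k,m$. Equally directly, ($\mathcal{BQ}1$) and ($\mathcal{BQ}2'$) are invariant, since $\phi$ only relabels the set $\Sc(K)$.

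\emph{Openness.} We will use ($\mathcal{BQ}5$). Fix $\rho\in\X_{BQ}$ and some $K>K_\delta$, so that $T_\rho(K)$ is a finite subtree of the Farey tree $\Tc$. The key claim is a local statement: for $\rho'$ close to $\rho$ (the $\rho'(\alpha),\rho'(\beta)$ move little in the compact-open topology), $T_{\rho'}(K')$ is contained in a fixed finite neighbourhood of $T_\rho(K)$, for some $K'\ge K_\delta$. We will prove it as follows.
\begin{enumerate}
\item Consider the finitely many edges of $\Tc$ adjacent to $T_\rho(K)$. They carry finitely many curves, whose stable lengths vary only slightly under perturbation, by upper semicontinuity of $l_S$ together with the $16\delta$ comparison with $l$.
\item The ``quasi''-attracting property of $T_\rho(K)$ controls growth beyond this boundary. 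Along any ray leaving the finite tree, the lengths of neighbouring regions grow in a Fibonacci-type way. This growth is certified by finitely many inequalities on the boundary curves, with a margin of order $\delta$.
\item These inequalities are open conditions, so they persist for $\rho'$. This yields finiteness of $T_{\rho'}(K')$, and ($\mathcal{BQ}5$) then holds for $\rho'$.
\end{enumerate}
The threshold needs care. One should pick $K$ large, e.g.\ $K'=K-\varepsilon$ still above $K_\delta$, so that the losses of order $\delta$ in the quasi-attraction statement are absorbed.

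\emph{Proper discontinuity.} We will follow Theorem \ref{thm:prop-discCC}. First, make the constants in ($\mathcal{BQ}4$) locally uniform. The openness argument shows that the finite trees $T_{\rho'}(K')$ stay in a fixed finite set for $\rho'$ near $\rho$. Running the proof of (5)$\Rightarrow$(3) then gives uniform $k,m$ on a neighbourhood, and by compactness uniform $k,m$ on any compact $C\subset\X_{BQ}$. An upper bound $l_S(\rho(\gamma))\le \lambda W(\gamma)$ holds uniformly on $C$ by continuity on generators. Now suppose $\phi\cdot[\rho]\in C$ with $[\rho]\in C$. Comparing the two bounds as in \eqref{eq:bound-Phi} gives
\[W(\phi(\gamma))\le A\,(W(\gamma)+1)\quad \text{for all } \gamma\in\Sc.\]
Applying this to $\alpha,\beta,\alpha\beta$ bounds the lengths of the images of a fixed triple of simple curves. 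Only finitely many mapping classes of $S_{1,1}$ (modulo the finite kernel acting trivially) send a fixed triangle of $\Tc$ to one of bounded word length. Hence only finitely many $\phi$ occur.

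\emph{Main obstacle.} I expect the main obstacle to be the openness/local uniformity step. In the $\PSLC$ case the tree is genuinely attracting via exact trace identities. In a $\delta$-hyperbolic space we only have coarse inequalities, so we must show that the finite certificate for finiteness of $T_\rho(K)$ survives perturbation, with the additive $\delta$-errors handled uniformly. I would first try to extract from the proof of Theorem \ref{thm:BQ in Gromov-hyp} an explicit ``fork/escape'' lemma with constants depending only on $\delta$, and then check that its hypotheses are strict inequalities on finitely many curves.
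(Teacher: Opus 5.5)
Your architecture matches the paper's outline. Everything is deduced from Theorem \ref{thm:BQ in Gromov-hyp}, with $(\mathcal{BQ}5)$ driving openness as in Bowditch's argument. Your invariance argument and your closing count are fine (bounded cyclic length of the images of $\alpha,\beta,\alpha\beta$, finite stabilizer of a vertex of $\Tc$). The gap is the openness step, which carries essentially all the content: you assert it rather than prove it, and the mechanisms you sketch do not deliver it as stated.

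In step (1), upper semicontinuity of $l_S$ together with $l_S\le l\le l_S+16\delta$ only bounds $l_S(\rho'(\gamma))$ from \emph{above}. (Upper semicontinuity holds because $l_S$ is an infimum of the continuous functions $g\mapsto\frac1n d(g^nx,x)$.) A certificate that curves beyond a finite subtree are long consists of lower bounds. The lower half needs a genuinely hyperbolic input: a local criterion bounding $l_S(g)$ below by $d(x,gx)-2(g^{-1}x\mid gx)_x$, up to $O(\delta)$, once this quantity is large. That criterion involves finitely many distances, so it survives perturbation. Apply it to a high power $g^N$ at a point nearly minimizing its displacement, using $l_S(g^N)=N\,l_S(g)$; the $O(\delta)$ loss becomes $O(\delta/N)$. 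This shows $l_S$ is continuous at hyperbolic elements and hyperbolicity is an open condition. You also need the latter for $(\mathcal{BQ}1)$ on the finitely many curves near the tree.

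More seriously, step (2) does not reduce to finitely many strict inequalities. Suppose a finite subtree $T$ is to certify that every curve of $\rho'$-length at most $K'$ has its region meeting $T$. Then $T$ must meet the boundary of the region of every curve $\gamma$ of small $\rho$-length, so some boundary edges of $T$ run along that region. There the inequality $\max\{l_S(AB),l_S(AB^{-1})\}\ge l_S(A)+l_S(B)-C_\delta$ cannot push lengths outward: it needs both lengths above $C_\delta$, and $l_S(\rho(\gamma))$ can be arbitrarily small (e.g.\ a pinched Fuchsian structure).

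What is missing is a Gromov-hyperbolic version of Step (3) of Section \ref{proof}. The neighbours of $\gamma$ are the curves $\gamma^k\eta$, $k\in\Zb$, and you need $l_S(\rho'(\gamma)^k\rho'(\eta))>K'$ for all $k$ outside a fixed finite set, uniformly for $\rho'$ near $\rho$. Only away from short curves, where both adjacent lengths exceed $C_\delta$ by a margin, does the coarse inequality propagate a gap condition outward by induction. This same uniform control is what you invoke, unproved, to get locally uniform $(\mathcal{BQ}4)$ constants.

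Two smaller points. First, finiteness of a single $T_{\rho'}(K')$ is not $(\mathcal{BQ}5)$, which quantifies over all $K\ge K_\delta$; aim for $(\mathcal{BQ}1)$ and $(\mathcal{BQ}2)$ directly. Second, once you have a neighbourhood $U$ of $\rho$ and a finite set $F$ with $\Sc_{\rho'}(K)\subset F$ for all $\rho'\in U$, proper discontinuity follows in the spirit of the paper's route through $(\mathcal{BQ}5)$:
\begin{enumerate}
\item choose $K$ so that two distinct curves have $\rho$-length $<K$; by upper semicontinuity they stay in $\Sc_{\rho'}(K)$ on a smaller $U$;
\item use $\Sc_{\rho\circ\phi^{-1}}(K)=\phi(\Sc_{\rho}(K))$;
\item use that the stabilizer of a pair of distinct curves on $S_{1,1}$ is finite.
\end{enumerate}
This avoids uniform $(\mathcal{BQ}4)$ constants and the upper bound altogether.
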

Compare this result with Theorem \ref{bow}. The link with primitive-stable representation will be made in the next section. 

\textit{Proof strategy:}
The general strategy will follow the same lines as the one described above in Section \ref{proof}, except Step (6) which is not addressed here. However, the work of Bowditch \cite{bow_mar}, Tan--Wong--Zhang \cite{TWZ}, Maloni--Palesi--Tan \cite{MPT}, Maloni--Palesi \cite{MP} in $\PSLC$ and Lawton--Maloni--Palesi \cite{LMP} in $\mathsf{SU}(2,1)$ relies on the explicit description of the relative character varieties in terms of algebraic equations in order to prove the results in Step (1) to (5), which we do not have in the $\delta$--hyperbolic setting. One then needs to work out new large-scale geometric arguments, in order to replace the trace relations in the Lie groups $\PSLC$ and $\mathsf{SU}(2,1)$. For example, it is proved in \cite{schlich-equivalence} that there exists a constant $C_\delta$, depending only on the hyperbolicity constant $\delta$ of the space $X$, such that if $A$ and $B$ are two hyperbolic isometries with $l_S(A)>C_\delta$, $l_S(B)>C_\delta$, then we have:
\[ \max \{ l_S(AB),l_S(AB^{-1}) \} \geq l_S(A) + l_S(B)-C_\delta.\]
This last inequality can be thought of as a large-scale analogue of the trace relation $\Tr(AB)+\Tr(AB^{-1})=\Tr(A)\Tr(B)$ which holds in $\PSLC$, and will be a key tool to prove the Fork Lemma of Step (2). 

\subsection{Relationship between Bowditch and primitive-stable representations}\label{bow-PS}

In Section \ref{PS}, we introduced \emph{primitive-stable representations} for free groups of rank $n$, with $n\geq 2$, and in Section \ref{Bow} \emph{Bowditch representations} for punctured surfaces. When $n=2$, note that the free group of rank two $F_2$ is the fundamental group of the once-punctured torus $S_{1,1}$. We can then observe the remarkable fact that the mapping class group $\MCG(S_{1,1})$ is equal (up to index two) to the outer automorphism group $\mathrm{Out}(F_2)$. This last fact is very specific to the once-punctured torus and does not hold anymore for other punctured surfaces. When fixing an identification between $\pi_1(S_{1,1})$ and $F_2$, it is a classical fact that the set of free homotopy classes of simple closed essential unoriented curves on $S_{1,1}$ corresponds to the set of conjugacy and inversion classes of primitive elements in $F_2$, namely $\Sc=\mathcal{Pr}/\sim$ using the notation introduced in Sections \ref{PS} and \ref{Bow}.

It is not difficult to see that primitive-stable representations are, in particular, Bowditch representations. Minsky asked in \cite{Minsky-primitive} whether the set of primitive-stable representations of the free group of rank two coincides with the set of Bowditch representations of the once-punctured torus in $\PSLC$. In his thesis, Lupi \cite{lupi} answered this question positively when the representations take values in $\PSLR$. Later, Lee--Xu \cite{lee-xu} and independently Series \cite{ser_pri} proved this result for representations in $\PSLC$. 

\begin{Theorem}[Lupi \cite{lupi} in $\PSLR$, Lee--Xu \cite{lee-xu}, Series \cite{ser_pri} in $\PSLC$]
	$\X_{BQ}(S_{1,1},\PSLC)=\X_{PS}(F_2,\PSLC)$. 
\end{Theorem}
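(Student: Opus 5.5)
The inclusion $\X_{PS}(F_2,\PSLC)\subseteq\X_{BQ}(S_{1,1},\PSLC)$ is the easy direction, and I would dispose of it first. If $\rho$ is primitive-stable with constants $(\lambda,C)$, then for every primitive $\gamma$ the orbit map restricted to the axis of $\gamma$ is a $(\lambda,C)$-quasi-isometric embedding. Applying the inequality to $\gamma^n$ and taking limits, exactly as in the proof of Theorem~\ref{thm:prop-discCC}, gives
\[
l(\rho(\gamma))\ \geq\ \tfrac{1}{\lambda}\Vert\gamma\Vert
\]
for all primitive $\gamma$. Since $\Sc=\mathcal{Pr}/\sim$, this is exactly (BQ4) with $k=1/\lambda$ and $m=0$. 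By Theorem~\ref{Main-characterization}, $\rho$ is therefore Bowditch.

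For the converse, let $\rho$ be Bowditch. By Theorem~\ref{Main-characterization} we have the linear lower bound (BQ4), and through the Fibonacci growth of Step (5) also an upper bound, on $l(\rho(\gamma))$ in terms of $\Vert\gamma\Vert$ for all primitive $\gamma$. The task is to upgrade this length control to uniform quasi-geodesicity of the whole orbit of each primitive axis. My plan is to exploit the combinatorial structure of primitive words in $F_2$. With respect to a suitable basis $\{a,b\}$ adapted to the finite attracting tree $T_\rho(M)$, every cyclically reduced primitive word is, up to inversion and swapping letters, a word in $a$ and $b$ in which $b$ appears with a single exponent sign and the powers of $a$ between consecutive $b$'s take at most two consecutive values. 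These are the Christoffel-type words, obtained recursively by substitutions along the Farey tree.

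The key steps would be the following. First, I would use the finiteness of $T_\rho(K)$ to reduce, via the properly discontinuous $\MCG(S_{1,1})$-action, to finitely many ``base'' vertices of $\Tc$. Outside $T_\rho(K)$ the arrows all point inward, so every primitive axis is obtained from a base pair by iterated Farey substitutions along an escaping direction. Second, I would show that a long subsegment of a primitive axis is a concatenation of blocks, each of which is itself a primitive element (or a bounded perturbation of one) whose translation length grows linearly by (BQ4). The point is that consecutive blocks meet at uniformly bounded angles, so the concatenation is a local quasi-geodesic. Third, I would invoke the local-to-global principle in $\Hb^3$: a $(\lambda,C)$-local quasi-geodesic at sufficiently large scale is a global quasi-geodesic with controlled constants. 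This yields uniform constants for all primitive axes, which is precisely Definition~\ref{def:PS}.

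The main obstacle is the second step, namely controlling the ``angles'' between consecutive blocks uniformly. Length bounds on closed geodesics alone do not prevent backtracking inside a primitive axis. To handle this I would first try to use the trace identity $\Tr(AB)+\Tr(AB^{-1})=\Tr A\Tr B$ together with the growth estimates of Step (3). Along any escaping Farey ray the relevant traces grow exponentially, and that growth should force the axes of the successive substitution words to be nearly aligned, via an estimate on the complex cross-ratio of their endpoints. The remaining danger is regions $X$ with $\Tr\rho(X)\in[-2,2]$ near the boundary of the tree. (BQ1) rules these out, but I expect that obtaining uniformity there, rather than just pointwise control, will need a compactness argument over the finite tree $T_\rho(K)$.
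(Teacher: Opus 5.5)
Your inclusion $\X_{PS}\subseteq\X_{BQ}$ is correct. Comparing $x$ and $\gamma^n x$ on the Cayley axis of $\gamma$ gives $l(\rho(\gamma))\geq\Vert\gamma\Vert/\lambda$. This is (BQ4) with $m=0$, and it already gives (BQ1) and (BQ2') directly; it is the direction the paper calls not difficult.

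The converse, however, has a genuine gap. It sits exactly at what you call the main obstacle, and that step is the whole content of the theorem.

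\begin{itemize}
\item Primitive stability controls $d(o,\rho(u)o)$ for every subword $u$ of every cyclic primitive word, and such $u$ are in general not primitive.
\item (BQ4) and Fibonacci growth only control translation lengths of primitive elements. They do so only multiplicatively: $\kappa_1\Vert w\Vert\le l(\rho(w))\le\kappa_2\Vert w\Vert$ with $\kappa_1<\kappa_2$.
\item A uniform lower bound on the angles at the junctions of your blocks is an additive statement, $d(o,\rho(B_iB_{i+1})o)\ge d(o,\rho(B_i)o)+d(o,\rho(B_{i+1})o)-D$, with $D$ independent of the block pair.
\item For local-to-global to apply, the blocks must be long. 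They therefore come from infinitely many levels of the Farey substitution. Since their word length is unbounded, you also need the same control recursively inside each block, with constants that do not degrade.
\end{itemize}

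No such estimate follows formally from length bounds, and the mechanism you suggest does not supply it as stated. Moving outward from $T_\rho(K)$ you only know $|\Tr\rho(uv)|\ge|\Tr\rho(uv^{-1})|$. Since $\Tr(AB)-\Tr(AB^{-1})=4\sinh\frac{L_A}{2}\sinh\frac{L_B}{2}\cosh\delta$, with $\delta$ the complex distance between the oriented axes, this is only a half-plane condition on $\cosh\delta$. It excludes anti-aligned axes, but it gives no upper bound on the distance between the axes and says nothing about where $o$ sits relative to them.

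Yet by the Morse lemma, primitive stability forces $o$ to lie within a uniform distance of the axis of $\rho(w)$ for every cyclically reduced primitive $w$. So some uniform statement of this kind must be proved. A compactness argument over the finitely many vertices of $T_\rho(K)$ does not by itself reach the infinitely deep parts of the tree.

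The argument sketched in the paper for Theorem~\ref{thm:eq-BQ-PS-Gromov-hyp}, which contains the case $X=\Hb^3$, avoids angle estimates entirely. Starting from (BQ4), it shows by contradiction that the orbit of each primitive Cayley axis stays in a uniform neighborhood of the axis of $\rho(\gamma)$:
\begin{itemize}
\item A subword $u$ making a large excursion must, by coarse thin-triangle arguments, be a quasi-loop, meaning $\rho(u)$ moves the basepoint very little.
\item The redundancy of subwords of primitive words in $F_2$ then produces many disjoint quasi-loops inside $\gamma$.
\item A recursive count shows that an arbitrarily large proportion of $\gamma$ barely displaces the basepoint, contradicting the linear lower bound in (BQ4).
\end{itemize}
That is the missing bridge from the multiplicative length information in (BQ4) to uniform control of non-primitive subwords.
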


Note that Minsky introduced primitive-stability in $\PSLC=\mathrm{Isom}^+(\Hb^3)$, and that his definition immediately generalizes to more general isometry groups of metric spaces. 

Fl\'echelles \cite{flechelles} studied the link between primitive-stability and $(BQ)$--conditions for representations of the free group $F_2$ with values in $\mathrm{Isom}(\Hb^d)$, for $d\geq 3$. By extending the argument of Lee-Xu from $\Hb^3$ to $\Hb^n$, he proved:
\begin{Theorem}[Fl\'echelles \cite{flechelles}]\label{thm:BQ-PS-Hn}
	Let $\rho \co F_2 \to \mathrm{Isom}(\Hb^d)$ be a Coxeter extensible representation. There exists $\lambda>0$ such that $\rho$ is primitive-stable if and only if $\rho$ satisfies $(BQ1)$ and $\# \Sc(\lambda) <\infty$.
\end{Theorem}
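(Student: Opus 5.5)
The forward implication is the easy direction. Fix a free basis $\{\alpha,\beta\}$ of $F_2$ and a basepoint $o\in\Hb^d$. If $\rho$ is primitive-stable with constants $(\lambda',C')$, then for every primitive $\gamma$ the orbit map restricted to the axis of $\gamma$ in $\mathrm{Cay}$ is a quasi-geodesic. Using $l(\rho(\gamma))=\lim_n \frac1n d(o,\rho(\gamma)^n o)$, exactly as in the proof of Theorem \ref{thm:prop-discCC}, we obtain $l(\rho(\gamma))\geq \frac{1}{\lambda'}\Vert\gamma\Vert$. Hence every $\rho(\gamma)$ with $\gamma\in\Sc=\mathcal{Pr}/\sim$ is loxodromic, which gives $(BQ1)$. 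Moreover, only finitely many cyclically reduced primitive classes have bounded word length, so $\#\Sc(\lambda)<\infty$ for every $\lambda$, and in particular for the constant we shall fix.

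For the converse, I will follow the Lee--Xu strategy transported to $\Hb^d$, and fix $\lambda$ at the end. The first step is to exploit the Coxeter extensibility hypothesis. It provides three involutions $\iota_1,\iota_2,\iota_3\in\mathrm{Isom}(\Hb^d)$ with $\rho(\alpha)=\iota_1\iota_2$ and $\rho(\beta)=\iota_2\iota_3$. As a result, every primitive element, and more generally every generator pair at a vertex of the tree $\Tc$, is expressed as a product of two involutions conjugate to the $\iota_j$. These involutions play the role of the half-turns used by Lee--Xu and Series in $\Hb^3$. In particular, the axis of $\rho(\gamma)$ for a primitive $\gamma$ is the common perpendicular of the fixed sets of two such involutions.

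The second step is to produce a large-scale replacement for the trace identity $\Tr(AB)+\Tr(AB^{-1})=\Tr A\,\Tr B$, along the lines of the inequality $\max\{l(AB),l(AB^{-1})\}\geq l(A)+l(B)-C$ recalled in Section~\ref{gromov}, with $\delta=\log(1+\sqrt2)$ for $\Hb^d$. With this in hand, I will run Steps (1)--(5) of Section~\ref{proof}: orient the edges of $\Tc$ by translation lengths, prove a Fork Lemma with threshold some $\lambda_0$, and prove connectedness of $\{X:l(\rho(X))\leq K\}$ for $K>\lambda_0$. Under $(BQ1)$ and $\#\Sc(\lambda)<\infty$ with $\lambda\geq\lambda_0$, this yields a finite attracting subtree together with a lower Fibonacci growth bound $l(\rho(\gamma))\geq k\Vert\gamma\Vert-m$ for all primitive $\gamma$, which is the analogue of $(BQ4)$.

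The main obstacle is upgrading the lower bound on translation lengths to the uniform quasi-geodesic property of the whole primitive axis, which is what primitive-stability actually demands. To do this I will show that, for a cyclically reduced primitive word, the broken geodesic through the images of its prefixes has, at each junction, long segments whose angles are bounded away from zero. The junctions correspond to the consecutive vertices of the finite subtree $T_\rho(K)$ traversed by the Christoffel-word structure of primitive words. Once the segments are longer than a constant and the angles are uniformly controlled, a local-to-global principle for piecewise geodesics in $\Hb^d$ gives uniform $(\lambda',C')$. The finitely many short pieces near the finite tree are absorbed into the additive constant.

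The angle control is where the involution structure is essential. Consecutive segments are perpendiculars between fixed sets of involutions at neighboring vertices, and the Fork Lemma bounds how close these fixed sets can come. This is the point I would check most carefully, since fixed sets in $\Hb^d$ may have higher dimension than in $\Hb^3$.
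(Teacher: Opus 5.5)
Your forward implication is correct. The converse has a genuine gap in the last step, where you pass from $l(\rho(\gamma))\geq k\Vert\gamma\Vert-m$ to uniform quasi-geodesicity of all primitive axes.

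The hypothesis $\#\Sc(\lambda)<\infty$ still allows finitely many primitive classes whose translation length is as close to $0$ as one likes, and such a short primitive does not give ``finitely many short pieces near the finite tree.'' Suppose $\rho(u)$ is short and $(u,v)$ is a basis. Then $u$ has frequency more than $1/2$ in each of the infinitely many primitive words $(uv)^n u^2 v$. So along a single primitive axis the short $u$-steps occur unboundedly often. They cannot be absorbed into an additive constant, and a broken geodesic containing them does not satisfy the hypotheses of the local-to-global principle.

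With prefixes in a fixed basis the segments have the fixed lengths $d(o,\rho(\alpha^{\pm1})o)$ and $d(o,\rho(\beta^{\pm1})o)$, so to get long segments you must regroup letters. This forces you to handle the infinite family of generator pairs $(u^kv,u^{k+1}v)$, $k\in\Nb$, attached along the boundary of the region of $u$ in $\Tc$. You need length and angle bounds at the junctions that are uniform in $k$. In $\Hb^d$ the short loxodromic $\rho(u)$ can carry an arbitrary rotational part about its axis. Hence, for $k$ up to roughly $1/l(\rho(u))$, these junction configurations genuinely change with $k$ and do not reduce to finitely many cases. Establishing this uniform control near short primitives is the real content of the converse, and your sketch does not provide it. This is consistent with the remark after the statement that Fl\'echelles' $\lambda$ depends on the primitive systole of $\rho$ and blows up as the systole tends to $0$.

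The angle mechanism you propose does not supply this control either. If two involutions have fixed sets at positive distance, then $\iota\iota'$ translates the common perpendicular of $\mathrm{Fix}(\iota)$ and $\mathrm{Fix}(\iota')$ by twice their distance. Hence $d(\mathrm{Fix}(\iota),\mathrm{Fix}(\iota'))=\tfrac12\, l(\rho(\iota\iota'))$, so fixed sets come close exactly along short primitives. The Fork Lemma only says that at a fork one of the three lengths is below the threshold. That is an upper bound, so it cannot bound from below how close the fixed sets come. Moreover, the angle at a junction depends on the relative position of three fixed sets. It is tied to which of $uv$ and $uv^{-1}$ is longer (the orientation of the edge), not to pairwise distances.

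Your Steps (1)--(5), run with a coarse substitute for the trace identity, amount to Schlich's equivalence of $(\mathcal{BQ}1)$+$(\mathcal{BQ}2)$ with $(\mathcal{BQ}4)$ \cite{schlich-equivalence}. There $T_\rho(K)$ is only quasi-attracting, not attracting. These steps never use Coxeter extensibility, so the only place your argument uses the hypothesis is exactly the step that is not established. To close the gap you have two options:
\begin{itemize}
\item carry out a genuine analysis around the short primitives, uniform in $k$; or
\item replace your last step by Schlich's argument that $(\mathcal{BQ}4)$ implies primitive stability \cite{schlich} (see Section \ref{bow-PS}), which in fact gives the statement with the universal constant $\lambda=329\delta$.
\end{itemize}
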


In fact, Fl\'echelles proved this result for an a priori larger class of representations from $F_2$ to $\mathrm{Isom}(\Hb^d)$, namely those satisfying a property he called the half-length property (see Section 3 in \cite{flechelles}). In Theorem \ref{thm:BQ-PS-Hn}, the constant $\lambda$ depends on the representation $\rho$. More precisely, Fl\'echelles \cite{flechelles} showed that this constant can be chosen to be $\mathrm{Out}(F_2)$--invariant, to depend only on the dimension $d$ and on the primitive systole of $\rho$, which is the infimum of the translation length $l(\rho(\gamma))$ among all primitive elements $\gamma$ in $F_2$, and that the constant will go to infinity when the primitive systole goes to zero. 

Schlich generalized this equivalence in \cite{schlich} for representations with values in isometry groups of Gromov-hyperbolic spaces, using characterization ($\mathcal{BQ}4$) of Bowditch representations. Note that together with Theorem \ref{thm:BQ in Gromov-hyp}, this proves the following result, which can be seen from Theorem 1.1 in \cite{schlich} and Corollary 1.3 in \cite{schlich-equivalence}. Recall that $\X_{BQ}(F_2,\mathrm{Isom}(X))$ is defined in Definition \ref{def:BQ in gromov-hyp}:
\begin{Theorem}[Schlich \cite{schlich, schlich-equivalence}] \label{thm:eq-BQ-PS-Gromov-hyp}
Let $X$ be a $\delta$--hyperbolic, geodesic, visible space. Then $\X_{BQ}(F_2,\mathrm{Isom}(X))=\X_{PS}(F_2,\mathrm{Isom}(X))$.
\end{Theorem}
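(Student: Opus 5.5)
The plan is to prove the two inclusions separately, taking as given Theorem \ref{thm:BQ in Gromov-hyp}: membership in $\X_{BQ}(F_2,\mathrm{Isom}(X))$ is equivalent to condition ($\mathcal{BQ}4$), a linear lower bound $l_S(\rho(\gamma))\geq kW(\gamma)-m$ for all $\gamma\in\Sc$. Throughout we fix the identification $\Sc=\mathcal{Pr}/\sim$ between simple closed curves on $S_{1,1}$ and conjugacy and inversion classes of primitive elements of $F_2$.

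\emph{Inclusion $\X_{PS}\subseteq\X_{BQ}$.} Let $\rho$ be primitive-stable with constants $(\lambda,C)$, and let $\gamma$ be primitive, taken cyclically reduced. The axis $L$ of $\gamma$ in $\mathrm{Cay}$ passes through $\gamma^n$, and the vertices $1$ and $\gamma^n$ of $L$ are at distance $n\Vert\gamma\Vert=nW(\gamma)$. Since the orbit map restricted to $L$ is a $(\lambda,C)$--quasi-isometric embedding, $d(o,\rho(\gamma)^n o)\geq \frac1\lambda nW(\gamma)-C$. Dividing by $n$ and letting $n\to\infty$ gives $l_S(\rho(\gamma))\geq \frac1\lambda W(\gamma)$. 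This is ($\mathcal{BQ}4$), and in particular every $\rho(\gamma)$ has $l_S>0$, i.e.\ is hyperbolic. Theorem \ref{thm:BQ in Gromov-hyp} then yields $\rho\in\X_{BQ}$.

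\emph{Inclusion $\X_{BQ}\subseteq\X_{PS}$.} Starting from ($\mathcal{BQ}4$), we must upgrade a lower bound on stable lengths of primitive elements to a uniform quasi-geodesic property for the orbit map along every primitive axis. The plan is:
\begin{enumerate}
\item Use ($\mathcal{BQ}5$) to find a finite tree $T_\rho(K)$. Every primitive word is then described by a path in the Farey tree escaping $T_\rho(K)$, where lengths grow at least additively by the large-scale relation $\max\{l_S(AB),l_S(AB^{-1})\}\geq l_S(A)+l_S(B)-C_\delta$.
\item Recall the combinatorics of primitive words in $F_2$ (Christoffel/Sturmian words). A cyclically reduced primitive word is a concatenation of blocks $ab^{n}$ and $ab^{n+1}$ (up to the symmetries), arising by iterated substitutions along the Farey tree. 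Hence every finite subword of a primitive axis is, up to bounded error, a concatenation of boundedly many primitive subwords read along a path in the tree.
\item Use that $\rho(\gamma)$ is hyperbolic with $l_S$ bounded below linearly, together with thin triangles, to show that the broken geodesic through the orbit points $\rho(w_1)o,\rho(w_1w_2)o,\dots$ has long segments and controlled backtracking.
\item Conclude, via the local-to-global principle for quasi-geodesics in $\delta$--hyperbolic spaces, that the orbit map on $L$ is a uniform $(\lambda,C)$--quasi-isometric embedding. Uniformity comes from the constants $k,m,\delta$ and the finiteness of $T_\rho(K)$, which leaves only finitely many short ``seed'' pairs to control by compactness.
\end{enumerate}

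The main obstacle is step (3): an arbitrary segment of a primitive axis is not itself primitive, and ($\mathcal{BQ}4$) says nothing directly about it. My first attempt would be an inductive argument along the escaping Farey path. At each substitution step, both new generators $A,B$ have large $l_S$ and their axes have bounded overlap, since large overlap would make $l_S(AB^{-1})$ small and contradict ($\mathcal{BQ}4$). Bounded overlap, combined with the standard ping-pong lemma for hyperbolic isometries with long translation relative to overlap, gives quasi-geodesicity of all positive words in $A,B$, which contain the primitive axes. Arguments of this kind appear in Lee--Xu, Series and Schlich.
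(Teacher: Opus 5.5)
Your inclusion $\X_{PS}\subseteq\X_{BQ}$ is correct: evaluating the quasi-isometric embedding at the points $\gamma^n$ of the axis gives ($\mathcal{BQ}4$) with $k=1/\lambda$, $m=0$, and Theorem \ref{thm:BQ in Gromov-hyp} concludes.

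The converse carries all the content, and it has a genuine gap exactly at the step you flag. The mechanism you propose for that step cannot work.

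You assert that along the Farey path both generators $A,B$ have large $l_S$ and \emph{bounded} overlap, since a large overlap would make $l_S(AB^{-1})$ small. But $l_S(AB^{-1})$ is roughly $l_S(A)+l_S(B)$ minus twice the overlap. Since ($\mathcal{BQ}4$) is only a linear bound with a possibly tiny slope $k$, it bounds the overlap by about $\tfrac12\bigl(l_S(A)+l_S(B)-kW(AB^{-1})+m\bigr)$, which grows linearly along the path.

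Nothing better can be true. A generating pair with long translation lengths and bounded overlap is a Schottky pair by ping-pong, so your claim would make every Bowditch representation convex-cocompact. Yet the Bowditch set contains non-discrete representations, e.g.\ Minsky's non-discrete primitive-stable ones, which are Bowditch by your first inclusion.

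There are two further problems with this step:
\begin{itemize}
\item The overlap detected by $AB^{-1}$ is the coherently oriented one, which is harmless for positive words in $A,B$. Backtracking in positive words comes from anti-coherent overlap, which shortens $AB$ instead.
\item The claim that both generators have large $l_S$ fails. Each of the finitely many regions $X$ with $l_S(\rho(X))\le K$ has infinitely many boundary edges outside $T_\rho(K)$. This produces infinitely many pairs $(X,X^nY)$ with a short generator, so finiteness of $T_\rho(K)$ does not reduce you to finitely many seed pairs.
\end{itemize}

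Step (2) does not close the gap either. Suppose a subword of a primitive axis is, up to bounded error, a concatenation $p_1\cdots p_r$ of boundedly many primitive subwords. Then ($\mathcal{BQ}4$) only controls the displacement of each $\rho(p_i)$ separately. It says nothing about backtracking between consecutive pieces, which is precisely what must be shown.

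What is missing is an argument that excludes large excursions using only primitive words, with no ping-pong. The paper proceeds by contradiction:
\begin{itemize}
\item Suppose the orbit of the primitive geodesic of some $\gamma$ leaves a uniform neighbourhood of the axis of $\rho(\gamma)$.
\item Coarse hyperbolic geometry then yields a subword $u$ of $\gamma$ such that $\rho(u)$ barely moves the basepoint (a quasi-loop).
\item The redundancy of subwords of primitive elements of $F_2$ gives many disjoint occurrences of quasi-loops in $\gamma$.
\item A recursive argument shows that an arbitrarily large proportion of $\gamma$ barely displaces the basepoint.
\end{itemize}
Hence $l_S(\rho(\gamma))$ falls below $kW(\gamma)-m$, contradicting ($\mathcal{BQ}4$).
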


\begin{proof}[Proof sketch]
    We will now give a few ideas on the proof of Theorem \ref{thm:eq-BQ-PS-Gromov-hyp}. We start from a representation $\rho \co F_2 \to \mathrm{Isom}(X)$ satisfying $(\mathcal{BQ}4)$, which means that there exist constants $k$ and $m$ such that $l_S(\rho(\gamma))\geq k W(\gamma)-m$, for all $\gamma \in \Sc=\mathcal{Pr}/\sim$. In order to prove that $\rho$ is primitive-stable, the main step is to prove that, for every primitive element $\gamma \in \mathcal{Pr}$, the image under the orbit map $\tau_{\rho,o}$, for some basepoint $o\in X$ of the primitive geodesic associated with $\gamma$ in the Cayley graph (see Section \ref{PS}), stays in a uniform bounded neighborhood of the axis of $\rho(\gamma)$ in $X$. To this aim, we proceed by contradiction and assume there is a primitive element $\gamma$ such that the image of the orbit map restricted to the primitive geodesic associated with $\gamma$ in the Cayley graph goes far from the axis of $\rho(\gamma)$. This means that some subword $u$ of $\gamma$ makes a large `excursion', namely that it stays far from the axis. Using coarse hyperbolic geometry arguments, one shows that this implies that the isometry $\rho(u)$ moves the basepoint by a small amount, in this case we say that $u$ realizes a `quasi-loop'. The goal is then to find as many disjoint occurrences of  quasi-loops as possible in $\gamma$. To do so, one needs to use the structure of primitive elements in the free group of rank two and, in particular, properties on the redundancy of subwords of primitive elements. Finally, using a recursive argument, one can show that there is an arbitrary large proportion of the word $\gamma$ that does not displace the basepoint much, which will contradict the inequality above between the length of $\rho(\gamma)$ and the word length $W(\gamma)$, contradicting the assumption ($\mathcal{BQ}4$). 
    \end{proof}

Recall that we introduced \emph{simple-stable} representations for surface groups (possibly with punctures) in the end of Section \ref{PS}. One can also ask for the link between simple-stable representations and Bowditch representations of (punctured)-surfaces. In the case of the four-punctured sphere, Schlich \cite{schlich_sphere} obtained an analogue of Theorem \ref{thm:eq-BQ-PS-Gromov-hyp} in Gromov-hyperbolic spaces, using condition $(\mathcal{BQ}4)$ as a definition for Bowditch representation of the four-punctured sphere in this setting. This implies that in $\PSLC$ the set $\X_{BQ}(\pi_1(S_{0,4}),\PSLC)$ defined by Maloni--Palesi--Tan \cite{MPT} (see Definition \ref{def:BQ-PSLC} and Theorem \ref{Main-characterization}) is equal to the set $\X_{SS}(\pi_1(S_{0,4}),\PSLC)$ of simple-stable representations. Note that in general, an analogue of Theorem \ref{thm:BQ in Gromov-hyp} for the four-punctured sphere in Gromov-hyperbolic spaces is not known.

\section{Representations in higher rank Lie groups}\label{higher-rank}

In this section we discuss how to generalize some of the questions addressed in previous sections, especially in Sections \ref{PSL-C}, to representations into semisimple Lie groups of higher rank. 

	\subsection{Anosov representations} 
	
	Anosov representations have been considered a fruitful generalization of the notion of convex-cocompact representation into rank-one Lie groups to representations in higher rank Lie groups. They were first introduced by Labourie \cite{lab_ano} in his study of the surface groups representations defined by Hitchin \cite{hit_lie}, and were later generalized to representations of hyperbolic groups by Guichard-Wienhard \cite{gui_ano} and further studied by many authors \cite{KLP1, GGKW, BPS}. They provide a rich class of discrete representations which share many key features  with convex-cocompact representations in rank one. 

	We will discuss a definition of Anosov representations only in terms of the growth of their singular value gaps, following Kapovich-Leeb-Porti \cite{KLP1} and Bochi-Potrie-Sambarino \cite{BPS}. We invite the interested reader to use the survey paper by Canary \cite{can_dyn} to see other definitions for Anosov representations and the relationships between them.
    
    Let $\Gamma$ be a finitely generated group equipped with a word metric $| \cdot \nobreak |$ coming from a finite generating set. Let $\Kb = \Rb$ or $\Cb$ and $d\geq 2$ be an integer and denote $\Gs=\mathsf{GL}_d(\Kb)$. We choose here to introduce Anosov representations in $\mathsf{G} = \mathsf{GL}_d(\Kb)$ since the main results we will survey hereafter are for this group and since the definition simplifies in this setting and does not need to introduce a lot of Lie theoretical background needed for the general setting. Let $K$ be a maximal compact of $\mathsf{G}$, namely $\mathsf{K}=\mathsf{O}(d)$ if $\Kb=\Rb$ and $K=\mathsf{U}(d)$ if $\Kb=\Cb$. The \emph{Cartan decomposition} of $\mathsf{G}$ asserts that, for all $g \in G$, there exist $k_1, k_2 \in \mathsf{K}$ and $\sigma_1(g) \geq \dots \geq  \sigma_d(g) \in \Rb$ such that 
	\begin{equation*}
		g=k_1 \mathrm{diag}(\sigma_1(g),\dots,\sigma_d(g))k_2.
	\end{equation*}
	The real numbers $\sigma_1(g), \dots, \sigma_d(g)$ are the \emph{singular values} of $g$. 
	
	Unlike in the rank one case, there are several `ways' of being Anosov. In the case of $\mathsf{G} = \mathsf{GL}_d(\Kb)$, they can be encoded by an integer $1 \leq k \leq d-1$  and give rise to the notion of a $k$--\emph{Anosov} representation.
	\begin{definition}[Anosov representations in $\mathsf{GL}_d(\Kb)$] \label{def:anosov}
		Let $\Gamma$ be a finitely generated group and $\rho: \Gamma \to G$ a representation of $\Gamma$ in $G$. Let $1 \leq k \leq d-1$ be an integer. We say that $\rho$ is $k$--\emph{Anosov}\index{Anosov representation} if there exist two constants $\lambda >0, C>0$ such that
		\begin{equation}\label{eq:anosov}
			\frac{\sigma_k(\rho(\gamma))}{\sigma_{k+1}(\rho(\gamma))} \geq Ce^{\lambda |\gamma|}, \qquad \forall \gamma \in \Gamma.
		\end{equation} 
		We say that $\rho$ is \emph{Anosov} if there exists an integer $1 \leq k \leq d-1$ for which $\rho$ is $k$-Anosov.
	\end{definition}
	We denote by $\mathfrak{X}_{Anosov}(\Gamma,G)$ the subset of $\mathfrak{X}(\Gamma,G)$ consisting of all Anosov representations. This definition could be generalized to other semi-simple Lie groups, and we refer the reader to the above references to see a general discussion of this theory.
	
	\begin{remark} \label{rem:anosov}
	\begin{enumerate}
		\item This definition implies that the group $\Gamma$ must be hyperbolic (see \cite{BPS} and \cite{KLP1}). See Section \ref{gromov} and the references therein for a discussion on Gromov hyperbolic spaces and groups.
		\item This is not the original definition given by Labourie \cite{lab_ano}, but it has been proven to be equivalent by Kapovich-Leeb-Porti \cite{KLP1} and independently Bochi-Potrie-Sambarino \cite{BPS}.
		There are many other characterizations, for example in terms of boundary maps (from the boundary of the group into flag varieties), or in terms of the coarse geometry in the symmetric space of $G$, or in terms of eigenvalues. We refer the interested reader to \cite{gui_ano}, \cite{GGKW}, \cite{KP} and to the survey \cite{can_dyn}.
		\item \label{rem:anosov-PGL}Since the definition of an Anosov representation only involves a \emph{ratio} of singular values, we immediately see that this definition still makes sense when the target group is $\mathsf{PGL}_d(\Kb)$.  
		\end{enumerate}
	\end{remark}
	
	Let $X = X_{\mathsf{G}}=\mathsf{G}/\mathsf{K}$ be the symmetric space of $\mathsf{G}$ equipped with its Riemannian metric. When $\mathsf{G}=\mathsf{GL}_d(\Rb)$, $X$ identifies with the set of positive definite $d\times d$ symmetric matrices and its tangent space at $[\mathrm{Id}]$ with the space of $d\times d$ symmetric matrices.  The \emph{Killing form} defined by $\langle A , B \rangle = \mathrm{Tr}(AB)$ is a scalar product on $T_{[Id]}X$, and can be extended on $T_pX$, for all $p \in X$ by left multiplication.  This defines a Riemannian metric on $X$. Recall that the finitely generated group $\Gamma$ is equipped with a word metric and that we say that $\rho : \Gamma \to \mathsf{G}$ is a \emph{quasi-isometric embedding} if there exists $o \in X$ such that the orbit map $\tau_{\rho,o} : \Gamma \to X$, $o \mapsto \rho(\gamma).o$ is a quasi-isometric embedding between the metric spaces $\Gamma$ and $X$ and the kernel of $\rho$ is finite. 

	Guichard--Wienhard (\cite{gui_ano}) proved that, for $\mathsf{G}$ a semi-simple Lie group, Anosov representations are quasi-isometric embeddings. In particular, Anosov representations are discrete and are faithful whenever the group $\Gamma$ is torsion free. 
	
	\begin{remark}
	In the case of  $\mathsf{G}=\mathsf{SL}(n,\Rb)$, it is not hard to see that Anosov representations are quasi-isometric embeddings. This uses Definition \ref{def:anosov} and the fact that, if we denote by $d_X$ the distance in $X$, for all $g \in G$, we have that $d_X(g.o,o)=\Vert \log \sigma(g) \Vert_2$, where $\Vert \cdot \Vert_2$ is the euclidean norm. Indeed, this norm is equivalent to the sup norm, so $d_X(g.o,o) \geq A \log \sigma_1(g)$ for some positive constant $A$, and since $\log \sigma_1(g) + \dots + \log \sigma_d(g)=0$ (because $g \in \mathsf{SL}(n \Rb)$), we have the inequalities:
	\begin{align*}
		\log \sigma_1(g) & \geq \frac{1}{d} (\log \sigma_1(g) + (\log \sigma_1(g) + \dots + \log \sigma_{d-1}(g))) \\
		& \geq \frac{1}{d}(\log \sigma_1(g)-\log \sigma_d(g)) \geq \frac{1}{d}(\log \sigma_k(g)-\log \sigma_{k+1}(g)).
	\end{align*} 
	\end{remark}
	
	Not only are Anosov representations quasi-isometric embeddings, but Guichard--Wienhard \cite{gui_ano} proved that the constants of quasi-isometry can be chosen uniformly in a neighborhood of an Anosov representation:
	\begin{theorem}[Guichard--Wienhard \cite{gui_ano}, Theorem 5.14] \label{thm:unif-QI}
		Let $\rho_0\co \Gamma \to \mathsf{G}$ be an Anosov representation. Then there exists an open neighborhood of $\rho_0$ in $\mathrm{Hom}(\Gamma,\mathsf{G})$ and two constants $\lambda,C >0$  such that every representation $\rho \in U$ is a $(\lambda,C)$--quasi-isometric embedding. 
	\end{theorem}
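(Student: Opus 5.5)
The plan is to prove the uniform estimate on singular value gaps first, and then deduce the uniform quasi-isometric embedding from it exactly as in the Remark preceding the statement. Fix $k$ such that $\rho_0$ is $k$--Anosov. The lower bound $d_X(\rho(\gamma)o,o)\leq \lambda|\gamma|+C$ is automatic and uniform near $\rho_0$. Indeed, $d_X(\rho(\gamma)o,o)\leq |\gamma|\max_{s\in\Sigma} d_X(\rho(s)o,o)$ by the triangle inequality. The maximum is a continuous function of $\rho$, since it involves only finitely many generators, so it is bounded on a compact neighborhood of $\rho_0$. The content is therefore the lower bound. By the computation in the Remark, $d_X(g.o,o)\geq A\log\sigma_1(g)\geq \frac{A}{d}\bigl(\log\sigma_k(g)-\log\sigma_{k+1}(g)\bigr)$ after normalizing to determinant one, which is harmless in $\mathsf{PGL}_d$ as noted in Remark \ref{rem:anosov}. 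So it suffices to find a neighborhood $U$ of $\rho_0$ and uniform constants $\lambda',C'>0$ such that every $\rho\in U$ satisfies $\sigma_k/\sigma_{k+1}(\rho(\gamma))\geq C'e^{\lambda'|\gamma|}$ for all $\gamma\in\Gamma$. Finite kernel then follows automatically.

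For this uniform gap estimate I would use a local-to-global argument. Step one: choose $L$ large so that for $\rho_0$ the gap on words of length about $L$ is enormous, say $\log(\sigma_k/\sigma_{k+1})(\rho_0(\gamma))\geq \lambda L - C$ for $|\gamma|\le L$. Since there are finitely many such $\gamma$, continuity gives a neighborhood $U$ on which the same holds with slightly worse constants. Step two: write an arbitrary $\gamma$ as a geodesic word cut into blocks $\gamma=g_1g_2\cdots g_m$ with $|g_i|\approx L$. The aim is to show that the products of the $\rho(g_i)$ keep a gap growing linearly in $m$.

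The tool for step two is a higher-rank version of the Morse lemma: the Kapovich--Leeb--Porti local-to-global principle for Morse quasigeodesics, or equivalently Bochi--Potrie--Sambarino's dominated splitting argument. Concretely, I would show that the attracting $k$-planes $U_k(\rho(g_1\cdots g_i))$ form a Cauchy-like sequence, because consecutive ones are exponentially close. I would also show that "angle between $U_k$ of one block and $U_{d-k}$ of the next" stays bounded below on $U$. For $\rho_0$ this transversality follows from hyperbolicity of $\Gamma$ together with the Anosov property, and at scale $L$ it is an open condition checked on finitely many pairs. A product of matrices with large gaps and uniformly transverse consecutive flags then has gap at least the sum of the individual gaps minus a uniform constant per block. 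This yields $\log(\sigma_k/\sigma_{k+1})(\rho(\gamma))\geq m(\lambda L-C'')\geq \lambda'|\gamma|-C'$.

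The main obstacle is step two: controlling the transversality of consecutive blocks uniformly. The gap alone is not multiplicative. Here I would lean on the hyperbolicity of $\Gamma$, which Remark \ref{rem:anosov} says is forced. Geodesic words in a hyperbolic group are uniformly close to concatenations of short geodesic pieces, and for $\rho_0$ the limit maps give uniform transversality. Only the finitely many configurations of length $2L$ need to be checked for perturbations. Alternatively, and more cleanly, I would cite the stability theorem of Guichard--Wienhard, that the Anosov condition is open with locally uniform constants, and then apply the Remark to conclude.
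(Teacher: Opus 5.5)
The paper gives no proof of this statement; it is quoted from Guichard--Wienhard \cite{gui_ano}. Your outline is correct, but it follows a different route from theirs.

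\emph{Comparison of routes.} Guichard--Wienhard work with the dynamical definition: contraction and expansion on the flat bundle over the geodesic flow of $\Gamma$. They obtain openness with locally uniform contraction rates from the stability of this contracting behaviour under small perturbations. They then deduce linear growth of $d_X(o,\rho(\gamma)o)$ from those uniform rates. You instead keep the singular-value definition used in the paper. You prove that $\sigma_k/\sigma_{k+1}(\rho(\gamma))\geq C'e^{\lambda'|\gamma|}$ holds with uniform constants near $\rho_0$, via a local-to-global estimate for products of blocks. This is essentially the Kapovich--Leeb--Porti / Bochi--Potrie--Sambarino proof of openness \cite{KLP1,BPS}. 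You then finish with the Remark. Your route stays in linear algebra and is visibly the higher-rank analogue of the Morse-lemma argument behind Proposition \ref{prop:local-uniform-cst-qie}. Its cost is that you need limit maps for $\rho_0$, which the dynamical definition has built in.

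\emph{Where the real input lies.} The product estimate gives linear growth only because the transversality constant $\epsilon$ does not depend on the block length $L$, while the block gaps grow like $e^{\lambda L}$; then $\epsilon^2e^{\lambda L}\gg 1$ for $L$ large.
\begin{itemize}
\item The pair to control is $U_k(\rho_0(g_{i+1}))$ against the repelling space $U_{d-k}(\rho_0(g_i^{-1}))$ of $\rho_0(g_i)$. It is not ``$U_{d-k}$ of the next block''.
\item Comparing the gaps of $\rho_0(g_i)$, $\rho_0(g_{i+1})$ and $\rho_0(g_ig_{i+1})$ directly only yields an angle bound that may decay exponentially in $L$, which is useless here.
\item The $L$-independent bound comes from continuous, transverse, equivariant limit maps with exponential convergence $U_k(\rho_0(\gamma_n))\to\xi^k(x)$ along geodesic rays. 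One also uses that $g_i^{-1}$ and $g_{i+1}$ point to uniformly separated parts of $\partial\Gamma$, since $e$ lies on a geodesic between them.
\item That the gap definition produces such limit maps is a genuine theorem of \cite{BPS,KLP1}, and you should cite it as such.
\end{itemize}

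\emph{Minor slips.}
\begin{itemize}
\item $d_X(\rho(\gamma)o,o)\leq\lambda|\gamma|+C$ is the upper bound, not the lower one.
\item In step one the blocks should have lengths in $[L,2L)$, so the remainder is absorbed. The condition $|\gamma|\le L$ is not the right one.
\item Your fallback of citing Guichard--Wienhard's stability theorem essentially cites the statement itself. Passing from their uniform contraction rates to uniform gap constants also needs the quantitative equivalence of definitions from \cite{BPS,KLP1}.
\end{itemize}
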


    Compare Theorem \ref{thm:unif-QI} with Proposition \ref{prop:local-uniform-cst-qie} in the rank-one case.
	As in the rank-one case, this property allows one to describe the action of the outer automorphism group $\mathrm{Out}(\Gamma)$ of $\Gamma$ on the set $\mathfrak{X}_{Anosov}(\Gamma,\mathsf{G})$ of Anosov representations.
	
	\begin{Theorem}[Guichard--Wienhard \cite{gui_ano}, Canary \cite{canary_survey_ano}]\label{thm:Anosov-domain of discontinuity}
		The outer automorphism group of $\Gamma$ acts properly discontinuously on $\mathfrak{X}_{Anosov}(\Gamma,\mathsf{G})$.
	\end{Theorem}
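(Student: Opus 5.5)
The plan is to repeat, almost word for word, the proof of Theorem \ref{thm:prop-discCC}, with Theorem \ref{thm:unif-QI} playing the part of Proposition \ref{prop:local-uniform-cst-qie}. By Remark \ref{rem:anosov}, $\Gamma$ is hyperbolic. After passing to a finite-index torsion-free subgroup if needed, or by using the version of Canary's finiteness statement that allows torsion, we may apply Proposition \ref{prop:canary-EA}.

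First, let $K\subset \mathfrak{X}_{Anosov}(\Gamma,\mathsf{G})$ be compact. Each point of $K$ has an open neighbourhood, given by Theorem \ref{thm:unif-QI}, on which every representation is a $(\lambda,C)$--quasi-isometric embedding with uniform constants. Anosovness is conjugation invariant, so these neighbourhoods descend to the character variety. Extracting a finite subcover gives constants $(\lambda,C)$ that work for every $[\rho]\in K$.

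Next, I convert the orbit-map estimate into an estimate on translation lengths. For $g\in\mathsf{G}$, set $l(g)=\lim_n \frac1n d_X(o,g^n o)$; this is the norm of the Jordan (Lyapunov) projection and is conjugation invariant. Apply the inequality $\frac1\lambda|\gamma^n|-C\le d_X(o,\rho(\gamma)^n o)\le \lambda|\gamma^n|+C$, divide by $n$, and let $n\to\infty$. This gives $\frac1\lambda\Vert\gamma\Vert\le l(\rho(\gamma))\le\lambda\Vert\gamma\Vert$ for all $\gamma$, where $\Vert\gamma\Vert$ is the stable (equivalently, up to a bounded constant, the cyclically reduced) word length. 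This is the step that needs the most care in higher rank. The symmetric space is no longer Gromov hyperbolic, so I would not use the infimal displacement. The stable displacement $l$ suffices, because its existence follows from subadditivity, and it is what the limit argument produces. In hyperbolic groups, stable length and conjugacy length differ only by a bounded amount, so Proposition \ref{prop:canary-EA} still applies, possibly with a slightly larger constant $A$.

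Finally, suppose $[\rho]\in K$ and $\Phi\in\Out(\Gamma)$ satisfy $\Phi\cdot[\rho]=[\rho\circ\Phi^{-1}]\in K$. The two-sided bounds for $\rho$ and for $\rho\circ\Phi^{-1}$, rewritten at $\gamma\mapsto\Phi(\gamma)$, give $\Vert\Phi(\gamma)\Vert\le\lambda^2\Vert\gamma\Vert$ for all $\gamma\in\Gamma$. So $\{\Phi:\Phi(K)\cap K\neq\emptyset\}\subset E_{\lambda^2}$, which is finite by Proposition \ref{prop:canary-EA}. This proves proper discontinuity.

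The main obstacle is the uniformity in the first step: a finite subcover only works if the neighbourhoods from Theorem \ref{thm:unif-QI} are open in the character variety. I would address this by working on $\mathrm{Hom}(\Gamma,\mathsf{G})$ and using that the quotient map restricted to the (polystable) Anosov locus is open. Since the argument only needs compact sets of classes, I would lift $K$ to a compact set of representatives; this is possible because the quotient map is proper on the polystable locus.
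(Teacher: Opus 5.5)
Your proposal is correct and is essentially the paper's proof, which simply reruns the argument of Theorem \ref{thm:prop-discCC} with Theorem \ref{thm:unif-QI} supplying locally uniform constants and then concludes with Proposition \ref{prop:canary-EA}. One side claim should be dropped: the quotient map on the polystable locus is not proper, since its fibres are non-compact $\mathsf{G}$-orbits. You never need compact lifts anyway. The quotient map is open, and the inequality $\frac{1}{\lambda}\Vert\gamma\Vert\le l(\rho(\gamma))\le\lambda\Vert\gamma\Vert$ depends only on $\lambda$ and is conjugation invariant, so a finite subcover of $K$ already yields a uniform $\lambda$.
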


	\begin{remark} Labourie first proved this result for Hitchin representations in $\mathsf{PSL}_d(\Rb)$ and maximal representations in $\mathsf{PSp}(2d,\Rb)$ in \cite{lab_ano}. Guichard and Wienhard stated the Theorem for $\Gamma$ a surface group or a free group (Corollary 5.4) into semi-simple Lie groups, and Canary explained in \cite{canary_survey_ano} how to generalize to torsion-free hyperbolic groups. 
	\end{remark}

    \begin{proof} The proof is the same as the proof of Theorem \ref{thm:prop-discCC} using Theorem \ref{thm:unif-QI}.
    \end{proof}
	
	Let us denote by $\mathfrak{X}_{QI}(\Gamma,\mathsf{G})$ the subset in the character variety consisting of (conjugacy class) of representations which are quasi-isometric embeddings. As already mentioned above, we have ${\mathfrak{X}_{Anosov}}(\Gamma,\mathsf{G}) \subset \mathfrak{X}_{QI}(\Gamma,\mathsf{G})$. While in rank one, these two sets are known to be equal, this is not the case anymore in higher rank. Indeed, Gu\'eritaud--Guichard--Kassel--Wienhard, elaborating on an example of Guichard (\cite{gui-phd}), give an example of a representation of the free group of rank two into $\mathsf{SL}(4,\Rb)$ which is a quasi-isometric embedding but not an Anosov representation. This example also shows that the openness of $\mathfrak{X}_{QI}(\Gamma,\mathsf{G})$ fails in the higher rank setting, as this representation is a limit of indiscrete representations. Furthermore, Tsouvalas \cite{tsou} proved that in general, $\mathfrak{X}_{Anosov}(\Gamma,\mathsf{G})$ does not even coincide with the interior of $\mathfrak{X}_{QI}(\Gamma,\mathsf{G})$ because he provides examples of representations of a hyperbolic group in $\mathsf{SL}_d(\Rb)$, with $d \geq 5$, which are quasi-isometric embeddings but not limits of Anosov representations. He also exhibits a hyperbolic group $\Gamma$ and an open subset of $\mathrm{Hom}(\Gamma,\mathsf{SL}_d(\Kb))$, for $d$ sufficiently large, consisting entirely of quasi-isometric embeddings which are not Anosov. Moreover, the representations in this open set are non-locally rigid, meaning that they are limits of representations that are not conjugate to them. We can raise the question of the dynamics of the outer automorphism group on the set of quasi-isometric embeddings:
	\begin{question}
		Is the interior of $\mathfrak{X}_{QI}(\Gamma,\mathsf{G})$ an open domain of discontinuity for the action of the outer automorphism group ?
	\end{question}

    \subsection{Other higher rank domains of discontinuity}
	
	In Section \ref{PS}, we explained how Minsky constructed an open domain of discontinuity in the $\mathsf{PSL}(2,\Cb)$--character varieties of free groups containing the domain of convex-cocompact representations. His idea was to consider a weakening of the convex-cocompact condition by requiring the quasi-geodesic embedding property only along primitive geodesics in the group.
	Guichard--Gu\'eritaud--Kassel--Wienhard, while developing the theory of Anosov representations, suggested a higher rank analogue of primitive-stability, see Remark 1.6 (b) \cite{GGKW}. This has been then studied by other authors like Kim--Kim \cite{KK}, Kim--Tan--Zhang \cite{KTZ}, Wang \cite{wang2021anosov}, Tholozan--Wang \cite{tholozan-wang}. 
	
	Let $F_n$ be a non-abelian free group of rank $n$. Recall that we defined \emph{primitive} elements in $F_n$ in section \ref{PS} and denote $\mathcal{Pr}$ the set of primitive elements in $F_n$. Also recall that we denote by $\mathrm{Ax}_{\mathcal{Pr}}$ the set of \emph{primitive geodesics} in the Cayley graph $\mathrm{Cay}$ of $F_n$ (for a choice of a free generating set) which are the axes in $\mathrm{Cay}$ of the primitive elements in $F_n$. 
	
	\begin{definition}[Primitive-Anosov representations] \label{def:primitive-anosov} \index{Primitive-Anosov representation}
		Let $\rho\co F_n \to \mathsf{GL}_d(\Kb)$ be a representation of $F_n$ in  $\mathsf{GL}_d(\Kb)$ and let $1 \leq \nobreak k \leq \nobreak d-1$ be an integer. We say that $\rho$ is $k$--\emph{primitive-Anosov} if there exist two constants $\lambda>0, C>0$ such that for any primitive geodesic $\{ \gamma_n\}_{n \in \Zb} \in \mathrm{Ax}_{\mathcal{Pr}}$, with $\gamma_0=\mathrm{id}$, we have:
		\begin{equation}
		\frac{\sigma_k(\rho(\gamma_n))}{\sigma_{k+1}(\rho(\gamma_n))} \geq Ce^{\lambda n},  \qquad \forall n \in \Nb. 
		\end{equation}
		We say that $\rho$ is \emph{primitive-Anosov} if there exists an integer $1 \leq k \leq d-1$ for which $\rho$ is $k$--primitive-Anosov.
	\end{definition}
	
	\begin{remark}
		\begin{enumerate}
		\item Notice the similarity with Definition \ref{def:anosov} of Anosov representations. Definition \ref{def:primitive-anosov} is just a restriction of Definition \ref{def:anosov} to primitive geodesics. 
		\item As for Anosov representations, the notion still makes sense in $\mathsf{PGL}_d(\Kb)$, see Remark \ref{rem:anosov} \eqref{rem:anosov-PGL}.
		\item Primitive-Anosov representations can be characterized in many other ways. For example, Kim--Kim use in \cite{KK} a definition in terms of Morse-quasi-geodesics in the symmetric space, following the framework of Kapovich--Leeb--Porti \cite{KLP1}. In \cite{wang2021anosov}, Wang studied the equivalence between some properties characterizing primitive-Anosov representations, such as the dynamic of a flow, the existence of a dominated splitting for some bundle associated with the representation, or the existence of limit maps.
		\end{enumerate}
	\end{remark}
	
	Let $S$ be a compact, connected, orientable surface with boundary of negative Euler characteristic. Note that the fundamental group $\pi_1(S)$ of $S$ is a free group. Kim--Kim studied the link between convex projective structures on $S$, positive representations of $\Gamma$ in $\mathsf{PGL}_d(\Rb)$ and primitive-Anosov representations. We now introduce convex projective structures and positive representations. \\
    
    \textit{Convex projective structures in \texorpdfstring{$\mathbb{RP}^2$}{RP2}.}
	A convex projective structure on $S$ is a diffeomorphism between $S$ and a quotient $\Omega / \Gamma$ where $\Omega$ is a convex domain in $\mathbb{RP}^2$ and $\Gamma$ is a discrete subgroup of $\mathsf{PGL}(3, \Rb)$ acting properly and freely on $\Omega$. We may therefore identify $\Gamma$ with the fundamental group $\pi_1(S)$ of the surface $S$.\\ 
	
	\textit{Positive representations in \texorpdfstring{$\mathsf{PGL}_d(\Rb)$}{PGL(d,R)}.}
	A \emph{full flag} in $\Rb^d$ is an increasing sequence of subspaces $\{0\}=F^{(0)} \subset F^{(1)} \subset \dots \subset F^{(d)}=\Rb^d$ such that for all $0 \leq i \leq d$, we have $\dim F^{(i)}=i$. Denote by $\Fc(\Rb^d)$ the set of full flags in $\Rb^d$. Let $\mathsf{B}^+$ (respectively $\mathsf{B}^-$) be the set of upper triangular (respectively lower triangular) matrices in $\mathsf{PGL}_d(\Rb)$. The set $\Fc(\Rb^d)$ can be identified with $\mathsf{PGL}_d(\Rb)/\mathsf{B}^+$, thus it is a  homogeneous manifold. We say that two $n$-tuples of full flags are \emph{equivalent} if one is the image of the other by an element of $\mathsf{PGL}_d(\Rb)$. We denote by $\mathsf{U}^+ \subset \mathsf{B}^+$ the set of unipotent upper triangular matrices. We say that a matrix in $\mathsf{U}^+$ is \emph{totally positive} if all its minors are positive, except for those that are necessarily zero due to the structure of $\mathsf{U}^+$. We denote $U^+_{>0} $ the subset of totally positive matrices in $\mathsf{U}^+$. We say that an $n$-tuple of full flags $(F_1,\dots,F_n) \in \Fc(\Rb^d)^n$ is \emph{positive} if there exists $u_i \in U^+_{>0}$, for $1\leq i \leq n-2$, such that $(F_1,\dots,F_n)$ is equivalent to 
	\begin{equation*}
		(\mathsf{B}^+,\mathsf{B}^-,u_1.\mathsf{B}^-,(u_1u_2).\mathsf{B}^-,\dots,(u_1\dots u_{n-2}).\mathsf{B}^-).  
	\end{equation*}  
	
	We would now like to have a natural cyclic ordering on the boundary of the group $\pi_1(S)$. Since this is a free group (because $S$ has non-empty boundary), it is not clear what such a cyclic ordering should be only by looking at the group. However, we can use the identification with the fundamental group of the surface to give an ordering that will encode the geometry of the surface. We choose a finite volume hyperbolic metric on the interior of $S$ such that its completion has geodesic boundary components and $p$ cusps. Then the universal cover $\tilde{S}$ of the surface $S$ identifies with a convex subset of the hyperbolic plane and we define the boundary at infinity $\partial_\infty \pi_1(S)_p$ to be the boundary at infinity of this convex subset. Thus $\partial_\infty \pi_1(S)_p$ inherits a natural ordering from the boundary of the hyperbolic plane depending on the orientation on the surface $S$. The boundary $\partial_\infty \pi_1(S)_p$ does not depend on the choice of the hyperbolic metric, see \cite{Labourie-McShane}. 

    \indent	A map $\xi\co \partial_\infty \pi_1(S)_p \to \Fc(\Rb^d)$ is said to be \emph{positive} if it sends positively oriented tuples in $\partial_\infty \pi_1(S)_p$ to positive tuples of full flags in $\Fc(\Rb^d)$. Finally, we say that a representation $\rho\co \pi_1(S) \to \mathsf{PGL}_d(\Rb)$ is \emph{positive} if there exists a continuous positive $\rho$-equivariant map: $\xi\co\partial_\infty \pi_1(S)_p \to \Fc(\Rb^d)$ for some~	$p$.
	
	\begin{remark}
		There exists also a notion of positivity in real semi-simple Lie groups developed by Guichard--Wienhard in \cite{gui_ano} and a notion of positive representations of surface groups developed by Guichard--Labourie--Wienhard \cite{GLW}.
	\end{remark}

    We have the following result:
	
	\begin{theorem}[Kim--Kim \cite{KK} Theorem 1.2]
	Let $S$ be a compact, connected, orientable surface with boundary of negative Euler characteristic. The holonomy representations of convex-projective structures in $\mathbb{RP}^2$ and positive representations from $\pi_1(S)$ to $\mathsf{PGL}_d(\Rb)$ are primitive-Anosov. 
	\end{theorem}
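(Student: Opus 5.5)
Since primitive-Anosov is only a singular value gap condition along primitive geodesics, the plan is to show that the holonomy $\rho$ of a convex projective structure, and any positive representation, restricts on every primitive axis to a uniformly dominated sequence. Throughout I will use the characterization of $k$--primitive-Anosov representations by uniform gap growth along axes of primitive elements. The argument has two parts. First, I would get control on \emph{every} element of $\pi_1(S)$ that stays in a compact part of the surface, in the style of Labourie's argument for Hitchin representations. Second, I would show that primitive axes always stay in such a compact part, in the spirit of Minsky's blocking argument for primitive-stable representations in Section \ref{PS}. The cusps (and, in the convex projective case, possibly non-hyperbolic boundary holonomy) are what prevent $\rho$ from being genuinely Anosov on the free group, so the whole point is to avoid them.

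\textbf{Step 1 (equivariant flag map).} In the positive case I am handed a continuous positive $\rho$-equivariant map $\xi\co\partial_\infty\pi_1(S)_p\to\Fc(\Rb^d)$. For a convex projective structure $\Omega/\Gamma$, I would build the analogous map into $\Fc(\Rb^3)$ by sending a boundary point to the pair consisting of the corresponding point of $\partial\Omega$ and its supporting line. I expect this to be positive, or at least transverse, by strict convexity along the limit set. From positivity I would extract transversality of $\xi(x)$ and $\xi(y)$ for $x\neq y$. I would also extract a uniform quantitative transversality whenever $x,y$ range over pairs of endpoints of geodesics that stay in a fixed compact core $S_K$, obtained from $S$ by removing horoball neighborhoods of the $p$ cusps. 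By $\pi_1(S)$-equivariance, this family is cocompact.

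\textbf{Step 2 (domination on the compact core).} Over the unit tangent bundle of $S_K$, consider the flat bundle associated to $\rho$. The transverse limit maps give a continuous splitting $\xi^{(k)}\oplus\xi^{(d-k)}$ along the geodesic flow restricted to geodesics contained in $S_K$. Positivity (Fock--Goncharov/Labourie-type convexity of the relevant ratios) gives contraction of $\mathrm{Hom}(\xi^{(d-k)},\xi^{(k)})$ along this invariant set. Because the set is compact, the contraction is uniform. This yields constants $\lambda,C$ with
\begin{equation*}
\frac{\sigma_k(\rho(\gamma_n))}{\sigma_{k+1}(\rho(\gamma_n))}\geq Ce^{\lambda n}
\end{equation*}
along any group geodesic whose hyperbolic realization stays in $S_K$. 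To pass from geodesic-flow time to word length I would use a Švarc--Milnor comparison on $S_K$, as in Proposition \ref{prop:CC=QIE}.

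\textbf{Step 3 (primitive geodesics avoid cusps) — main obstacle.} I must show that there is a single $K$ such that every geodesic representing a primitive element of $\pi_1(S)$ stays in $S_K$. The idea is that each cusp or boundary curve $c$ is a blocking word in the sense of Minsky: some power $c^m$ never occurs as a subword of a cyclically reduced primitive word in a suitable free basis adapted to $S$. If a primitive geodesic entered the cusp deeply, it would wind many times around $c$. That winding would force such a subword, a contradiction. Since the blocking property is purely combinatorial in $F_n$, this needs an explicit description of primitive words relative to a basis in which the peripheral words are long enough, for instance by Whitehead's cut-vertex criterion. This is the step I would attack with the most care.

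\textbf{Conclusion.} Combining Steps 2 and 3 gives the primitive-Anosov inequality with uniform constants on all primitive axes, for $k=1$ in the convex projective case and for every $k$ in the positive case.
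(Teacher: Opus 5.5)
Your overall architecture (uniform gaps on a compact invariant part of the geodesic flow, plus a blocking argument that confines primitive geodesics to it) is the natural one. The gap is Step 3, and it is fatal rather than technical: a peripheral curve is blocking only when $\partial S$ is connected.

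\textbf{Why Step 3 fails.} Take $S=S_{g,n}$ with $n\ge 2$. Eliminating any one $c_j$ from the relation $\prod_i[a_i,b_i]\prod_j c_j$ shows that the remaining generators form a free basis, so every peripheral $c_i$ is primitive. For example, for the pair of pants with $\pi_1=\langle a,b\rangle$, the peripheral elements $a$, $b$, $ab$ are all primitive, and so is $a^m b$ for every $m$. Hence the axis of $c$ itself is a primitive geodesic running straight into the cusp, and no $S_K$ contains all primitive geodesics.

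\textbf{The conclusion itself fails there.} Suppose $\rho(c)$ is unipotent. This happens for a positive representation with unipotent monodromy at $c$, and already for a Fuchsian pair of pants with one cusp when $d=2$. Then every ratio $\sigma_k(\rho(c^n))/\sigma_{k+1}(\rho(c^n))$ grows only polynomially in $n$. If instead $\rho(c)$ is quasi-hyperbolic, the gap is polynomial for $k=1$ along one direction of the axis of $c$ and for $k=2$ along the other. In both cases $\rho$ is not primitive-Anosov. So the theorem as reproduced here needs the hypothesis that $\partial S$ is connected, which is present in Kim--Kim's original statement. Any correct proof must use it, and yours never does.

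\textbf{How to repair Step 3 when $\partial S$ is connected.} The efficient tool is Whitehead's cut-vertex lemma, not an explicit description of primitive words.
In the basis $a_1,b_1,\dots,a_g,b_g$, the Whitehead graph of the cyclic word $w=\prod_{i=1}^g[a_i,b_i]$ is a single cycle through all $4g$ vertices. (It is the vertex link of the standard one-vertex cell structure on the closed surface.) So it is connected with no cut vertex.
Now suppose $w^2$, or a cyclic permutation of $w^{\pm 2}$, occurs as a subword of a cyclically reduced word $u$. Then this spanning cycle sits inside the Whitehead graph of $u$, which therefore has no cut vertex. By Whitehead's lemma, $u$ is not primitive.
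You still have to prove, as in Minsky's argument, that a deep cusp excursion of the geodesic of a primitive element forces such an occurrence in its cyclically reduced word.

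\textbf{A secondary point in Step 2.} Transversality plus compactness gives you a continuous splitting, but not contraction. The contraction must come from an additional convergence input along the closed subflow spanned by the closure of the primitive geodesics. For instance, use that the positive map is dynamics-preserving: $\xi(\gamma^{+})$ is the attracting flag of $\rho(\gamma)$ for non-peripheral $\gamma$. You then need an actual argument upgrading this to uniform contraction.
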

    
	\begin{remark}
		Positive representations with unipotent boundary element are not Anosov representations.
        
		We say that a matrix in $\mathsf{PGL}(3,\Rb)$ is \emph{hyperbolic} if it is conjugate to $\begin{bmatrix}
			a & 0 & 0 \\
			0 & b & 0 \\
			0 & 0 & c
		\end{bmatrix}$ with $a>b>c>0$ and $abc=1$. \\
		We say that a matrix in $\mathsf{PGL}(3,\Rb)$ is \emph{quasi-hyperbolic} if it is conjugate to $\begin{bmatrix}
			a & 1 &0 \\
			0 & a & 0 \\
			0 & 0 & b
		\end{bmatrix}$ with $a>0,b>0$, $a \neq b$ and $a^2b=1$.
        
		Holonomies of convex projective structures that map every boundary component to a hyperbolic matrix are both Anosov and positive representations. On the other hand, holonomies of convex projective structures mapping any boundary component to a quasi-hyperbolic element are neither Anosov nor positive representations. 
		\end{remark}
        
		From this, Kim--Kim obtain the following corollary: 
	\begin{corollary}[Kim--Kim]
		There is an open domain of discontinuity in the character variety $\mathfrak{X}(F_n,\mathsf{PGL}_d(\Rb))$ for the action of the outer-automorphism group of $F_n$ which is strictly larger than the set of Anosov representations. 
	\end{corollary}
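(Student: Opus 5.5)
The plan is to take as candidate domain the set $\mathfrak{X}_{PA}(F_n,\mathsf{PGL}_d(\Rb))$ of (conjugacy classes of) primitive-Anosov representations, and to show three things: it is open; $\Out(F_n)$ acts on it properly discontinuously; and it strictly contains $\mathfrak{X}_{Anosov}(F_n,\mathsf{PGL}_d(\Rb))$. Invariance is immediate. Conjugation changes singular value gaps only by a bounded multiplicative factor, and any automorphism of $F_n$ permutes $\mathcal{Pr}$ and is a quasi-isometry of $\mathrm{Cay}$. Such a quasi-isometry sends primitive axes to within bounded distance of primitive axes. Hence the inequality in Definition \ref{def:primitive-anosov} survives, with changed constants. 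The inclusion of Anosov representations is trivial, since Definition \ref{def:primitive-anosov} is Definition \ref{def:anosov} restricted to primitive geodesics.

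For openness I would follow Minsky's argument for Theorem \ref{thm:PS}, replacing uniform quasi-geodesics in $\Hb^3$ by uniform Morse quasi-geodesics in the symmetric space, in the sense of Kapovich--Leeb--Porti and following Kim--Kim. The key input is a local-to-global principle. The singular value gap condition along a path holds globally with controlled constants as soon as it holds on all subsegments of some fixed large length $L$ with sufficiently good constants. Fix $\rho$ primitive-Anosov and $L$ large. There are only finitely many words of length $\le L$ that occur as subwords of primitive cyclic words. Near $\rho$, their images still satisfy the local gap estimates with slightly worse constants. The local-to-global principle then gives a neighbourhood $U$ of $\rho$ and uniform constants $(\lambda,C)$ such that every representation in $U$ is primitive-Anosov with these constants. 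This is the analogue of Theorem \ref{thm:unif-QI}.

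For proper discontinuity I would copy the proof of Theorem \ref{thm:prop-discCC}. Cover a compact set $K$ by finitely many such neighbourhoods, which gives uniform constants on $K$. Primitive-Anosov implies that the orbit map along each primitive axis is a uniform quasi-isometric embedding, and this bounds the translation length comparably to $\Vert\gamma\Vert$ for all primitive $\gamma$. If $\Phi\cdot[\rho]\in K$ with $[\rho]\in K$, we obtain $\Vert\Phi(\gamma)\Vert\le A\Vert\gamma\Vert$ for primitive $\gamma$, with $A$ depending only on the constants. Canary's Proposition \ref{prop:canary-EA} only requires this inequality on a finite collection, such as the basis elements and their pairwise products, all of which are primitive. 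So only finitely many $\Phi$ qualify.

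Strictness comes from the preceding Kim--Kim theorem and the remark after it. Take $S$ with $\pi_1(S)\cong F_n$, and take a convex projective structure on $S$ with a quasi-hyperbolic boundary holonomy, embedded in $\mathsf{PGL}_d(\Rb)$ via a positive representation with unipotent boundary. This representation is primitive-Anosov but not Anosov. I expect the openness step to be the main obstacle, because it requires the higher-rank Morse local-to-global lemma. I would try to cite Kapovich--Leeb--Porti for it directly rather than reprove it.
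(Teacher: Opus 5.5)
Your route is the paper's. The candidate domain is the primitive-Anosov locus, and it contains the Anosov locus trivially. It is an open domain of discontinuity: the paper just cites Kim--Kim and Wang for this, and your sketch (Kapovich--Leeb--Porti Morse lemma plus local-to-global for openness, then the argument of Theorem \ref{thm:prop-discCC} with the finite-set version of Proposition \ref{prop:canary-EA}) is essentially their proof. Strictness comes from Kim--Kim's theorem together with the remark after it. One harmless slip: if Canary's finite set contains the squares $x_i^2$, these are not primitive, but $\Vert\Phi(x_i^2)\Vert=2\Vert\Phi(x_i)\Vert$ is controlled anyway.

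The strictness step, however, is incorrect as written. You describe one representation that is both the holonomy of a convex projective structure with quasi-hyperbolic boundary holonomy and ``embedded in $\mathsf{PGL}_d(\Rb)$ via a positive representation with unipotent boundary''. These are two different families of examples. The remark you rely on says explicitly that holonomies with a quasi-hyperbolic boundary element are \emph{not} positive. Nor is there any sense in which a $\mathsf{PGL}_3(\Rb)$-valued representation is embedded into $\mathsf{PGL}_d(\Rb)$ ``via'' another representation of $\pi_1(S)$.

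Use the two families separately.

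For $d=3$, take a convex projective holonomy with a quasi-hyperbolic boundary element $c$. It is primitive-Anosov by Kim--Kim. It is not Anosov, because the Jordan block makes each of the gaps $\sigma_1/\sigma_2$ and $\sigma_2/\sigma_3$ grow only polynomially along $\rho(c^{m})$ or along $\rho(c^{-m})$.

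For arbitrary $d$, which is what the corollary needs, take a positive representation with a unipotent boundary element. For example, compose a Fuchsian representation of $S$ with a cusp with the irreducible representation $\PSLR\to\mathsf{PGL}_d(\Rb)$. This is primitive-Anosov by Kim--Kim. The cusp is sent to a regular unipotent, so every gap $\sigma_k/\sigma_{k+1}$ of its powers grows polynomially, and the representation is $k$-Anosov for no $k$. This is how the paper obtains strictness, and only this second family works for every $d$.
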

	
	\begin{remark}
		The fact that primitive-Anosov representations form an open domain of discontinuity is true in any semi-simple Lie group without compact factor, see \cite{KK} or \cite{wang2021anosov}. 
	\end{remark}
	
	The works mentioned above then suggest that a strategy for constructing domains of discontinuity larger than those arising from Anosov representations is to consider primitive-Anosov representations of a surface with boundary that degenerate along a boundary component. This raises the question of how to construct such domains for a closed surface group. In this setting, not only there is no boundary component but also primitive elements do not make sense anymore since the group $\pi_1(S)$ is not free. The natural analogous notion to consider would then be the notion of \emph{simple-Anosov} representations, as studied by Tholozan--Wang \cite{tholozan-wang}. The idea of the definition of simple-Anosov representations is the same as for primitive-Anosov representations but restricting to simple elements rather than primitive elements: consider a restriction of the Anosov condition \eqref{eq:anosov} to the simple geodesics in the group, corresponding to the simple closed curves on the surface. We give a precise definition below for completeness. 
	
	Let $S$ be a closed connected oriented surface with negative Euler characteristic. Fix $\Sc$ a generating set for its fundamental group $\pi_1(S)$ and consider $\mathrm{Cay}:=\mathrm{Cay}(\pi_1(S),\Sc \cup \Sc^{-1})$ the Cayley graph of $\pi_1(S)$ with respect to the generating set $\Sc \cup \Sc^{-1}$.  Let $\mathscr{S}$ be the set of simple closed curves on $S$. We denote by $\mathrm{Ax}_\mathscr{S}$ the set of \emph{simple geodesics} in $\mathrm{Cay}$, that is, the set of geodesics in $\mathrm{Cay}$ which are axes of some simple element in $\pi_1(S)$.
	
		\begin{definition}[Simple-Anosov representations] \label{def:simple-anosov}~\\ 
		\index{Simple-Anosov representation}Let $\rho\co\pi_1(S) \to \mathsf{GL}_d(\Kb)$ be a representation of $\pi_1(S)$ in  $\mathsf{GL}_d(\Kb)$ and $1 \leq \nobreak k \leq \nobreak d-1$ an integer. We say that $\rho$ is \emph{$k$-simple-Anosov} if there exist two constants $\lambda>0, C>0$ such that for all simple geodesic $\{\gamma_n\}_{n \in \Zb} \in \mathrm{Ax}_{\mathscr{S}}$, with $\gamma_0=\mathrm{id}$, we have:
		\begin{equation*}
			\frac{\sigma_k(\rho(\gamma_n))}{\sigma_{k+1}(\rho(\gamma_n))} \geq Ce^{\lambda n},  \qquad \forall n \in \Nb. 
		\end{equation*}
		We say that $\rho$ is \emph{simple-Anosov} if there exists an integer $1 \leq k \leq d-1$ for which $\rho$ is $k$-simple-Anosov.
	\end{definition}

		While in $\mathsf{PSL}(2,\Cb)$ it is not known whether or not there exist some simple-Anosov representations which are not convex-cocompact, when the dimension gets bigger, Tholozan--Wang proved the existence of simple-Anosov representations which are not Anosov in $\mathsf{SL}(2d,\Cb)$:

	 \begin{theorem}[Tholozan--Wang, \cite{tholozan-wang}] \label{thm:TW}
	 Let $S$ be a closed connected oriented surface with negative Euler characteristic and $d\geq 2$. The set of $d$-simple Anosov representations from $\pi_1(S)$ to $\mathsf{SL}(2d,\Cb)$ is an open domain of discontinuity for the action of the mapping class group of $S$, which contains points of the boundary of the domain of Anosov representations (hence is strictly larger that the domain of Anosov representations).
	 \end{theorem}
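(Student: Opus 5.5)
The plan has three parts, proved in this order: openness of the set $\mathfrak{X}_{d\text{-sA}}$ of $d$-simple-Anosov classes, proper discontinuity of the mapping class group action on it, and the existence of a point of $\partial\mathfrak{X}_{Anosov}$ lying in it.

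\emph{Openness.} I would adapt Theorem \ref{thm:unif-QI} (and Proposition \ref{prop:local-uniform-cst-qie}), restricting attention to simple geodesics in $\mathrm{Ax}_{\mathscr{S}}$ rather than all geodesics. The key input is a local-to-global principle for dominated splittings along sequences: if a sequence of matrices has a uniform singular value gap at index $d$ on all segments of some bounded length $L$, then it has an exponential gap globally. This is the Morse-lemma analogue of Kapovich--Leeb--Porti \cite{KLP1} and Bochi--Potrie--Sambarino \cite{BPS}. A subsegment of a simple geodesic need not itself lie on a simple geodesic. However, the gap condition on length-$L$ segments involves only finitely many words, and these conditions are open. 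Requiring the finite-length condition only for those words that occur as subwords of simple geodesics gives an open condition that implies $d$-simple-Anosov with uniform constants $(\lambda,C)$ on a neighbourhood.

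\emph{Proper discontinuity.} I would copy the proof of Theorem \ref{thm:prop-discCC}. Fix a compact set $K$ of $d$-simple-Anosov classes; by the openness step, the constants are uniform on $K$. Along a simple axis, the quantity $\log(\sigma_d/\sigma_{d+1})$ is coarsely additive, so passing to $\gamma^n$ gives
\[
\tfrac1\lambda\Vert\gamma\Vert \leq \log\bigl(\lambda_d(\rho(\gamma))/\lambda_{d+1}(\rho(\gamma))\bigr)
\]
for simple $\gamma$, with a matching upper bound coming from the generators. As in \eqref{eq:bound-Phi}, this yields $\Vert\Phi(\gamma)\Vert\le A\Vert\gamma\Vert$ for all simple $\gamma$ whenever $\Phi\cdot K\cap K\ne\emptyset$. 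Canary's Proposition \ref{prop:canary-EA} needs the inequality only for a finite set of elements. I would take a finite filling set of simple curves, which suffices, so the set of such $\Phi$ is finite.

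\emph{Strict containment.} This is the main obstacle. I would construct $\rho=\rho_1\otimes\rho_2$, or a block-type representation into $\mathsf{SL}(2d,\Cb)$ built from $\mathsf{SL}(2,\Cb)$ and $\mathsf{SL}(d,\Cb)$ pieces, such that some non-simple closed curve $\delta$ is sent to an element with $\lambda_d=\lambda_{d+1}$, for instance unipotent-like in the relevant block. Then $\rho$ is not $d$-Anosov, but it is a limit of Anosov representations. The delicate claim is that every simple curve still has an exponential $d$-gap. My first attempt would take a quasi-Fuchsian or Fuchsian $\rho_1$ and a compact-image $\rho_2:\pi_1(S)\to\mathsf{SU}(d)$, and deform so that the degeneracy is forced only on words that never appear as subwords of simple closed curves. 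This is in the spirit of Minsky's ``blocking curve'' construction and Remfort-Aurat's parabolic-on-a-non-simple-curve argument. Verifying that simple words avoid the degeneracy uniformly uses the structure of simple curves (train tracks), and that is where I expect most of the work.
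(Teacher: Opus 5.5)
Your openness and proper-discontinuity steps are reasonable in outline. The paper's overview does not treat them; they follow the pattern of Theorem~\ref{thm:prop-discCC}, and a finite filling family of simple curves does give finiteness by an Alexander-method argument. One correction is needed there: a uniform gap on all windows of a single length $L$ does not force a global gap. The periodic sequence $\mathrm{diag}(N,N^{-1}),R,\mathrm{diag}(N,N^{-1}),R,\dots$, with $R$ the rotation by $\pi/2$, has gap $N^2$ on every window of length $2$, yet any four consecutive terms multiply to $-I$. You need either the exponential bound on all windows of length at most $L$, combined with the Kapovich--Leeb--Porti Morse lemma and local-to-global principle, or robustness of dominated splittings over the compact flow-invariant closure of the simple geodesics.

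The genuine gap is the strict containment, which is the entire content of the paper's argument; your concrete candidate cannot produce it. If $\rho_2$ takes values in $\mathsf{SU}(d)$, the singular values of $\rho_1(\gamma)\otimes\rho_2(\gamma)$ are those of $\rho_1(\gamma)$, each repeated $d$ times. Hence $\sigma_d/\sigma_{d+1}(\rho(\gamma))=\sigma_1/\sigma_2(\rho_1(\gamma))$, so $\rho$ is $d$-simple-Anosov if and only if $\rho_1$ is simple-stable, and $d$-Anosov if and only if $\rho_1$ is convex-cocompact. You are thrown back to $\PSLC$. There, a simple-stable, non-convex-cocompact $\rho_1$ is impossible if $\rho_1$ is discrete and faithful: it would lie in $\partial\X_{QF}$, which misses $\X_{SS}$, as noted at the end of Section~\ref{PS}. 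Relatedly, parabolics of Kleinian closed-surface groups are always represented by simple curves, so the Minsky/Remfort-Aurat ``cusp on a non-simple curve'' device is unavailable. If $\rho_1$ is not discrete and faithful, the existence of such a $\rho_1$ is an open problem. Your fallback, deforming ``so that the degeneracy is forced only on non-simple words'', gives no mechanism that creates such a degeneracy while keeping uniform control along every simple geodesic.

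The missing idea is to put the cusp on a finite cover and induce:
\begin{itemize}
\item Take a degree-$d$ Galois cover $\tilde S\to S$ in which a self-intersecting curve $c$ lifts to a simple closed curve $\tilde c$.
\item Take a geometrically finite $\rho_0\colon\pi_1(\tilde S)\to\mathsf{SL}(2,\Cb)$ whose parabolic subgroups are exactly the conjugates of $\langle\tilde c\rangle$; this is a limit of quasi-Fuchsian representations of $\pi_1(\tilde S)$.
\item Set $\rho=\mathrm{Ind}_{\pi_1(\tilde S)}^{\pi_1(S)}(\rho_0)$.
\end{itemize}
Because $c$ is not simple, its closed geodesic in $T^1S$ is disjoint from the closure of the simple closed geodesics. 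So $\rho_0$ is uniformly $1$-Anosov along lifts of simple geodesics, and Tholozan--Wang show that this makes $\rho$ $d$-simple-Anosov. On the other hand, $\rho(\tilde c)$ has one parabolic $2\times 2$ block, the other blocks being loxodromic, so $|\lambda_d|=|\lambda_{d+1}|=1$ and $\rho$ is not $d$-Anosov. Inducing the approximating quasi-Fuchsian representations gives $d$-Anosov representations converging to $\rho$, so $\rho$ lies on the boundary of the Anosov domain.

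The essential point is that $\rho_0$ does not extend to $\pi_1(S)$, since the deck translates of $\tilde c$ are not pinched. If $\rho_0=\rho_1|_{\pi_1(\tilde S)}$, then $\mathrm{Ind}(\rho_0)\cong\rho_1\otimes\mathrm{Ind}(\mathbf{1})$, where $\mathrm{Ind}(\mathbf{1})$ is the permutation representation on cosets. That is exactly the compact-tensor case ruled out above.
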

	 
 	Given a simple-Anosov representation on the boundary of the set of Anosov representations, Tholozan--Wang use openness to deform it into non discrete and faithful Zariski dense simple-Anosov representations, see Section 4.4 of \cite{tholozan-wang}.

   We try to give a brief overview of the construction of Tholozan--Wang of a $d$-simple Anosov representation in $\mathrm{SL}(2d,\Cb)$ which is not Anosov.
	\begin{proof}[Overview of the proof of Theorem \ref{thm:TW}]
	First, consider a Galois covering $\tilde{S}$ of degree $d$ of the surface $S$ and a simple closed curve $\tilde{c}$ on $\tilde{S}$ such that $\tilde{c}$ projects on a curve $c$ with a self-intersection on $S$. Thus the fundamental group $\Gamma_0:=\pi_1(\tilde{S})$ of $\tilde{S}$ is a finite index normal subgroup of the fundamental group $\Gamma := \pi_1(S)$ of $S$. The idea is now to consider a representation $\rho_0 : \Gamma_0 \to \mathrm{SL}(2,\Cb)$ which is geometrically finite, and such that the stabilizers of parabolic points are conjugate to the cyclic subgroup of $\Gamma_0$ generated by $\tilde{c}$.
    
	They then use a standard procedure for constructing a representation $\rho\co\Gamma \to \mathsf{GL}(2d,\Cb)$ of a group $\Gamma$ from a representation $\rho_0 : \Gamma_0 \to \mathsf{GL}(2,\Cb)$ of a finite index (normal) subgroup $\Gamma_0$ of $\Gamma$. We briefly explain the construction. The subgroup $\Gamma_0$ is finite index in $\Gamma$, which means that we can decompose $\Gamma= \nobreak \underset{i=1}{\overset{d}{\sqcup}} \gamma_i \Gamma_0$. Let $V_0:=\Cb^2$ and consider $d$ formal copies of $V_0$ denoted by~$\gamma_i V_0$, for all $1 \leq i \leq d$. Then the vector space $V:=\Cb^{2d}$ can be identified with $\displaystyle \bigoplus_{i=1}^d \gamma_i V_0$. Let us define a linear action $\rho$ of the group $\Gamma$ on $V$. Take $\gamma\in \Gamma$, an integer $1 \leq i \leq d$ and $x_i \in \gamma_i V_0$, which we write $x_i=\gamma_ix$, where $x$ is the corresponding point in $V_0$. Then there exists an index $1 \leq j \leq d$ and an element $\gamma_0 \in \Gamma_0$ such that $\gamma \gamma_i=\gamma_j \gamma_0$ and we set $\rho(\gamma) x_i :=  \gamma_j (\rho_0(\gamma_0)x)$, which means that $\rho(\gamma) x_i$ is the image of $\rho_0(\gamma_0)x \in V_0$ in the copy $\gamma_j V_0$ of $V_0$. The induced representation $\rho$ is commonly denoted by $\mathrm{Ind}_{\Gamma_0}^{\Gamma}(\rho_0)$.
    
	The induced representation $\rho$ of the representation $\rho_0\co\Gamma_0 \to \mathsf{SL}(2,\Cb)$ defined above will satisfy the required properties:
	\begin{itemize}
	\item The curve $c$ on $S$ is self-intersecting, then, seen as a closed geodesic (for any hyperbolic metric on $S$) in the unit tangent bundle of $S$, it is disjoint from the closure of the union of all simple closed geodesic on $S$. This will imply that $\rho_0$ will have good geometric behavior (namely  being $1$-Anosov) in the direction of the group $\Gamma_0$ corresponding to the lifts on $\tilde{S}$ of the simple closed curves on $S$. Tholozan-Wang proved that passing to the induced representation preserves this `good geometric behavior', which means here that $\rho$ is $d$-simple-Anosov.
	\item 
	By construction, the image of the curve $\tilde{c} \in \Gamma_0$ by $\rho_0$ is parabolic, so in particular the modulus of the two eigenvalues of $\rho_0(\tilde{c})$ are the same and equal to $1$. This will imply that, in the induced representation $\rho$, the $d$-th and $(d+1)$-th eigenvalues (ordered in the non-increasing order of their modulus) also have both modulus $1$, which will forbid $\rho$ to be a $d$--Anosov representation.
	\end{itemize}
	\end{proof}	
    	

\bibliographystyle{amsalpha}

\bibliography{sample}

@article {BFM-minimalDT,
    AUTHOR = {Bouilly, Yohann and Faraco, Gianluca and Maret, Arnaud},
     TITLE = {Mapping class group orbit closures for {D}eroin-{T}holozan
              representations},
   JOURNAL = {J. Mod. Dyn.},
  FJOURNAL = {Journal of Modern Dynamics},
    VOLUME = {21},
      YEAR = {2025},
     PAGES = {805--850},
      ISSN = {1930-5311,1930-532X},
   MRCLASS = {37D40 (20C15 37J06 53D30 57K20)},
  MRNUMBER = {5009413},
       DOI = {10.3934/jmd.2025019},
       URL = {https://doi.org/10.3934/jmd.2025019},
}

@misc{BM,
      title={Tykhyy's Conjecture on finite mapping class group orbits}, 
      author={Samuel Bronstein and Arnaud Maret},
      year={2025},
      eprint={2409.04379},
      archivePrefix={arXiv},
      primaryClass={math.DS},
      url={https://arxiv.org/abs/2409.04379}, 
}

@incollection{KL-survey,
    AUTHOR = {Kapovich, Michael and Leeb, Bernhard},
     TITLE = {Discrete isometry groups of symmetric spaces},
 BOOKTITLE = {Handbook of group actions. {V}ol. {IV}},
    SERIES = {Adv. Lect. Math. (ALM)},
    VOLUME = {41},
     PAGES = {191--290},
 PUBLISHER = {Int. Press, Somerville, MA},
      YEAR = {2018},
      ISBN = {978-1-57146-365-4},
   MRCLASS = {22E40 (20E42 20F65 53C35)},
  MRNUMBER = {3888689},
MRREVIEWER = {Jean\ Raimbault},
}

@article{magnus-coords,
    AUTHOR = {Magnus, Wilhelm},
     TITLE = {Rings of {F}ricke characters and automorphism groups of free
              groups},
   JOURNAL = {Math. Z.},
  FJOURNAL = {Mathematische Zeitschrift},
    VOLUME = {170},
      YEAR = {1980},
    NUMBER = {1},
     PAGES = {91--103},
      ISSN = {0025-5874,1432-1823},
   MRCLASS = {20F28 (13B25 20F36 20G20)},
  MRNUMBER = {558891},
MRREVIEWER = {D.\ L.\ Johnson},
       DOI = {10.1007/BF01214715},
       URL = {https://doi.org/10.1007/BF01214715},
}

@article{EGPS,
    AUTHOR = {Erlandsson, Viveka and Gendulphe, Matthieu and Pasquinelli,
              Irene and Souto, Juan},
     TITLE = {Mapping class group orbit closures for non-orientable
              surfaces},
   JOURNAL = {Geom. Funct. Anal.},
  FJOURNAL = {Geometric and Functional Analysis},
    VOLUME = {33},
      YEAR = {2023},
    NUMBER = {3},
     PAGES = {637--693},
      ISSN = {1016-443X,1420-8970},
   MRCLASS = {57K20 (37F32)},
  MRNUMBER = {4597639},
MRREVIEWER = {Dongryul\ M.\ Kim},
       DOI = {10.1007/s00039-023-00638-7},
       URL = {https://doi.org/10.1007/s00039-023-00638-7},
}

@article{mag_cou,
    AUTHOR = {Magee, Michael},
     TITLE = {Counting one-sided simple closed geodesics on {F}uchsian
              thrice punctured projective planes},
   JOURNAL = {Int. Math. Res. Not. IMRN},
  FJOURNAL = {International Mathematics Research Notices. IMRN},
      YEAR = {2020},
    NUMBER = {13},
     PAGES = {3886--3901},
      ISSN = {1073-7928,1687-0247},
   MRCLASS = {57K20 (32G15 37D40)},
  MRNUMBER = {4120312},
       DOI = {10.1093/imrn/rny112},
       URL = {https://doi.org/10.1093/imrn/rny112},
}

@article{hua_sim,
	Author = {Huang, Yi and Norbury, Paul},
	Journal = {Geometriae Dedicata},
	Number = {1},
	Pages = {113--148},
	Publisher = {Springer},
	Title = {Simple geodesics and Markoff quads},
	Volume = {186},
	Year = {2017}}

@unpublished{gen_wha,
	Author = {Gendulphe, Matthieu},
	Month = {June},
	Note = {Preprint arXiv:1706.08798},
	Title = {What's wrong with the growth of simple closed geodesics on nonorientable hyperbolic surfaces},
	Year = {2017}}

@article{goldman-topological,
    AUTHOR = {Goldman, William M.},
     TITLE = {Topological components of spaces of representations},
   JOURNAL = {Invent. Math.},
  FJOURNAL = {Inventiones Mathematicae},
    VOLUME = {93},
      YEAR = {1988},
    NUMBER = {3},
     PAGES = {557--607},
      ISSN = {0020-9910,1432-1297},
   MRCLASS = {57M05 (22E40 32G15)},
  MRNUMBER = {952283},
MRREVIEWER = {William\ Harvey},
       DOI = {10.1007/BF01410200},
       URL = {https://doi.org/10.1007/BF01410200},
}

@article{delzant,
    AUTHOR = {Delzant, Thomas},
     TITLE = {Hamiltoniens p\'{e}riodiques et images convexes de
              l'application moment},
   JOURNAL = {Bull. Soc. Math. France},
  FJOURNAL = {Bulletin de la Soci\'{e}t\'{e} Math\'{e}matique de France},
    VOLUME = {116},
      YEAR = {1988},
    NUMBER = {3},
     PAGES = {315--339},
      ISSN = {0037-9484},
   MRCLASS = {58F05},
  MRNUMBER = {984900},
MRREVIEWER = {J.\ J.\ Duistermaat},
       URL = {http://www.numdam.org/item?id=BSMF_1988__116_3_315_0},
}

@article{PX2,
    AUTHOR = {Pickrell, Doug and Xia, Eugene Z.},
     TITLE = {Ergodicity of mapping class group actions on representation
              varieties. {II}. {S}urfaces with boundary},
   JOURNAL = {Transform. Groups},
  FJOURNAL = {Transformation Groups},
    VOLUME = {8},
      YEAR = {2003},
    NUMBER = {4},
     PAGES = {397--402},
      ISSN = {1083-4362,1531-586X},
   MRCLASS = {57M50 (20F65 22C05 37A15)},
  MRNUMBER = {2015257},
MRREVIEWER = {A.\ H.\ Dooley},
       DOI = {10.1007/s00031-003-0819-6},
       URL = {https://doi.org/10.1007/s00031-003-0819-6},
}

@article{PX1,
    AUTHOR = {Pickrell, Doug and Xia, Eugene Z.},
     TITLE = {Ergodicity of mapping class group actions on representation
              varieties. {I}. {C}losed surfaces},
   JOURNAL = {Comment. Math. Helv.},
  FJOURNAL = {Commentarii Mathematici Helvetici},
    VOLUME = {77},
      YEAR = {2002},
    NUMBER = {2},
     PAGES = {339--362},
      ISSN = {0010-2571,1420-8946},
   MRCLASS = {22F50 (37A15)},
  MRNUMBER = {1915045},
MRREVIEWER = {A.\ H.\ Dooley},
       DOI = {10.1007/s00014-002-8343-1},
       URL = {https://doi.org/10.1007/s00014-002-8343-1},
}

@incollection{GX,
    AUTHOR = {Goldman, William M. and Xia, Eugene Z.},
     TITLE = {Ergodicity of mapping class group actions on {${\rm
              SU}(2)$}-character varieties},
 BOOKTITLE = {Geometry, rigidity, and group actions},
    SERIES = {Chicago Lectures in Math.},
     PAGES = {591--608},
 PUBLISHER = {Univ. Chicago Press, Chicago, IL},
      YEAR = {2011},
      ISBN = {978-0-226-23788-6; 0-226-23788-5},
   MRCLASS = {22F50 (22D40 37A25 57N05)},
  MRNUMBER = {2807844},
MRREVIEWER = {Yasushi\ Yamashita},
}

@article {funar-marche,
    AUTHOR = {Funar, Louis and March\'{e}, Julien},
     TITLE = {The first {J}ohnson subgroups act ergodically on {$\rm
              SU_2$}-character varieties},
   JOURNAL = {J. Differential Geom.},
  FJOURNAL = {Journal of Differential Geometry},
    VOLUME = {95},
      YEAR = {2013},
    NUMBER = {3},
     PAGES = {407--418},
      ISSN = {0022-040X,1945-743X},
   MRCLASS = {37A25 (22Exx 57M50 58D19)},
  MRNUMBER = {3128990},
       URL = {http://projecteuclid.org/euclid.jdg/1381931734},
}

@article{GX-Torelli,
    AUTHOR = {Goldman, William M. and Xia, Eugene Z.},
     TITLE = {Action of the {J}ohnson-{T}orelli group on representation
              varieties},
   JOURNAL = {Proc. Amer. Math. Soc.},
  FJOURNAL = {Proceedings of the American Mathematical Society},
    VOLUME = {140},
      YEAR = {2012},
    NUMBER = {4},
     PAGES = {1449--1457},
      ISSN = {0002-9939,1088-6826},
   MRCLASS = {57M05 (22D40)},
  MRNUMBER = {2869130},
MRREVIEWER = {\'{A}gota\ Figula},
       DOI = {10.1090/S0002-9939-2011-10972-9},
       URL = {https://doi.org/10.1090/S0002-9939-2011-10972-9},
}

@article{DT,
    AUTHOR = {Deroin, Bertrand and Tholozan, Nicolas},
     TITLE = {Supra-maximal representations from fundamental groups of punctured spheres to {$\mathrm{PSL}(2,\mathbb{R})$}},
   JOURNAL = {Ann. Sci. \'{E}c. Norm. Sup\'{e}r. (4)},
  FJOURNAL = {Annales Scientifiques de l'\'{E}cole Normale Sup\'{e}rieure.
              Quatri\`eme S\'{e}rie},
    VOLUME = {52},
      YEAR = {2019},
    NUMBER = {5},
     PAGES = {1305--1329},
      ISSN = {0012-9593,1873-2151},
   MRCLASS = {57K20 (20F34 22F10)},
  MRNUMBER = {4057784},
MRREVIEWER = {Patricio\ Gallardo},
       DOI = {10.24033/asens.2410},
       URL = {https://doi.org/10.24033/asens.2410},
}

@article{maret,
    AUTHOR = {Maret, Arnaud},
     TITLE = {Ergodicity of the mapping class group action on
              {D}eroin-{T}holozan representations},
   JOURNAL = {Groups Geom. Dyn.},
  FJOURNAL = {Groups, Geometry, and Dynamics},
    VOLUME = {16},
      YEAR = {2022},
    NUMBER = {4},
     PAGES = {1341--1368},
      ISSN = {1661-7207,1661-7215},
   MRCLASS = {58D29 (37C85 37D40 57K20 57M05)},
  MRNUMBER = {4536432},
MRREVIEWER = {Vagn\ Lundsgaard\ Hansen},
       DOI = {10.4171/ggd/695},
       URL = {https://doi.org/10.4171/ggd/695},
}

@incollection {goldman-survey,
    AUTHOR = {Goldman, William M.},
     TITLE = {Mapping class group dynamics on surface group representations},
 BOOKTITLE = {Problems on mapping class groups and related topics},
    SERIES = {Proc. Sympos. Pure Math.},
    VOLUME = {74},
     PAGES = {189--214},
 PUBLISHER = {Amer. Math. Soc., Providence, RI},
      YEAR = {2006},
      ISBN = {978-0-8218-3838-9; 0-8218-3838-5},
   MRCLASS = {57M50 (53C24)},
  MRNUMBER = {2264541},
       DOI = {10.1090/pspum/074/2264541},
       URL = {https://doi.org/10.1090/pspum/074/2264541},
}

@article {Penner,
    AUTHOR = {Penner, Robert C.},
     TITLE = {The decorated {T}eichm\"{u}ller space of punctured surfaces},
   JOURNAL = {Comm. Math. Phys.},
  FJOURNAL = {Communications in Mathematical Physics},
    VOLUME = {113},
      YEAR = {1987},
    NUMBER = {2},
     PAGES = {299--339},
      ISSN = {0010-3616,1432-0916},
   MRCLASS = {32G15 (14H15 53A35)},
  MRNUMBER = {919235},
MRREVIEWER = {C.\ Earle},
       URL = {http://projecteuclid.org/euclid.cmp/1104160216},
}

@article {roger-yang,
    AUTHOR = {Roger, Julien and Yang, Tian},
     TITLE = {The skein algebra of arcs and links and the decorated
              {T}eichm\"{u}ller space},
   JOURNAL = {J. Differential Geom.},
  FJOURNAL = {Journal of Differential Geometry},
    VOLUME = {96},
      YEAR = {2014},
    NUMBER = {1},
     PAGES = {95--140},
      ISSN = {0022-040X,1945-743X},
   MRCLASS = {32G15 (53D55 57M27)},
  MRNUMBER = {3161387},
MRREVIEWER = {J\'{e}r\'{e}my\ Toulisse},
       URL = {http://projecteuclid.org/euclid.jdg/1391192694},
}

@article {mir_gro,
    AUTHOR = {Mirzakhani, Maryam},
     TITLE = {Growth of the number of simple closed geodesics on hyperbolic
              surfaces},
   JOURNAL = {Ann. of Math. (2)},
  FJOURNAL = {Annals of Mathematics. Second Series},
    VOLUME = {168},
      YEAR = {2008},
    NUMBER = {1},
     PAGES = {97--125},
      ISSN = {0003-486X,1939-8980},
   MRCLASS = {32G15},
  MRNUMBER = {2415399},
MRREVIEWER = {Hsian-Hua\ Tseng},
       DOI = {10.4007/annals.2008.168.97},
       URL = {https://doi.org/10.4007/annals.2008.168.97},
}

@unpublished{RY,
	author = {Ryu, Inyoung and Yang, Tian},
	note = {preprint, arXiv:2507.07391},
	title = {Connected components of the space of type-preserving representations},
	year = {2025}}

@unpublished{ryu,
	author = {Ryu, Inyoung},
	note = {preprint, arXiv:2511.13989},
	title = {On totally hyperbolic non-Fuchsian type-preserving representations},
	year = {2025}}

@unpublished{canary_survey_ano,
	author = {Canary, Richard},
	note = {preprint, available at \texttt{https://websites.umich.edu/~canary/lecnotespublic61125.pdf}},
	title = {Anosov Representations: Informal Lecture Notes},
	year = {2025}}

@unpublished{maret-character-variety,
    author = {Maret, Arnaud},
    title = {A note on Character Varieties},
    note = {preprint, available at \texttt{https://arnaudmaret.com/files/character-varieties.pdf}},
    year = {2025}}

@unpublished{schlich_sphere,
	author = {Schlich, Suzanne},
	note = {preprint, arXiv:2503.21859},
	title = {Simple-stable and {B}owditch representations of the four-punctured sphere group in Gromov-hyperbolic spaces},
	year = {2025}}

@incollection {gromov,
    AUTHOR = {Gromov, Michael},
     TITLE = {Hyperbolic groups},
 BOOKTITLE = {Essays in group theory},
    SERIES = {Math. Sci. Res. Inst. Publ.},
    VOLUME = {8},
     PAGES = {75--263},
 PUBLISHER = {Springer, New York},
      YEAR = {1987},
      ISBN = {0-387-96618-8},
   MRCLASS = {20F32 (20F06 20F10 22E40 53C20 57R75 58F17)},
  MRNUMBER = {919829},
MRREVIEWER = {Christopher\ W.\ Stark},
       DOI = {10.1007/978-1-4613-9586-7\{_}3}

@unpublished{flechelles,
	author = {Fl{\'e}chelles, Balthazar},
	note = {preprint, arXiv:2507.09771},
	title = {Primitive stability and the Q-conditions for the rank two free group in hyperbolic d-space},
	year = {2025}}

@unpublished{schlich,
	author = {Schlich, Suzanne},
	note = {preprint, arXiv:2211.14546},
	title = {Equivalence of primitive-stable and {B}owditch actions of the free group of rank two on {G}romov-hyperbolic spaces},
	year = {2022}}

@article{witten-quant,
    AUTHOR = {Witten, Edward},
     TITLE = {On quantum gauge theories in two dimensions},
   JOURNAL = {Comm. Math. Phys.},
  FJOURNAL = {Communications in Mathematical Physics},
    VOLUME = {141},
      YEAR = {1991},
    NUMBER = {1},
     PAGES = {153--209},
      ISSN = {0010-3616},
   MRCLASS = {58G26 (14D20 32G13 58D27 81T13)},
  MRNUMBER = {1133264},
MRREVIEWER = {Dana S. Fine},
       URL = {http://projecteuclid.org/euclid.cmp/1104248198},
}

@article{Kapovich-proper,
	author = {Kapovich, Michael},
	title = {A note on properly discontinuous actions},
    JOURNAL = {San Paulo Journal of Math Sciences},
	year = {2024},
    PAGES = {807--836}}

@article {HJ,
    AUTHOR = {Ho, Nan-Kuo and Jeffrey, Lisa C.},
     TITLE = {The volume of the moduli space of flat connections on a
              nonorientable 2-manifold},
   JOURNAL = {Comm. Math. Phys.},
  FJOURNAL = {Communications in Mathematical Physics},
    VOLUME = {256},
      YEAR = {2005},
    NUMBER = {3},
     PAGES = {539--564},
      ISSN = {0010-3616},
   MRCLASS = {53D30 (58D27)},
  MRNUMBER = {2327950},
MRREVIEWER = {Christopher B. Willett},
       DOI = {10.1007/s00220-005-1344-3},
       URL = {https://doi.org/10.1007/s00220-005-1344-3},
}

@article {Minsky-primitive,
    AUTHOR = {Minsky, Yair N.},
     TITLE = {On dynamics of {$Out(F_n)$} on {$\mathsf{PSL}_2({\mathbb{C}})$}
              characters},
   JOURNAL = {Israel J. Math.},
  FJOURNAL = {Israel Journal of Mathematics},
    VOLUME = {193},
      YEAR = {2013},
    NUMBER = {1},
     PAGES = {47--70},
      ISSN = {0021-2172},
   MRCLASS = {37A15 (20E05 20E36 22E15)},
  MRNUMBER = {3038545},
MRREVIEWER = {Peter Ha\"{\i}ssinsky},
       DOI = {10.1007/s11856-012-0086-0},
       URL = {https://doi.org/10.1007/s11856-012-0086-0},
}

@article {pal-erg,
    AUTHOR = {Palesi, Fr\'{e}d\'{e}ric},
     TITLE = {Ergodic actions of mapping class groups on moduli spaces of
              representations of non-orientable surfaces},
   JOURNAL = {Geom. Dedicata},
  FJOURNAL = {Geometriae Dedicata},
    VOLUME = {151},
      YEAR = {2011},
     PAGES = {107--140},
      ISSN = {0046-5755},
   MRCLASS = {22D40 (20F34 57M05)},
  MRNUMBER = {2780741},
MRREVIEWER = {William Goldman},
       DOI = {10.1007/s10711-010-9522-7},
       URL = {https://doi.org/10.1007/s10711-010-9522-7},
}

@article{GLX,
    AUTHOR = {Goldman, William M. and Lawton, Sean and Xia, Eugene Z.},
     TITLE = {The mapping class group action on {$\mathsf{SU}(3)$}-character
              varieties},
   JOURNAL = {Ergodic Theory Dynam. Systems},
  FJOURNAL = {Ergodic Theory and Dynamical Systems},
    VOLUME = {41},
      YEAR = {2021},
    NUMBER = {8},
     PAGES = {2382--2396},
      ISSN = {0143-3857},
   MRCLASS = {22F50 (22D40 37A25)},
  MRNUMBER = {4283277},
MRREVIEWER = {Cheng Zheng},
       DOI = {10.1017/etds.2020.50},
       URL = {https://doi.org/10.1017/etds.2020.50},
}

@article {LR,
    AUTHOR = {Leininger, Christopher J. and Russell, Jacob},
     TITLE = {Pseudo-{A}nosov subgroups of general fibered 3-manifold
              groups},
   JOURNAL = {Trans. Amer. Math. Soc. Ser. B},
  FJOURNAL = {Transactions of the American Mathematical Society. Series B},
    VOLUME = {10},
      YEAR = {2023},
     PAGES = {1141--1172},
   MRCLASS = {57K20 (20E08 20F65 57K30)},
  MRNUMBER = {4632569},
MRREVIEWER = {Thomas Koberda},
       DOI = {10.1090/btran/157},
       URL = {https://doi.org/10.1090/btran/157},
}

@unpublished{Mu-Pen,
	author = {Mulase,Motohico  and  Penkava, Michael},
	note = {preprint, arXiv:math/0212012},
	title = {Volume of representation varieties},
	year = {2002}}

@article {kim-lee,
    AUTHOR = {Kim, Inkang and Lee, Michelle},
     TITLE = {Separable-stable representations of a compression body},
   JOURNAL = {Topology Appl.},
  FJOURNAL = {Topology and its Applications},
    VOLUME = {206},
      YEAR = {2016},
     PAGES = {171--184},
      ISSN = {0166-8641},
   MRCLASS = {57S25 (51M10)},
  MRNUMBER = {3494440},
MRREVIEWER = {Marja K. Kankaanrinta},
       DOI = {10.1016/j.topol.2016.03.029},
       URL = {https://doi.org/10.1016/j.topol.2016.03.029},
}

@article {TWZ-corr,
    AUTHOR = {Tan, Ser Peow and Wong, Yan Loi and Zhang, Ying},
     TITLE = {Corrigendum to ``{G}eneralized {M}arkoff maps and
              {M}c{S}hane's identity'' [{A}dv. {M}ath. 217 (2008) 761--813]
              [MR2370281]},
   JOURNAL = {Adv. Math.},
  FJOURNAL = {Advances in Mathematics},
    VOLUME = {222},
      YEAR = {2009},
    NUMBER = {6},
     PAGES = {2270--2271},
      ISSN = {0001-8708,1090-2082},
   MRCLASS = {57M50 (20H10)},
  MRNUMBER = {2562784},
       DOI = {10.1016/j.aim.2009.06.024},
       URL = {https://doi.org/10.1016/j.aim.2009.06.024},
}

@article {TWZ,
    AUTHOR = {Tan, Ser Peow and Wong, Yan Loi and Zhang, Ying},
     TITLE = {Generalized {M}arkoff maps and {M}c{S}hane's identity},
   JOURNAL = {Adv. Math.},
  FJOURNAL = {Advances in Mathematics},
    VOLUME = {217},
      YEAR = {2008},
    NUMBER = {2},
     PAGES = {761--813},
      ISSN = {0001-8708,1090-2082},
   MRCLASS = {57M50 (20H10)},
  MRNUMBER = {2370281},
MRREVIEWER = {Colin\ C.\ Adams},
       DOI = {10.1016/j.aim.2007.09.004},
       URL = {https://doi.org/10.1016/j.aim.2007.09.004},
}

@article {LMP,
    AUTHOR = {Lawton, Sean and Maloni, Sara and Palesi, Fr\'{e}d\'{e}ric},
     TITLE = {Dynamics on the {$\mathrm{SU}(2,1)$}-character variety of the one-holed torus},
   JOURNAL = {Moduli},
  FJOURNAL = {Moduli},
    VOLUME = {2},
      YEAR = {2025},
      PAGES = {1--38},
      ISSN = {2949-7647,2977-1382},
   MRCLASS = {57M60 (14M35 57K20 57M50)},
  MRNUMBER = {4892272},
MRREVIEWER = {Thilo\ Kuessner},
       DOI = {10.1112/mod.2025.3},
       URL = {https://doi.org/10.1112/mod.2025.3},
}

@article {MPY,
    AUTHOR = {Maloni, Sara and Palesi, Fr\'{e}d\'{e}ric and Yang, Tian},
     TITLE = {On type-preserving representations of thrice punctured
              projective plane group},
   JOURNAL = {J. Differential Geom.},
  FJOURNAL = {Journal of Differential Geometry},
    VOLUME = {119},
      YEAR = {2021},
    NUMBER = {3},
     PAGES = {421--457},
      ISSN = {0022-040X,1945-743X},
   MRCLASS = {20F65 (57K20)},
  MRNUMBER = {4333027},
MRREVIEWER = {Lvzhou\ Chen},
       DOI = {10.4310/jdg/1635368618},
       URL = {https://doi.org/10.4310/jdg/1635368618},
}

@article {MP,
    AUTHOR = {Maloni, Sara and Palesi, Fr\'{e}d\'{e}ric},
     TITLE = {On the character variety of the three-holed projective plane},
   JOURNAL = {Conform. Geom. Dyn.},
  FJOURNAL = {Conformal Geometry and Dynamics. An Electronic Journal of the
              American Mathematical Society},
    VOLUME = {24},
      YEAR = {2020},
     PAGES = {68--108},
      ISSN = {1088-4173},
   MRCLASS = {57K20 (20E05 37A15)},
  MRNUMBER = {4071233},
MRREVIEWER = {Samuel\ Aaron\ Ballas},
       DOI = {10.1090/ecgd/349},
       URL = {https://doi.org/10.1090/ecgd/349},
}

@article {MPT,
    AUTHOR = {Maloni, Sara and Palesi, Fr\'{e}d\'{e}ric and Tan, Ser Peow},
     TITLE = {On the character variety of the four-holed sphere},
   JOURNAL = {Groups Geom. Dyn.},
  FJOURNAL = {Groups, Geometry, and Dynamics},
    VOLUME = {9},
      YEAR = {2015},
    NUMBER = {3},
     PAGES = {737--782},
      ISSN = {1661-7207,1661-7215},
   MRCLASS = {57M50 (20F65)},
  MRNUMBER = {3420542},
MRREVIEWER = {Thomas\ Koberda},
       DOI = {10.4171/GGD/326},
       URL = {https://doi.org/10.4171/GGD/326},
}

@article {sch_the,
    AUTHOR = {Scharlemann, Martin},
     TITLE = {The complex of curves on nonorientable surfaces},
   JOURNAL = {J. London Math. Soc. (2)},
  FJOURNAL = {Journal of the London Mathematical Society. Second Series},
    VOLUME = {25},
      YEAR = {1982},
    NUMBER = {1},
     PAGES = {171--184},
      ISSN = {0024-6107,1469-7750},
   MRCLASS = {57R30 (58F18)},
  MRNUMBER = {645874},
MRREVIEWER = {Elmar\ Vogt},
       DOI = {10.1112/jlms/s2-25.1.171},
       URL = {https://doi.org/10.1112/jlms/s2-25.1.171},
}

@article{gol-erg,
	author = {Goldman, William M.},
	doi = {10.2307/2952454},
	fjournal = {Annals of Mathematics. Second Series},
	issn = {0003-486X},
	journal = {Ann. of Math. (2)},
	mrclass = {58D29 (57M99 58F11)},
	mrnumber = {1491446},
	mrreviewer = {Ambar N. Sengupta},
	number = {3},
	pages = {475--507},
	title = {Ergodic theory on moduli spaces},
	url = {https://doi.org/10.2307/2952454},
	volume = {146},
	year = {1997}}

@incollection {palesi-no,
    AUTHOR = {Palesi, Fr\'{e}d\'{e}ric},
     TITLE = {Connected components of {${\rm PGL}(2,\mathbb{R})$}-representation
              spaces of non-orientable surfaces},
 BOOKTITLE = {Geometry, topology and dynamics of character varieties},
    SERIES = {Lect. Notes Ser. Inst. Math. Sci. Natl. Univ. Singap.},
    VOLUME = {23},
     PAGES = {281--295},
 PUBLISHER = {World Sci. Publ., Hackensack, NJ},
      YEAR = {2012},
      ISBN = {978-981-4401-35-7; 981-4401-35-8},
   MRCLASS = {57M05},
  MRNUMBER = {2987621},
MRREVIEWER = {Paul\ A.\ Kirk},
       DOI = {10.1142/9789814401364\{_}0008}

@incollection {T-min,
    AUTHOR = {Thurston, William P.},
     TITLE = {Minimal stretch maps between hyperbolic surfaces},
 BOOKTITLE = {Collected works of {W}illiam {P}. {T}hurston with commentary.
              {V}ol. {I}. {F}oliations, surfaces and differential geometry},
     PAGES = {533--585},
      NOTE = {1986 preprint, 1998 eprint},
 PUBLISHER = {Amer. Math. Soc., Providence, RI},
      YEAR = {[2022] \copyright 2022},
      ISBN = {978-1-4704-6388-5; [9781470468330]; [9781470451646]},
   MRCLASS = {57K20},
  MRNUMBER = {4554454},
}

@article {DT-domi,
    AUTHOR = {Deroin, Bertrand and Tholozan, Nicolas},
     TITLE = {Dominating surface group representations by {F}uchsian ones},
   JOURNAL = {Int. Math. Res. Not. IMRN},
  FJOURNAL = {International Mathematics Research Notices. IMRN},
      YEAR = {2016},
    NUMBER = {13},
     PAGES = {4145--4166},
      ISSN = {1073-7928,1687-0247},
   MRCLASS = {53C20},
  MRNUMBER = {3544632},
MRREVIEWER = {Bruno\ P.\ Zimmermann},
       DOI = {10.1093/imrn/rnv275},
       URL = {https://doi.org/10.1093/imrn/rnv275},
}

@article {GKW,
    AUTHOR = {Gu\'{e}ritaud, Fran\c{c}ois and Kassel, Fanny and Wolff,
              Maxime},
     TITLE = {Compact anti--de {S}itter 3-manifolds and folded hyperbolic
              structures on surfaces},
   JOURNAL = {Pacific J. Math.},
  FJOURNAL = {Pacific Journal of Mathematics},
    VOLUME = {275},
      YEAR = {2015},
    NUMBER = {2},
     PAGES = {325--359},
      ISSN = {0030-8730,1945-5844},
   MRCLASS = {57M07 (20H10 30F45 30F60 32G15 53C50)},
  MRNUMBER = {3347373},
MRREVIEWER = {Anthony\ Weaver},
       DOI = {10.2140/pjm.2015.275.325},
       URL = {https://doi.org/10.2140/pjm.2015.275.325},
}

@article {KTZ,
    AUTHOR = {Kim, Sungwoon and Tan, Ser Peow and Zhang, Tengren},
     TITLE = {Weakly positive and directed {A}nosov representations},
   JOURNAL = {Adv. Math.},
  FJOURNAL = {Advances in Mathematics},
    VOLUME = {408},
      YEAR = {2022},
    NUMBER = {part B},
     PAGES = {Paper No. 108611, 72},
      ISSN = {0001-8708,1090-2082},
   MRCLASS = {20C99 (20E05 57M60)},
  MRNUMBER = {4462941},
MRREVIEWER = {Yasushi\ Yamashita},
       DOI = {10.1016/j.aim.2022.108611},
       URL = {https://doi.org/10.1016/j.aim.2022.108611},
}

@article {KK,
    AUTHOR = {Kim, Inkang and Kim, Sungwoon},
     TITLE = {Primitive stable representations in higher rank semisimple
              {L}ie groups},
   JOURNAL = {Rev. Mat. Complut.},
  FJOURNAL = {Revista Matem\'{a}tica Complutense},
    VOLUME = {34},
      YEAR = {2021},
    NUMBER = {3},
     PAGES = {715--745},
      ISSN = {1139-1138,1988-2807},
   MRCLASS = {22F30 (20E05 32G15)},
  MRNUMBER = {4302239},
MRREVIEWER = {Fr\'{e}d\'{e}ric\ Palesi},
       DOI = {10.1007/s13163-020-00372-w},
       URL = {https://doi.org/10.1007/s13163-020-00372-w},
}

@article {tsou,
    AUTHOR = {Tsouvalas, Konstantinos},
     TITLE = {Quasi-isometric embeddings inapproximable by {A}nosov
              representations},
   JOURNAL = {J. Inst. Math. Jussieu},
  FJOURNAL = {Journal of the Institute of Mathematics of Jussieu. JIMJ.
              Journal de l'Institut de Math\'{e}matiques de Jussieu},
    VOLUME = {22},
      YEAR = {2023},
    NUMBER = {5},
     PAGES = {2497--2514},
      ISSN = {1474-7480,1475-3030},
   MRCLASS = {22E40 (20H10)},
  MRNUMBER = {4624969},
MRREVIEWER = {O.\ V.\ Shvartsman},
       DOI = {10.1017/S1474748021000645},
       URL = {https://doi.org/10.1017/S1474748021000645},
}

@unpublished{gui-phd,
    author = {Guichard, Olivier},
    title = {D\'eformation de sous-groupes discrets de groupes de rang un},
    note = {PhD thesis, Universit\'e' Paris 7},
	year = {2004},
}

@unpublished{ser_pri,
	author = {Series, Caroline},
	note = {preprint, arXiv:1901.01396,  to appear in New Aspects of Teichm\"uller Theory, Eds K. Ohshika, N. Kawazumi S. Yamada., Advanced Studies in Pure Mathematics},
	title = {Primitive stability and {B}owditch's {BQ} conditions are equivalent},
	year = {2024}}

@article{KLP1,
	author = {Kapovich, Michael and Leeb, Bernhard and Porti, Joan},
	doi = {10.2140/gt.2018.22.157},
	fjournal = {Geometry \& Topology},
	issn = {1465-3060},
	journal = {Geom. Topol.},
	mrclass = {53C35 (22E40 37B05 51E24)},
	mrnumber = {3720343},
	number = {1},
	pages = {157--234},
	title = {Dynamics on flag manifolds: domains of proper discontinuity and cocompactness},
	url = {https://doi.org/10.2140/gt.2018.22.157},
	volume = {22},
	year = {2018}}

@article{lee_dyn,
	 AUTHOR = {Lee, Michelle},
     TITLE = {Dynamics on the {${\rm PSL}(2,\mathbb{C})$}-character variety of a
              compression body},
   JOURNAL = {Algebr. Geom. Topol.},
  FJOURNAL = {Algebraic \& Geometric Topology},
    VOLUME = {14},
      YEAR = {2014},
    NUMBER = {4},
     PAGES = {2149--2179},
      ISSN = {1472-2747,1472-2739},
   MRCLASS = {57M60 (57M50)},
  MRNUMBER = {3331612},
MRREVIEWER = {Charalampos\ Charitos},
       DOI = {10.2140/agt.2014.14.2149},
       URL = {https://doi.org/10.2140/agt.2014.14.2149},
}

@article{lee-twisted,
	  AUTHOR = {Lee, Michelle},
     TITLE = {Dynamics on the {${\rm PSL}(2,\mathbb{C})$}-character variety of
              a twisted {$I$}-bundle},
   JOURNAL = {Groups Geom. Dyn.},
  FJOURNAL = {Groups, Geometry, and Dynamics},
    VOLUME = {9},
      YEAR = {2015},
    NUMBER = {1},
     PAGES = {187--201},
      ISSN = {1661-7207,1661-7215},
   MRCLASS = {57M60},
  MRNUMBER = {3343351},
MRREVIEWER = {Michael\ Heusener},
       DOI = {10.4171/GGD/310},
       URL = {https://doi.org/10.4171/GGD/310},
}

@inproceedings {sullivan,
    AUTHOR = {Sullivan, Dennis},
     TITLE = {On the ergodic theory at infinity of an arbitrary discrete
              group of hyperbolic motions},
 BOOKTITLE = {Riemann surfaces and related topics: {P}roceedings of the 1978
              {S}tony {B}rook {C}onference ({S}tate {U}niv. {N}ew {Y}ork,
              {S}tony {B}rook, {N}.{Y}., 1978)},
    SERIES = {Ann. of Math. Stud., No. 97},
     PAGES = {465--496},
 PUBLISHER = {Princeton Univ. Press, Princeton, NJ},
      YEAR = {1981},
      ISBN = {0-691-08264-2},
   MRCLASS = {58F11 (22E40 30C60 53C30 57R99)},
  MRNUMBER = {624833},
MRREVIEWER = {M.\ Rees},
}

@incollection {thurston2,
    AUTHOR = {Thurston, William P.},
     TITLE = {Hyperbolic structures on 3-manifolds, {II}: {S}urface groups
              and 3-manifolds which fiber over the circle},
 BOOKTITLE = {Collected works of {W}illiam {P}. {T}hurston with commentary.
              {V}ol. {II}. 3-manifolds, complexity and geometric group
              theory},
     PAGES = {79--110},
      NOTE = {August 1986 preprint, January 1998 eprint},
 PUBLISHER = {Amer. Math. Soc., Providence, RI},
      YEAR = {[2022] \copyright 2022},
      ISBN = {978-1-4704-6389-2; [9781470468347]; [9781470451646]},
   MRCLASS = {57K32},
  MRNUMBER = {4556467},
}

@article{kas_coor,
	author = {Kashaev, Rinat},
	journal = {Math. Res. Lett.},
	number = {1},
	pages = {23--36},
	title = {Coordinates for the moduli space of flat {${\rm PSL}(2,\mathbb{R})$}-connections},
	volume = {12},
	year = {2005}}

@article {mar-wol2,
    AUTHOR = {March\'{e}, Julien and Wolff, Maxime},
     TITLE = {Six-point configurations in the hyperbolic plane and
              ergodicity of the mapping class group},
   JOURNAL = {Groups Geom. Dyn.},
  FJOURNAL = {Groups, Geometry, and Dynamics},
    VOLUME = {13},
      YEAR = {2019},
    NUMBER = {2},
     PAGES = {731--766},
      ISSN = {1661-7207},
   MRCLASS = {57M05 (20H10 30F45 30F60 58D29)},
  MRNUMBER = {3950649},
MRREVIEWER = {Thomas Koberda},
       DOI = {10.4171/GGD/503},
       URL = {https://doi.org/10.4171/GGD/503},
}

@article {bow_geo,
    AUTHOR = {Bowditch, Brian H.},
     TITLE = {Geometrical finiteness with variable negative curvature},
   JOURNAL = {Duke Math. J.},
  FJOURNAL = {Duke Mathematical Journal},
    VOLUME = {77},
      YEAR = {1995},
    NUMBER = {1},
     PAGES = {229--274},
      ISSN = {0012-7094,1547-7398},
   MRCLASS = {53C21 (53C20 57R99)},
  MRNUMBER = {1317633},
MRREVIEWER = {Boris\ N.\ Apanasov},
       DOI = {10.1215/S0012-7094-95-07709-6},
       URL = {https://doi.org/10.1215/S0012-7094-95-07709-6},
}

@inproceedings{atiyah-bott,
	author = {Michael F. Atiyah and Raoul Bott},
	booktitle = {Differential analysis},
	pages = {175-186},
	publisher = {Oxford},
	title = {The index theorem for manifolds with boundary},
	year = {1964}}

@article{bowditch,
	author = {Bowditch, Brian H.},
	coden = {JLMSAK},
	fjournal = {Journal of the London Mathematical Society. Second Series},
	issn = {0024-6107},
	journal = {J. London Math. Soc. (2)},
	mrclass = {57M50 (57N05)},
	mrnumber = {MR1149015 (93f:57014)},
	mrreviewer = {F. E. A. Johnson},
	number = {3},
	pages = {553--565},
	title = {Singular {E}uclidean structures on surfaces},
	volume = {44},
	year = {1991}}

@article {MM1,
    AUTHOR = {Masur, Howard A. and Minsky, Yair N.},
     TITLE = {Geometry of the complex of curves. {I}. {H}yperbolicity},
   JOURNAL = {Invent. Math.},
  FJOURNAL = {Inventiones Mathematicae},
    VOLUME = {138},
      YEAR = {1999},
    NUMBER = {1},
     PAGES = {103--149},
      ISSN = {0020-9910,1432-1297},
   MRCLASS = {57M50 (20F67 30F60 32G15)},
  MRNUMBER = {1714338},
MRREVIEWER = {Darryl\ McCullough},
       DOI = {10.1007/s002220050343},
       URL = {https://doi.org/10.1007/s002220050343},
}

@article{goldman-symplectic,
	author = {Goldman, William M.},
	coden = {ADMTA4},
	fjournal = {Advances in Mathematics},
	issn = {0001-8708},
	journal = {Adv. in Math.},
	mrclass = {32G15 (57M05)},
	mrnumber = {MR762512 (86i:32042)},
	mrreviewer = {C. Earle},
	number = {2},
	pages = {200--225},
	title = {The symplectic nature of fundamental groups of surfaces},
	volume = {54},
	year = {1984}}

@article{gui_ano,
	author = {Guichard, Olivier and Wienhard, Anna},
	coden = {INVMBH},
	doi = {10.1007/s00222-012-0382-7},
	fjournal = {Inventiones Mathematicae},
	issn = {0020-9910},
	journal = {Invent. Math.},
	mrclass = {22F30 (32G15 53C30 53D25)},
	mrnumber = {2981818},
	mrreviewer = {Pablo Su{\'a}rez-Serrato},
	number = {2},
	pages = {357--438},
	title = {Anosov representations: domains of discontinuity and applications},
	url = {http://dx.doi.org/10.1007/s00222-012-0382-7},
	volume = {190},
	year = {2012}}

@article{H,
	author = {R. S. Hamilton},
	journal = {Bull. Amer. Math. Soc. (N. S.)},
	pages = {65-222},
	title = {The Inverse Function Theorem of {Nash} and {Moser}},
	volume = {7},
	year = {1982}}

@article{volume,
	author = {Krasnov, Kirill and Schlenker, Jean-Marc},
	coden = {CMPHAY},
	doi = {10.1007/s00220-008-0423-7},
	fjournal = {Communications in Mathematical Physics},
	issn = {0010-3616},
	journal = {Comm. Math. Phys.},
	mrclass = {53C65 (32G81 53C80 58Dxx)},
	mrnumber = {MR2386723},
	number = {3},
	pages = {637--668},
	title = {On the renormalized volume of hyperbolic 3-manifolds},
	url = {http://dx.doi.org/10.1007/s00220-008-0423-7},
	volume = {279},
	year = {2008}}

@unpublished{tholozan-wang,
	author = {Tholozan, Nicolas and Wang, Tianqi},
	note = {preprint arXiv:2307.02934},
	title = {Simple Anosov representations of closed surface groups},
	year = {2023}}

@article {GLW,
    AUTHOR = {Guichard, Olivier and Labourie, Fran\c{c}ois and Wienhard,
              Anna},
     TITLE = {Positivity and representations of surface groups},
   JOURNAL = {Forum Math. Pi},
  FJOURNAL = {Forum of Mathematics. Pi},
    VOLUME = {14},
      YEAR = {2026},
     PAGES = {Paper No. e6, 38},
      ISSN = {2050-5086},
   MRCLASS = {22E40 (22E15 22F30 57S20)},
  MRNUMBER = {5029150},
       DOI = {10.1017/fmp.2025.10022},
       URL = {https://doi.org/10.1017/fmp.2025.10022},
}

@article {labourie-mcshane,
    AUTHOR = {Labourie, Fran\c{c}ois and McShane, Gregory},
     TITLE = {Cross ratios and identities for higher
              {T}eichm\"{u}ller-{T}hurston theory},
   JOURNAL = {Duke Math. J.},
  FJOURNAL = {Duke Mathematical Journal},
    VOLUME = {149},
      YEAR = {2009},
    NUMBER = {2},
     PAGES = {279--345},
      ISSN = {0012-7094,1547-7398},
   MRCLASS = {32G15 (32G07)},
  MRNUMBER = {2541705},
MRREVIEWER = {Javier\ Aramayona},
       DOI = {10.1215/00127094-2009-040},
       URL = {https://doi.org/10.1215/00127094-2009-040},
}

@article{N,
	author = {L. Nirenberg},
	journal = {Comm. Pure Appl. Math},
	pages = {337-394},
	title = {The {Weyl} and {Minkowski} Problem in Differential Geometry in the Large},
	volume = {6},
	year = {1953}}

@book{O,
	author = {B. O'Neill},
	publisher = {Academic Press},
	title = {Semi-Riemannian Geometry},
	year = {1983}}

@book{otal-hyperbolisation,
	    AUTHOR = {Otal, Jean-Pierre},
     TITLE = {The hyperbolization theorem for fibered 3-manifolds},
    SERIES = {SMF/AMS Texts and Monographs},
    VOLUME = {7},
      NOTE = {Translated from the 1996 French original by Leslie D. Kay},
 PUBLISHER = {American Mathematical Society, Providence, RI; Soci\'{e}t\'{e}
              Math\'{e}matique de France, Paris},
      YEAR = {2001},
     PAGES = {xiv+126},
      ISBN = {0-8218-2153-9},
   MRCLASS = {57M50 (20E08 30F40 30F60 57M07)},
  MRNUMBER = {1855976},
}

@book{Po,
	author = {Aleksei V. Pogorelov},
	note = {Translations of Mathematical Monographs. Vol. 35},
	publisher = {American Mathematical Society},
	title = {Extrinsic Geometry of Convex Surfaces},
	year = {1973}}

@book{S,
	author = {M. Spivak},
	publisher = {Publish or perish},
	title = {A comprehensive introduction to geometry, Vol.I-V},
	year = {1970-1975}}

@book {Knapp,
    AUTHOR = {Knapp, Anthony W.},
     TITLE = {Lie groups beyond an introduction},
    SERIES = {Progress in Mathematics},
    VOLUME = {140},
   EDITION = {Second},
 PUBLISHER = {Birkh\"{a}user Boston, Inc., Boston, MA},
      YEAR = {2002},
     PAGES = {xviii+812},
      ISBN = {0-8176-4259-5},
   MRCLASS = {22-01},
  MRNUMBER = {1920389},
}

@article{bow_mar,
	author = {Bowditch, Brian H.},
	coden = {PLMTAL},
	doi = {10.1112/S0024611598000604},
	fjournal = {Proceedings of the London Mathematical Society. Third Series},
	issn = {0024-6115},
	journal = {Proc. London Math. Soc. (3)},
	mrclass = {57M50 (11D25)},
	mrnumber = {1643429 (99f:57014)},
	mrreviewer = {Athanase Papadopoulos},
	number = {3},
	pages = {697--736},
	title = {Markoff triples and quasi-{F}uchsian groups},
	url = {http://dx.doi.org/10.1112/S0024611598000604},
	volume = {77},
	year = {1998}}

@article{bro_sel,
	author = {Bromberg, Kenneth W. and Holt, John},
	coden = {JDGEAS},
	fjournal = {Journal of Differential Geometry},
	issn = {0022-040X},
	journal = {J. Differential Geom.},
	mrclass = {57M50 (57N10)},
	mrnumber = {1871491 (2002i:57015)},
	mrreviewer = {James W. Anderson},
	number = {1},
	pages = {47--65},
	title = {Self-bumping of deformation spaces of hyperbolic 3-manifolds},
	url = {http://projecteuclid.org/getRecord?id=euclid.jdg/1090348089},
	volume = {57},
	year = {2001}}

@article {KP,
    AUTHOR = {Kassel, Fanny and Potrie, Rafael},
     TITLE = {Eigenvalue gaps for hyperbolic groups and semigroups},
   JOURNAL = {J. Mod. Dyn.},
  FJOURNAL = {Journal of Modern Dynamics},
    VOLUME = {18},
      YEAR = {2022},
     PAGES = {161--208},
      ISSN = {1930-5311,1930-532X},
   MRCLASS = {20M30 (20F67 22E40 37D25 37D30 37H15)},
  MRNUMBER = {4446005},
MRREVIEWER = {Khosro\ Tajbakhsh},
       DOI = {10.3934/jmd.2022008},
       URL = {https://doi.org/10.3934/jmd.2022008},
}

@article {BPS,
    AUTHOR = {Bochi, Jairo and Potrie, Rafael and Sambarino, Andr\'{e}s},
     TITLE = {Anosov representations and dominated splittings},
   JOURNAL = {J. Eur. Math. Soc. (JEMS)},
  FJOURNAL = {Journal of the European Mathematical Society (JEMS)},
    VOLUME = {21},
      YEAR = {2019},
    NUMBER = {11},
     PAGES = {3343--3414},
      ISSN = {1435-9855,1435-9863},
   MRCLASS = {22E40 (20F67 37B99 37D30 53C35)},
  MRNUMBER = {4012341},
MRREVIEWER = {Alejandro\ Ucan-Puc},
       DOI = {10.4171/JEMS/905},
       URL = {https://doi.org/10.4171/JEMS/905},
}

@article {GGKW,
    AUTHOR = {Gu\'{e}ritaud, Fran\c{c}ois and Guichard, Olivier and Kassel,
              Fanny and Wienhard, Anna},
     TITLE = {Anosov representations and proper actions},
   JOURNAL = {Geom. Topol.},
  FJOURNAL = {Geometry \& Topology},
    VOLUME = {21},
      YEAR = {2017},
    NUMBER = {1},
     PAGES = {485--584},
      ISSN = {1465-3060,1364-0380},
   MRCLASS = {37D40 (20F67 22E40 57S30)},
  MRNUMBER = {3608719},
       DOI = {10.2140/gt.2017.21.485},
       URL = {https://doi.org/10.2140/gt.2017.21.485},
}

@article {LaSi,
    AUTHOR = {Lawton, Sean and Sikora, Adam S.},
     TITLE = {Varieties of characters},
   JOURNAL = {Algebr. Represent. Theory},
  FJOURNAL = {Algebras and Representation Theory},
    VOLUME = {20},
      YEAR = {2017},
    NUMBER = {5},
     PAGES = {1133--1141},
      ISSN = {1386-923X,1572-9079},
   MRCLASS = {20C15 (14D20 14L30)},
  MRNUMBER = {3707908},
MRREVIEWER = {John\ T.\ Cullinan},
       DOI = {10.1007/s10468-017-9679-y},
       URL = {https://doi.org/10.1007/s10468-017-9679-y},
}

@article {CFLO,
    AUTHOR = {Casimiro, Ana and Florentino, Carlos and Lawton, Sean and
              Oliveira, Andr\'{e}},
     TITLE = {Topology of moduli spaces of free group representations in
              real reductive groups},
   JOURNAL = {Forum Math.},
  FJOURNAL = {Forum Mathematicum},
    VOLUME = {28},
      YEAR = {2016},
    NUMBER = {2},
     PAGES = {275--294},
      ISSN = {0933-7741,1435-5337},
   MRCLASS = {14D20 (14L30 14P25 20G20 32G13)},
  MRNUMBER = {3466569},
MRREVIEWER = {Sangjib\ Kim},
       DOI = {10.1515/forum-2014-0049},
       URL = {https://doi.org/10.1515/forum-2014-0049},
}

@book{fri_vor,
	author = {Fricke, Robert and Klein, Felix},
	publisher = {Teubner},
	title = {Vorlesungen {\"u}ber die Theorie der automorphen Funktionen},
	year = {1897}}

@article{goldman-flows,
	author = {Goldman, William M.},
	coden = {INVMBH},
	doi = {10.1007/BF01389091},
	fjournal = {Inventiones Mathematicae},
	issn = {0020-9910},
	journal = {Invent. Math.},
	mrclass = {32G15 (57M05)},
	mrnumber = {846929 (87j:32069)},
	mrreviewer = {C. Earle},
	number = {2},
	pages = {263--302},
	title = {Invariant functions on {L}ie groups and {H}amiltonian flows of surface group representations},
	url = {http://dx.doi.org/10.1007/BF01389091},
	volume = {85},
	year = {1986}}

@article{hit_lie,
	author = {Hitchin, N.J.},
	journal = {Topology},
	number = {3},
	pages = {449--473},
	publisher = {Elsevier},
	title = {Lie groups and Teichm{\"u}ller space},
	volume = {31},
	year = {1992}}

@article{lab_ano,
	author = {Labourie, Fran{\c{c}}ois},
	doi = {10.1007/s00222-005-0487-3},
	journal = {Inventiones Mathematicae},
	number = {1},
	pages = {51--114},
	title = {Anosov flows, surface groups and curves in projective space},
	volume = {165},
	year = {2006}}

@article{mil_ont,
    AUTHOR = {Milnor, John},
     TITLE = {On the existence of a connection with curvature zero},
   JOURNAL = {Comment. Math. Helv.},
  FJOURNAL = {Commentarii Mathematici Helvetici},
    VOLUME = {32},
      YEAR = {1958},
     PAGES = {215--223},
      ISSN = {0010-2571,1420-8946},
   MRCLASS = {53.00},
  MRNUMBER = {95518},
MRREVIEWER = {L.\ Auslander},
       DOI = {10.1007/BF02564579},
       URL = {https://doi.org/10.1007/BF02564579},
}

@unpublished{geom,
	author = {Soma, Teruhiko},
	note = {arXiv:math/0702725},
	title = {Geometric limits of quasi-Fuchsian groups},
	year = {2007}}

@book {thu_geometry,
    AUTHOR = {Thurston, William P.},
     TITLE = {The geometry and topology of three-manifolds. {V}ol. {IV}},
      NOTE = {Edited and with a preface by Steven P. Kerckhoff and a chapter
              by J. W. Milnor},
 PUBLISHER = {American Mathematical Society, Providence, RI},
      YEAR = {[2022] \copyright 2022},
     PAGES = {xvii+316},
      ISBN = {978-1-4704-6391-5; [9781470468361]; [9781470451646]},
   MRCLASS = {57K32 (53C15 57R30)},
  MRNUMBER = {4554426},
MRREVIEWER = {Thilo\ Kuessner},
}

@article{PS,
	author = {Potrie, Rafael and Sambarino, Andr{\'e}s},
	journal = {Inventiones mathematicae},
	number = {3},
	pages = {885--925},
	publisher = {Springer},
	title = {Eigenvalues and entropy of a Hitchin representation},
	volume = {209},
	year = {2017}}

@article {gelander,
    AUTHOR = {Gelander, Tsachik},
     TITLE = {{${\rm Aut}(F_n)$} actions on representation spaces},
   JOURNAL = {J. Algebra},
  FJOURNAL = {Journal of Algebra},
    VOLUME = {656},
      YEAR = {2024},
     PAGES = {206--225},
      ISSN = {0021-8693,1090-266X},
   MRCLASS = {20E36 (20C15 20E05 22C05 22E46)},
  MRNUMBER = {4759502},
MRREVIEWER = {Yassine\ Guerch},
       DOI = {10.1016/j.jalgebra.2024.04.005},
       URL = {https://doi.org/10.1016/j.jalgebra.2024.04.005},
}

@incollection{can_dyn,
	author = {Canary, Richard D.},
	booktitle = {Handbook of group actions. {V}ol. {II}},
	mrclass = {53C35 (20F28 20F67 37C85 57M50)},
	mrnumber = {3382028},
	mrreviewer = {Caglar Uyanik},
	pages = {175--200},
	publisher = {Int. Press, Somerville, MA},
	series = {Adv. Lect. Math. (ALM)},
	title = {Dynamics on character varieties: a survey},
	volume = {32},
	year = {2015}}

@article{yan_ont,
	author = {Yang, Tian},
	journal = {Geometry \& Topology},
	number = {2},
	pages = {1213--1255},
	publisher = {Mathematical Sciences Publishers},
	title = {On type-preserving representations of the four-punctured sphere group},
	volume = {20},
	year = {2016}}

@article {ohshika,
    AUTHOR = {Ohshika, Ken'ichi},
     TITLE = {Realising end invariants by limits of minimally parabolic,
              geometrically finite groups},
   JOURNAL = {Geom. Topol.},
  FJOURNAL = {Geometry \& Topology},
    VOLUME = {15},
      YEAR = {2011},
    NUMBER = {2},
     PAGES = {827--890},
      ISSN = {1465-3060,1364-0380},
   MRCLASS = {57M50 (30F40)},
  MRNUMBER = {2821565},
MRREVIEWER = {Bruno\ P.\ Zimmermann},
       DOI = {10.2140/gt.2011.15.827},
       URL = {https://doi.org/10.2140/gt.2011.15.827},
}

@article {namazi-souto,
    AUTHOR = {Namazi, Hossein and Souto, Juan},
     TITLE = {Non-realizability and ending laminations: proof of the density  conjecture},
   JOURNAL = {Acta Math.},
  FJOURNAL = {Acta Mathematica},
    VOLUME = {209},
      YEAR = {2012},
    NUMBER = {2},
     PAGES = {323--395},
      ISSN = {0001-5962,1871-2509},
   MRCLASS = {30F40 (20H10 57M50)},
  MRNUMBER = {3001608},
MRREVIEWER = {Majid\ Heydarpour},
       DOI = {10.1007/s11511-012-0088-0},
       URL = {https://doi.org/10.1007/s11511-012-0088-0},
}

@article {JKLO,
    AUTHOR = {Jeon, Woojin and Kim, Inkang and Lecuire, Cyril and Ohshika, Ken'ichi},
     TITLE = {Primitive stable representations of free {K}leinian groups},
   JOURNAL = {Israel J. Math.},
  FJOURNAL = {Israel Journal of Mathematics},
    VOLUME = {199},
      YEAR = {2014},
    NUMBER = {2},
     PAGES = {841--866},
      ISSN = {0021-2172,1565-8511},
   MRCLASS = {30F40 (20C15 20H10)},
  MRNUMBER = {3219560},
MRREVIEWER = {J.\ D.\ Dixon},
       DOI = {10.1007/s11856-013-0062-3},
       URL = {https://doi.org/10.1007/s11856-013-0062-3},
}

@article{BIW,
   AUTHOR = {Burger, Marc and Iozzi, Alessandra and Wienhard, Anna},
     TITLE = {Surface group representations with maximal {T}oledo invariant},
   JOURNAL = {Ann. of Math. (2)},
  FJOURNAL = {Annals of Mathematics. Second Series},
    VOLUME = {172},
      YEAR = {2010},
    NUMBER = {1},
     PAGES = {517--566},
      ISSN = {0003-486X,1939-8980},
   MRCLASS = {22E41 (20F67 57M07)},
  MRNUMBER = {2680425},
       DOI = {10.4007/annals.2010.172.517},
       URL = {https://doi.org/10.4007/annals.2010.172.517},
}

@article{wood_bundles,
    AUTHOR = {Wood, John W.},
     TITLE = {Bundles with totally disconnected structure group},
   JOURNAL = {Comment. Math. Helv.},
  FJOURNAL = {Commentarii Mathematici Helvetici},
    VOLUME = {46},
      YEAR = {1971},
     PAGES = {257--273},
      ISSN = {0010-2571,1420-8946},
   MRCLASS = {32L05 (55F25)},
  MRNUMBER = {293655},
MRREVIEWER = {R.\ L. E. Schwarzenberger},
       DOI = {10.1007/BF02566843},
       URL = {https://doi.org/10.1007/BF02566843},
}

@article{mar-wol,
	author = {March\'e, Julien and Wolff, Maxime},
	journal = {Duke Math. J.},
	number = {2},
	pages = {371--412},
	title = {The modular action on {${\rm PSL}(2,\mathbb{R})$}-characters in genus 2},
	volume = {165},
	year = {2016}}

@article{canary-storm,
    AUTHOR = {Canary, Richard D. and Storm, Peter A.},
     TITLE = {Moduli spaces of hyperbolic 3-manifolds and dynamics on
              character varieties},
   JOURNAL = {Comment. Math. Helv.},
  FJOURNAL = {Commentarii Mathematici Helvetici. A Journal of the Swiss
              Mathematical Society},
    VOLUME = {88},
      YEAR = {2013},
    NUMBER = {1},
     PAGES = {221--251},
      ISSN = {0010-2571,1420-8946},
   MRCLASS = {57M50},
  MRNUMBER = {3008919},
       DOI = {10.4171/CMH/284},
       URL = {https://doi.org/10.4171/CMH/284},
}

@book {CDP,
    AUTHOR = {Coornaert, Michel and Delzant, Thomas and Papadopoulos, Athanase},
     TITLE = {G\'{e}om\'{e}trie et th\'{e}orie des groupes},
    SERIES = {Lecture Notes in Mathematics},
    VOLUME = {1441},
      NOTE = {Les groupes hyperboliques de Gromov. [Gromov hyperbolic
              groups],
              With an English summary},
 PUBLISHER = {Springer-Verlag, Berlin},
      YEAR = {1990},
     PAGES = {x+165},
      ISBN = {3-540-52977-2},
   MRCLASS = {57M07 (20F32)},
  MRNUMBER = {1075994},
MRREVIEWER = {John\ Meier},
}

@book{bridson-haefliger,
    AUTHOR = {Bridson, Martin R. and Haefliger, Andr\'{e}},
     TITLE = {Metric spaces of non-positive curvature},
    SERIES = {Grundlehren der mathematischen Wissenschaften [Fundamental
              Principles of Mathematical Sciences]},
    VOLUME = {319},
 PUBLISHER = {Springer-Verlag, Berlin},
      YEAR = {1999},
     PAGES = {xxii+643},
      ISBN = {3-540-64324-9},
   MRCLASS = {53C23 (20F65 53C70 57M07)},
  MRNUMBER = {1744486},
MRREVIEWER = {Athanase\ Papadopoulos},
       DOI = {10.1007/978-3-662-12494-9},
       URL = {https://doi.org/10.1007/978-3-662-12494-9},
}

@article {marden_geo,
    AUTHOR = {Marden, Albert},
     TITLE = {The geometry of finitely generated kleinian groups},
   JOURNAL = {Ann. of Math. (2)},
  FJOURNAL = {Annals of Mathematics. Second Series},
    VOLUME = {99},
      YEAR = {1974},
     PAGES = {383--462},
      ISSN = {0003-486X},
   MRCLASS = {30A58 (22E40)},
  MRNUMBER = {349992},
MRREVIEWER = {C.\ Earle},
       DOI = {10.2307/1971059},
       URL = {https://doi.org/10.2307/1971059},
}

@article{wang2021anosov,
    AUTHOR = {Wang, Tianqi},
     TITLE = {Anosov representations over closed subflows},
   JOURNAL = {Trans. Amer. Math. Soc.},
  FJOURNAL = {Transactions of the American Mathematical Society},
    VOLUME = {376},
      YEAR = {2023},
    NUMBER = {9},
     PAGES = {6177--6214},
      ISSN = {0002-9947,1088-6850},
   MRCLASS = {53C30 (20H10 22E40 37D20 37D40)},
  MRNUMBER = {4630773},
MRREVIEWER = {Sadayoshi\ Kojima},
       DOI = {10.1090/tran/8920},
       URL = {https://doi.org/10.1090/tran/8920},
}

@article {lee-xu,
    AUTHOR = {Lee, Jaejeong and Xu, Binbin},
     TITLE = {Bowditch's {Q}-conditions and {M}insky's primitive stability},
   JOURNAL = {Trans. Amer. Math. Soc.},
  FJOURNAL = {Transactions of the American Mathematical Society},
    VOLUME = {373},
      YEAR = {2020},
    NUMBER = {2},
     PAGES = {1265--1305},
      ISSN = {0002-9947,1088-6850},
   MRCLASS = {20E05 (57M60)},
  MRNUMBER = {4068264},
MRREVIEWER = {Doron\ Puder},
       DOI = {10.1090/tran/7953},
       URL = {https://doi.org/10.1090/tran/7953},
}

@article {brown-anosov,
    AUTHOR = {Brown, Richard J.},
     TITLE = {Anosov mapping class actions on the {${\rm
              SU}(2)$}-representation variety of a punctured torus},
   JOURNAL = {Ergodic Theory Dynam. Systems},
  FJOURNAL = {Ergodic Theory and Dynamical Systems},
    VOLUME = {18},
      YEAR = {1998},
    NUMBER = {3},
     PAGES = {539--554},
      ISSN = {0143-3857,1469-4417},
   MRCLASS = {58F15 (57S15 58D29 58F11 58F27)},
  MRNUMBER = {1631712},
MRREVIEWER = {Jonathan\ A.\ Poritz},
       DOI = {10.1017/S0143385798108258},
       URL = {https://doi.org/10.1017/S0143385798108258},
}

@article {FGLM-nonerg,
    AUTHOR = {Forni, Giovanni and Goldman, William M. and Lawton, Sean and
              Matheus, Carlos},
     TITLE = {Non-ergodicity on {$\rm SU(2)$} and {$\rm SU(3)$} character
              varieties of the once-punctured torus},
   JOURNAL = {Ann. H. Lebesgue},
  FJOURNAL = {Annales Henri Lebesgue},
    VOLUME = {7},
      YEAR = {2024},
     PAGES = {1099--1130},
      ISSN = {2644-9463},
   MRCLASS = {14M35 (22D40 37A25 37J40 53D30 70H08)},
  MRNUMBER = {4799915},
MRREVIEWER = {Thomas\ Ward},
       DOI = {10.5802/ahl.216},
       URL = {https://doi.org/10.5802/ahl.216},
}

@article {kam-theory,
    AUTHOR = {Arnol'd, Vladimir I.},
     TITLE = {Small denominators and problems of stability of motion in
              classical and celestial mechanics},
   JOURNAL = {Uspehi Mat. Nauk},
  FJOURNAL = {Akademija Nauk SSSR i Moskovskoe Matemati\v ceskoe Ob\v s\v
              cestvo. Uspehi Matemati\v ceskih Nauk},
    VOLUME = {18},
      YEAR = {1963},
    NUMBER = {6(114)},
     PAGES = {91--192},
      ISSN = {0042-1316},
   MRCLASS = {85.57 (57.48)},
  MRNUMBER = {170705},
MRREVIEWER = {J.\ Moser},
}

@phdthesis{lee_thesis,
	author = {Lee, Michelle},
	school = {Universityof Michigan},
	title = {Dynamics on $\mathsf{PSL}(2, \mathbb{C})$--character varieties of certain
hyperbolic $3$--manifolds},
	year = {2012}}

@book {FM,
    AUTHOR = {Farb, Benson and Margalit, Dan},
     TITLE = {A primer on mapping class groups},
    SERIES = {Princeton Mathematical Series},
    VOLUME = {49},
 PUBLISHER = {Princeton University Press, Princeton, NJ},
      YEAR = {2012},
     PAGES = {xiv+472},
      ISBN = {978-0-691-14794-9},
   MRCLASS = {57M50 (20F36 20F65 57M07 57N05)},
  MRNUMBER = {2850125},
MRREVIEWER = {Stephen\ P.\ Humphries},
}

@article {mcmullen_complex,
    AUTHOR = {McMullen, Curtis T.},
     TITLE = {Complex earthquakes and {T}eichm\"{u}ller theory},
   JOURNAL = {J. Amer. Math. Soc.},
  FJOURNAL = {Journal of the American Mathematical Society},
    VOLUME = {11},
      YEAR = {1998},
    NUMBER = {2},
     PAGES = {283--320},
      ISSN = {0894-0347,1088-6834},
   MRCLASS = {32G15 (20H10 30F60 57N05)},
  MRNUMBER = {1478844},
MRREVIEWER = {Darryl\ McCullough},
       DOI = {10.1090/S0894-0347-98-00259-8},
       URL = {https://doi.org/10.1090/S0894-0347-98-00259-8},
}

@article {magid,
    AUTHOR = {Magid, Aaron D.},
     TITLE = {Deformation spaces of {K}leinian surface groups are not
              locally connected},
   JOURNAL = {Geom. Topol.},
  FJOURNAL = {Geometry \& Topology},
    VOLUME = {16},
      YEAR = {2012},
    NUMBER = {3},
     PAGES = {1247--1320},
      ISSN = {1465-3060,1364-0380},
   MRCLASS = {57M50 (30F40)},
  MRNUMBER = {2967052},
MRREVIEWER = {Yasushi\ Yamashita},
       DOI = {10.2140/gt.2012.16.1247},
       URL = {https://doi.org/10.2140/gt.2012.16.1247},
}

@article{saadi-nonerg,
	author = {Saadi, Fayssal},
	fjournal = {Commentarii Mathematici Helvetici},
	issn = {1420-8946},
	journal = {Comment. Math. Helv.},
	title = {Non-ergodicity on the {$\rm SU(2)$}-character varieties},
	year = {2025},
    note = {published online first, arXiv:2404.00372v2}}

@article {gelander-minsky,
    AUTHOR = {Gelander, Tsachik and Minsky, Yair},
     TITLE = {The dynamics of {${\rm Aut}(F_n)$} on redundant
              representations},
   JOURNAL = {Groups Geom. Dyn.},
  FJOURNAL = {Groups, Geometry, and Dynamics},
    VOLUME = {7},
      YEAR = {2013},
    NUMBER = {3},
     PAGES = {557--576},
      ISSN = {1661-7207,1661-7215},
   MRCLASS = {37A25 (20E05 20F28 22D40 57M50)},
  MRNUMBER = {3095709},
       DOI = {10.4171/GGD/197},
       URL = {https://doi.org/10.4171/GGD/197},
}

@incollection {lub-survey,
    AUTHOR = {Lubotzky, Alexander},
     TITLE = {Dynamics of {${\rm Aut}(F_N)$} actions on group presentations
              and representations},
 BOOKTITLE = {Geometry, rigidity, and group actions},
    SERIES = {Chicago Lectures in Math.},
     PAGES = {609--643},
 PUBLISHER = {Univ. Chicago Press, Chicago, IL},
      YEAR = {2011},
      ISBN = {978-0-226-23788-6; 0-226-23788-5},
   MRCLASS = {20F28 (37C85 57M07)},
  MRNUMBER = {2807845},
MRREVIEWER = {Alexander\ Fel\cprime shtyn},
}

@unpublished{schlich-equivalence,
	author = {Schlich, Suzanne},
	note = {preprint, arXiv:2511.10551},
	title = {Bowditch representations in {G}romov-hyperbolic spaces: characterizations, dynamics of $\mathrm{Out}({F}_2)$ and recognition},
	year = {2025}}

@book{dehn_papers_1987,
	address = {New York, NY},
	title = {Papers on {Group} {Theory} and {Topology}},
	copyright = {http://www.springer.com/tdm},
	isbn = {978-1-4612-9107-7 978-1-4612-4668-8},
	url = {http://link.springer.com/10.1007/978-1-4612-4668-8},
	doi = {10.1007/978-1-4612-4668-8},
	language = {en},
	urldate = {2026-06-12},
	publisher = {Springer New York},
	author = {Dehn, Max},
	year = {1987},
}

@phdthesis{lupi,
	author = {Lupi, Damiano},
	school = {University of Warwick},
	title = {Primitive stability and {B}owditch conditions for rank 2 free group representations},
	year = {2015}}

@article {bourdon,
    author = {Bourdon, Marc},
     TITLE = {Structure conforme au bord et flot g\'{e}od\'{e}sique d'un
              {${\rm CAT}(-1)$}-espace},
   JOURNAL = {Enseign. Math. (2)},
  FJOURNAL = {L'Enseignement Math\'{e}matique. Revue Internationale. 2e
              S\'{e}rie},
    VOLUME = {41},
      YEAR = {1995},
    NUMBER = {1-2},
     PAGES = {63--102},
      ISSN = {0013-8584},
   MRCLASS = {58F17 (20F05 20F32 57S30)},
  MRNUMBER = {1341941},
MRREVIEWER = {Michel\ Coornaert}}

@unpublished{remfort-aurat,
	author = {Remfort-Aurat, Ulysse},
    note = {preprint, arXiv:2605.28891},
    title = {Simple-stable representations of surface groups in $\mathsf{PU}(2,1)$},
	year = {2026}}

@phdthesis{bouilly,
	author = {Bouilly, Yohann},
	school = {Universit\'e de Strasbourg},
	title = {Ergodic actions of Torelli subgroup on compact character varieties and pure modular group on relative character varieties and topological dynamics of modular group},
	year = {2021}}

@article {chillingworth,
    AUTHOR = {Chillingworth, David R. J.},
     TITLE = {A finite set of generators for the homeotopy group of a
              non-orientable surface},
   JOURNAL = {Proc. Cambridge Philos. Soc.},
  FJOURNAL = {Proceedings of the Cambridge Philosophical Society},
    VOLUME = {65},
      YEAR = {1969},
     PAGES = {409--430},
      ISSN = {0008-1981},
   MRCLASS = {57.47},
  MRNUMBER = {235583},
MRREVIEWER = {H.\ R.\ Gluck},
       DOI = {10.1017/s0305004100044388},
       URL = {https://doi.org/10.1017/s0305004100044388},
}

@article {lickorish,
    AUTHOR = {Lickorish, W. B. R.},
     TITLE = {Homeomorphisms of non-orientable two-manifolds},
   JOURNAL = {Proc. Cambridge Philos. Soc.},
  FJOURNAL = {Proceedings of the Cambridge Philosophical Society},
    VOLUME = {59},
      YEAR = {1963},
     PAGES = {307--317},
      ISSN = {0008-1981},
   MRCLASS = {54.75},
  MRNUMBER = {145498},
MRREVIEWER = {H.\ R.\ Gluck},
       DOI = {10.1017/s0305004100036926},
       URL = {https://doi.org/10.1017/s0305004100036926},
}

@article {lickorish-note,
    AUTHOR = {Lickorish, W. B. R.},
     TITLE = {On the homeomorphisms of a non-orientable surface},
   JOURNAL = {Proc. Cambridge Philos. Soc.},
  FJOURNAL = {Proceedings of the Cambridge Philosophical Society},
    VOLUME = {61},
      YEAR = {1965},
     PAGES = {61--64},
      ISSN = {0008-1981},
   MRCLASS = {54.75},
  MRNUMBER = {169221},
MRREVIEWER = {H.\ R.\ Gluck},
       DOI = {10.1017/s0305004100038640},
       URL = {https://doi.org/10.1017/s0305004100038640},
}

@article {palesi-psl,
    AUTHOR = {Palesi, Fr\'{e}d\'{e}ric},
     TITLE = {Connected components of spaces of representations of
              non-orientable surfaces},
   JOURNAL = {Comm. Anal. Geom.},
  FJOURNAL = {Communications in Analysis and Geometry},
    VOLUME = {18},
      YEAR = {2010},
    NUMBER = {1},
     PAGES = {195--217},
      ISSN = {1019-8385,1944-9992},
   MRCLASS = {53D30 (57M10 57N05)},
  MRNUMBER = {2660463},
MRREVIEWER = {Lee-Peng\ Teo},
       DOI = {10.4310/CAG.2010.v18.n1.a8},
       URL = {https://doi.org/10.4310/CAG.2010.v18.n1.a8},
}

@book {erg-decomp,
    AUTHOR = {Furstenberg, Hillel},
     TITLE = {Recurrence in ergodic theory and combinatorial number theory},
      NOTE = {M. B. Porter Lectures},
 PUBLISHER = {Princeton University Press, Princeton, NJ},
      YEAR = {1981},
     PAGES = {xi+203},
      ISBN = {0-691-08269-3},
   MRCLASS = {28D05 (10K10 10L10 54H20)},
  MRNUMBER = {603625},
MRREVIEWER = {Michael\ Keane},
}

@article {nonor_mcg,
    AUTHOR = {Mangler, W.},
     TITLE = {Die {K}lassen von topologischen {A}bbildungen einer
              geschlossenen {F}l\"ache auf sich},
   JOURNAL = {Math. Z.},
  FJOURNAL = {Mathematische Zeitschrift},
    VOLUME = {44},
      YEAR = {1939},
    NUMBER = {1},
     PAGES = {541--554},
      ISSN = {0025-5874,1432-1823},
   MRCLASS = {99-04},
  MRNUMBER = {1545786},
       DOI = {10.1007/BF01210672},
       URL = {https://doi.org/10.1007/BF01210672},
}

@book {peter-weyl,
    AUTHOR = {Applebaum, David},
     TITLE = {Probability on compact {L}ie groups},
    SERIES = {Probability Theory and Stochastic Modelling},
    VOLUME = {70},
      NOTE = {With a foreword by Herbert Heyer},
 PUBLISHER = {Springer, Cham},
      YEAR = {2014},
     PAGES = {xxvi+217},
      ISBN = {978-3-319-07841-0; 978-3-319-07842-7},
   MRCLASS = {60B15 (22E30 43A05 43A30 43A77 62G07)},
  MRNUMBER = {3243650},
MRREVIEWER = {Maria\ Gordina},
       DOI = {10.1007/978-3-319-07842-7},
       URL = {https://doi.org/10.1007/978-3-319-07842-7},
}

@article {flo-law,
    AUTHOR = {Florentino, Carlos and Lawton, Sean},
     TITLE = {Topology of character varieties of {A}belian groups},
   JOURNAL = {Topology Appl.},
  FJOURNAL = {Topology and its Applications},
    VOLUME = {173},
      YEAR = {2014},
     PAGES = {32--58},
      ISSN = {0166-8641,1879-3207},
   MRCLASS = {14L35 (22E46)},
  MRNUMBER = {3227204},
MRREVIEWER = {Wen-Wei\ Li},
       DOI = {10.1016/j.topol.2014.05.009},
       URL = {https://doi.org/10.1016/j.topol.2014.05.009},
}

@article {MB-domination,
    AUTHOR = {Martin-Baillon, Florestan},
     TITLE = {Dominating {${\rm CAT}$} {$(-1)$} surface group
              representations by {F}uchsian ones},
   JOURNAL = {Geom. Dedicata},
  FJOURNAL = {Geometriae Dedicata},
    VOLUME = {217},
      YEAR = {2023},
    NUMBER = {3},
     PAGES = {Paper No. 45, 19},
      ISSN = {0046-5755,1572-9168},
   MRCLASS = {51F99 (20F34 57K20)},
  MRNUMBER = {4553987},
MRREVIEWER = {David\ Matthew\ Freeman},
       DOI = {10.1007/s10711-023-00782-2},
       URL = {https://doi.org/10.1007/s10711-023-00782-2},
}

@article {Tykhyy,
    AUTHOR = {Tykhyy, Yuriy},
     TITLE = {Finite orbits of monodromies of rank two {F}uchsian systems},
   JOURNAL = {Anal. Math. Phys.},
  FJOURNAL = {Analysis and Mathematical Physics},
    VOLUME = {12},
      YEAR = {2022},
    NUMBER = {5},
     PAGES = {Paper No. 122, 42},
      ISSN = {1664-2368,1664-235X},
   MRCLASS = {34M55 (15A30 20F36)},
  MRNUMBER = {4483398},
       DOI = {10.1007/s13324-022-00698-2},
       URL = {https://doi.org/10.1007/s13324-022-00698-2},
}

@article {Souto-Storm,
    AUTHOR = {Souto, Juan and Storm, Peter},
     TITLE = {Dynamics of the mapping class group action on the variety of
              {${\rm PSL}_2\mathbb{C}$} characters},
   JOURNAL = {Geom. Topol.},
  FJOURNAL = {Geometry and Topology},
    VOLUME = {10},
      YEAR = {2006},
     PAGES = {715--736},
      ISSN = {1465-3060,1364-0380},
   MRCLASS = {57M50},
  MRNUMBER = {2240903},
MRREVIEWER = {Kentaro\ Ito},
       DOI = {10.2140/gt.2006.10.715},
       URL = {https://doi.org/10.2140/gt.2006.10.715},
}

\end{document}